\documentclass{article}

\usepackage{amssymb,amsthm}
\usepackage{amsmath}
\usepackage{algorithm,algorithmic}
\usepackage{appendix}
\usepackage{graphicx}
\usepackage{float}
\usepackage{subcaption}
\usepackage{color}
\usepackage[normalem]{ulem}

\usepackage[unicode=true,pdfusetitle,
 bookmarks=true,bookmarksnumbered=false,bookmarksopen=false,
 breaklinks=false,pdfborder={0 0 1},backref=false,colorlinks=false]
 {hyperref}

\allowdisplaybreaks

\DeclareMathOperator*{\argmin}{argmin}

\DeclareMathOperator*{\tr}{Tr}

\DeclareMathOperator*{\unif}{\mathbb{U}}
\DeclareMathOperator*{\poly}{Poly}

\newcommand{\E}{\mathbb{E}}
\newcommand{\Prob}{\mathbb{P}}
\newcommand{\R}{\mathbb{R}}

\newcommand{\SD}{\mathcal{S}^d}
\newcommand{\SN}{\mathcal{S}^n}

\newcommand{\T}{^{\top}}
\newcommand{\bq}{\overline{Q}}
\newcommand{\br}{\overline{R}}
\newcommand{\bbs}{\overline{\|B\|^2}}
\newcommand{\abs}{\overline{\|A-BL\|^2}}
\newcommand{\bnr}{\overline{\|R\|}}
\newcommand{\bqlrl}{\overline{Q+L^{\top} R L}}
\newcommand{\bqlrlp}{\overline{Q+(L')^{\top} R L'}}

\newcommand{\bnxs}{\overline{\|x_0\|^2}}
\newcommand{\blrlp}{\overline{(L')^{\top} R L'}}
\newcommand{\blrl}{\overline{L^{\top} R L}}
\newcommand{\rl}{\overline{R+B\T P_L B}}
\newcommand{\bbpla}{\overline{B\T P_L A}}
\newcommand{\us}{\underline{\sigma}}
\newcommand{\FF}{\mathcal{F}}
\newcommand{\TT}{\mathcal{T}}
\newcommand{\AAA}{\mathcal{A}}
\newcommand{\BBB}{\mathcal{B}}
\newcommand{\BBbb}{\mathbb{B}}
\newcommand{\SSbb}{\mathbb{S}}
\newcommand{\xut}{\begin{bmatrix}x_t\\u_t\end{bmatrix}}
\newcommand{\xutt}{\left[x_t\T,  u_t\T\right]}
\newcommand{\xuti}{\begin{bmatrix}x_t^{(i)}\\u_t^{(i)}\end{bmatrix}}
\newcommand{\hdelta}{h_{\Delta}}
\newcommand{\hgrad}{h_{\text{grad}}}
\newcommand{\bhdelta}{\overline{\hdelta}}

\newcommand{\hsnsop}{h_{\text{sample,NSOP}}}
\newcommand{\hsnsf}{h_{\text{sample,NSF}}}

\newcommand{\hshf}{h_{\text{sample,HF}}}

\newcommand{\hsbf}{h_{\text{sample,F}}}
\newcommand{\hs}{h_{\text{sample}}}
\newcommand{\hd}{\widetilde{\nabla C} _r(L)}
\newcommand{\hdl}{\widetilde{\nabla C} _r^{(l)}(L)}
\newcommand{\td}{\widehat{\nabla C}(L)}
\newcommand{\hrgd}{h_{\text{r,GD}}}
\newcommand{\hlgd}{h_{\text{l,GD}}}
\newcommand{\cc}{\mathcal{C}}

\newtheorem{theorem}{Theorem}

\newtheorem{lemma}{Lemma}[section]
\newtheorem{corollary}{Corollary}[section]
\newtheorem{assumption}{Assumption}
\newtheorem{definition}{Definition}

\newenvironment{assumptionp}[1]{
  
  \assumptionalt
}{\endassumptionalt}

\newtheoremstyle{TheoremNum}
{\topsep}{\topsep}              
{\itshape}                      
{}                              
{\bfseries}                     
{.}                             
{ }                             
{\thmname{#1}\thmnote{ \bfseries #3}}
\theoremstyle{TheoremNum}
\newtheorem{thmn}{Theorem}

\numberwithin{equation}{section}

\theoremstyle{definition}
\newtheorem{remark}{Remark}[section]

\renewcommand{\algorithmicrequire}{ \textbf{function:}}

\oddsidemargin 0pt
\evensidemargin 0pt
\marginparwidth 10pt
\marginparsep 10pt
\topmargin -20pt
\textwidth 6.5in
\textheight 8.5in
\parindent = 20pt

\title{Policy Gradient Methods for Discrete Time Linear Quadratic Regulator with Random Parameters}

\date{November 2023}

\begin{document}
\author{Deyue Li\thanks{Shanghai Center for Mathematical Sciences; Fudan University, Shanghai, China 200433 ({lidy21@m.fudan.edu.cn})} 
}
\maketitle

\begin{abstract}
	This paper studies an infinite horizon optimal control problem for discrete-time linear system and quadratic criteria, both with random parameters which are independent and identically distributed with respect to time. 
	In this general setting, we apply the policy gradient method, a reinforcement learning technique, to search for the optimal control  without requiring knowledge of statistical information of the parameters.   
	We investigate the sub-Gaussianity of the state process and establish global linear convergence guarantee for this approach based on assumptions that are weaker and easier to verify compared to existing results. 
	Numerical experiments are presented to illustrate our result.

 \noindent
 {\bf Key words}:
 linear quadratic optimal control, random parameters,  reinforcement learning, model-free policy gradient method, sub-Gaussianity

\noindent
{\bf AMS subject classification}: \, 49N10, 68W40, 93E35 
\end{abstract}

\section{Introduction}
Linear Quadratic (LQ) control problem for discrete time with random parameters whose study goes back to Kalman~\cite{kalman1961}  finds applications in a wide range of practical problems, 
such as random sampling of a diffusion process in digital control~\cite{tiedemann1984}, sampling of a system with noise~\cite{drenick1964} and economic systems~\cite{aoki1976}. 
Consequently, extensive results has been carried out in this area~\cite{drenick1964, athans1977, de1982, morozan1983, beghi1998}. 
However, the literatures cited above assumes a priori knowledge of model parameters, which is unrealistic in many practical scenarios. 
Therefore, solving such problem without statistical information of model parameters are of great importance from both theoretical and practical perspectives.

Recent years  have witnessed  a huge growth in  learning approaches, among which the reinforcement learning (RL) method has garnered a great deal of attention from researchers~\cite{ fazel2018, tu2019, gravell2020, hambly2021, du2022, lai2023}. 
There are two categories of RL-methods: the model-based RL and the model-free RL.
The model-based RL approach estimates the transition dynamics by observing or conducting experiments and then designs the control policy using the estimated parameters~\cite{simchowitz2018, dean2020}. 
On the other hand, the model-free RL optimizes the control policy by interacting with the system directly without inferring the model parameters. 
Model-free RL has several appealing features. Firstly, it can solve control problems even when the parameter distribution is unknown. Secondly, it is more robust and can tolerate some mis-specification of a complex model. Thirdly, it is relatively easy to implement.

Due to the good nature of LQ problem, 
optimal control has linear feedback form~\cite{de1982, du2022}. 
A natural way is to employ gradient descent to search for the optimal feedback matrix over a certain matrix space. Such method is called policy gradient method and is a kind of model-free method. 
Fazel et al.~\cite{fazel2018} was the first to prove the global convergence result of policy gradient within the LQ problem framework. 
Their research primarily focused on an infinite horizon LQ problem where parameter matrices remained constant and time-invariant. 
In this scenario, the only source of randomness comes from the initial state. 
To achieve global convergence, their theoretical analysis required the initial state to be almost surely bounded. 
This result has been extended in various directions. 
In the realm of finite horizon setting, Hambly et al.~\cite{hambly2021} investigated the policy gradient method 
for LQ problem with additive noise. 
They not only incorporated additive noise into the system transition dynamics, but also 
relaxed the requirement of almost surely boundedness of the initial state.  
In their setting, achieving global convergence required 
the sub-Gaussianity of the initial state and the additive noise term.  
In the realm of infinite horizon setting,
Gravell et al.~\cite{gravell2020} were the first to consider the presence of multiplicative noise in system transition dynamics. 
They provided a proof of global convergence of policy gradient method, thereby further enhancing the algorithm's robustness.
However, the randomness in parameter matrices also have a specific structure. 
To ensure global convergence, they assumed that the noise term in parameters and the initial state were 
almost surely bounded and satisfied a technical condition, which is challenging to verify.

In this study, we consider a generic setting with structureless random parameter matrices 
in both transition dynamics and cost. 
Due to the lack of structural information about the randomness to exploit, 
we need to perform a more detailed probabilistic analysis of relevant random matrices and state process. 
Furthermore, the presence of randomness in cost imposes further uncertainty on our gradient estimator, which  
entails more careful treatments and novel proofs of algorithm's convergence. 
We not only consider a more general model,
but also relax the convergence condition proposed in~\cite{gravell2020} in the following directions.
First, similar to~\cite{hambly2021}, we assume that the initial state is sub-Gaussian instead of almost surely boundedness. 
Second, we eliminate the technical conditions from their convergence condition of the algorithm.

RL methods for solving LQ problem with random parameters are less developed.  
One existing approach is a model-free Q-learning method proposed by Du et al.~\cite{du2022} to learn the optimal policy. 
However, to the best of our knowledge, there are no theoretical guarantees for the convergence of policy gradient methods in this generic setting. 
Filling this research gap is meaningful for broadening the class of models to which the policy gradient method can be applied 
and improving the robustness of the policy gradient algorithm.   

\subsection{Problem formulation and preliminaries}\label{sec:pre}
  We consider the following infinite-horizon LQ problem for discrete time systems with random parameters. Given the initial state $x_0 \in \R^n$ , the system evolves as 
  \begin{equation}
      x_{t+1}=\Lambda_{t+1}\xut ,\quad t=0,1,2\ldots,
  \end{equation}
  where $x_t$ is the system state at time $t$, and  $u_t\in \R^m$ is the control at time $t$. The cost function is defined as 
  \begin{equation}
      J(x_0,u_.)=\sum_{t=0}^{\infty}\xutt N_{t+1} \xut,
  \end{equation}
  where random parameters 
  \[
  \begin{bmatrix}
		\Lambda_t\T&N_t
	\end{bmatrix}	:=\begin{bmatrix}
	A_t\T&Q_t&0\\
	B_t\T&0&R_t
\end{bmatrix},\quad t=1,2,3\ldots
  \]
  affecting the system form time $t$ to $t+1$  are not exposed until time $t+1$.

  Our goal is to find admissible controls that solve the following optimization problem:
  \begin{subequations}
     \begin{align}
	&\text{minimize} \quad \E \left[J(x_0,u_.)\right],\label{cost}	\\
    &\text{such that} \quad x_{t+1}=\Lambda_{t+1}\xut,\quad x_0\sim \mathcal{D},\label{sys}
	\end{align}
  \end{subequations}
  where $x_0\in \R^n $ is the initial state drawn from initial distribution $\mathcal{D}$ with finite second order moment 
  $\E\left[x_0x_0\T\right]$, and  $N_{t+1}$ is
  positive semi-definite. The random matrices 
  $\left[\Lambda_t\T, N_t\right]$ 
  are assumed to be independent and identically distributed. 
  In sequel, we use $\left[\Lambda\T, N\right]$ to denote an independent copy of $\left[\Lambda_1\T, N_1\right]$.
  
  As we are dealing with an infinite horizon problem, it is necessary to consider the question of stability. 
  A control $u_.$ is said to be mean-square stabilizing if the state process under control satisfies $\lim_{t\to+\infty} \E\left[x_t\T x_t\right]=0$. 
  The system~\eqref{sys} or $\left(A,B\right)$ is said to be mean-square stabilizable if there exists a matrix $L$ such that the linear feedback control of the form $u_.=-L x_.$ is mean-square stabilizing. 
  In this case, $L$ is said to mean-square stabilize the system.

  Given a positive semi-definite matrix $P\in \SD$ with $d=n+m$, we use the following partition to denote specific sub-matrices:
  \[P=\begin{bmatrix}
		P_{xx}&P_{xu}\\
		P_{ux}&P_{uu}
	\end{bmatrix}\quad	\text{with} \quad P_{xx}\in\R^{n\times n}.\]
	And two mappings are defined as follows:
	\begin{equation}
	\label{op}
	\begin{aligned}
		\Pi(P):=&P_{xx}-P_{xu}P_{uu}^{\dag}P_{ux},\\
		\Gamma(P):=&-P_{uu}^{\dag}P_{ux},
	\end{aligned}
	\end{equation}
    where $P_{uu}^{\dag}$ denotes the Moore-Penrose pseudo-inverse of $P_{uu}$.
  
  To ensure the optimization problem \eqref{cost}-\eqref{sys} is non-trivial, 
  we introduce some conditions on parameters to guarantee the existence of solution.
  
  \begin{assumption}
    \label{ass:1}
  Assume that $\E\left[\|x_0\|^2\right]$, $\E\left[\|N\|\right]$ and $\E\left[\|\Lambda\|^2\right]$ are finite and that $\E\left[N\right]$ is positive definite and that $\left(A,B\right)$ is mean-square stabilizable.
  \end{assumption}
  
  Under Assumption~\ref{ass:1} we can define matrix $K$ as the unique positive definite solution to the stochastic algebraic Riccati equation (ARE)~\cite{de1982,du2022}:
  \begin{equation}
  \label{eq:ARE}
      K=\Pi(\E\left[N+\Lambda\T K \Lambda\right]).
  \end{equation}
Moreover,  the value function, defined as $$V(x)\colon = \inf_u J(x,u_.),$$ has a quadratic form of $V(x)=x\T K x$, and  the optimal control is given by linear feedback form~\cite{de1982,du2022}
     \begin{equation}
        \label{eq:feedbackcontrol}
         u_t=\Gamma(\E\left[N+\Lambda\T K \Lambda\right])x_t.
     \end{equation}
     
	In cases where the statistical information of parameters is known and mathematical expectation can be calculated explicitly, the fix point iteration method can be used to solve~\eqref{eq:ARE}~\cite{athans1977,ku1977}

\eqref{eq:feedbackcontrol} suggests that we only need to focus on the following class of linear feedback policies :
\begin{equation}
    \label{eq:linearcontrol}
    u_t=-L x_t
\end{equation} 
under which \eqref{sys} is equivalent to the closed loop system
\begin{equation}
    \label{eq:cls}
    x_{t+1}=(A_t-B_t L)x_t.
\end{equation}

We say that $L$ is admissible if $L$  mean-square stabilize the system $\left(A,B\right)$. 
To describe admissibility of matrix $L$ we introduce a linear transformation $\FF_L : \SN \to \SN $ defined as 
\begin{equation}
    \FF_L(X):=\E\left[(A-BL) X (A-BL)\T\right].
\end{equation}
As shown in~\cite{de1982}, $L$ is admissible if and only if the spectral radius $\rho(\FF_L)<1$. 
Using this result, we can narrow the admissible set as follows:
\begin{equation}
\label{eq:ad}
    U_{ad}: = \left\{L\in \R^{m\times n}\big|  \rho(\FF_L)<1 \right\}.
\end{equation}

\subsection{Our contributions}
This paper further generalizes the results of Gravell et al.~\cite{gravell2020}.  
Compared with previous work, the novelties are as follows:
\begin{itemize}
	\item The matrices occurring in both transition dynamics and quadratic criteria are allowed to be random, and the randomness has no special structure, which entails more careful treatments and novel proofs.
	\item More advanced analyses of concentration property of state process provide weaker and easily verifiable conditions for ensuring the performance of model-free algorithms. 
	\item The overall performance of the model-free algorithm is elaborated which allows the algorithm's performance to be controlled directly and precisely. 
\end{itemize}

\subsection{Notations}
Throughout this paper, we use $\R^n$ to denote the Euclidean $n$-space, where the inner product is given by $\left<x,y\right>_{\R^n}=x\T y$ and the norm is $\|x\|_2=(x\T x)^{1/2}$. 
We use $\R^{n\times m}$ to denote the Banach space of all real $n\times m$ matrices with operator norm $\|M\|=\max_{\|x\|_2=1}\|Mx\|_2$, 
and use $\sigma_{\min}(M)$ to denote the smallest singular value of matrix $M$,
where $M\in \R^{n\times m}$ and $x\in\R^m$. 
The identity matrix in $\R^{n\times n}$ is denoted by $I_n$. 
We also use $\SN$ to denote the the Hilbert space of all real symmetric $n\times n$ matrices, equipped with the Frobenius inner product $\left<A,B\right>_F=\tr(A\T B)$. 
Note that the operator norm and Frobenius norm $\|M\|_F=\left(\tr(M\T M)\right)^{1/2}$ are equivalent norm on $\R^{n\times m}$ and $\SN$.  
We use $\mathcal{J}^n$ to denote the Banach space of all linear transformations $\TT \colon \SN  \to \SN $, with norm $\|\TT\|=\max_{\|M\|=1}\|\TT M\|$. The spectral radius of $\TT\in\mathcal{J}^n$ is denoted by $\rho(\TT)$. 
We consider the probability space $\left(\Omega,\mathcal{F},\Prob\right)$ where the expectation of random element $X$ is denoted by $\E\left[X\right]$ or $\overline{X}$. 
We use $L^p\left(\Omega,\mathcal{F},\Prob\right)$ to denote the Banach space of all $p$-th order integrable real random variables, with norm $\|\xi\|_{L^p}=\left(\E\left[|\xi|^p\right]\right)^{1/p}$, where $\xi\in L^p\left(\Omega,\mathcal{F},\Prob\right)$.
The model parameters are those quantities that are only relevant to the intrinsic nature of the problem~\eqref{cost}-\eqref{sys}, 
such as $m$, $n$, $\overline{\|A\|^2}$, $\bbs$, $\overline{\|Q\|}$,  $\overline{\|R\|}$, $\overline{\|x_0\|_2^2}$,
$\sigma_{\min}(\Sigma_0)$,$\sigma_{\min}(\bq)$,$\sigma_{\min}(\br)$. $\|\Sigma_{L^*}\|$, $C(L^*)$, and their reciprocals.  
We use $\poly(\cdot)$ to denote a polynomial depending on model parameters and other dependent parameters in parentheses. 
We use $\cc$ to denote absolute constants. 
When constants depend on model parameters, 
we denote this dependency by using subscripts, for example, $\cc_{A}$, $\cc_{Q}$, and $\cc_{\Sigma_0}$. 
These constants may vary from line to line.

\subsection{Outline}
The rest of this paper is organized as follows. 
Section~\ref{sec:2} introduces the algorithm and presents its convergence result. 
In Section~\ref{sec:num}, we present numerical examples to illustrate the performance of the proposed algorithm. 
Section~\ref{sec:CAOMBA} aims to prove the convergence of model-based algorithm in Theorem~\ref{thm:mb} as an auxiliary result. 
Finally, Section~\ref{sec:samplebased} provides the proof of our main result in Theorem~\ref{thm:samplebased}. 
\section{Main results}\label{sec:2}
The most noteworthy result in this paper is Theorem~\ref{thm:samplebased}, which guarantees the global convergence of the model-free policy gradient method for LQ problems whose systems and quadratic criteria both have random parameters. 
In Section~\ref{sec:modelbased}, Theorem~\ref{thm:mb}, serving as an auxiliary theorem,  presents the convergence result of the model-based gradient method,  paving the way for the analysis of the model-free setting. 
In Section~\ref{sec:m2}, the convergence result in model-free setting and its sample complexity analysis are provided.
\subsection{Model-based policy gradient method}\label{sec:modelbased}
In this section we assume that the distribution information of all parameters $\left\{ \Lambda_t \right\}_{t=1}^{\infty}$,$\left\{N_t\right\}_{t=1}^{\infty}$ are known, and $L\in U_{ad}$. 

As shown in~\eqref{eq:feedbackcontrol}, admissible control is linearly characterized.
Accordingly, the cost function can be expressed as follows: 
\begin{equation}
    C(L):=\E\left[J(x_0,-Lx_.)\right]=\E\left[\sum_{t=0}^{\infty}x_t \T(Q_{t+1}+L\T R_{t+1} L)x_t\right].
\end{equation}
Thus, the optimization problem~\eqref{cost}-\eqref{sys} is equivalent to the following~\eqref{costl}-\eqref{feal}:
\begin{subequations}
    \begin{align}
    &\text{minimize} \quad  C(L),\label{costl}\\ 
    &\text{such that} \quad L\in U_{ad}. \label{feal}
    \end{align}
\end{subequations}
The optimal policy, $L^*:=\argmin_{L\in U_{ad} } C(L)$, is given by $L^*=\Gamma(\E\left[N+\Lambda\T K \Lambda\right]) $ according to~\eqref{eq:feedbackcontrol}.

To search $L^*$ directly, we consider the following model-based gradient decent with constant step size $\eta$:
\begin{equation}
\label{eq:gradientupdate}
    L_{k+1}=L_k-\eta\nabla C(L_k).	
\end{equation}
However, the convergence analysis of \eqref{eq:gradientupdate} is not immediately clear. As pointed out in~\cite{fazel2018}, the feasible set $U_{ad}$ is generally non-convex, and there is no guarantee that the gradient method will successfully solve a non-convex problem. Fortunately, similar to the deterministic case, we show that the cost function possesses two essential properties: {\it the gradient dominance} and {\it the almost smoothness}, which facilitate the analysis of non-convex gradient method. 
For these two properties to hold, the only extra condition that needs to be added is the positive definiteness of initial covariance matrix 
$\Sigma_0:=\E\left[x_0 x_0\T\right].$
\begin{assumption}
\label{ass:2}
We assume that $x_0\sim \mathcal{D}$ such that $\Sigma_0$ is positive definite.
\end{assumption}

We introduce some notations that will be used throughout the analysis. 
\begin{align}
    \mu&:=\sigma_{\min}(\Sigma_0),\\
    \us_{\bq}&:=\sigma_{\min}(\bq),\\
    \us_{\br}&:=\sigma_{\min}(\br).
\end{align}  
Note that Assumption~\ref{ass:1} implies $\us_{\bq}>0,$ $\us_{\br}>0$ and  Assumption~\ref{ass:2} implies $\mu>0.$ 


Let $\left\{x_t\right\}_{t=0}^{\infty}$ be a trajectory generated by policy matrix $L$. 
We can write the state covariance matrix as 
\begin{equation}
    \Sigma_t:=\E\left[x_t x_t\T\right] 
\end{equation}
and aggregate state covariance matrix as  
\begin{equation}
    \Sigma_L:=\E\left[\sum_{t=0}^{\infty}x_t x_t\T\right]=\sum_{t=1}^{\infty}\Sigma_t.
\end{equation}

We can then state the following convergence result for the model-based setting:
\begin{theorem}[Global Convergence of Model-based Policy Gradient]
	\label{thm:mb}
	Assume Assumptions~\ref{ass:1} and~\ref{ass:2} hold, and let $L_0\in U_{ad}$. 
	Then for any $\epsilon>0$, the model-based policy gradient update
	\begin{equation}\label{eq:mbg}
        L_{k+1}=L_k-\eta \nabla C(L_k)	
    \end{equation}
	with a constant step size $0<\eta\leq \eta^*$ 
	converges globally to the optimal matrix $L^*$ at a linear rate:
	\[
	C(L_{k+1})-C(L^*)\leq \left(1-2\eta \frac{\mu^2\us_{\br}}{\|\Sigma_{L^*}\|}\right)	(C(L_k)-C(L^*)),
	\]
	where $\eta^* = \cc_{A,B,Q,R,\mu} \left(1+C(L_0)\right)^{-5}$.

	Moreover, if the number of iteration $k$ satisfies
	\[
	k \geq  \frac{\|\Sigma_{L^*}\|}{2\eta\mu^2\us_{\br}}	\log\frac{C(L_0)-C(L^*)}{\epsilon}, 
	\] 
	then the model-based policy gradient method~\eqref{eq:mbg} enjoys the following performance bound:
	\[
	C(L_k)-C(L^*)<\epsilon.	
	\]
\end{theorem}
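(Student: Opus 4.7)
The plan is to follow the gradient-dominance-plus-smoothness paradigm introduced by Fazel et al.\ for deterministic LQR, suitably adapted to the present doubly-stochastic setting where both the dynamics and the cost matrices are random.

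First, I would derive an explicit policy-gradient formula. Let $P_L \in \SN$ be the unique positive semi-definite solution of the stochastic Lyapunov equation
\[
P_L = \E\!\left[Q + L\T R L\right] + \E\!\left[(A - BL)\T P_L (A - BL)\right],
\]
which is well-defined for $L \in U_{ad}$, so that $C(L) = \tr(P_L \Sigma_0)$. Differentiating through this identity should give
\[
\nabla C(L) = 2 E_L\, \Sigma_L, \qquad E_L := \E\!\left[R + B\T P_L B\right] L - \E\!\left[B\T P_L A\right],
\]
and in particular $L^*$ is characterized by $E_{L^*} = 0$.

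Second, I would establish the two structural properties of $C$ on $U_{ad}$. For \emph{gradient dominance}, a value-difference identity of the form $C(L)-C(L^*) = \tr\!\bigl(\Sigma_L (L-L^*)\T \E[R + B\T P_{L^*} B] (L-L^*)\bigr)$, combined with $\sigma_{\min}(\Sigma_L) \geq \mu$ and $\E[R + B\T P_{L^*} B] \succeq \br$, should yield
\[
C(L) - C(L^*) \leq \frac{\|\Sigma_{L^*}\|}{4 \mu^2 \us_{\br}}\, \|\nabla C(L)\|_F^2.
\]
For \emph{almost smoothness} on the sublevel set $\{L : C(L) \leq C(L_0)\}$, a Taylor-type expansion of $C$ would yield an estimate $C(L') - C(L) \leq \langle \nabla C(L), L'-L \rangle + \tfrac{1}{2}\mathcal{L}(L_0)\|L'-L\|_F^2$ with smoothness constant $\mathcal{L}(L_0) = \cc_{A,B,Q,R,\mu}^{-1}(1+C(L_0))^5$. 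The fifth power comes from multiplying several factors in the Hessian-type upper bound, each polynomial in $C(L_0)$, namely $\|L\|$, $\|\Sigma_L\|$, $\|P_L\|$, and a perturbation term of $P_L$ under a small shift of $L$ (each of $\|L\|^2$, $\|\Sigma_L\|$, $\|P_L\|$ can be bounded linearly in $C(L)$ via the inequalities $P_L \succeq \E[Q + L\T R L]$ and $C(L) \geq \us_{\bq}\tr(\Sigma_L)$).

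Third, I would close an induction on $k$. Assuming $L_k$ lies in the sublevel set $\{C \leq C(L_0)\} \subseteq U_{ad}$, combining gradient dominance with almost smoothness along the step $L_{k+1} = L_k - \eta \nabla C(L_k)$ for $0 < \eta \leq 1/\mathcal{L}(L_0)$ gives
\[
C(L_{k+1}) - C(L^*) \leq \left(1 - 2\eta\, \frac{\mu^2 \us_{\br}}{\|\Sigma_{L^*}\|}\right)(C(L_k) - C(L^*)),
\]
which forces $C(L_{k+1}) \leq C(L_k) \leq C(L_0)$, so $L_{k+1}$ stays in the sublevel set and hence in $U_{ad}$, completing the induction. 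The iteration bound then follows from iterating the contraction and using $1 - \alpha \leq e^{-\alpha}$.

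The main obstacle, compared to the deterministic case, is controlling $\|\Sigma_L\|$, $\|P_L\|$, and $\|L\|$ uniformly in terms of $C(L)$ without exploiting any structural property of the parameter randomness. In the deterministic case these bounds are immediate from closed-form Lyapunov equations, but here one must work with the linear operator $\FF_L$ on $\SN$ and extract the needed estimates purely from the spectral-radius condition $\rho(\FF_L) < 1$ via a Neumann-series expansion of $(\id - \FF_L)^{-1}$, together with the lower bound $\E[N] \succ 0$ from Assumption~\ref{ass:1}. Once these bounds are in place, the almost-smoothness estimate and the explicit prescription $\eta^* = \cc_{A,B,Q,R,\mu}(1+C(L_0))^{-5}$ follow from a careful but essentially routine polynomial manipulation.
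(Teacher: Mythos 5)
Your overall architecture matches the paper's: gradient formula $\nabla C(L)=2E_L\Sigma_L$, gradient dominance plus almost smoothness, operator-theoretic control of $\Sigma_L$ via $\FF_L$ and the Neumann series for $(\id-\FF_L)^{-1}$, sublevel-set bounds giving $\eta^*=\cc(1+C(L_0))^{-5}$, and induction on $k$. However, the gradient-dominance step as you sketch it would fail. The identity you invoke, $C(L)-C(L^*)=\tr\bigl(\Sigma_L(L-L^*)\T\,\E[R+B\T P_{L^*}B]\,(L-L^*)\bigr)$, is the expansion of the gap along the trajectory of $L$ using the advantage of $L^*$ (it is correct, since $E_{L^*}=0$), but it expresses the gap as a quadratic form in $L-L^*$ weighted by $\Sigma_L$ and $R_{L^*}$; there is no way to convert this into an upper bound by $\|\nabla C(L)\|_F^2=4\|E_L\Sigma_L\|_F^2$, and the constant $\|\Sigma_{L^*}\|$ cannot arise from it because $\Sigma_{L^*}$ never appears. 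The paper's argument goes the other way: expand $C(L)-C(L^*)=-\E\bigl[\sum_t A_L(x_t^*,u_t^*)\bigr]$ along the \emph{optimal} trajectory, lower-bound the advantage of $L$ by completing the square, $A_L(x,-L'x)\geq-\tr(xx\T E_L\T R_L^{-1}E_L)$, to get $C(L)-C(L^*)\leq\tr(\Sigma_{L^*}E_L\T R_L^{-1}E_L)\leq\frac{\|\Sigma_{L^*}\|}{\us_{\br}}\tr(E_L\T E_L)$, and then use $\|\nabla C(L)\|_F^2\geq 4\mu^2\tr(E_L\T E_L)$.

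A second, smaller gap is in your induction: you apply almost smoothness to the step $L_k\to L_{k+1}$ and then conclude from the resulting contraction that $L_{k+1}$ stays in the sublevel set "and hence in $U_{ad}$". But the smoothness estimate is only meaningful if $L_{k+1}\in U_{ad}$ in the first place (otherwise $C(L_{k+1})=+\infty$ and $\Sigma_{L_{k+1}}$ is undefined), so this is circular as stated. The paper breaks the circle with a quantitative openness result: the ball $B_{op}(L,\hdelta(L))$ with $\hdelta(L)=\frac{\us_{\bq}\mu}{4(\bbs)^{1/2}(1+(\abs)^{1/2})C(L)}$ is contained in $U_{ad}$ (proved via the trace lower bound $\tr(\Sigma_L)\geq\mu/(1-\rho(\FF_L))$ and a continuity/contradiction argument), and the step-size condition is designed precisely so that $\eta\|\nabla C(L_k)\|\leq\hdelta(L_k)$. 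Your "main obstacle" paragraph gestures at the right machinery, but this specific admissibility argument needs to be made explicit for the induction to close.
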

Theorem~\ref{thm:mb} demonstrates that despite the non-convexity of problem~\eqref{costl}-\eqref{feal},  the model-based gradient method~\eqref{eq:mbg} still achieves global convergence at linear rate.
However, the algorithm~\eqref{eq:mbg} requires knowledge of the distributions of model parameters to calculate the gradient at each iteration, which can be computationally expensive or practically unrealistic.
Although the model-based algorithm is not practical enough for real-world applications, this algorithm serves as a crucial building block for developing the model-free algorithm in next section. 
\subsection{Model-free policy gradient method}\label{sec:m2}
In this section, the statistical information of model parameters is unknown. 
The controller can only access the model through simulation or experiments. 
To address this challenge, we employ zeroth-order optimization and use a model-free estimator $\widehat{\nabla C}(L_k)$ to approximate the genuine gradient $\nabla C(L_k)$ with arbitrary accuracy.
Although some perturbation occurs, we show that for the random parameter LQ problem, the model-free policy gradient method still achieves global convergence with high probability. 
Moreover, the sample and computation complexities are both polynomial. 
 
 In each step $k$ the policy matrix is updated as:
 \begin{equation}
    \label{eq:samplebasedgradientmethod}
    L_{k+1}=L_k-\eta \widehat{\nabla C}(L_k),
 \end{equation}
 where $\widehat{\nabla C}(L_k)$ is generated by calling Algorithm~\ref{alg:1}.
 \begin{algorithm}[H]
	\caption{Model-Free Policy Gradient Estimation}
	\label{alg:1}
	\renewcommand{\algorithmicrequire}{\textbf{Input:}}
	\renewcommand{\algorithmicensure}{\textbf{Output:}}
	\begin{algorithmic}[1]
		\REQUIRE Current feedback matrix $L_k$, number of samples $N$, rollout length $l$, exploration radius $r$ 
		\FOR{$i=1,\ldots, N$}
		\STATE Generate a sample feedback matrix $\widehat{L}_k^{(i)}=L_k+U_i$ where $U_i$ is drawn uniformly at random over matrices with Frobenius norm $r$.
		\STATE Simulate $l$ steps trajectory $\{x_t^{(i)},c_t^{(i)}\}_{t=0}^{l-1}$ using linear feedback policy $\widehat{L}_k^{(i)}$ with $x_0^{i}\sim \mathcal{D}.$ Let $\widehat{C}_i$ be the empirical cost estimator 
		\[
		\widehat{C}_i=	\sum_{t=0}^{l-1}c_t^{(i)}
		\] 
		\ENDFOR
		\ENSURE  Model-free Gradient 
		\begin{equation}
            \label{eq:zoo}
            \widehat{\nabla C} (L_k)=	\frac{1}{N}\sum_{i=1}^N\frac{mn}{r^2}\widehat{C}_iU_i.
        \end{equation}
	\end{algorithmic}  
\end{algorithm}

As described in~\cite{fazel2018},  zeroth-order optimization is applied to approximate the genuine gradient term in~\eqref{eq:mbg}. 
This method allows us to access the gradient term using zeroth-order information, i.e. values of the cost function, which can be easily obtained by experiment or simulation. 
However, as the admissible set $U_{ad}$ is not the 
entire space $\R^{n\times m}$, the classical Gaussian-smoothing technique 
cannot be applied directly. This is avoidable by smoothing over the sphere of a ball with Frobenius norm $r$. 
We can show that, with suitable choice of step-size $\eta$, 
the updated policy $L_k$ remains in the interior 
of $U_{ad}$ at each iteration, making local smoothing well-defined.   

In order to guarantee global convergence of the model-free gradient method, additional conditions on the distributions of the initial state $x_0$ and model parameters $\left[\Lambda\T, N\right]$ are necessary. First, we introduce the following definitions:
\begin{definition}  
	We introduce the following definitions for random variables and vectors:
        \begin{enumerate}
            \item A centered random variable $X$ is said to be sub-Gaussian with parameter $\sigma$, denoted by $X\sim SG(\sigma^2)$, if its moment generating function satisfies $\E[e^{\lambda X}]\leq \exp(\sigma^2 \lambda^2/2 ),$ for any $\lambda\in \R$. 
            \item A centered random variable $X$ is said to be sub-exponential with parameter $(v,\alpha)$, denoted by $X\sim SE(v^2,\alpha)$, if its moment generating function satisfies $\E[e^{\lambda X}]\leq \exp(v^2 \lambda^2/2),$ for any $|\lambda|<1 / \alpha.$
            \item A centered random vector $X\in \R^n$ is said to be sub-Gaussian, denoted by $X\sim SG(\sigma^2)$, if for any $u\in\R^n$ with $\|u\|_2=1$, the standard Euclidean inner product $\left<u,X\right>$ is a sub-Gaussian random variable with parameter $\sigma$. 
        \end{enumerate} 
\end{definition}
\begin{remark}
    The above definitions can be extended to non-centered random variables or vectors. 
	Specifically, a random variable or vector $X$ is said to be sub-Gaussian (or sub-exponential) if $X-\E\left[X\right]$ is sub-Gaussian (or sub-exponential).
\end{remark}
The Sub-Gaussian distribution is a versatile class of probability distributions that covers a wide range of commonly used distributions, including the normal distribution, symmetric Bernoulli distribution, and all distributions with finite support. 
As a result, it finds applications in various fields, such as statistics, machine learning, economics, signal processing, and analysis of algorithms. 
A notable property of the Sub-Gaussian distribution is the concentration property, which is utilized to bound the probability of large deviations from the mean of a random variable. 

In this context, we leverage the concentration property to ensure that the gradient estimator~\eqref{eq:zoo} concentrates around the exact gradient term. 
Consequently, when the exact gradient term is replaced by the gradient estimator~\eqref{eq:zoo}, the convergence result of the exact gradient method in Theorem~\ref{thm:mb} is slightly affected. 
To achieve this goal, we make the following assumption regarding the initial state and model parameters: 
\begin{assumption}$\quad$
\label{ass:3}
    \begin{enumerate}
        \item The initial state $x_0$ is sub-Gaussian random vector with parameter $\sigma_0$. 
        \item The random matrix $\left[\Lambda\T, N\right]$ are almost surely bounded.
    \end{enumerate}
\end{assumption}
Assumption~\ref{ass:3} is made to guarantee the sub-Gaussian 
property of the state process, which is crucial for the analysis of concentration property in model-free setting. 
Specifically, the boundedness assumption on model parameters 
enables the sub-Gaussian property to pass from the 
initial state to subsequent states. 

We now present our main theorem:
\begin{theorem}
    \label{thm:samplebased}
    Assume Assumptions~\ref{ass:1},~\ref{ass:2} and~\ref{ass:3} hold
    and $L_0\in U_{ad}$. For any $\epsilon>0$ and failure probability 
	$\delta>0$, the model-free policy 
    gradient method as described in~\eqref{eq:samplebasedgradientmethod}
    \[
    L_{k+1}=L_k-\eta \widehat{\nabla C}(L_k)
	\]
    with $0<\eta<\eta^*$ and $\widehat{\nabla C}(L_k)$ is generated by calling Algorithm~\ref{alg:1} with parameters $\left(N,l,r\right)$ such that 
	$r\leq \hrgd$, $l\geq \hlgd$ and $N\geq \hs^*$, 
	where 
	$$
	\begin{aligned}
		&\eta^*= \cc_{A,B,Q,R,\mu} \left(1+C(L_0)\right)^{-5},\\
		&\hrgd=\poly(\frac{1}{1+C(L_0)})\epsilon,\\
		&\hlgd=\poly(1+C(L_0))\frac{1}{\epsilon r},\\
		&\hs^*=\poly(1+C(L_0))^{4l}\frac{l^2}{\epsilon^2 r^2}\left(\log\log(1/\epsilon)+\log(1/\delta)+\log(1/\eta)+\log(l)\right).
	\end{aligned}
	$$

	Then, with at least $1-\delta$ probability, at most at the ($k+1$)-th step, we have the following performance guarantee:
	 \[
	 C(L_{k+1})-C(L^*)<\epsilon,
	 \] 
	where 
	\[
		k= \left\lceil \frac{\|\Sigma_{L^*}\|}{\eta\mu^2\us_{\br}}	\log\frac{C(L_0)-C(L^*)}{\epsilon}\right\rceil.	
	\]
	
\end{theorem}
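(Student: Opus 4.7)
The plan is to piggyback on the model-based linear convergence of Theorem~\ref{thm:mb} by showing that replacing $\nabla C(L_k)$ with the zeroth-order estimator $\widehat{\nabla C}(L_k)$ only mildly perturbs each step, provided the estimation error is small with high probability. First I would prove a perturbed descent lemma: if $\|\widehat{\nabla C}(L_k) - \nabla C(L_k)\|\leq \varepsilon_{\text{grad}}$ for a suitable threshold $\varepsilon_{\text{grad}}=\poly(1/(1+C(L_0)))\,\epsilon$ and $L_k$ lies in the sublevel set $\{L:C(L)\leq C(L_0)\}$, then $L_{k+1}=L_k-\eta\widehat{\nabla C}(L_k)$ stays admissible in this set and achieves essentially the exact contraction $C(L_{k+1})-C(L^*)\leq (1-\eta\mu^2\us_{\br}/\|\Sigma_{L^*}\|)(C(L_k)-C(L^*))$. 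Iterating over $k$ steps reduces the suboptimality below $\epsilon$.

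Second, I would decompose $\widehat{\nabla C}(L_k) - \nabla C(L_k)$ into three pieces: a smoothing bias from averaging over perturbations $U_i$ with $\|U_i\|_F=r$, a truncation bias from using rollouts of finite length $l$ instead of the infinite-horizon cost, and a Monte-Carlo fluctuation from averaging over $N$ samples. Local smoothness of $C$ on the sublevel set, inherited from the analysis behind Theorem~\ref{thm:mb}, gives an $O(r)$ bound on the smoothing bias and fixes the choice $r\leq \hrgd$. Geometric decay of $\Sigma_t$ under any admissible $\widehat{L}_k^{(i)}$, uniform across the sublevel set, bounds the truncation bias by $\rho^{l}\cdot\poly$ and fixes $l\geq \hlgd$. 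Both bounds are deterministic once $L_k$ remains in the sublevel set, which I would enforce inductively together with the descent lemma.

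The principal obstacle is controlling the Monte-Carlo fluctuation (iii). Under Assumption~\ref{ass:3}, I would show by induction on $t\leq l$ that each rollout state $x_t^{(i)}=\prod_{s=1}^{t}(A_s-B_s\widehat{L}_k^{(i)})\,x_0^{(i)}$ remains sub-Gaussian with parameter growing like $M^{t}\sigma_0$, where $M$ uniformly bounds the norm of the admissible closed-loop maps on the sublevel set. Consequently each quadratic form $(x_t^{(i)})\T N_{t+1} x_t^{(i)}$ is sub-exponential, the sum $\widehat{C}_i$ inherits sub-exponential tails with variance parameter scaling like $\poly(1+C(L_0))^{2l}$, and the matrix-valued variable $(mn/r^2)\,\widehat{C}_i U_i$ concentrates coordinate-wise via a Bernstein-type inequality. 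Squaring the sub-exponential parameter to bound the variance reproduces the $\poly(1+C(L_0))^{4l}$ factor in $\hs^*$, while the $l^2/(\epsilon^2 r^2)$ factor comes from the required tolerance $\varepsilon_{\text{grad}}$ and the normalization of the estimator; the delicate point is choosing $\varepsilon_{\text{grad}}$ small enough to preserve descent yet large enough that $N$ stays polynomial in $1/\epsilon$.

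Finally, I would take a union bound over the $k=\lceil(\|\Sigma_{L^*}\|/\eta\mu^2\us_{\br})\log((C(L_0)-C(L^*))/\epsilon)\rceil$ iterations with per-iteration failure probability $\delta/k$. Since $k$ is at most logarithmic in $1/\epsilon$ and polynomial in $1/\eta$, the resulting overhead in $N$ is $\log\log(1/\epsilon)+\log(1/\delta)+\log(1/\eta)+\log l$, which precisely matches the additive factor appearing inside $\hs^*$. Combining the perturbed descent lemma, the error decomposition, the Bernstein concentration, and the union bound then yields $C(L_{k+1})-C(L^*)<\epsilon$ with probability at least $1-\delta$.
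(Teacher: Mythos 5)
Your proposal follows essentially the same architecture as the paper's proof: a perturbed one-step descent argument that compares the noisy update with the exact update $L_k-\eta\nabla C(L_k)$ and absorbs an $O(\epsilon')$ gradient error into the contraction factor (turning $1-2\eta\mu^2\us_{\br}/\|\Sigma_{L^*}\|$ into $1-\eta\mu^2\us_{\br}/\|\Sigma_{L^*}\|$), an error decomposition for $\widehat{\nabla C}(L)$, sub-Gaussian propagation of the state and sub-exponential quadratic forms feeding a matrix Bernstein inequality, and an inductive union bound over iterations. Two execution points differ from, and are handled more carefully in, the paper. First, the paper uses a four-term decomposition $\nabla C\to\nabla C_r\to\hd\to\hdl\to\td$: the Monte Carlo fluctuation is split into the randomness of the directions $U_i$ (where $C(L+U_i)$ is deterministic and bounded on the sublevel set, so ordinary bounded matrix Bernstein applies) and the trajectory noise $\widehat{C}_i-C^{(l)}(L+U_i)$, which is controlled by the sub-exponential Bernstein bound \emph{conditionally on} $\{U_i\}$ and the cost matrices. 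This conditioning matters because $\widehat{C}_i$ and $U_i$ are dependent, so your single-shot "Bernstein for $(mn/r^2)\widehat{C}_iU_i$" would need either the same conditioning or a sub-exponential matrix Bernstein for the dependent product; your sketch elides this. Second, the paper's truncation bound (Lemma~\ref{lemma:acaswfhr}) is an $O(1/l)$ tail estimate obtained by pigeonhole from $\tr(\Sigma_L)<\infty$ plus matrix monotonicity of $\FF_L^t$ — not a geometric $\rho^l$ bound — which is exactly why $\hlgd$ scales like $1/(\epsilon r)$ rather than $\log(1/(\epsilon r))$. Your geometric-decay claim is not wrong in spirit, but a uniform rate $\rho<1$ with polynomially controlled transient constant over the whole sublevel set requires an additional argument (e.g., extracting $s$ with $\|\FF_L^s\|\le 1/2$ from $\sum_t\|\FF_L^t\|\le\tr(\Sigma_L)/\mu$) that you do not supply, and as stated it is inconsistent with the $1/(\epsilon r)$ form of $\hlgd$ in the theorem you are proving. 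Neither point breaks the argument, but both need to be filled in to reach a complete proof.
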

Theorem~\ref{thm:samplebased} demonstrates that, with high probability, the model-free policy gradient method achieves an accuracy level of $\epsilon$ at a linear rate. The three assumptions required by the theorem are easy to verify in practice.
In particular, the verification of assumption~\ref{ass:3} is easier compared to those technical conditions that are deliberately introduced to circumvent the probabilistic analysis related to concentration inequalities.  

It's noteworthy that the existence of parameters $(\eta^*,\hs^*,\hlgd,\hrgd)$ only plays a theoretical role. 
In the practice of model-free algorithms, 
the exact values of these parameters can't be calculated explicitly,
because the calculation involves the unknown model parameters. 
Therefore, in the following numerical experiments, these parameters are obtained through parameter tuning.

Regarding sample complexity, for a given accuracy level $\epsilon$ and failure probability tolerance $\delta$, 
the parameters $\left(N,l,r,k\right)$ in Theorem~\ref{thm:samplebased} are polynomial in $C(L_0)$, $\eta$, $\epsilon$, $\delta$, and model parameters. 
Specifically, when initial policy $L_0$ is fixed, 
in order to achieve the performance $\Prob(C(L_{k+1})-C(L^*)<\epsilon)>1-\delta$   
the exploration radius $r$ is $\mathcal{O}(\epsilon)$, 
the rollout length $l$ is $\mathcal{O}(1/\epsilon^2)$, 
the total number of sample  required for gradient estimation $N$ is 
$\mathcal{O}(\epsilon^{-8}\left(\log\log(1/\epsilon)+\log(1/\delta)+\log(1/\eta)+\log(1/\epsilon^2)\right))$, 
and the number of iteration $k$ is $\mathcal{O}((1/\eta) \log(1/\epsilon))$.   
Note that the number of sample trajectories $N$ and the step-size upper bound $\eta^*$ given by the theoretical analysis can be rather conservative and far from optimal. 
However, the sample complexity provided by our theorem is necessarily greater than existing results. 
This is because of the presence of randomness in both the transition dynamics and quadratic criteria and because of the lack of structural information about the noise to exploit.  
The proof of Theorem~\ref{thm:samplebased} is developed in Section~\ref{sec:samplebased}.

\section{Numerical experiments}\label{sec:num}
In this section we illustrate the effectiveness of our main results with numerical experiments. 
The first experiment is conducted on a system with $n=3$ and $m=2$.  
We not only demonstrate the convergence of our algorithms 
but also explore the impact of parameters on the numerical behavior.
Furthermore, we compare the numerical behavior when adaptive step sizes are employed.  
Finally, higher dimensional experiments with $n=20$ and $m=10$ is presented.  
The experiments were conducted on a PC laptop with single-core Intel i5 10400 2.9GHz CPU, 8-GB RAM; no GPU computing was utilized.

We use the following relative error to evaluate the performance of a given policy matrix $L$:
\[
\text{Relative error}=\frac{|C(L)-C(L^*)|}{C(L^*)}	
\]
where the baseline optimal cost $C(L^*)$ was computed by solving the ARE~\eqref{eq:ARE} using fix point iteration with high precision. 

\subsection{Low dimensional experiment}
\noindent {\bf Set up:} We consider the LQ problem~\eqref{cost}-\eqref{sys} with $n=3$, $m=2$. 
The system parameters are given by
\[
\begin{aligned}
A_t &= \begin{bmatrix}
    0.7&0.3&0.2\\
    -0.2&0.4&0.5\\
    -0.4&0.2&-0.3
\end{bmatrix} + \xi_t^{(1)}\begin{bmatrix}
    1&0&0\\
    0&0&0\\
    0&0&1
\end{bmatrix} + (\xi_t^{(1)})^2\begin{bmatrix}
    0&1&0\\
    0&0&0\\
    0&1&0
\end{bmatrix} + \xi_t^{(2)}\begin{bmatrix}
    1&0&0\\
    0&0&1\\
    0&0&0
\end{bmatrix} + (\xi_t^{(2)})^3\begin{bmatrix}
    0&0&1\\
    0&1&0\\
    1&0&0
\end{bmatrix},\\
B_t & = \begin{bmatrix}
    0.5&-0.3\\
    0.8&0.3\\
    0.1&0.9
\end{bmatrix} + \omega_t^{(1)}\begin{bmatrix}
    1&0\\
    0&1\\
    1&0
\end{bmatrix} + (\omega_t^{(1)})^2\begin{bmatrix}
    1&0\\
    1&0\\
    1&0
\end{bmatrix} + (\omega_t^{(1)})^3\begin{bmatrix}
    0&0\\
    0&1\\
    1&1
\end{bmatrix} + (\omega_t^{(2)})^2\begin{bmatrix}
    0&1\\
    1&1\\
    0&0
\end{bmatrix},\\
Q_t &= I_3 + \zeta_t^2\begin{bmatrix}
    1&0&0\\
    1&1&0\\
    0&0&1
\end{bmatrix} + 
\zeta_t^3
\begin{bmatrix}
    0&0&0\\
    0&1&1\\
    0&1&0
\end{bmatrix},\\
R_t &= I_2 + \psi_t 
\begin{bmatrix}
    0&1\\
    1&1
\end{bmatrix} + \psi_t^2 
\begin{bmatrix}
    0&1\\
    1&0
\end{bmatrix}.
\end{aligned}
\]
where the noise terms are given by
\begin{equation}\label{eq:noise}
\begin{aligned}
&\xi_t^{(1)}\overset{i.i.d}{\sim}\unif([-0.01,0.01]),\quad \xi_t^{(2)}\overset{i.i.d}{\sim}\unif([-0.012,0.012]),\\
&\omega_t^{(1)}\overset{i.i.d}{\sim}\unif([-0.015,0.015]),\quad \omega_t^{(2)}\overset{i.i.d}{\sim}\unif([-0.011,0.011]),\\
&\zeta_t\overset{i.i.d}{\sim}\unif([-0.015,0.015]),\quad \psi_t\overset{i.i.d}{\sim}\unif([-0.011,0.011]).
\end{aligned}
\end{equation} 
We choose the noise terms from uniform distributions $\mathbb{U}$, which ensures that the boundedness assumption~\ref{ass:3} is satisfied. 

It is worth noting that this example cannot be accommodated within the framework proposed by Gravell et al.~\cite{gravell2020}. 
Not only because of the presence of noise terms in $Q_t$ and $R_t$, 
but also because the random noise terms in their setting are required to be mutually independent, while in our case, the dependency is allowed. 
Moreover, the randomness can even be completely structureless. 
We consider this setting for the sake of programming convenience. 
\subsubsection{Constant step size}
\begin{figure}[htbp]
	\centering
	\begin{subfigure}[b]{0.45\textwidth}
        \includegraphics[width=\textwidth]{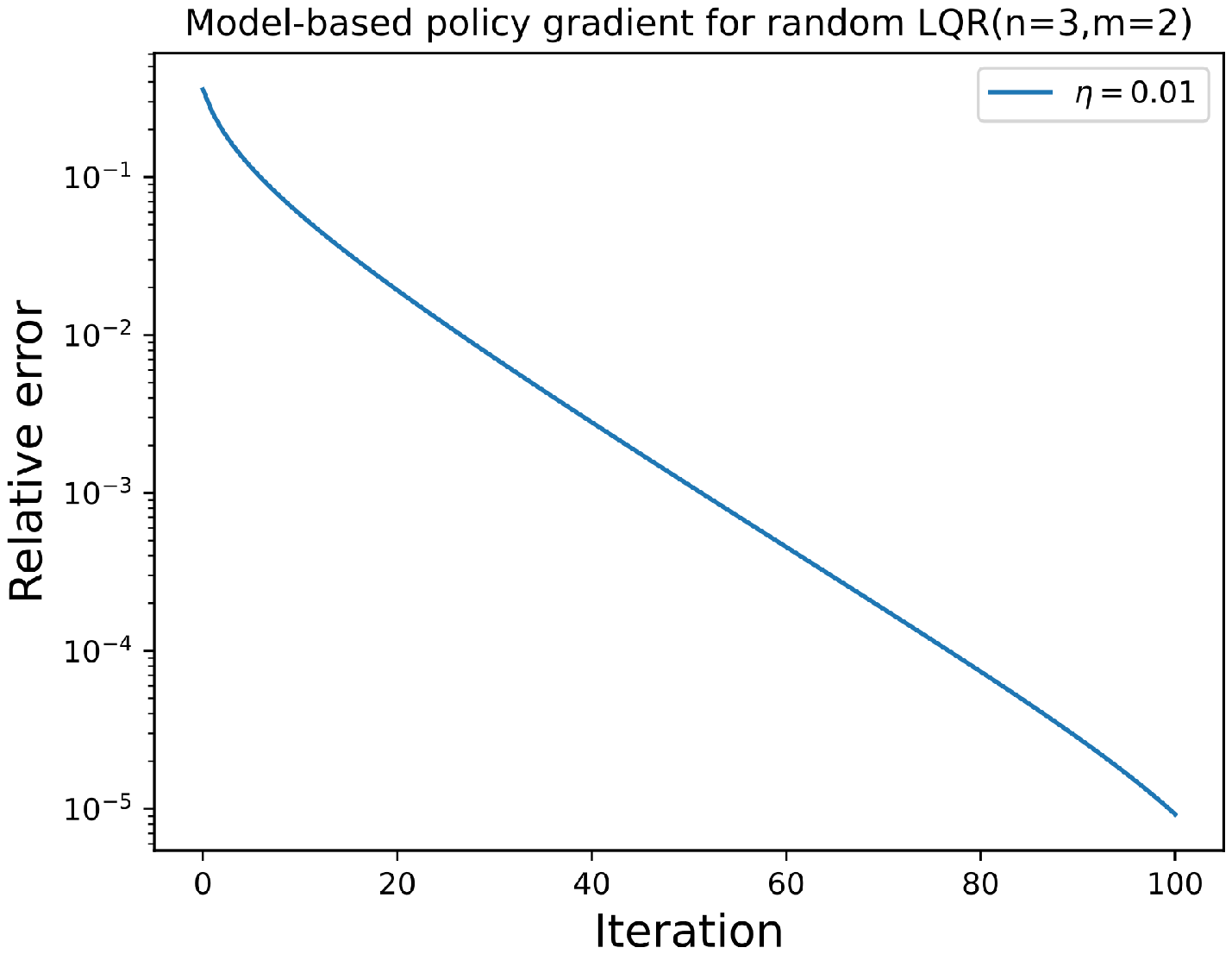}
        \caption{Model-based policy gradient method($\eta = 10^{-2}$)}
        \label{fig:sub1}
    \end{subfigure}
    \begin{subfigure}[b]{0.45\textwidth}
        \includegraphics[width=\textwidth]{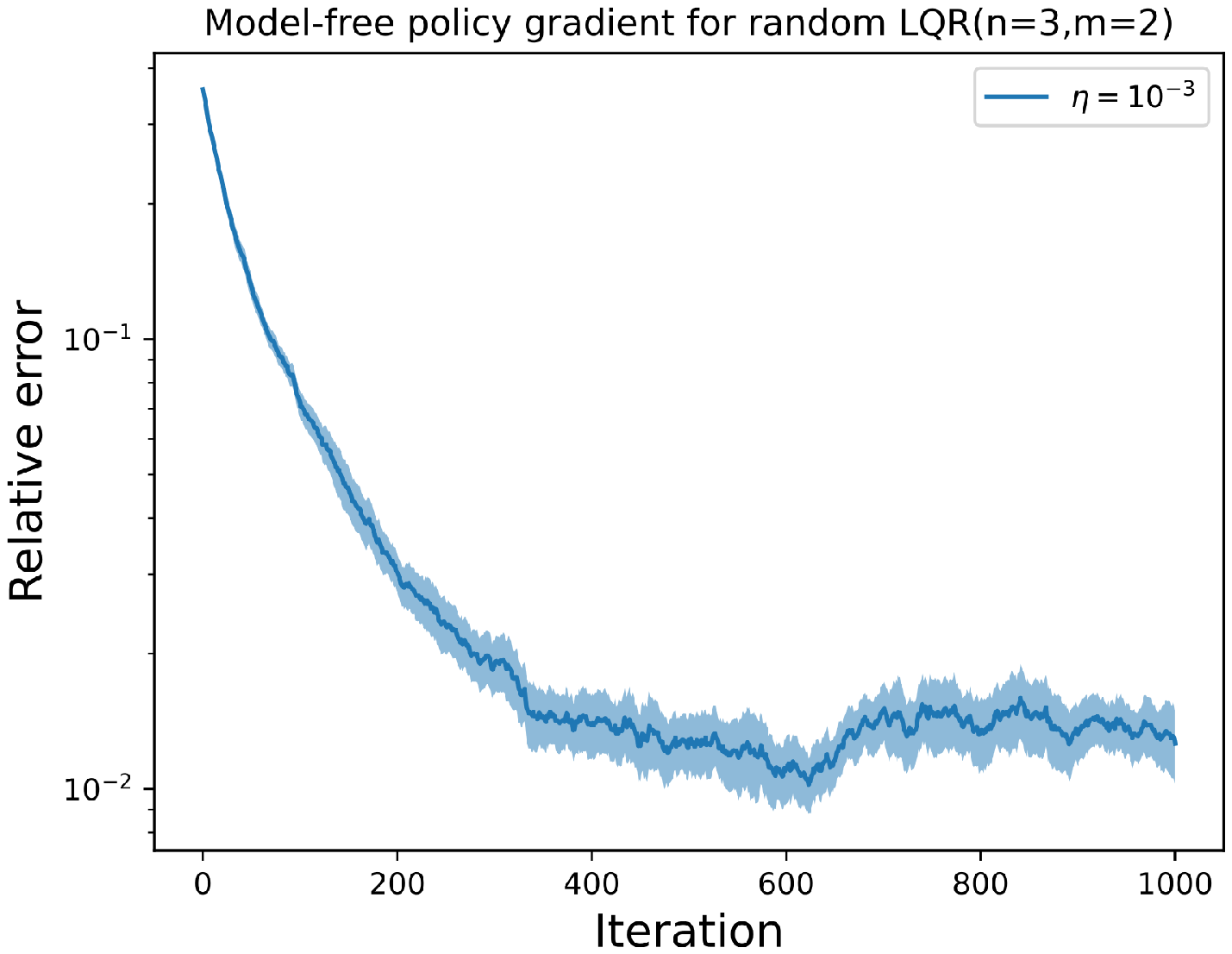}
        \caption{Model-free policy gradient method($\eta = 10^{-3}$)}
        \label{fig:sub2}
    \end{subfigure}
	\caption{Relative error vs. iteration during policy gradient method on the $n=3, m=2$ example.}
	\label{fig:1}
\end{figure}
\noindent{\bf The model-based experiment: } 
The step size was set to $\eta=1\times 10^{-2}$, 
and we executed a fixed number of iterations, which was $100$.

\noindent{\bf The model-free experiment: } 
In this case, we set the step size of gradient method to $\eta=1\times 10^{-3}$.
We estimated the model-free gradient with an exploration radius of $r=0.1$ and 
conducted $N=500$ rollouts per iteration. 
For each rollout, we set the rollout length to $l=30$ and the initial state $x_0\sim \mathcal{N}(0,I_3)$. 
We simulated $30$ trajectories. The shaded area represents $95\%$ confidence interval.

\noindent{\bf Convergence: } 
The results of our numerical experiments are presented in Fig.~\ref{fig:1}. 
The model-based method shows rapid linear convergence within $100$ iterations, 
while the model-free method exhibits slower convergence and more fluctuation.
Both the model-based and model-free methods converge to a reasonable neighborhood of the optimal policy at a linear rate, 
which is consistent with our theoretical analysis. 

\subsubsection{Impact of parameters}
In this subsection, we investigate  
the impact of varying parameters on the numerical behavior.  
When analyzing a specific kind of parameter, all other parameters are held unchanged. 
In the policy gradient experiments presented in Fig.~\ref{fig:var}, for each parameter setting, we simulated $10$ trajectories. 
The shaded area represents $95\%$ confidence interval. 
In the gradient estimation experiments recorded in Table~\ref{tb:1}, 
for each $N$, we estimated gradient $100$ times and recorded the mean CPU time consumption and the mean relative error.

\begin{figure}[htbp]
	\centering
	\begin{subfigure}[b]{0.45\textwidth}
        \includegraphics[width=\textwidth]{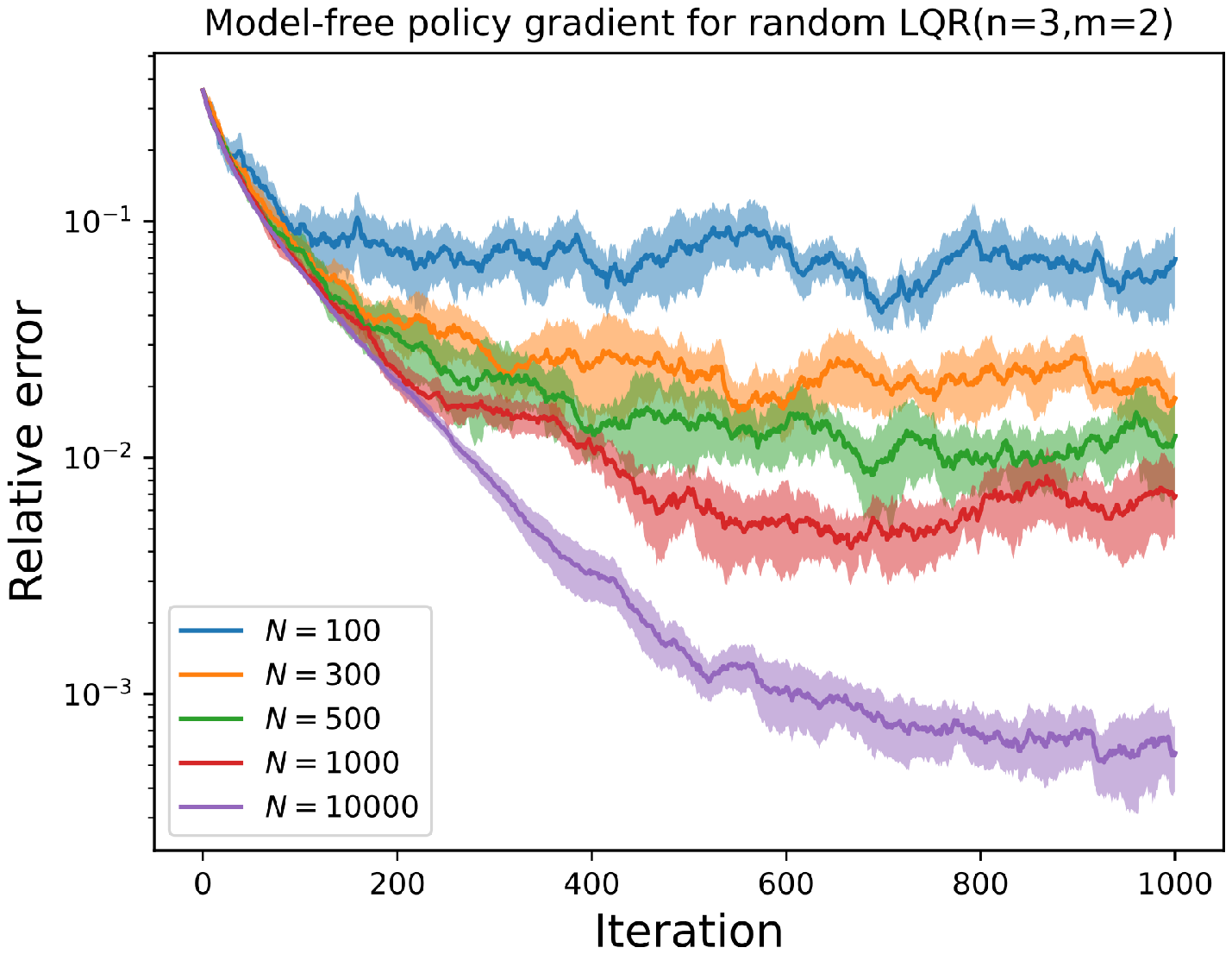}
        \caption{Under different $N$ but same $\eta=10^{-3}$}
        \label{fig:varn}
    \end{subfigure}
    \begin{subfigure}[b]{0.45\textwidth}
		\includegraphics[width=\textwidth]{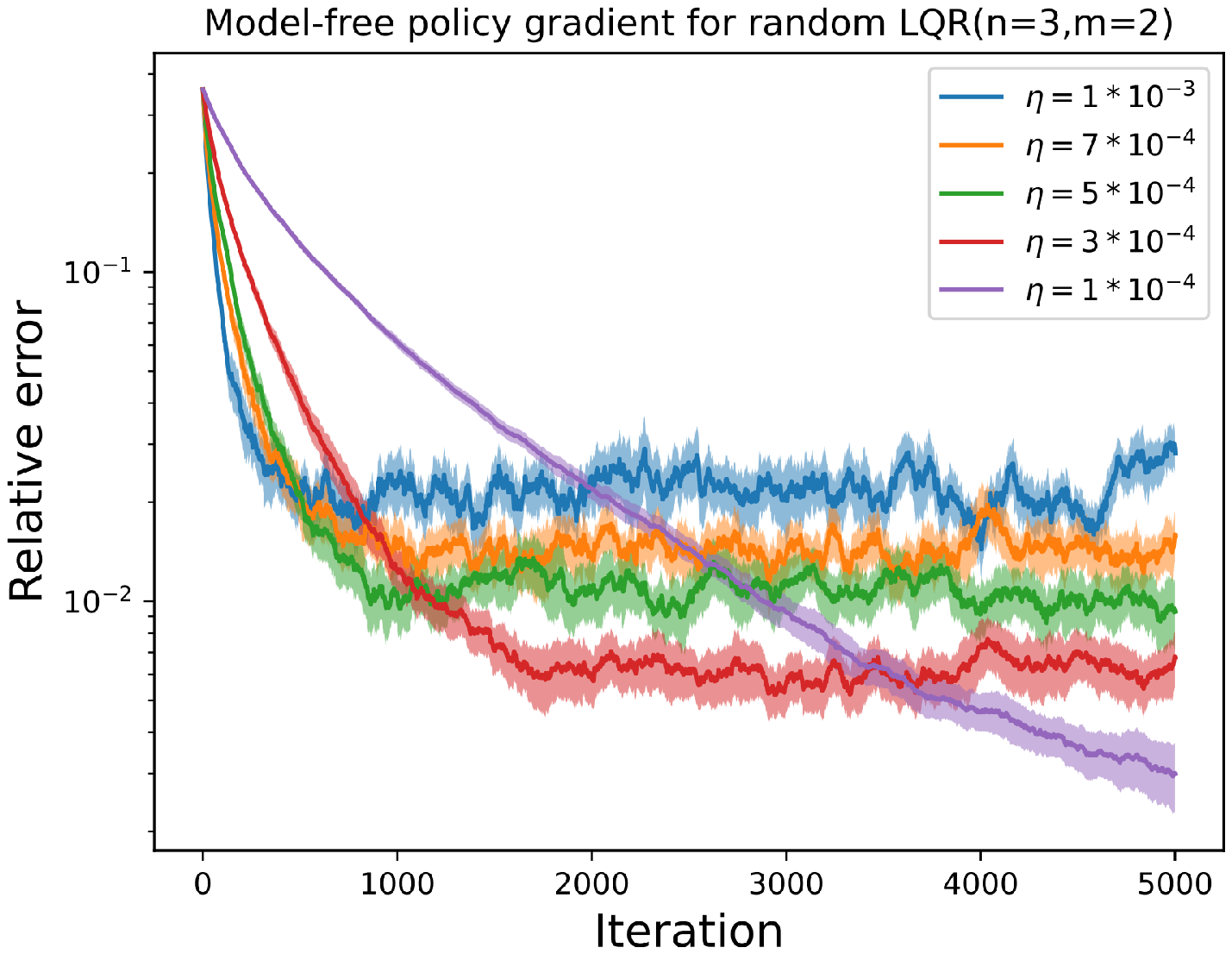}
        \caption{Under different $\eta$ but same $N=500$}
        \label{fig:vareta}
    \end{subfigure}
	\caption{Numerical performance of varying parameters}
	\label{fig:var}
\end{figure}
\begin{table}[htbp]
    \centering
    \begin{tabular}{|c|c|c|c|c|c|}
		\hline
		$N$ & $100$ & $1000$ & $10000$ & $100000$ & $1000000$   \\
		\hline
		CPU time(s) & 0.11092 & 1.08474 & 10.85027 & 110.35008 & 1112.69388  \\
		\hline
		$\|\widehat{\nabla C} - \nabla C\|/\|\nabla C\|$ & 6.00636 & 1.96197 & 0.57168 &  0.20274  &  0.05614 \\
		\hline
	\end{tabular}
	\caption{Impact of sample size $N$ on gradient estimation}
	\label{tb:1}
\end{table}

\noindent{\bf Impact of sample number \texorpdfstring{$N$}{}:  } 
We observe from Fig.~\ref{fig:varn} that 
the larger number of samples $N$ for estimating the gradient, the more accurate the algorithm is. 
However, it does not suggest that the larger $N$, the better numerical behavior. 
We can see from Table~\ref{tb:1}. 
As the number of samples increases, the gradient estimation becomes more accurate, 
but the CPU time required per iteration also increases almost in proportion to $N$.

\noindent {\bf Impact of step size  \texorpdfstring{$\eta$}{}:  }
Since the number of sample $N$ required for estimating the gradient is unchanged.  
Thus, based on the above discussion, the number of iterations is directly proportional to the CPU time consumption.
As depicted  in Fig.~\ref{fig:vareta}, a larger the step size leads to a steeper slope of the relative error with respect to the iteration. 
And we observe that the model-free policy gradient algorithm with smaller step size 
exhibit higher precision but slower convergence.  
This phenomenon aligns with our theoretical results 
and is a common trait in machine learning algorithms. 
A smaller step size enhances convergence accuracy, 
but it also increases the number of iterations, 
resulting in more CPU time consumption to attain stability.  

\subsubsection{Adaptive Step size}
In this subsection we show the numerical performance of the model-free policy gradient method using adaptive step sizes.  
All experiments illustrated in Fig.~\ref{fig:diminish} keep a consistent number of  $N=500$ for estimating gradient. 
For each type of step size, $10$ trajectories were simulated. The shaded area reflects the $95\%$ confidence interval. 
In the experiment using diminishing step size, our step size varies according to $\eta_k = 1/(500+5k)$, where $k$ is the number of iterations. 
In the experiment using backtrack line search, our criterion for step size contraction uses the Armijo type criterion:
\(
C(L_k-\eta\widehat{\nabla C}(L_k) )	\leq C(L_k)-0.01\eta\|\widehat{\nabla C}(L_k)\|^2.
\)
The CPU times in Table~\ref{tb:2} represent the average time taken for executing the first $100$ iterations. 
The relative error in Table~\ref{tb:2} reflects the average accuracy achieved after executing the first $100$ iterations.
\begin{figure}[htbp]
	\centering
    \includegraphics[width=0.6\textwidth]{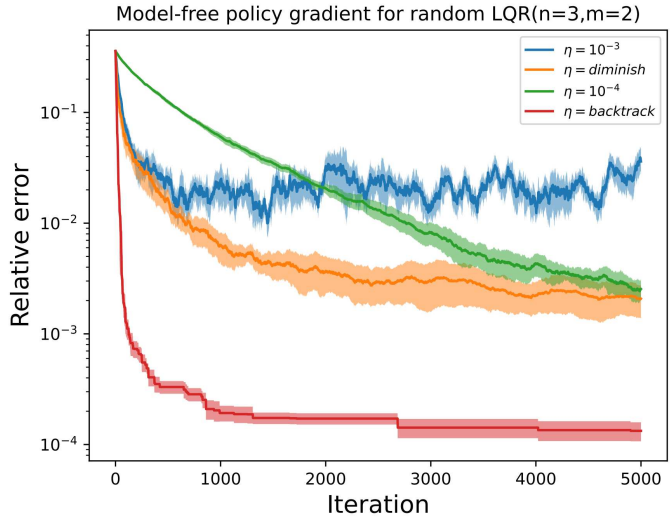}
    \caption{Numerical performance when using adaptive step size($N=500$)}
    \label{fig:diminish}
\end{figure}
\begin{table}[htbp]
    \centering
    \begin{tabular}{|c|c|c|c|}
		\hline
		Selection of $\eta$ & constant $\eta=0.001$ & diminishing step & backtrack line search    \\
		\hline
		CPU time(s) & 54.72431& 55.56956 & 57.28105   \\
		\hline
		$|C(L_{100}) - C^*|/|C^*|$ & 0.07059& 0.05535 & 0.00063  \\
		\hline
	\end{tabular}
	\caption{Numerical performance of different step sizes within first $100$ iterations}
	\label{tb:2}
\end{table}

We can see from Fig.~\ref{fig:diminish} that both algorithms using adaptive step size 
exhibit better numerical performance  compared to those using fixed step size. 
The algorithm using diminishing step size shows the combined benefits of rapid convergence for large step sizes and high accuracy for smaller step sizes.

In the experiment of using backtrack line search, 
due to the protection of Armijo type criterion, the default step size can be chosen to be relatively large $\eta_{\text{default}}=0.01$. 
Consequently, the algorithm using backtrack line search decreases faster at the beginning of the optimization process compared to other algorithms. 
Moreover, the relative error is monotone decreasing with fewer fluctuation, due to the safeguarding of the Armijo type criterion.

Although the line search process adds to the CPU time in each iteration, 
we observe from Table~\ref{tb:2} that within the first $100$ iterations, 
despite the additional $2$-$3$ seconds of CPU time,  
the line search process significantly improves accuracy by a factor of 100.

\subsection{Higher dimensional experiment}
\begin{figure}[htbp]
	\centering
	\begin{subfigure}[b]{0.45\textwidth}
		\includegraphics[width=\textwidth]{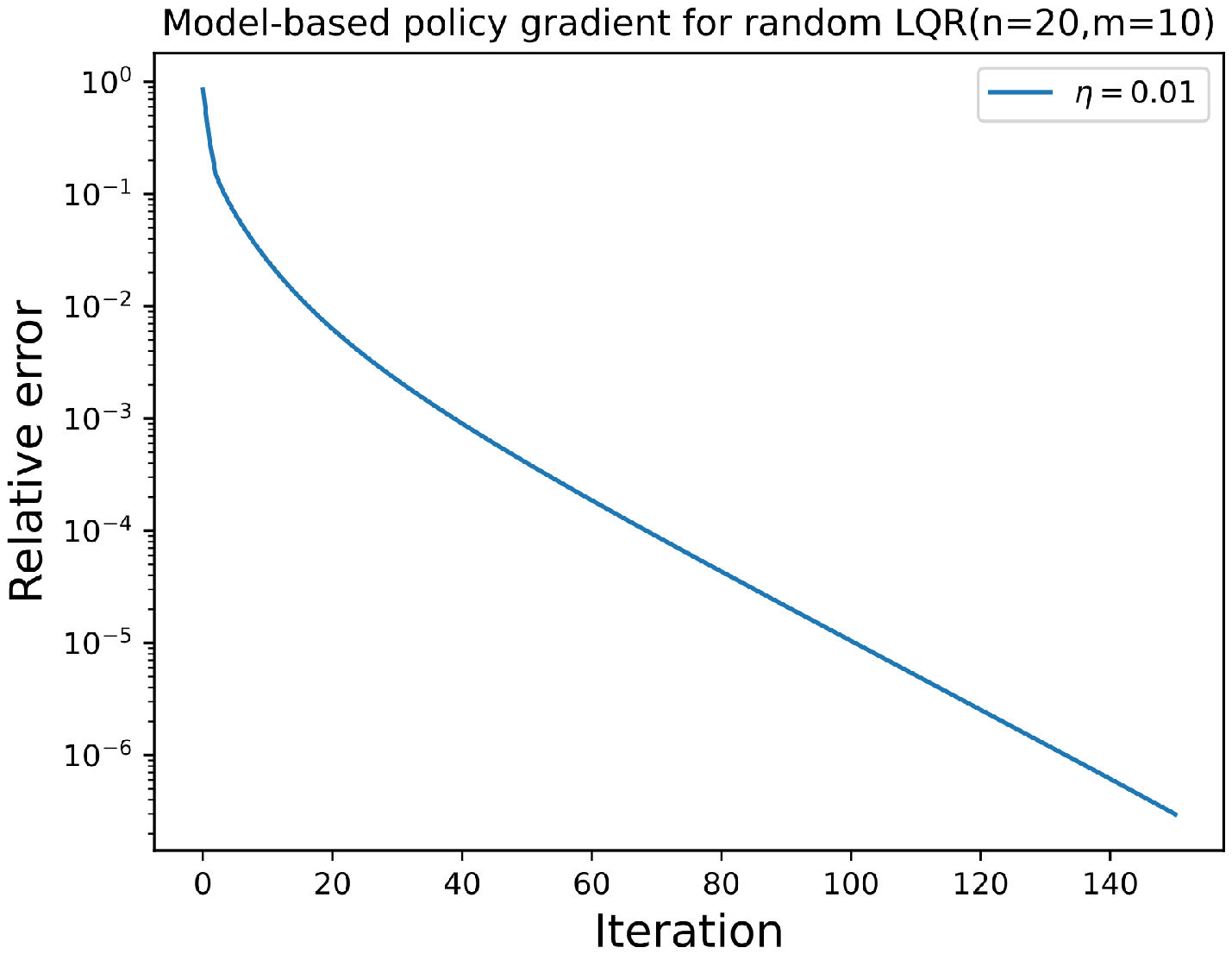}
		\caption{Model-based policy gradient method($\eta = 10^{-2}$)}
		\label{fig:sub11}
	\end{subfigure}
	\begin{subfigure}[b]{0.45\textwidth}
		\includegraphics[width=\textwidth]{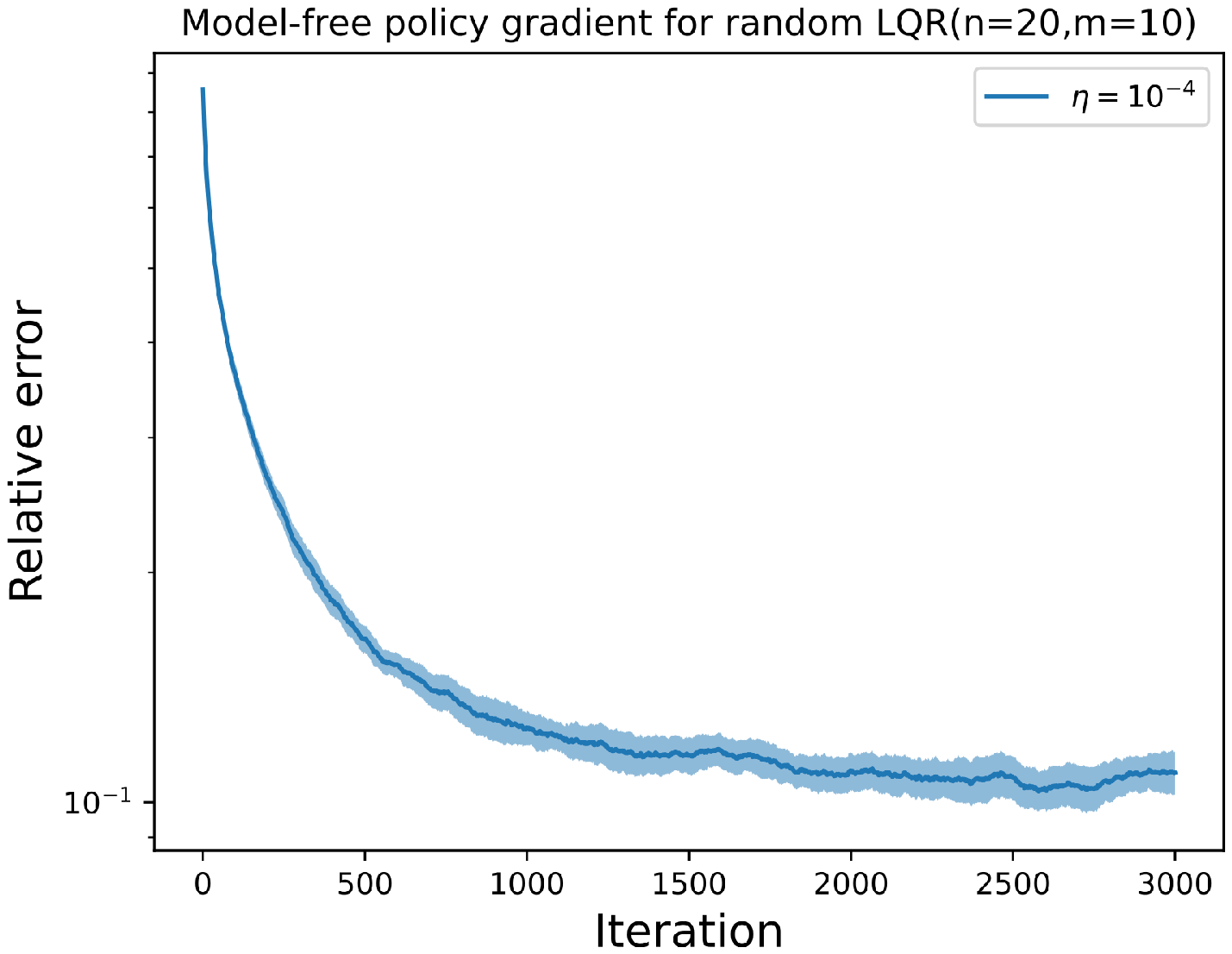}
		\caption{Model-free policy gradient method($\eta = 10^{-4}$)}
		\label{fig:sub22}
	\end{subfigure}
	\caption{Relative error vs. iteration during policy gradient method on the $n=20, m=10$ example.}
	\label{fig:2}
\end{figure}

\noindent {\bf Set up: }  The explicit system parameter matrices are omitted due to space constraint. 
In the higher dimensional case, we use the same noise structure as in lower dimensional case. 

\noindent{\bf The model-based experiment: } 
The step size was set to $\eta=1\times 10^{-2}$, 
and we executed a fixed number of iterations, which was $150$.

\noindent{\bf The model-free experiment: } 
In this case, we set the step size to $\eta=1\times 10^{-4}$.
We estimated the model-free gradient with an exploration radius of $r=0.1$ and 
conducted $N=500$ rollouts per iteration. 
For each rollout, we set the rollout length to $l=30$ and the initial state are drawn such that $x_0\sim \mathcal{N}(0,I_{20})$. 

\noindent{\bf Convergence: } 
Fig.~\ref{fig:2} shows that both model-based and model-free methods converge to a reasonable neighborhood of the optimal value in the higher dimensional example. 
Due to the impact of dimension, it can be observed that when the number of samples $N$ for gradient estimation keep unchanged, 
the algorithm in the higher dimensional setting can not achieve the same level of precision as in the low dimensional setting. 

\section{Convergence analysis of model-based algorithm}\label{sec:CAOMBA}
This Section aims to prove Theorem~\ref{thm:mb}. 
Firstly, we analyze two key properties of the LQ problem~\eqref{cost}-\eqref{sys} in Section~\ref{sec:cigd}. 
Secondly,  in Section~\ref{sec:paos} we control the perturbation of $\Sigma_L$ and show that the admissible set $U_{ad}$ is an open set.  
Finally, we provide the proof of Theorem~\ref{thm:mb} in Section~\ref{sec:caombm}

In the following we always assume Assumptions~\ref{ass:1},~\ref{ass:2} hold and $L\in U_{ad}$ unless otherwise stated. 
Due to space constraints, the proofs of the lemmas in this section are deferred to Appendix~\ref{sec:tech}.
\subsection{Gradient dominance and almost smoothness}\label{sec:cigd}
In this subsection, we analyze two key properties of the cost function $C(L)$: {\it gradient dominance} and {\it almost smoothness}.

The gradient dominance, also known as the Polyak-\L{}ojasiewicz condition , bounds the difference between $C(L)$ and the optimal cost $C(L^*)$ by $\|\nabla C(L)\|_F$, ensuring that all stationary points are global minimizers even in non-convex settings. 
The almost smoothness is a property similar to gradient Lipschitz continuity, but with a small deviation;     
gradient Lipschitz continuity enables the gradient-based methods to search for the global minimum efficiently. 
Therefore, the gradient dominance and almost smoothness of the cost function are essential for the convergence of our algorithms.

\begin{lemma}[Gradient domination]
    \label{lemma:gd}
	Let $L\in U_{ad}$ and $L^*$ denote the optimal policy. 
    Then, we have   
    \begin{equation}
        \label{eq:gd}
        C(L)-C(L^*)\leq \frac{\|\Sigma_{L^*}\|}{4\mu^2\us_{\br}}\|\nabla C(L)\|_F^2.	
    \end{equation}
    Moreover, the difference can be lower bounded as
    \[
        \frac{\mu}{\|R_L\|}\tr(E_L\T E_L) \leq C(L)-C(L^*)
    \]
	where $E_L=R_L L-\bbpla$ and $R_L=\rl$.
\end{lemma}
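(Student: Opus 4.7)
The plan is to reduce both inequalities to a single \emph{cost-difference (advantage) lemma} tailored to the random-parameter setting, and then specialise it in two different ways: once at $L' = L^*$ for the upper bound, and once at a one-step greedy update $L' = L - R_L^{-1} E_L$ for the lower bound. This is the random-parameter analogue of the argument in Fazel et al., but with expectations pulled inside in the right places.

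First I would set up the value matrix $P_L \in \PSN$ associated to an admissible $L$, defined as the unique positive semi-definite solution of the Lyapunov-type equation
\[
P_L \;=\; \overline{Q + L\T R L} \;+\; \overline{(A-BL)\T P_L (A-BL)},
\]
so that $C(L) = \tr(P_L \Sigma_0) = \tr(\overline{Q + L\T R L}\, \Sigma_L)$. A direct differentiation, using $\Sigma_L$ as the closed-loop occupancy, then gives
\[
\nabla C(L) \;=\; 2 E_L \Sigma_L, \qquad E_L = R_L L - \bbpla, \qquad R_L = \rl.
\]

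Next I would prove the cost-difference identity: for any admissible $L, L'$, with $\{x_t\}$ generated by $L'$,
\[
C(L') - C(L) \;=\; \E\!\left[\sum_{t=0}^{\infty} 2\, x_t\T (L'-L)\T E_L x_t \;+\; x_t\T (L'-L)\T R_L (L'-L) x_t \right].
\]
This follows from a telescoping argument: writing $x_t\T P_L x_t - x_{t+1}\T P_L x_{t+1}$, taking conditional expectations over the parameter block $[\Lambda_{t+1}\T, N_{t+1}]$ which is independent of $x_t$, and using the defining equation for $P_L$ plus the closed-loop dynamics under $L'$. The independence of the parameter noise from the state at time $t$ is what lets the expectations factor so that $\overline{B\T P_L A}$ and $R_L = \rl$ appear cleanly; this is where the random-parameter case diverges from the deterministic one and is the step I expect to require the most care.

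With the cost-difference formula in hand, completing the square in $(L'-L)$ gives pointwise
\[
2\, u\T (L'-L)\T E_L u + u\T (L'-L)\T R_L (L'-L) u \;\geq\; -\, u\T E_L\T R_L^{-1} E_L u,
\]
with equality exactly at $L' - L = -R_L^{-1} E_L$. Taking $L' = L^*$ and using $C(L^*) \le C(L)$ yields the intermediate bound
\[
C(L) - C(L^*) \;\leq\; \tr\!\bigl(E_L\T R_L^{-1} E_L \, \Sigma_{L^*}\bigr) \;\leq\; \frac{\|\Sigma_{L^*}\|}{\us_{\br}} \, \tr(E_L\T E_L),
\]
using $R_L \succeq \br$. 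Converting $\tr(E_L\T E_L)$ back to $\|\nabla C(L)\|_F^2$ via $\nabla C(L) = 2 E_L \Sigma_L$ and $\Sigma_L \succeq \Sigma_0 \succeq \mu I$, i.e.\ $\tr(E_L\T E_L) \le \tfrac{1}{4\mu^2}\|\nabla C(L)\|_F^2$, produces~\eqref{eq:gd}.

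For the lower bound I would plug $L' = L - R_L^{-1} E_L$ into the cost-difference formula, which makes the quadratic completion exact and gives
\[
C(L) - C(L') \;=\; \tr\!\bigl(E_L\T R_L^{-1} E_L \, \Sigma_{L'}\bigr) \;\geq\; \frac{\mu}{\|R_L\|} \tr(E_L\T E_L),
\]
using $\Sigma_{L'} \succeq \Sigma_0 \succeq \mu I$ and $R_L^{-1} \succeq \|R_L\|^{-1} I$. Since $C(L^*) \le C(L')$ this yields the desired lower bound. The one subtle point here is admissibility of $L - R_L^{-1} E_L$: I would argue it by a perturbation argument on $\FF_L$, checking that the one-step greedy step stays in $U_{ad}$ (which is open by the spectral-radius characterization~\eqref{eq:ad}); if it does not, one can take any $L'$ of the form $L - \alpha R_L^{-1} E_L$ with $\alpha \in (0,1]$ small enough, and then let $\alpha \to 1$ along the convex combination, noting that the identity above holds whenever the interpolant is admissible and the cost function is continuous on $U_{ad}$. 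This admissibility check is the main obstacle, and I expect it to be handled by the perturbation lemmas in Section~\ref{sec:paos} promised by the paper.
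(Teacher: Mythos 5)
Your proposal matches the paper's proof essentially step for step: the cost-difference (advantage) identity obtained by telescoping with $P_L$, the completion of squares, the choice $L'=L^*$ for the upper bound and the greedy step $L'=L-R_L^{-1}E_L$ for the lower bound, and the conversion $\tr(E_L\T E_L)\le \tfrac{1}{4\mu^2}\|\nabla C(L)\|_F^2$ via $\nabla C(L)=2E_L\Sigma_L$ and $\Sigma_L\succeq \mu I$. The only divergence is that you explicitly flag the admissibility of $L-R_L^{-1}E_L$, which the paper uses without comment; the concern is legitimate (and your $\alpha$-interpolation fallback would only recover a degraded constant if admissibility failed before $\alpha=1$), but it is resolved cleanly by the classical policy-improvement argument: at the greedy step the completion of squares gives $\overline{(A-BL')\T P_L(A-BL')}\preceq P_L-\bq$, so $x\T P_L x$ is a strict stochastic Lyapunov function for the improved policy and $L'\in U_{ad}$.
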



We say that $f$ is {\it gradient $\beta$-Lipschitz} continuous if
\begin{equation}
	\label{eq:c1}
	f(x)-f(y)-\langle\nabla f(y),x-y\rangle 	\leq \frac{\beta}{2}\|x-y\|^2, \quad \forall x,y \in U_{ad}
\end{equation}
The following result shows that $C(L)$ almost satisfies~\eqref{eq:c1} locally in the admissible set $U_{ad}$. 
\begin{lemma}[Almost smoothness]
	\label{lemma:as}
	Let $L,L'\in U_{ad}$ and $\left\{x_t'\right\}$ be the state process generated by feedback matrix $L'$ with $x_0'=x_0$. 
	Then, we have 
	\begin{equation}
        \label{eq:as}
        C(L')-C(L)=2\tr(\Sigma_{L'}(L'-L)\T E_L)+\tr(\Sigma_{L'}(L'-L)\T R_L (L'-L)), 	
    \end{equation}
    where $\Sigma_{L'}=\E\left[\sum_{t=0}^{\infty}x_t'(x_t')\T\right]$.
\end{lemma}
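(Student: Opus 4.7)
The plan is to exploit the standard Lyapunov/Bellman formulation of the cost and apply a ``performance difference'' telescoping identity in expectation. The preliminary step is to introduce, for each $L \in U_{ad}$, the cost-to-go matrix $P_L \in \SN$ as the unique positive semi-definite solution to the generalized Lyapunov equation
\[
P_L = \bq + L\T \br L + \E\!\left[(A-BL)\T P_L (A-BL)\right],
\]
whose existence and uniqueness follow from $\rho(\FF_L)<1$. A one-step unrolling combined with the i.i.d.\ hypothesis on $(\Lambda_t,N_t)$ and the independence of $(A_{t+1},B_{t+1},Q_{t+1},R_{t+1})$ from $x_t$ gives the familiar identity $C(L)=\E[x_0\T P_L x_0]=\tr(P_L \Sigma_0)$, and in particular $\bbpla$ and $\rl$ are then well defined.

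Next, for $L'\in U_{ad}$ with state process $\{x_t'\}$ starting from $x_0'=x_0$, I would use the telescoping expansion
\[
\E[x_0\T P_L x_0]\;=\;\E\!\left[\sum_{t=0}^{\infty}\Bigl((x_t')\T P_L x_t' - (x_{t+1}')\T P_L x_{t+1}'\Bigr)\right],
\]
which is valid because $\rho(\FF_{L'})<1$ forces $\E[(x_T')\T P_L x_T']\to 0$. Subtracting this identity from the expression for $C(L')$ and conditioning on $x_t'$ inside each summand (using the independence of $(A_{t+1},B_{t+1},Q_{t+1},R_{t+1})$ from $x_t'$) yields
\[
C(L')-C(L)\;=\;\E\!\left[\sum_{t=0}^{\infty}(x_t')\T M_{L,L'}\, x_t'\right],
\]
with
\[
M_{L,L'}\;:=\;\bq + (L')\T \br L' + \E\!\left[(A-BL')\T P_L (A-BL')\right] - P_L.
\]

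The third step is pure algebra. Substituting the Lyapunov equation for $P_L$ into $M_{L,L'}$ cancels $\bq$ and, after expanding $(A-BL')\T P_L (A-BL')-(A-BL)\T P_L (A-BL)$ in powers of $L'-L$ and grouping the $B\T P_L B$ terms with $\br$ to form $R_L=\rl$, I obtain
\[
M_{L,L'}\;=\;(L'-L)\T R_L (L'-L) + (L'-L)\T E_L + E_L\T (L'-L),
\]
where $E_L=R_L L - \bbpla$ as in Lemma~\ref{lemma:gd}. Sandwiching with $x_t'$, summing and using $\E\!\left[\sum_{t=0}^{\infty}x_t'(x_t')\T\right]=\Sigma_{L'}$ converts the expression into a sum of traces; the two cross terms combine by cyclicity of trace together with transpose invariance into $2\tr(\Sigma_{L'}(L'-L)\T E_L)$, producing exactly~\eqref{eq:as}.

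The main obstacle will be the careful handling of conditional expectations and the rigorous justification of interchanging the infinite sum with the expectation in the telescoping step; these rely on the admissibility of $L'$ (to ensure summability of $\E[(x_t')\T P_L x_t']$) and on the moment bounds provided by Assumption~\ref{ass:1}. The remaining matrix algebra is elementary but demands patient bookkeeping so that the stochastic cross terms arrange into the symmetrized form $E_L$ and $R_L$ prescribed by the statement.
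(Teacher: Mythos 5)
Your proposal is correct and follows essentially the same route as the paper: the paper proves this lemma by telescoping $V_L(x_t')=(x_t')\T P_L x_t'$ along the trajectory generated by $L'$ (packaged as a value-difference/advantage lemma) and then expanding the one-step advantage via the Lyapunov equation for $P_L$, which is exactly your decomposition $C(L')-C(L)=\E[\sum_t (x_t')\T M_{L,L'}x_t']$ with $M_{L,L'}=(L'-L)\T R_L(L'-L)+(L'-L)\T E_L+E_L\T(L'-L)$. The only cosmetic difference is that the paper states the telescoping step as a standalone Q-function/advantage identity before specializing it here.
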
 
\begin{remark}[Explanation of the name]
    \label{rk:1}
    As appointed in~\cite{fazel2018,gravell2020}, we call Lemma~\ref{lemma:as} `almost smoothness' because if $\Sigma_L$ is stable under the perturbation of $L$, i.e.,  $\Sigma_{L'} \approx \Sigma_L$ when $L'$ is close to $L$, then we obtain  
\[
    \begin{aligned}
    C(L')-C(L)&=2\tr(\Sigma_{L'}(L'-L)\T E_L)+\tr(\Sigma_{L'}(L'-L)\T R_L (L'-L))\\
    &\approx 2\tr(\Sigma_{L}(L'-L)\T E_L)+\tr(\Sigma_{L}(L'-L)\T R_L (L'-L))\\
    &=\tr((L'-L)\T \nabla C(L))+\tr(\Sigma_{L}(L'-L)\T R_L (L'-L))\\
    &\leq \left<\nabla C(L), (L'-L)\right>+\frac{2\|\Sigma_{L}\| \|R_L\| }{2}\|L'-L\|_F^2,
    \end{aligned}	    
\]   
which is exactly~\eqref{eq:c1} with $\beta(L) = 2\|\Sigma_{L}\| \|R_L\|$. 

Although $\beta(L)$ may blow up as $L$ approaches $\partial U_{ad}$, Lemma~\ref{cor:2} ensures that $\beta(L)$ can be bounded on any sublevel set of cost $C(L)$. 
Therefore, $C(L)$ is indeed almost gradient Lipschitz on any nonempty sublevel set as long as the stability issue of $\Sigma_L$ is addressed, which is the topic of the next subsection.
\end{remark}
 
\subsection{Perturbation analysis of  \texorpdfstring{$\Sigma_L$}{} and openness of \texorpdfstring{$U_{ad}$}{}}\label{sec:paos}
In this subsection, we  bound  the perturbation of $\Sigma_L$  as discussed in Remark~\ref{rk:1}. 
Moreover, we demonstrate that the admissible set $U_{ad}$ is an open set, 
which guarantees that any admissible $L$ has a neighborhood of matrices that are also admissible, 
allowing the gradient algorithm to take a step forward.

Let $B_{op}(L,r):=\left\{M\in \R^{m\times n} \big| \quad \|M-L\|\leq r \right\}$ denote the closed ball of radius $r$ in the operator norm centered at $L$.
The following lemma is the main result in this subsection.
\begin{lemma}
    \label{lemma:paos}
	For any $L\in U_{ad}$ fixed, the following hold:
    \begin{enumerate} 
        \item 
	The closed ball $B_{op}(L,\hdelta(L))$ is contained in $U_{ad}$, where 
    \[
		\hdelta(L):=\frac{\us_{\bq}\cdot \mu}{4(\bbs)^{1/2}\left(1+(\abs)^{1/2}\right)C(L)}.
	\]
    \item For any $L'\in B_{op}(L,\hdelta(L))$, we have  
	\[
		\begin{aligned}
			\|\Sigma_{L'}-\Sigma_L\|\leq 4\left(\frac{C(L)}{\us_{\bq}}\right)^2\frac{(\bbs)^{1/2}\left(1+(\abs)^{1/2}\right)}{\mu}\|L-L'\|
			\leq \frac{C(L)}{ \us_{\bq} }
		\end{aligned}
	\]
    \end{enumerate}
\end{lemma}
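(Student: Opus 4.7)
The plan is to treat $\Sigma_L$ as the unique solution of the Lyapunov-type fixed point equation
\[
\Sigma_L = \Sigma_0 + \FF_L(\Sigma_L), \qquad \text{equivalently} \qquad \Sigma_L = (I-\FF_L)^{-1}(\Sigma_0),
\]
which is well defined on $\SN$ by admissibility of $L$. Both conclusions of the lemma then reduce to (i) an operator-norm bound for $(I-\FF_L)^{-1}$ in terms of $C(L)$, and (ii) a perturbation estimate for $\FF_{L'}-\FF_L$ linear in $\|L'-L\|$. The first ingredient I would establish is $\|(I-\FF_L)^{-1}\| \le C(L)/(\mu\us_{\bq})$: since $\FF_L$ preserves the PSD cone, so does $(I-\FF_L)^{-1}$, and for such positive operators on $\SN$ one has $\|T\|_{\text{op}\to\text{op}} = \|T(I_n)\|$ (apply $-I \preceq X \preceq I$ to any unit-norm $X$); the bound then follows from $(I-\FF_L)^{-1}(\mu I_n) \preceq \Sigma_L$ together with $\|\Sigma_L\| \le C(L)/\us_{\bq}$, the latter coming from $C(L) = \tr((\bq + L\T \br L)\Sigma_L) \ge \us_{\bq}\|\Sigma_L\|$.

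The second ingredient is obtained by expanding, with $\Delta := L'-L$,
\[
(\FF_{L'}-\FF_L)(X) = -\E[(A-BL)X\Delta\T B\T] - \E[B\Delta X(A-BL)\T] + \E[B\Delta X \Delta\T B\T],
\]
so Cauchy--Schwarz gives $\|\FF_{L'}-\FF_L\| \le 2(\abs)^{1/2}(\bbs)^{1/2}\|\Delta\| + \bbs\|\Delta\|^2$. For $\|\Delta\| \le \hdelta(L)$, the estimate $C(L) \ge C(L^*) \ge \mu\us_{\bq}$ forces $(\bbs)^{1/2}\|\Delta\| \le 1$, which folds the quadratic term into the linear one:
\[
\|\FF_{L'}-\FF_L\| \le 2(\bbs)^{1/2}\bigl(1+(\abs)^{1/2}\bigr)\|\Delta\|.
\]
Multiplying the two bounds, the choice of $\hdelta(L)$ is precisely calibrated so that $\|(I-\FF_L)^{-1}(\FF_{L'}-\FF_L)\| \le 1/2$ whenever $\|\Delta\| \le \hdelta(L)$.

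Given this, the factorization $I - \FF_{L'} = (I-\FF_L)\bigl(I - (I-\FF_L)^{-1}(\FF_{L'}-\FF_L)\bigr)$ combined with the Neumann series makes $I-\FF_{L'}$ boundedly invertible with $\|(I-\FF_{L'})^{-1}\| \le 2\|(I-\FF_L)^{-1}\| \le 2C(L)/(\mu\us_{\bq})$. Because $\FF_{L'}$ is a positive operator on $\SN$, bounded invertibility of $I-\FF_{L'}$ is equivalent to $\rho(\FF_{L'})<1$, so $L' \in U_{ad}$, proving part~1. For part~2, the resolvent identity
\[
\Sigma_{L'}-\Sigma_L = \bigl[(I-\FF_{L'})^{-1} - (I-\FF_L)^{-1}\bigr](\Sigma_0) = (I-\FF_{L'})^{-1}(\FF_{L'}-\FF_L)(\Sigma_L),
\]
together with the three bounds above, delivers $\|\Sigma_{L'}-\Sigma_L\| \le 4(C(L)/\us_{\bq})^2 (\bbs)^{1/2}(1+(\abs)^{1/2})\|\Delta\|/\mu$; substituting $\|\Delta\| \le \hdelta(L)$ then collapses the right-hand side to $C(L)/\us_{\bq}$.

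The main obstacle I anticipate is the operator-norm identification $\|T\|_{\text{op}\to\text{op}} = \|T(I_n)\|$ for positive $T$ on $\SN$: without it, the natural bound $(I-\FF_L)^{-1}(\Sigma_0) = \Sigma_L$ gives only the image at the single point $\Sigma_0$, not a uniform norm bound. A second delicate point is verifying that $\hdelta(L)$ is small enough for the quadratic correction $\bbs\|\Delta\|^2$ to be subsumed by the linear term; this hinges on the lower bound $C(L) \ge \mu\us_{\bq}$, which in turn uses that $C(L^*)$ is itself controlled from below by the positive definiteness of $\Sigma_0$ and $\bq$.
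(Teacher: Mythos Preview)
Your perturbation estimate for $\FF_{L'}-\FF_L$, the bound $\|\TT_L\|\le C(L)/(\mu\,\us_{\bq})$ via the identity $\|T\|=\|T(I_n)\|$ for cone-preserving $T$, and the resolvent computation for $\Sigma_{L'}-\Sigma_L$ are all correct and coincide with what the paper does (its Lemmas on $\FF_L$ perturbation, $\TT_L$ norm bound, and $\TT_L$ perturbation). Your anticipated obstacle is not one: the sandwich $-I_n\preceq X\preceq I_n$ argument you sketch is exactly the proof.

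The genuine gap is in Part~1. The assertion ``because $\FF_{L'}$ is a positive operator on $\SN$, bounded invertibility of $I-\FF_{L'}$ is equivalent to $\rho(\FF_{L'})<1$'' is false. Take $n=1$ and parameters with $\E[(A-BL')^2]=2$: then $\FF_{L'}=2\,\id$ preserves the cone, $I-\FF_{L'}=-\id$ is invertible with norm $1$, yet $\rho(\FF_{L'})=2$. So the Neumann factorization only tells you $1\notin\mathrm{spec}(\FF_{L'})$; it does not exclude eigenvalues of modulus $>1$. Your argument can be rescued by running the Neumann estimate along the entire segment $L(s)=L+s(L'-L)$, $s\in[0,1]$, so that $1\notin\mathrm{spec}(\FF_{L(s)})$ for every $s$; then invoke Perron--Frobenius/Krein--Rutman for cone-preserving maps to say that $\rho(\FF_{L(s)})$ is itself a (real) eigenvalue, and conclude by continuity of the spectrum that $\rho$ cannot cross the value $1$.

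The paper closes this gap differently. It first proves the $\Sigma$-perturbation bound as a \emph{weak} statement under the extra hypothesis $L'\in U_{ad}$ (your Part~2 argument, essentially). Then, to obtain Part~1, it argues by contradiction: if some $L'$ in the ball had $\rho(\FF_{L'})\ge 1$, continuity of $\rho$ produces a point $L'''$ on the segment with $\rho(\FF_{L'''})=1-\varepsilon$ for a specific small $\varepsilon$; being admissible, $L'''$ obeys the weak perturbation bound, which caps $\tr(\Sigma_{L'''})$ by a constant $K$; but the trace lower bound $\tr(\Sigma_{L'''})\ge \mu/(1-\rho(\FF_{L'''}))=\mu/\varepsilon$ (obtained from $\tr(\FF_{L'''}^t(I_n))\ge\rho(\FF_{L'''})^t$) forces $\tr(\Sigma_{L'''})\ge 2K$, a contradiction. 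This avoids citing Perron--Frobenius explicitly, at the cost of the auxiliary trace inequality.
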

Note that a direct corollary of the first statement in Lemma~\ref{lemma:paos} is that $U_{ad}$ is an open set, 
which ensures that the gradient algorithm can choose a suitable step size for the next update, and the updated matrix remains in the admissible set.

\subsection{Proof of theorem~\ref{thm:mb}}\label{sec:caombm}
In the previous section, we demonstrated that there exists a space surrounding every admissible matrix that enables the gradient algorithm to take a step forward.  
The following lemma quantifies this step size and demonstrates that the cost function decreases in a single step update. However, it should be noted that the step size depends on the matrix $L$.
 \begin{lemma}[One step update of model-based gradient decent]
    \label{lemma:osu}
    Consider the update
	\[
	L'=L-\eta \nabla C(L)	
	\]  
	with step size 
	\begin{equation}
		\label{eq:ss}
				0<\eta\leq \frac{1}{16}\min\left\{\left(\frac{\us_{\bq}\cdot \mu}{C(L)}\right)^2\frac{1}{\|\nabla C(L)\|(\bbs)^{1/2}\left(1+(\abs)^{1/2}\right)},\frac{\us_{\bq}}{C(L)\|R_L\|}\right\}.
	\end{equation}
	Then, we have
	\begin{equation}
        \label{eq:up}
        	C(L')-C(L^*)\leq \left(1-2\eta \frac{\mu^2\us_{\br}}{\|\Sigma_{L^*}\|}\right)	(C(L)-C(L^*)).
    \end{equation}
 \end{lemma}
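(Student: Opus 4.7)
The plan is to combine the three earlier ingredients: gradient dominance (Lemma~\ref{lemma:gd}), almost smoothness (Lemma~\ref{lemma:as}), and the perturbation bound (Lemma~\ref{lemma:paos}). At a high level, I would prove the per-step descent
\[
C(L') - C(L) \leq -\tfrac{\eta}{2}\|\nabla C(L)\|_F^2
\]
and then convert it into the desired contraction~\eqref{eq:up} via~\eqref{eq:gd}. First I would verify that $L'=L-\eta\nabla C(L)$ remains admissible: the first branch of~\eqref{eq:ss} is tailored exactly so that $\|L'-L\|\leq \eta\|\nabla C(L)\|\leq \hdelta(L)$, hence Lemma~\ref{lemma:paos} yields $L'\in U_{ad}$ together with $\|\Sigma_{L'}-\Sigma_L\|\leq \mu/4$; combined with the easy bound $\|\Sigma_L\|\leq C(L)/\us_{\bq}$ this also gives $\|\Sigma_{L'}\|\leq 2C(L)/\us_{\bq}$.

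Next I apply the almost-smoothness identity~\eqref{eq:as} with $L'-L=-\eta\nabla C(L)$, and use the identity $\nabla C(L)=2E_L\Sigma_L$ (read off from the linear part of~\eqref{eq:as} by letting $L'\to L$) to expand
\[
C(L')-C(L) = -\eta\|\nabla C(L)\|_F^2 - 4\eta\tr\bigl((\Sigma_{L'}-\Sigma_L)\Sigma_L E_L\T E_L\bigr) + \eta^2 \tr\bigl(\Sigma_{L'}\nabla C(L)\T R_L \nabla C(L)\bigr).
\]
The perturbation term I rewrite as the Frobenius inner product $\langle E_L(\Sigma_{L'}-\Sigma_L),\,E_L\Sigma_L\rangle_F$ and bound via Cauchy--Schwarz together with $\|E_L\|_F\leq \|E_L\Sigma_L\|_F/\mu$ (valid since $\Sigma_L\succeq\mu I$) and $\|E_L\Sigma_L\|_F=\|\nabla C(L)\|_F/2$; the first branch of~\eqref{eq:ss} then forces this contribution to be at most $\tfrac{\eta}{4}\|\nabla C(L)\|_F^2$. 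The quadratic piece is bounded above by $\eta^2\|\Sigma_{L'}\|\cdot\|R_L\|\cdot\|\nabla C(L)\|_F^2$; plugging in $\|\Sigma_{L'}\|\leq 2C(L)/\us_{\bq}$ together with the second branch of~\eqref{eq:ss} makes it at most $\tfrac{\eta}{4}\|\nabla C(L)\|_F^2$. Summing the three contributions yields $C(L')-C(L)\leq -\tfrac{\eta}{2}\|\nabla C(L)\|_F^2$. Gradient dominance~\eqref{eq:gd} then upgrades the right-hand side to $-2\eta\mu^2\us_{\br}(C(L)-C(L^*))/\|\Sigma_{L^*}\|$, and adding $C(L)-C(L^*)$ to both sides produces~\eqref{eq:up}.

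The main obstacle is making the heuristic $\Sigma_{L'}\approx\Sigma_L$ from Remark~\ref{rk:1} quantitatively tight without destroying the clean descent. The asymmetric trace $\tr((\Sigma_{L'}-\Sigma_L)\Sigma_L E_L\T E_L)$ has no obvious PSD structure, so a naive bound using $\|E_L\|_F^2$ would pay an extra $1/\mu^2$ factor and inflate the step-size requirement. Casting it as a Frobenius inner product that is homogeneous in $E_L\Sigma_L$ avoids this loss and is exactly why the first branch of~\eqref{eq:ss} is written in terms of $\|\nabla C(L)\|$ rather than $\|E_L\|_F$; the second branch is then tuned correspondingly so that each error piece fits inside $\tfrac{\eta}{4}\|\nabla C(L)\|_F^2$, leaving the $-\eta\|\nabla C(L)\|_F^2$ main descent intact.
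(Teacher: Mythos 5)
Your proof is correct and follows essentially the same route as the paper: apply almost smoothness with the split $\Sigma_{L'}=\Sigma_L+(\Sigma_{L'}-\Sigma_L)$, use the step-size condition together with Lemma~\ref{lemma:paos} to confine each error term to $\tfrac{\eta}{4}\|\nabla C(L)\|_F^2$, and finish with gradient dominance. The only cosmetic difference is that you bound the cross term by Cauchy--Schwarz on the Frobenius inner product $\langle E_L(\Sigma_{L'}-\Sigma_L),E_L\Sigma_L\rangle_F$, whereas the paper uses the equivalent trace inequality $\tr(\Sigma_L E_L\T E_L)\leq\mu^{-1}\tr(\Sigma_L E_L\T E_L\Sigma_L)$; both yield the same $\|\Sigma_{L'}-\Sigma_L\|/\mu$ factor.
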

 \begin{remark}\label{rk:thm1}
Note that the step size $\eta$ in the model-based policy 
gradient update~\eqref{eq:gradientupdate} is constant. 
Thus, we need to find the positive lower bound $\eta^*$ for the RHS of~\eqref{eq:ss}. 
Any positive step size dominated by $\eta^*$ will satisfy the step size 
condition~\eqref{eq:ss} at each update position, thereby 
enabling the update performance~\eqref{eq:up} to be 
path independent. 
Unfortunately, the RHS of~\eqref{eq:ss} cannot be lower bounded over the entire admissible set $U_{ad}$, 
since the $C(L)$ term in the denominator is not upper bounded on $U_{ad}$.
However, since the cost $C(L_k)$ decreases during the 
optimization process, the update 
path $\{L_k\}$ will remain in the $C(L_0)$-sub-level set of cost. 
Consequently, we only need to ensure that the 
RHS of~\eqref{eq:ss} can be lower bounded on the $C(L_0)$-sub-level set of cost. 
This task can be accomplished using Lemma~\ref{lemma:bocag} and Lemma~\ref{lemma:boss}, which are provided in the appendix.
\end{remark}

\begin{proof}[Proof of Theorem~\ref{thm:mb}]
    As discussed in Remark~\ref{rk:thm1}, to establish the existence of a positive $\eta^*$ such that the step size condition~\eqref{eq:ss} 
    holds at each update position, we need to establish a positive lower bound on the right hand side of \eqref{eq:ss}. 
	The detailed proof of the existence of such positive lower bound $\eta^*= \cc_{A,B,Q,R,\mu} \left(1+C(L_0)\right)^{-5}$ 
	can be found in Lemma~\ref{lemma:eta}.


	Next, we choose $0<\eta\leq \eta^*$ and prove the theorem by induction. 

	At the first step $k=0$, by Lemma \ref{lemma:osu} we have 
	\[
		C(L_{1})-C(L^*)\leq \left(1-2\eta \frac{\mu^2\us_{\br}}{\|\Sigma_{L^*}\|}\right)	(C(L_0)-C(L^*)),
	\]
	which implies $C(L_1)\leq C(L_0)$ and the base case is true. 

	Now suppose our theorem is true at $k-1$. Inductive hypotheses gives $C(L_k)\leq C(L_0)$, which means that $L_k$ remains in $C(L_0)$-sub-level set of cost. 
	Then the step size condition in~\eqref{eq:ss} is still satisfied. 
    Thus we can again apply Lemma~\ref{lemma:osu} at $L_k$ to obtain 
	\[
		C(L_{k+1})-C(L^*)\leq \left(1-2\eta \frac{\mu^2\us_{\br}}{\|\Sigma_{L^*}\|}\right)	(C(L_k)-C(L^*)).
	\]   
\end{proof}
 \section{Convergence analysis of model-free case}\label{sec:samplebased}
 This section presents a proof of Theorem~\ref{thm:samplebased} through a sequence of lemmas. 
 First, in Section~\ref{sec:acaswfh}, we show that the cost function $C(L)$ and the aggregate covariance matrix $\Sigma_L$ can be approximated by their finite horizon counterparts. 
 Next, in Section~\ref{sec:paogac}, we demonstrate that $C(L)$ and its gradient $\nabla C(L)$ are stable under perturbation with respect to $L$. 
 Then, in Section~\ref{sec:sgosp}, we establish the sub-Gaussianity of state process $\{x_t\}$ under the Assumption~\ref{ass:3} and derive relevant concentration inequalities. 
 Moving on to Section~\ref{sec:saag},  we show that the gradient estimator generated in Algorithm~\ref{alg:1} can well approximate the exact gradient with high probability. 
 Finally, in Section~\ref{sec:pot2}, Theorem~\ref{thm:samplebased} is proved  based on the previously established lemmas. 

Unless otherwise stated, throughout this section, we assume that Assumptions~\ref{ass:1} and~\ref{ass:2} hold and that $L\in U_{ad}$.
\subsection{Approximating \texorpdfstring{$C(L)$}{} and  \texorpdfstring{$\Sigma_L$}{} with finite horizon}\label{sec:acaswfh}
In the model-free setting, only finite horizon samples can be observed or simulated. 
In this subsection, we demonstrate that infinite horizon quantities $C(L)$ and $\Sigma_L$ can be approximated by their finite horizon estimations with arbitrary accuracy, 
provided that the rollout length $l$ is sufficiently large.
 
 The finite horizon estimations of $\Sigma_L$ and $C(L)$ are defined as follows:
	\[
	\Sigma_L^{(l)}:=\E\left[\sum_{t=0}^{l-1}x_tx_t\T\right]	,\quad C^{(l)}(L):=\E\left[\sum_{t=0}^{l-1}\begin{bmatrix}x_t\\u_t\end{bmatrix}\T N_{t+1}\xut\right].
	\]  
 \begin{lemma}[Approximating $C(L)$ and $\Sigma_L$ with finite horizon rollouts]
	\label{lemma:acaswfhr}
	For any  small tolerance $\epsilon>0$, the following hold:
    \begin{itemize}
        \item If  $l\geq \frac{nC(L)^2}{\epsilon \mu\cdot (\us_{\bq})^2 }$, then 
	\[
	\|\Sigma_L-\Sigma^{(l)}_L\|	\leq \epsilon.
	\]
        \item  If $l\geq h_l(L,\epsilon)$, then 
	\[
	| C(L) - C^{(l)}(L) |	\leq \epsilon,
	\]
	where 
	\[
	h_l(L,\epsilon)	:=\frac{nC(L)^2(\|\bq\|+\bnr \|L\|^2)}{\epsilon \mu\cdot (\us_{\bq})^2}.
	\]
    \end{itemize}
\end{lemma}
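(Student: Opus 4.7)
The plan is to exploit the fixed-point identity $\Sigma_L = \Sigma_0 + \FF_L(\Sigma_L)$, together with the lower bound $\Sigma_0 \succeq \mu I$, to show that $\FF_L$ acts as a contraction on $\Sigma_L$ at a rate controlled by $1 - \mu/\|\Sigma_L\|$. Both estimates then follow from a single tail bound on $\FF_L^l(\Sigma_L)$.

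First, I would establish the telescoping identity
\[
\Sigma_L - \Sigma_L^{(l)} = \sum_{t=l}^{\infty}\FF_L^t(\Sigma_0) = \FF_L^l\Bigl(\sum_{t=0}^{\infty}\FF_L^t(\Sigma_0)\Bigr) = \FF_L^l(\Sigma_L),
\]
which reduces both claims to controlling $\FF_L^l(\Sigma_L)$. Next, I would use the Lyapunov relation to write $\FF_L(\Sigma_L) = \Sigma_L - \Sigma_0 \preceq \Sigma_L - \mu I$, and then observe via an eigendecomposition of $\Sigma_L$ that $\Sigma_L - \mu I \preceq (1 - \mu/\|\Sigma_L\|)\Sigma_L$ (this uses that every eigenvalue of $\Sigma_L$ lies in $[\mu, \|\Sigma_L\|]$, since $\Sigma_L \succeq \Sigma_0 \succeq \mu I$). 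Because $\FF_L$ is a positive linear operator on $\SN$ and hence monotone with respect to the Loewner order, iterating yields
\[
\FF_L^l(\Sigma_L) \preceq \left(1 - \frac{\mu}{\|\Sigma_L\|}\right)^l \Sigma_L.
\]

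Then I would convert this into a computable rate using $C(L) = \tr\bigl((\bq + L\T\br L)\Sigma_L\bigr) \geq \us_{\bq}\tr(\Sigma_L) \geq \us_{\bq}\|\Sigma_L\|$, which gives $\|\Sigma_L\| \leq C(L)/\us_{\bq}$ and thus the concrete contraction rate $1 - \mu\us_{\bq}/C(L)$. Applying the elementary bound $(1-x)^l \leq 1/(1+lx)$ for $x \in (0,1)$ converts this geometric decay into the polynomial-in-$l$ decay
\[
\|\Sigma_L - \Sigma_L^{(l)}\| \;\leq\; \frac{\|\Sigma_L\|^2}{l\mu} \;\leq\; \frac{C(L)^2}{l\mu\,\us_{\bq}^{2}},
\]
with an additional factor of $n$ picked up when passing from operator to trace norm (via $\|X\|\leq \tr(X)$ for $X\succeq 0$ and $\tr(X)\leq n\|X\|$), matching the stated threshold for $l$. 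For the cost estimate, I would write
\[
C(L) - C^{(l)}(L) \;=\; \tr\bigl((\bq + L\T \br L)\,\FF_L^l(\Sigma_L)\bigr) \;\leq\; \bigl(\|\bq\| + \bnr\|L\|^2\bigr)\,\tr\bigl(\FF_L^l(\Sigma_L)\bigr),
\]
and combine with the bound on $\tr(\FF_L^l(\Sigma_L))$ just derived to obtain the stated threshold $h_l(L,\epsilon)$.

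The main obstacle is the second step: identifying the sharp contraction rate $1 - \mu/\|\Sigma_L\|$ via the simple eigenvalue comparison for $\Sigma_L - \mu I$, and then propagating it cleanly through iterations using the monotonicity of $\FF_L$. Once that geometric decay estimate is in hand, the rest amounts to careful bookkeeping of trace-vs-operator norm conversions and the elementary inequality $(1-x)^l \leq 1/(1+lx)$.
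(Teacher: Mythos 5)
Your proposal is correct, and the key step is genuinely different from the paper's. Both arguments start from the same telescoping identity $\Sigma_L-\Sigma_L^{(l)}=\FF_L^l(\Sigma_L)$ and both rely on the matrix monotonicity of $\FF_L$ (Lemma~\ref{lemma:mof}) together with the bounds $\|\Sigma_L\|\leq C(L)/\us_{\bq}$ and $\Sigma_L\preceq \frac{C(L)}{\mu\us_{\bq}}\Sigma_0$; they diverge in how the decay in $l$ is extracted. The paper uses an averaging (pigeonhole) argument: since $\sum_{i=0}^{l-1}\tr(\FF_L^i(\Sigma_0))\leq\tr(\Sigma_L)\leq nC(L)/\us_{\bq}$, some index $j<l$ satisfies $\tr(\FF_L^j(\Sigma_0))\leq nC(L)/(l\us_{\bq})$, and the monotone tail is then bounded through that $j$. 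You instead derive a genuine one-step contraction from the fixed-point identity $\Sigma_L=\Sigma_0+\FF_L(\Sigma_L)$, namely $\FF_L(\Sigma_L)\preceq\Sigma_L-\mu I\preceq(1-\mu/\|\Sigma_L\|)\Sigma_L$, iterate it by positivity and linearity of $\FF_L$, and only then weaken the geometric rate to $1/l$ via $(1-x)^l\leq 1/(1+lx)$. Your route yields a strictly stronger intermediate estimate (exponential decay of $\FF_L^l(\Sigma_L)$ rather than just $O(1/l)$) and in fact saves the factor of $n$ in the operator-norm bound of the first bullet --- the $n$ is only genuinely needed when you pass to $\tr(\FF_L^l(\Sigma_L))$ for the cost estimate, so your parenthetical about where the $n$ enters should be attached to the second bullet rather than the first; the stated threshold is simply conservative for the first. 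The paper's pigeonhole argument, by contrast, never needs the eigenvalue comparison $\Sigma_L-\mu I\preceq(1-\mu/\|\Sigma_L\|)\Sigma_L$ and gets the $1/l$ rate directly, at the cost of an indirect detour through the auxiliary index $j$.
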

The proof is deferred to Appendix~\ref{sec:po4.1}. 
\subsection{Perturbation analysis of  \texorpdfstring{$\nabla C(L)$}{} and  \texorpdfstring{$C(L)$}{}}\label{sec:paogac}
In this subsection, we demonstrate the stability of $C(L)$ and $\nabla C(L)$ under perturbation with respect to $L$. 
The proofs of Lemmas in this subsection can be found in Appendix~\ref{sec:po4.2}. 

\begin{lemma}[Perturbation of $C(L)$ ]
	\label{lemma:cp} 
	Let $\Delta(L)$ be an arbitrary positive constant such that $\|L'-L\|\leq \Delta(L) \leq \hdelta(L)$. 
	Then, the cost difference is bounded as 
	\[
	|C(L')-C(L)|\leq h_c(L, \Delta(L)) C(L)\|L-L'\|	
	\]
	where
	\[
		h_c(L, \Delta(L)):=\frac{\bnxs}{\mu\cdot \us_{\bq}}\left[2\bnr (\Delta(L)+2 \|L\|)+4\frac{C(L)}{\mu\cdot \us_{\bq}}(\bbs)^{1/2}(1+(\abs)^{1/2})(\|\bq\|+\bnr\|L\|^2)\right],
	\]
    and $h_{\Delta}(L)$ is defined in Lemma~\ref{lemma:paos}.
\end{lemma}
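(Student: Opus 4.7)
The plan is to first rewrite the cost via the i.i.d.\ structure of the parameters as $C(L)=\tr(\Sigma_L(\bq+L\T\br L))$, which makes the cost linear in $\Sigma_L$ and quadratic in $L$ through the weight matrix. Adding and subtracting $\tr(\Sigma_{L'}L\T\br L)$ then yields the clean decomposition
\[
C(L')-C(L)=\tr\!\bigl(\Sigma_{L'}\bigl((L')\T\br L'-L\T\br L\bigr)\bigr)+\tr\!\bigl((\Sigma_{L'}-\Sigma_L)(\bq+L\T\br L)\bigr),
\]
in which the first piece captures the weight change at fixed $\Sigma_{L'}$ and the second captures the covariance change at fixed weight. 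Lemma~\ref{lemma:paos} is tailored to control the second piece, while the first will be handled by elementary expansion.

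For the weight piece, I factor $(L')\T\br L'-L\T\br L=(L'-L)\T\br L'+L\T\br(L'-L)$ and bound its operator norm by $\bnr(\|L'\|+\|L\|)\|L'-L\|\leq\bnr(\Delta(L)+2\|L\|)\|L'-L\|$, using $\|L'\|\leq\|L\|+\Delta(L)$. Since $\Sigma_{L'}\succeq 0$, the inequality $|\tr(\Sigma_{L'}X)|\leq\tr(\Sigma_{L'})\|X\|$ bounds this piece by $\tr(\Sigma_{L'})\cdot\bnr(\Delta(L)+2\|L\|)\|L'-L\|$. For the covariance piece, since $\bq+L\T\br L\succeq 0$, the symmetric inequality $|\tr((\Sigma_{L'}-\Sigma_L)Y)|\leq\|\Sigma_{L'}-\Sigma_L\|\cdot\tr(Y)$ applies, with $\tr(Y)\leq n(\|\bq\|+\bnr\|L\|^2)$; then the explicit linear-in-$\|L'-L\|$ bound from Lemma~\ref{lemma:paos}(ii) closes this piece.

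The remaining task is the norm conversion that produces the advertised prefactor $\bnxs/(\mu\cdot\us_{\bq})$. Using $\Sigma_0\succeq\mu I_n$ one has $n\leq\tr(\Sigma_0)/\mu=\bnxs/\mu$, converting the ambient dimension. Using $C(L)\geq\us_{\bq}\tr(\Sigma_L)\geq\us_{\bq}\|\Sigma_L\|$ gives $\|\Sigma_L\|\leq C(L)/\us_{\bq}$, and the coarse form of Lemma~\ref{lemma:paos}(ii) then yields $\|\Sigma_{L'}\|\leq\|\Sigma_L\|+\|\Sigma_{L'}-\Sigma_L\|\lesssim C(L)/\us_{\bq}$, so $\tr(\Sigma_{L'})\leq n\|\Sigma_{L'}\|\lesssim \bnxs C(L)/(\mu\cdot\us_{\bq})$. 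Collecting the two bounded pieces and factoring $\bnxs C(L)/(\mu\cdot\us_{\bq})\cdot\|L'-L\|$ yields the stated $h_c(L,\Delta(L))C(L)\|L-L'\|$.

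The main obstacle is not any single inequality but the careful bookkeeping of which matrix norm (operator, trace, or nuclear) to apply at each step so that the constants in $h_c$ come out right and the final dependence is on $C(L)$ rather than the a priori uncontrolled $C(L')$. The hypothesis $\Delta(L)\leq\hdelta(L)$ is essential precisely here: it is the regime in which Lemma~\ref{lemma:paos}(ii) applies and controls $\|\Sigma_{L'}-\Sigma_L\|$ by $C(L)/\us_{\bq}$, which in turn lets $\|\Sigma_{L'}\|$ and $\tr(\Sigma_{L'})$ be expressed purely in terms of $C(L)$ and the model parameters, avoiding any circular dependence on $C(L')$.
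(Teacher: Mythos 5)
Your proof is correct and arrives at exactly the stated $h_c$, but it takes the dual route to the paper's. The paper works on the value-function side: it writes $C(L)=\tr(\Sigma_0 P_L)$, bounds $|C(L')-C(L)|\leq \bnxs\|P_{L'}-P_L\|$, and then perturbs $P_L=\TT_L^*(\bqlrl)$ directly via the operator perturbation Lemma~\ref{lemma:tp} and the norm bound $\|\TT_L\|\leq C(L)/(\mu\us_{\bq})$, splitting $P_{L'}-P_L$ into a propagator-change term and a weight-change term. You instead work on the state-covariance side, writing $C(L)=\langle\Sigma_L,\bqlrl\rangle_F$ (Lemma~\ref{lemma:oac}, part 4) and splitting into a weight-change term paired with $\Sigma_{L'}$ and a covariance-change term paired with $\bq+L\T\br L$; the covariance change is then controlled by the already-packaged Lemma~\ref{lemma:paos}. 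The two decompositions are adjoint to one another and each has a term matching each term of $h_c$. What your route buys is economy — you reuse Lemma~\ref{lemma:paos} rather than redoing the $\TT_L$-calculus at the level of $P_L$ — at the cost of needing the trace/operator-norm duality $|\tr(\Sigma X)|\leq\tr(\Sigma)\|X\|$ and the dimension conversion $n\leq\tr(\Sigma_0)/\mu=\bnxs/\mu$, whereas the paper picks up $\bnxs=\tr(\Sigma_0)$ in one step. I checked your constants: $\tr(\Sigma_{L'})\leq n\|\Sigma_{L'}\|\leq 2nC(L)/\us_{\bq}\leq 2\bnxs C(L)/(\mu\us_{\bq})$ gives precisely the factor $2\bnr(\Delta(L)+2\|L\|)$, and $n\leq\bnxs/\mu$ applied to the covariance piece gives precisely the factor $4\frac{C(L)}{\mu\us_{\bq}}(\bbs)^{1/2}(1+(\abs)^{1/2})(\|\bq\|+\bnr\|L\|^2)$, so the bound you obtain is exactly $h_c(L,\Delta(L))C(L)\|L-L'\|$ and not merely of the same order.
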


\begin{lemma}[Perturbation of $\nabla C(L)$ ]
	\label{lemma:gp} 
	Let $\Delta(L)$ is an arbitrary positive constant such that $\|L'-L\|\leq \Delta(L) \leq \hdelta(L)$. 
	Then, the gradient difference is bounded as 
	\begin{align}
        &\|\nabla C(L')-\nabla C(L)\|\leq \hgrad(L,\Delta(L))	\|L'-L\|,\\
        &\|\nabla C(L')-\nabla C(L)\|_F \leq \widehat{\hgrad}(L,\Delta(L))	\|L'-L\|_F, 
    \end{align}
	where 
	\[
		\begin{aligned}
			\hgrad(L,\Delta(L)):=&4\frac{C(L)}{\us_{\bq}}\left(\left(\overline{\|B\|\|A\|}+\bbs( \Delta(L)+\|L\|) \right)\frac{C(L)}{\mu\us_{\bq}}\left[2\bnr (\Delta(L)+2 \|L\|)\right.\right.\\
			& \left. \left.+4\frac{C(L)}{\mu\cdot \us_{\bq}}(\bbs)^{1/2}(1+(\abs)^{1/2})(\|\bq\|+\bnr\|L\|^2)\right]+\|\br\|+\bbs\frac{C(L)}{\mu} \right)\\
			&+8\sqrt{\frac{\|R_L\|}{\mu}(C(L)-C(L^*))}\left(\frac{C(L)}{\us_{\bq}}\right)^2\frac{(\bbs)^{1/2}(1+(\abs)^{1/2})}{\mu}
		\end{aligned}	
	\]
	and $\widehat{\hgrad}(L,\Delta(L))=\sqrt{m\land n}\cdot \hgrad(L,\Delta(L)).$
\end{lemma}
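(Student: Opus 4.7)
The plan is to exploit the explicit formula $\nabla C(L) = 2 E_L \Sigma_L$, which follows from differentiating $C(L) = \tr(P_L \Sigma_0)$ and uses $E_L = R_L L - \bbpla$ as introduced in Lemma~\ref{lemma:gd}. With this identity I would write the additive decomposition
\[
\nabla C(L') - \nabla C(L) \;=\; 2(E_{L'} - E_L)\,\Sigma_{L'} \;+\; 2 E_L\,(\Sigma_{L'} - \Sigma_L),
\]
and treat the two summands separately. The Frobenius version of the statement then follows at the very end from the elementary inequality $\|M\|_F \leq \sqrt{m\land n}\,\|M\|$ applied to $\nabla C(L')-\nabla C(L) \in \R^{m\times n}$, giving the promised $\sqrt{m\land n}$ factor.

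For the \emph{second} summand the ingredients are already in place. Lemma~\ref{lemma:paos} controls $\|\Sigma_{L'}-\Sigma_L\|$ by a constant times $\|L-L'\|$ with exactly the shape appearing in the last line of $\hgrad$, while the one-sided bound in Lemma~\ref{lemma:gd} delivers $\|E_L\| \leq \|E_L\|_F \leq \sqrt{\|R_L\|(C(L)-C(L^*))/\mu}$. Combining the two reproduces verbatim the second additive term
$8\sqrt{\|R_L\|(C(L)-C(L^*))/\mu}\,(C(L)/\us_{\bq})^2(\bbs)^{1/2}(1+(\abs)^{1/2})/\mu$ in the definition of $\hgrad(L,\Delta(L))$, with the constant $8$ arising from the $2$ in the decomposition and the $4$ in Lemma~\ref{lemma:paos}.

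For the \emph{first} summand I would begin by bounding $\|\Sigma_{L'}\|$ on the ball $B_{op}(L,\hdelta(L))$: using $\|\Sigma_L\|\leq C(L)/\us_{\bq}$ together with the second conclusion of Lemma~\ref{lemma:paos} yields $\|\Sigma_{L'}\| \leq 2C(L)/\us_{\bq}$, which already gives the prefactor $4C(L)/\us_{\bq}$ in the first line of $\hgrad$. The remaining task is to bound $\|E_{L'} - E_L\|$. Here I would use the splitting
\[
E_{L'}-E_L = R_L(L'-L) + \E\!\left[B\T(P_{L'}-P_L)B\right]L' - \E\!\left[B\T(P_{L'}-P_L)A\right],
\]
so that, after applying $\|L'\|\leq \|L\|+\Delta(L)$ and $\|R_L\|\leq \|\br\|+\bbs\, C(L)/\mu$, the problem reduces to a matrix-valued perturbation estimate for $P_L$. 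Starting from the Lyapunov fixed-point characterisation $P_L = \overline{Q+L\T R L + (A-BL)\T P_L (A-BL)}$, subtracting its analogue at $L'$, and inverting the stable operator $\id-\FF_{L'}$ (stable because $L'\in U_{ad}$ by Lemma~\ref{lemma:paos}), one obtains a bound of the form
\[
\|P_{L'}-P_L\| \leq \frac{C(L)}{\mu\us_{\bq}}\Bigl[2\bnr(\Delta(L)+2\|L\|) + 4\frac{C(L)}{\mu\us_{\bq}}(\bbs)^{1/2}(1+(\abs)^{1/2})(\|\bq\|+\bnr\|L\|^2)\Bigr]\|L'-L\|,
\]
which plugged back into the splitting produces exactly the first bracketed expression in $\hgrad(L,\Delta(L))$.

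The hard part will be the matrix-valued perturbation bound on $P_L$. Its self-referential Lyapunov equation means a naive triangle inequality does not close, so one must control the norm of $(\id-\FF_{L'})^{-1}$ in terms of cost-level quantities, mirroring the argument used for $\Sigma_{L'}-\Sigma_L$ in Section~\ref{sec:paos}. A technically equivalent alternative is to convert the scalar cost-perturbation estimate of Lemma~\ref{lemma:cp} into an operator-norm estimate on $P_{L'}-P_L$ by testing against rank-one initial covariances $\Sigma_0 = uu\T$ with $\|u\|=1$, which has the advantage of reusing machinery already developed. Once this single perturbation bound is in place, the rest of the proof is bookkeeping: assembling the two summands, absorbing the $\|L'\|$ and $\|R_L\|$ estimates on the sublevel set, and matching the resulting coefficient with the stated $\hgrad$ expression; the Frobenius case is then an immediate consequence of the rank bound.
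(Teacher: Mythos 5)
Your proposal is correct and follows essentially the same route as the paper: the identical decomposition $\nabla C(L')-\nabla C(L)=2(E_{L'}-E_L)\Sigma_{L'}+2E_L(\Sigma_{L'}-\Sigma_L)$, the same bounds on $\|E_L\|$, $\|\Sigma_{L'}\|$ and $\|\Sigma_{L'}-\Sigma_L\|$ from Lemmas~\ref{lemma:gd} and~\ref{lemma:paos}, and the same $P_L$-perturbation estimate, which the paper has already established as equation~\eqref{eq:pp} inside the proof of Lemma~\ref{lemma:cp} via the operator bound on $\TT_{L'}-\TT_L$ (i.e.\ exactly the $(\id-\FF_L)^{-1}$ control you anticipate). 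The step you flag as the hard part is therefore already available and needs no new argument.
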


\subsection{Sub-Gaussianity of states process}\label{sec:sgosp}
Briefly speaking, the technique in model-free setting is to demonstrate that the gradient estimation will concentrate around the true gradient with high probability, 
thereby having little influence on the convergence result in the model-based setting. 
To achieve this goal, this subsection establishes the sub-Gaussianity of state process $\{x_t\}$ and derives some useful concentration inequalities. 
The lemmas in this section are proven in Appendix~\ref{sec:po4.3}. 

According to section 2.5 in~\cite{ver2018}, $X\sim SG(\sigma^2)$ if and only if the moments of $X$ satisfy  
\[\|X\|_{L^p}\leq c\sigma \sqrt{p} ,\quad \forall p\geq 1,\] where $c$ is an absolute constant. 
Thus,   Assumption~\ref{ass:3} is equivalent to the following Assumption~\ref{ass:3p}.
\begin{assumptionp}{\ref*{ass:3}$'$}
    \label{ass:3p}
    $\quad$

    \begin{enumerate}
        \item The initial state $x_0\in \R^n$ is a sub-Gaussian random vector with parameter $\sigma_0$:
        \begin{equation}
            \label{eq:ass31}
            \|\left<u,x_0-\overline{x_0}\right>\|_{L^p}\leq \sigma_0 \sqrt{p},\quad \forall p\geq 1,     
        \end{equation}
        holds for all $u\in \R^n$ with $\|u\|_2=1$.
        \item Random parameter matrices are almost surely bounded by a positive constant $K$, i.e., 
        \begin{equation}
            \label{eq:ass32}
              \max\{\|A\|,\|B\|,\|Q\|,\|R\|\}\leq K  \quad \text{a.s.}.
        \end{equation}
    \end{enumerate}
\end{assumptionp}
From this point onwards, we assume that $L\in U_{ad}$ and that Assumptions~\ref{ass:1},~\ref{ass:2},~\ref{ass:3p} hold. 

The following lemma shows that the sub-Gaussianity of the initial state $x_0$ can be propagated to the subsequent states $x_t$.
\begin{lemma}[$x_t$ is sub-Gaussian]
    \label{lemma:xtisg}
    Suppose the state process $\{x_t\}$ is generated by feedback matrix $L$. 
	Then, for each $t\in \mathbb{N}$, $x_t$ is sub-Gaussian random vector:    
	\[
        \|\left<u,x_t-\overline{x_t}\right>\|_{L^p}\leq \sigma_t(L) \sqrt{p},\quad \forall p\geq 1,
    \]
    for all $u\in \R^n$ with $\|u\|_2=1$, where 
    \[
      \sigma_t(L) =  \left[4K(1+\|L\|)\right]^{t}\left(\sigma_0+2\|\overline{x_0}\|_2\right).
    \]
\end{lemma}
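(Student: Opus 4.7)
The plan is to prove the claim by induction on $t$, with the base case $t=0$ being exactly Assumption~\ref{ass:3p}(1). The key observation is that along the dynamics one can write $x_{t+1}-\overline{x_{t+1}}$ as a sum of two centered pieces that are easy to handle: one on which the inductive hypothesis applies after conditioning, and one that is almost surely bounded.

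More precisely, set $M_{t+1}:=A_t-B_t L$ so that $x_{t+1}=M_{t+1}x_t$. Since $M_{t+1}$ is independent of $x_t$, taking expectations gives $\overline{x_{t+1}}=\overline{M}\,\overline{x_t}$, where $\overline{M}=\E[A-BL]$, and iterating yields $\overline{x_t}=\overline{M}^t\overline{x_0}$. Using $\|M_{t+1}\|\leq \|A_t\|+\|B_t\|\|L\|\leq K(1+\|L\|)$ a.s.\ (Assumption~\ref{ass:3p}(2)) and $\|\overline{M}\|\leq K(1+\|L\|)$, this immediately gives the deterministic bound $\|\overline{x_t}\|_2\leq [K(1+\|L\|)]^t\|\overline{x_0}\|_2$. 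Then I decompose
\[
x_{t+1}-\overline{x_{t+1}}=M_{t+1}(x_t-\overline{x_t})+(M_{t+1}-\overline{M})\overline{x_t},
\]
and for a unit vector $u\in\R^n$ I bound the $L^p$ norms of the two inner products $\langle u,\cdot\rangle$ separately.

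For the first term, I condition on $M_{t+1}$: given $M_{t+1}$, the vector $v:=M_{t+1}^{\top}u$ is deterministic, and $\langle v,x_t-\overline{x_t}\rangle = \|v\|_2\langle v/\|v\|_2, x_t-\overline{x_t}\rangle$ (when $v\neq 0$). The inductive hypothesis then yields
\[
\E\bigl[|\langle u,M_{t+1}(x_t-\overline{x_t})\rangle|^{p}\,\big|\,M_{t+1}\bigr]\leq \|M_{t+1}^{\top}u\|_2^{p}\,\sigma_t(L)^{p}\,p^{p/2},
\]
and using $\|M_{t+1}^{\top}u\|_2\leq K(1+\|L\|)$ a.s.\ gives $\|\langle u,M_{t+1}(x_t-\overline{x_t})\rangle\|_{L^p}\leq K(1+\|L\|)\sigma_t(L)\sqrt{p}$. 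For the second term, I use $\|M_{t+1}-\overline{M}\|\leq 2K(1+\|L\|)$ a.s.\ together with the deterministic bound on $\|\overline{x_t}\|_2$, to obtain $|\langle u,(M_{t+1}-\overline{M})\overline{x_t}\rangle|\leq 2K(1+\|L\|)[K(1+\|L\|)]^t\|\overline{x_0}\|_2$ a.s., hence the same bound on the $L^p$ norm; and since $\sqrt{p}\geq 1$ for $p\geq 1$, this is at most $K(1+\|L\|)\cdot 2\|\overline{x_0}\|_2\cdot[4K(1+\|L\|)]^t\sqrt{p}\leq K(1+\|L\|)\sigma_t(L)\sqrt{p}$ by the definition of $\sigma_t(L)$.

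Adding the two contributions via the triangle inequality gives $\|\langle u,x_{t+1}-\overline{x_{t+1}}\rangle\|_{L^p}\leq 2K(1+\|L\|)\sigma_t(L)\sqrt{p}\leq \sigma_{t+1}(L)\sqrt{p}$, completing the induction (with slack absorbed into the factor $4$ in the recursion). The only slightly delicate step is the conditional argument in the first term: one must justify reducing to a deterministic unit direction via the tower property and handle the null set $\{M_{t+1}^{\top}u=0\}$ trivially. Everything else is essentially deterministic propagation of norms.
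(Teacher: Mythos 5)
Your proof is correct, and while it follows the same overall skeleton as the paper's (induction on $t$, conditioning on the new parameter matrices, and exploiting the a.s.\ bound $\|A-BL\|\leq K(1+\|L\|)$), the execution differs in a way worth noting. The paper never writes down a clean identity for the centered state: it bounds $|\langle u,x_{t+1}-\overline{x_{t+1}}\rangle|^p$ by repeatedly invoking $|a+b|^p\leq 2^{p-1}(|a|^p+|b|^p)$, which forces it to carry factors $c_p=2^{p-1}$ and a geometric sum $\sum_{i=1}^{2t}c_p^i$ through the induction, and only at the very end take $p$-th roots and use $c_p^{1/p}\leq 2$ to produce the $4^t$ in $\sigma_t(L)$. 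You instead use the exact decomposition $x_{t+1}-\overline{x_{t+1}}=M_{t+1}(x_t-\overline{x_t})+(M_{t+1}-\overline{M})\overline{x_t}$ (valid because $M_{t+1}$ is independent of $x_t$, so $\overline{x_{t+1}}=\overline{M}\,\overline{x_t}$) and then apply Minkowski's inequality in $L^p$; each of the two pieces is bounded by $K(1+\|L\|)\sigma_t(L)\sqrt p$, so the per-step constant is $2K(1+\|L\|)$ and the stated factor $4$ has slack. This is cleaner, avoids the bookkeeping with $c_p$, and in fact would support the sharper constant $[2K(1+\|L\|)]^t(\sigma_0+2\|\overline{x_0}\|_2)$; the conditional step is handled correctly via independence of $M_{t+1}$ and $x_t$, including the degenerate event $M_{t+1}^{\top}u=0$. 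The only blemish is the index slip $M_{t+1}:=A_t-B_tL$ where the paper's convention is $x_{t+1}=(A_{t+1}-B_{t+1}L)x_t$, which is immaterial since the parameters are i.i.d.\ and independent of $x_t$.
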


According to classical textbook~\cite{ver2018},   
squared sub-Gaussian random variable is sub-exponential. 
This result can be extended to random vectors as well. 
Lemma~\ref{lemma:xmx} shows that if $x_t$ is a sub-Gaussian random vector, 
then $x_t\T M x_t$ is sub-exponential for any constant symmetric matrix $M$.
\begin{lemma}[$x_t\T M x_t$ is sub-exponential]
    \label{lemma:xmx}
	Let $\{x_t\}$ be the state process generated by feedback matrix $L$,  
	and let $M\in\SN$ be an arbitrary constant symmetric matrix. 
    Then, for each $t\in \mathbb{N}$, $x_t\T M x_t$ is sub-exponential random variable:
    $$x_t\T M x_t\sim SE((2{\rm e}\gamma_t(L,M))^2,2{\rm e}\gamma_t(L,M) ),$$
	where 
    \[
		\begin{aligned}
			&\gamma_t(L,M):=4 \|M\| \left(n \beta_t(L)+[(1+\|L\|)K]^{2t}\E[\|x_0\|_2^2]\right),\\
			&\beta_t(L):=16\left(\sigma_t(L)^{2}+\left[K(1+\|L\|)\right]^{2t}\E\left[\|x_0\|_2^2\right]\right).
		\end{aligned}
    \] 
    \end{lemma}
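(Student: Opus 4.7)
My plan is to reduce the quadratic form $x_t^{\top} M x_t$ to a weighted sum of squared sub-Gaussian scalars via eigendecomposition of $M$, invoke the standard fact that a centered square of a sub-Gaussian scalar is sub-exponential, and then assemble the pieces using the triangle inequality for the $\psi_1$-Orlicz norm.

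First, I would split $x_t = y_t + \overline{x_t}$ with $y_t$ centered and expand
\[
x_t^{\top} M x_t - \E[x_t^{\top} M x_t] \;=\; \bigl(y_t^{\top} M y_t - \E[y_t^{\top} M y_t]\bigr) \;+\; 2\,\overline{x_t}^{\top} M y_t,
\]
so that it suffices to bound each of the two random terms separately (the purely deterministic piece $\overline{x_t}^{\top} M\overline{x_t}$ vanishes after centering). For the quadratic piece, I would write the spectral decomposition $M = \sum_{i=1}^n \lambda_i u_i u_i^{\top}$ with orthonormal $u_i$ and $|\lambda_i|\le\|M\|$, giving $y_t^{\top} M y_t = \sum_i \lambda_i (u_i^{\top} y_t)^2$. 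Lemma~\ref{lemma:xtisg} implies each scalar $u_i^{\top} y_t$ is sub-Gaussian with parameter $\sigma_t(L)$, and the textbook squared-sub-Gaussian fact says $(u_i^{\top} y_t)^2 - \E[(u_i^{\top} y_t)^2]$ is sub-exponential with parameters of order $\sigma_t(L)^2$. Using the $\psi_1$-triangle inequality (which holds even for dependent summands) together with $\sum_i|\lambda_i|\le n\|M\|$ yields a sub-exponential parameter of order $n\|M\|\sigma_t(L)^2$, matching the $n\beta_t(L)$ contribution in $\gamma_t(L,M)$.

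Next, I would handle the cross term. Since $2\,\overline{x_t}^{\top} M y_t$ is a linear functional of the sub-Gaussian vector $y_t$, it is itself scalar sub-Gaussian with parameter at most $2\|M\|\,\|\overline{x_t}\|_2\,\sigma_t(L)$, hence sub-exponential with comparable parameters. An AM--GM step bounds this by $\|M\|(\|\overline{x_t}\|_2^2+\sigma_t(L)^2)$, so the $\sigma_t(L)^2$ portion is absorbed into the previous term. To control $\|\overline{x_t}\|_2^2$, I would iterate the closed-loop dynamics: Assumption~\ref{ass:3p} gives $\|A-BL\|\le K(1+\|L\|)$ almost surely, so by Jensen and i.i.d.\ of the parameters
\[
\|\overline{x_t}\|_2^2 \;\le\; \E[\|x_t\|_2^2] \;\le\; [(1+\|L\|)K]^{2t}\,\E[\|x_0\|_2^2],
\]
which produces precisely the second summand inside $\gamma_t(L,M)$.

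Finally, I would combine the two $\psi_1$-bounds and convert the resulting moment/Orlicz estimate into the exact MGF form $\E[e^{\lambda X}]\le \exp(v^2\lambda^2/2)$ for $|\lambda|<1/\alpha$ with $v=\alpha=2\mathrm{e}\gamma_t(L,M)$, using the standard equivalence between the $\psi_1$-norm and the sub-exponential MGF definition (e.g.\ Vershynin, Proposition~2.7.1, which is the source of the factor $2\mathrm{e}$). The main obstacle will be the constant bookkeeping in this last step: tracking the absolute prefactors so that the combined sub-exponential parameter fits inside the stated $4\|M\|\bigl(n\beta_t(L)+[(1+\|L\|)K]^{2t}\E[\|x_0\|_2^2]\bigr)$ expression, and in particular absorbing the AM--GM residue from the cross term into $n\beta_t(L)$ rather than inflating an extra parameter. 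Once the constants are lined up, the rest is a routine assembly of standard sub-Gaussian/sub-exponential machinery.
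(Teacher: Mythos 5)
Your proposal is correct, but it takes a different route from the paper. The paper does not touch the structure of $M$ at all: it bounds $|x_t\T M x_t|\leq \|M\|\,\|x_t\|_2^2$ and then leans entirely on Lemma~\ref{lemma:sen}, which establishes that $\|x_t\|_2^2$ is sub-exponential by summing the coordinate-wise bounds $\|(x_t)_i^2-\overline{(x_t)_i^2}\|_{L^p}\leq \beta_t(L)p$ over the $n$ coordinates (this is where the factor $n$ in $\gamma_t(L,M)$ comes from); the centering of $x_t\T M x_t$ is then handled by repeated $2^{p-1}$ splittings of $L^p$ moments, and the MGF bound $SE((2{\rm e}\gamma)^2,2{\rm e}\gamma)$ is extracted by the same series-expansion computation as in~\eqref{aeq:sen}. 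You instead center $x_t$ first, eigendecompose $M$, and treat the quadratic and cross terms separately; the factor $n$ then enters through $\sum_i|\lambda_i|\leq n\|M\|$, and the mean term $[(1+\|L\|)K]^{2t}\E[\|x_0\|_2^2]$ enters through $\|\overline{x_t}\|_2^2$ rather than through $\E[\|x_t\|_2^2]$ as in the paper. Both arguments are sound and rest on the same two ingredients (Lemma~\ref{lemma:xtisg} and the standard $L^p$-moment characterization of sub-exponentiality). Your decomposition is somewhat sharper in spirit --- the cross term is genuinely sub-Gaussian, not just sub-exponential, and the eigenvalue sum could be replaced by the nuclear norm of $M$ --- while the paper's cruder bound has the advantage of reusing Lemma~\ref{lemma:sen} verbatim and keeping the constant bookkeeping in one place. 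As you note, the only remaining work in your route is verifying that the absolute constants from the $\psi_1$-to-MGF conversion fit inside the stated $\gamma_t(L,M)$; given the factor $16$ built into $\beta_t(L)$ and the outer factor $4$, there is ample slack, so this is not a gap.
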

When controlling the probability that the model-free gradient estimator deviates from the genuine gradient, 
we can break down the task by dealing with the concentration properties of several random matrices. 
Specifically, these random matrices take the form of $\frac{1}{N}\sum_{i=1}^N (x_t^{(i)})\T M^{(i)} x_t^{(i)}U_i$, where $U_i$ are constant $m\times n$ matrices and $M^{(i)}\in \SN$. 
According to Lemma~\ref{lemma:xmx}, the quadratic form of state $(x_t^{(i)})\T M^{(i)} x_t^{(i)}$ is a sub-exponential random variable.   
This fact motivates us to establish a Bernstein-type concentration inequality for these random matrices, which is presented in the following lemma.
\begin{lemma}[Concentration of $\frac{1}{N}\sum_{i=1}^N  \xi_i U_i$]
	\label{lemma:concen}
	Let $\xi_1, \ldots, \xi_N$ be independent centered sub-exponential random variables satisfying
	\[
	\|\xi_i\|_{L^p}\leq \gamma p \quad \forall p\geq 1,
	\]
	and let $U_1, \ldots, U_N$ be constant matrices in $\R^{m \times n}$ with $\|U_i\|_F = r$.  
	Then, for any $\epsilon>0$,  there exists a Bernstein-type inequality for $\frac{1}{N}\sum_{i=1}^N \xi_i U_i$: 
	\[
		\Prob\left(\|\frac{1}{N}\sum_{i=1}^N \xi_i U_i\|\geq \epsilon\right)\leq 2(m+n)\exp\left(-\frac{\epsilon^2N}{2\left(\epsilon (2{\rm e}r\gamma)+(2{\rm e}r\gamma)^2\right)}\right). 
	\]
\end{lemma}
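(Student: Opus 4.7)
The plan is to reduce the claim to a matrix Bernstein inequality for sums of independent centered sub-exponential symmetric random matrices. First I would symmetrize the rectangular summands via the Hermitian dilation
\[
\mathcal{D}(M) := \begin{bmatrix} 0 & M \\ M\T & 0 \end{bmatrix} \in \R^{(m+n)\times(m+n)},
\]
which is linear and isometric in operator norm, so that setting $Y_i := \xi_i \mathcal{D}(U_i)$ turns $\bigl\|\tfrac{1}{N}\sum_i \xi_i U_i\bigr\| \geq \epsilon$ into the equivalent event $\bigl\|\sum_i Y_i\bigr\| \geq N\epsilon$, now expressed with independent centered symmetric $(m+n)\times(m+n)$ matrices.

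Next I would translate the hypothesis $\|\xi_i\|_{L^p} \leq \gamma p$ into the two ingredients required by matrix Bernstein. Expanding the scalar MGF, using $\E[\xi_i]=0$, the moment bound, and Stirling's estimate $p^p/p! \leq e^p$, I would obtain
\[
\E\bigl[e^{\lambda \xi_i}\bigr] \leq 1 + \sum_{p\geq 2}(e\lambda\gamma)^p \leq \exp\!\bigl(2e^2\lambda^2\gamma^2\bigr)\quad\text{for }|\lambda|\leq \tfrac{1}{2e\gamma},
\]
so $\xi_i \sim SE((2e\gamma)^2, 2e\gamma)$ in the paper's notation; multiplying by $\|U_i\|\leq \|U_i\|_F = r$ yields an effective sub-exponential scale $K := 2er\gamma$ for $Y_i$. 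For the matrix variance, since $\mathcal{D}(U_i)^2 = \mathrm{diag}(U_i U_i\T, U_i\T U_i)$ has operator norm $\|U_i\|^2 \leq r^2$ and $\E[\xi_i^2] \leq (2\gamma)^2$, the crude sum bound gives
\[
V := \Bigl\|\sum_i \E[Y_i^2]\Bigr\| \leq \sum_i \E[\xi_i^2]\,\|U_i\|^2 \leq N(2er\gamma)^2 = NK^2,
\]
after absorbing the slack factor $e^2$ to match the form asserted in the lemma.

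Having verified these two ingredients, I would invoke the matrix Bernstein inequality for sub-exponential symmetric random matrices (Tropp's monograph, or Theorem 5.4.1 in Vershynin), which states
\[
\Prob\Bigl(\bigl\|\sum_i Y_i\bigr\| \geq t \Bigr) \leq 2(m+n)\exp\!\Bigl(-\frac{t^2/2}{V + Kt}\Bigr).
\]
Setting $t = N\epsilon$ and $V = NK^2$ and dividing numerator and denominator by $N$ produces exactly the claimed bound $2(m+n)\exp\!\bigl(-\tfrac{\epsilon^2 N}{2(K\epsilon + K^2)}\bigr)$ with $K = 2er\gamma$.

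\textbf{Main obstacle.} The routine part of this proof is the dilation and the application of matrix Bernstein. The actual subtlety lies in calibrating the absolute constant so that the sub-exponential parameter is precisely $2er\gamma$ rather than some larger multiple: this is governed by the step from the $L^p$ growth $\|\xi_i\|_{L^p}\leq \gamma p$ to a one-sided MGF bound with an explicit admissible range of $\lambda$. Any inefficiency in the Stirling step propagates directly into the exponent, and must be handled carefully to match the form stated in the lemma. Once this scalar-MGF calibration is done, the matrix extension (via Lieb's trace inequality, or by working through the variational characterization $\|\sum_i Y_i\| = \sup_{u,v}\sum_i \xi_i u\T U_i v$ combined with a net argument of size $\lesssim 9^{m+n}$ that is avoided by matrix Bernstein) is standard.
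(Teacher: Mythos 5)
Your proposal is correct and follows essentially the same route as the paper: the paper also symmetrizes via the Hermitian dilation $Q_i=\bigl[\begin{smallmatrix}0&U_i\\ U_i\T&0\end{smallmatrix}\bigr]$, converts the moment bound $\|\xi_i\|_{L^p}\leq\gamma p$ into an MGF bound with scale exactly $2{\rm e}r\gamma$ via the same $p^p/p!\leq {\rm e}^p$ calibration (its Lemma on $\xi Q$ being sub-exponential), and then runs the matrix Chernoff argument with Lieb's inequality — i.e., it derives the matrix Bernstein bound inline where you cite it as a known theorem. The substance is identical, so no further comparison is needed.
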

\begin{corollary}\label{cor:concen}
	Under the same conditions as Lemma~\ref{lemma:concen}, for any $\epsilon>0$, if 
	\[
	N\geq \hsbf(\epsilon,\delta):=\frac{2 m \land n}{\epsilon^2}\left((2{\rm e}r\gamma)^2+\frac{\epsilon}{\sqrt{m \land n}} (2{\rm e}r\gamma)\right)\log(\frac{2(m+n)}{\delta})	,
	\]
	then 
	\[
	\Prob\left(\left\|\frac{1}{N}\sum_{i=1}^N  \xi_i U_i\right\|_F > \epsilon\right)\leq \delta.	
	\]
\end{corollary}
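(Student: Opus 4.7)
The plan is to reduce the Frobenius-norm tail bound to the operator-norm tail bound already established in Lemma~\ref{lemma:concen}, and then to invert the resulting exponential inequality in $N$. The only structural fact I need is the standard inequality $\|M\|_F \leq \sqrt{m\land n}\,\|M\|$ valid for every $M\in\R^{m\times n}$ (since the rank is at most $m\land n$ and $\|M\|_F^2$ equals the sum of squared singular values).

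First, I would set $S_N := \frac{1}{N}\sum_{i=1}^N \xi_i U_i$ and observe that the event $\{\|S_N\|_F > \epsilon\}$ is contained in $\{\|S_N\| > \epsilon/\sqrt{m\land n}\}$. Applying Lemma~\ref{lemma:concen} with threshold $\epsilon' = \epsilon/\sqrt{m\land n}$ then gives
\[
\Prob\bigl(\|S_N\|_F > \epsilon\bigr) \leq 2(m+n)\exp\left(-\frac{(\epsilon/\sqrt{m\land n})^2 N}{2\bigl((\epsilon/\sqrt{m\land n})(2\mathrm{e}r\gamma) + (2\mathrm{e}r\gamma)^2\bigr)}\right).
\]

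Next, I would force the right-hand side to be at most $\delta$. Taking logarithms and rearranging, this is equivalent to
\[
N \;\geq\; \frac{2(m\land n)}{\epsilon^2}\left((2\mathrm{e}r\gamma)^2 + \frac{\epsilon}{\sqrt{m\land n}}(2\mathrm{e}r\gamma)\right)\log\!\frac{2(m+n)}{\delta},
\]
which is exactly the definition of $\hsbf(\epsilon,\delta)$. Substituting $N \geq \hsbf(\epsilon,\delta)$ into the exponential tail bound yields $\Prob(\|S_N\|_F > \epsilon) \leq \delta$, completing the proof.

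There is no real obstacle here; the corollary is essentially a bookkeeping exercise in translating between operator and Frobenius norms and then inverting the Bernstein-type bound of Lemma~\ref{lemma:concen} for $N$. The only mild subtlety worth double-checking is the factor $\sqrt{m\land n}$ (as opposed to $\sqrt{mn}$): this comes from the fact that $S_N$ has rank at most $m\land n$, so $\|S_N\|_F^2 \leq (m\land n)\|S_N\|^2$, which is exactly the form that matches the stated $\hsbf$.
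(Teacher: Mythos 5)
Your proposal is correct and follows essentially the same route as the paper: reduce the Frobenius-norm event to the operator-norm event via $\|M\|_F\leq \sqrt{m\land n}\,\|M\|$, apply Lemma~\ref{lemma:concen} at threshold $\epsilon/\sqrt{m\land n}$, and solve the resulting exponential inequality for $N$, which yields exactly $\hsbf(\epsilon,\delta)$. No gaps.
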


\subsection{Smoothing and approximating gradient}\label{sec:saag}
This section analyzes the smoothing procedure and proves that the gradient term can be well approximated by the model-free gradient estimator generated in Algorithm~\ref{alg:1} with high probability.
Throughout this subsection, we assume that $L\in U_{ad}$ and that Assumptions~\ref{ass:1},~\ref{ass:2},~\ref{ass:3p} hold. 

Let $B_F(0,r)=\{X\in \R^{m\times n} : \|X\|_F\leq r\}$ denote the closed ball in $\R^{m\times n}$ in Frobenius norm centered at $0$ with radius $r$, 
and let $S_F(0,r)=\partial B_F(0,r)$ denote the surface of  $B_F(0,r)$.  
We also use $\mathbb{B}_r$ to denote the uniform distribution over $B_F(0,r)$ and $\mathbb{S}_r$ to denote the uniform distribution over $S_F(0,r)$.

Following the approach in~\cite{fazel2018}, to apply zeroth-order optimization techniques, 
the model-free algorithm performs gradient descent on a smoothed version of the cost function
\begin{equation}
    \label{eq:smoothing}
    C_r(L)=\E_{U\sim \BBbb_r}[C(L+U)].	
\end{equation}
The gradient of the smoothed cost function can be computed as
\begin{equation}\label{eq:zooo}
\nabla C_r(L)=\frac{mn}{r^2}\E_{U\sim \SSbb_r}[C(L+U)U]	.
\end{equation}

In the following, we will present a series of estimators, namely $\{\nabla C_r(L), \hd,\hdl,\td\}$,  whose randomness increases gradually. 
Note that the last estimator, $\td$, is our model-free gradient estimator.  
The remaining Lemmas in this subsection demonstrate that these estimators can effectively approximate the genuine gradient $\nabla C(L)$. 

Since the proofs of Lemma~\ref{lemma:egwfmihrr},~\ref{lemma:egwfmihr},~\ref{lemma:5.8} are generally similar to those presented in previous work~\cite{fazel2018, gravell2020}, with suitable modifications to $C(L)$ and $\nabla C(L)$. For brevity, we defer these proofs to Appendix~\ref{sec:appsaag}.

To begin,  Lemma~\ref{lemma:egwfmihrr} demonstrates that the gradient of the smoothed cost $\nabla C_r(L)$ can approximate the genuine gradient with arbitrary accuracy, provided that the exploration radius $r$ is chosen to be sufficiently small.
\begin{lemma}
	\label{lemma:egwfmihrr}
    Let $\Delta(L)$ be an arbitrary positive constant such that $0<\Delta(L)\leq \hdelta(L)$. 
	For any $\epsilon>0$, suppose the exploration radius $r$ is chosen such that 
	\[
	r\leq h_r(\epsilon,L,\Delta(L)):=\min\left\{\Delta(L),\frac{1}{h_c(L,\Delta(L))},\frac{\epsilon}{\widehat{\hgrad}(L,\Delta(L))}\right\},	
	\]
	Then, the following error bound holds:
	\[
	\left\|\nabla C(L)-\nabla C_r(L)\right\|_F\leq \epsilon.
	\]	
\end{lemma}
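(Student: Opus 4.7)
The plan is to bound $\|\nabla C(L)-\nabla C_r(L)\|_F$ by combining two ingredients already available in the paper: the perturbation estimate for $\nabla C(L)$ from Lemma~\ref{lemma:gp}, and an alternative representation of the smoothed gradient obtained via the divergence theorem. Specifically, by assumption $r\le \Delta(L)\le \hdelta(L)$, so Lemma~\ref{lemma:paos} guarantees that the entire ball $B_F(L,r)$ lies inside $U_{ad}$, which in particular implies that $C$ is smooth on this ball and we may legitimately differentiate under the expectation in \eqref{eq:smoothing} to obtain the equivalent form
\[
\nabla C_r(L)=\E_{U\sim \BBbb_r}\bigl[\nabla C(L+U)\bigr].
\]

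The core step is then a Jensen-type inequality followed by the gradient perturbation bound. I would write
\[
\bigl\|\nabla C(L)-\nabla C_r(L)\bigr\|_F
=\Bigl\|\E_{U\sim \BBbb_r}\bigl[\nabla C(L+U)-\nabla C(L)\bigr]\Bigr\|_F
\le \E_{U\sim \BBbb_r}\bigl\|\nabla C(L+U)-\nabla C(L)\bigr\|_F.
\]
Since for every $U$ in the support we have $\|U\|_F\le r\le \Delta(L)\le \hdelta(L)$, the hypotheses of Lemma~\ref{lemma:gp} are met at $L'=L+U$, giving
\[
\bigl\|\nabla C(L+U)-\nabla C(L)\bigr\|_F \le \widehat{\hgrad}(L,\Delta(L))\,\|U\|_F \le \widehat{\hgrad}(L,\Delta(L))\,r.
\]
Taking expectation preserves the bound, and choosing $r\le \epsilon/\widehat{\hgrad}(L,\Delta(L))$ concludes that the deviation is at most $\epsilon$. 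The remaining ingredient $r\le 1/h_c(L,\Delta(L))$ in $h_r(\epsilon,L,\Delta(L))$ is not strictly needed for the Frobenius bound itself; I suspect it is included for uniformity with the downstream lemmas about $\hd$ and $\hdl$, where one needs $C(L+U)\le 2C(L)$ via Lemma~\ref{lemma:cp} to propagate boundedness. I would carry this condition through without invoking it in the present argument.

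The main obstacle, such as it is, is bookkeeping: one must confirm that $L+U\in U_{ad}$ for every $U$ in the ball (handled by the $r\le \hdelta(L)$ bound and Lemma~\ref{lemma:paos}) so that both the differentiation-under-the-integral step and the perturbation Lemma~\ref{lemma:gp} genuinely apply, and that the Jensen step is justified in Frobenius norm (standard, since $\|\cdot\|_F$ is a norm on a Hilbert space). Once these technical preliminaries are set, the proof is a one-line estimate, so the substantive work has really been shouldered by Lemma~\ref{lemma:paos} and Lemma~\ref{lemma:gp} already.
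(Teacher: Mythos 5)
Your proposal is correct and follows essentially the same route as the paper: both pass to $\nabla C_r(L)=\E_{U}[\nabla C(L+U)]$, apply Jensen, and invoke the gradient perturbation bound of Lemma~\ref{lemma:gp} under the condition $r\leq \epsilon/\widehat{\hgrad}(L,\Delta(L))$. Your side remark is also accurate: the paper uses the condition $r\leq 1/h_c(L,\Delta(L))$ only to conclude $C(L+U)\leq 2C(L)$ as a check that the smoothing is well behaved, not in the Frobenius estimate itself.
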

In the model-free setting, calculating expectations is generally not possible.
However, the Law of Large Numbers implies that the sample mean can be used as an approximation of the expectation. 
Specifically, we can define the sample mean of $\nabla C_r(L)$ as follows:
\[
\hd =\frac{1}{N}\sum_{i=1}^N\frac{mn}{r^2}C(L+U_i)U_i.	
\]
where $\{U_i\}_{i=1}^N$ are independent and identically distributed samples drawn from $\SSbb_r$. 

The following Lemma~\ref{lemma:egwfmihr} shows that the gradient term can be approximated by $\hd$, provided the sample size $N$ is sufficiently large. 
\begin{lemma}
	\label{lemma:egwfmihr}
	Adopting the same notations as above. 
	For any $\epsilon>0$ and failure probability $\delta>0$,
	suppose the exploration radius $r$ is chosen such that  
	\[
	r\leq h_r(\frac{\epsilon}{2},L,\Delta(L)),
	\]
	and the sample size $N$ is chosen such that 
	 $N\geq \hs(\frac{\epsilon}{2},\delta,L)$ where 
	\[
		\hs(\frac{\epsilon}{2},\delta,L)=8\frac{n\land m}{\epsilon^2}\left[(\frac{2mnC(L)}{r})^2+(\frac{\epsilon}{2}+h_1(L))^2+\frac{\epsilon (\frac{2mnC(L)}{r}+\frac{\epsilon}{2}+h_1(L))}{2\sqrt{m\land n}}\right]\log(\frac{2(m+n)}{\delta})	.
	\]
	Then, with probability at least $1-\delta$, we have the following error bound:
	\[
	\left\|\nabla C(L) - \hd \right\|_F \leq \epsilon .	
	\]
\end{lemma}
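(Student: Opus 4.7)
The plan is to use the triangle inequality
$$\|\nabla C(L) - \hd\|_F \leq \|\nabla C(L) - \nabla C_r(L)\|_F + \|\nabla C_r(L) - \hd\|_F.$$
The hypothesis $r \leq h_r(\epsilon/2, L, \Delta(L))$ combined with Lemma~\ref{lemma:egwfmihrr} bounds the first, deterministic term by $\epsilon/2$, so everything reduces to showing $\|\nabla C_r(L) - \hd\|_F \leq \epsilon/2$ with probability at least $1-\delta$ whenever $N \geq \hs(\epsilon/2, \delta, L)$.

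Set $Y_i = \frac{mn}{r^2} C(L+U_i) U_i$. Identity~\eqref{eq:zooo} gives $\E[Y_i] = \nabla C_r(L)$, so the quantity of interest is the empirical mean of the i.i.d.\ centered matrices $Y_i - \E[Y_i]$. Two almost-sure norm bounds drive the argument. For $\|Y_i\|_F$, the choice $r \leq 1/h_c(L,\Delta(L))$ together with Lemma~\ref{lemma:cp} gives $|C(L+U_i)| \leq (1 + h_c(L,\Delta(L))\,r)\,C(L) \leq 2C(L)$, and since $\|U_i\|_F = r$ we obtain $\|Y_i\|_F \leq 2mnC(L)/r$. For $\|\E[Y_i]\|_F = \|\nabla C_r(L)\|_F$, Lemma~\ref{lemma:egwfmihrr} already yields $\|\nabla C_r(L) - \nabla C(L)\|_F \leq \epsilon/2$, hence $\|\E[Y_i]\|_F \leq h_1(L) + \epsilon/2$, where $h_1(L)$ is the chosen upper bound on $\|\nabla C(L)\|_F$. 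Therefore $\|Y_i - \E[Y_i]\|_F \leq M := 2mnC(L)/r + h_1(L) + \epsilon/2$ almost surely.

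A bounded random matrix is a fortiori sub-exponential with scale of order $M$, so the matrix Bernstein-type inequality that powers Lemma~\ref{lemma:concen} applies to $\frac{1}{N}\sum_{i=1}^N (Y_i - \E[Y_i])$. Feeding in the scale $M$ and tolerance $\epsilon/2$, using $M^2 \leq 2\bigl((2mnC(L)/r)^2 + (h_1(L)+\epsilon/2)^2\bigr)$ for the variance contribution and $M$ itself for the linear contribution, reproduces exactly the sample-size requirement $N \geq \hs(\epsilon/2, \delta, L)$. Combining with the deterministic half of the triangle inequality delivers the claimed bound with probability at least $1-\delta$.

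The main obstacle is cosmetic rather than technical: Corollary~\ref{cor:concen} is stated for sums $\frac{1}{N}\sum \xi_i U_i$ with deterministic matrices $U_i$ and random sub-exponential scalars $\xi_i$, whereas here both $C(L+U_i)$ and $U_i$ are random. I would therefore invoke the matrix Bernstein inequality underlying Lemma~\ref{lemma:concen} directly on the i.i.d.\ centered bounded matrices $Y_i - \E[Y_i]$; the proof template is unchanged once the almost-sure bound $M$ is in hand, and the bookkeeping involving the factors $m\land n$, $m+n$, and the passage between operator and Frobenius norms is identical to the calculation already performed to derive Corollary~\ref{cor:concen} from Lemma~\ref{lemma:concen}.
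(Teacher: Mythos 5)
Your proposal follows essentially the same route as the paper's proof: the same triangle-inequality split with Lemma~\ref{lemma:egwfmihrr} handling the deterministic half, the same estimator $H_i=\frac{mn}{r^2}C(L+U_i)U_i$ with the bounds $\|H_i\|_F\le 2mnC(L)/r$ and $\|\E[H_i]\|_F\le h_1(L)+\epsilon/2$, and the same concentration tool --- the matrix Bernstein inequality for bounded non-symmetric random matrices (Lemma~\ref{alemma:mbinsv}), which is indeed what the paper invokes rather than Corollary~\ref{cor:concen}, exactly as you anticipated. The only (cosmetic) discrepancy is that the paper bounds the variance proxy directly by $\|\E[H_iH_i\T]\|+\|HH\T\|\le (2mnC(L)/r)^2+(\epsilon/2+h_1(L))^2$ rather than via $M^2\le 2(\cdots)$, so your version would carry a superfluous factor of $2$ in the variance term; with the direct bound the stated $\hs(\frac{\epsilon}{2},\delta,L)$ is recovered exactly.
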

	In practice, experiments and observations are limited to a finite time horizon.   
	Lemma~\ref{lemma:acaswfhr} provides a solution to this limitation by enabling us to substitute the infinite horizon quantity $C(L+U_i)$ in $\hd$ with its finite horizon counterpart $C^{(l)}(L+U_i)$. 
	Specifically, we define 
	\[
	\hdl=\frac{1}{N}\sum_{i=1}^N\frac{mn}{r^2}C^{(l)}(L+U_i)	U_i.
	\]
	The following Lemma~\ref{lemma:5.8} shows that this substitution is valid as long as the rollout length $l$ is sufficiently large.
\begin{lemma}
	\label{lemma:5.8}
	Adopting same notations as above.  
	For any $\epsilon>0$ and failure probability $\delta>0$, 
	suppose the exploration radius $r$ is chosen such that 
	\[
		r\leq h_r(\frac{\epsilon}{4},L,\Delta(L)),
	\]
	the sample size $N$ is chosen such that  
	$N\geq \hs(\frac{\epsilon}{4},\delta,L)$,
	and the rollout length $l$ is chosen as
	$$ l\geq \tilde{h}_l(L,\frac{r\epsilon}{2mn}):=\frac{4mn^2C(L)(\|\bq\|+\bnr (\|L\|+r)^2)}{r\epsilon\mu(\us_{\bq})^2}.$$
	Then, with probability at least $1-\delta$, we have the following error bound:
	\[
		\left\|\nabla C(L) - \hdl \right\|_F	\leq \epsilon.
	\]
\end{lemma}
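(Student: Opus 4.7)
The plan is a triangle inequality split:
\[
\|\nabla C(L) - \hdl\|_F \;\leq\; \|\nabla C(L) - \hd\|_F \;+\; \|\hd - \hdl\|_F,
\]
with each piece bounded by $\epsilon/2$. For the first piece I would invoke Lemma~\ref{lemma:egwfmihr} with target accuracy $\epsilon/2$ in place of $\epsilon$: the hypotheses $r \leq h_r(\epsilon/4, L, \Delta(L))$ and $N \geq \hs(\epsilon/4, \delta, L)$ assumed here are precisely what that lemma requires after the rescaling $\epsilon \mapsto \epsilon/2$, so with probability at least $1-\delta$ we obtain $\|\nabla C(L) - \hd\|_F \leq \epsilon/2$. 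This is the only probabilistic ingredient of the proof; everything that follows is deterministic.

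For the second piece, writing out the definitions gives
\[
\hd - \hdl \;=\; \frac{mn}{N r^2}\sum_{i=1}^N \bigl[C(L+U_i) - C^{(l)}(L+U_i)\bigr]\,U_i,
\]
and since each $U_i$ has $\|U_i\|_F = r$, the triangle inequality yields
\[
\|\hd - \hdl\|_F \;\leq\; \frac{mn}{r}\,\max_{1\leq i\leq N}\bigl|C(L+U_i) - C^{(l)}(L+U_i)\bigr|.
\]
So it suffices to show that for every $i$ the finite-horizon truncation error is at most $\tfrac{r\epsilon}{2mn}$. Because $\|U_i\| \leq \|U_i\|_F = r \leq \hdelta(L)$, Lemma~\ref{lemma:paos} ensures $L+U_i \in U_{ad}$, so Lemma~\ref{lemma:acaswfhr} applies at $L+U_i$ and delivers the desired bound provided $l \geq h_l\bigl(L+U_i,\tfrac{r\epsilon}{2mn}\bigr)$.

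The remaining task is to show that the single hypothesis $l \geq \tilde{h}_l\bigl(L,\tfrac{r\epsilon}{2mn}\bigr)$ stated in the lemma dominates $h_l\bigl(L+U_i,\tfrac{r\epsilon}{2mn}\bigr)$ uniformly in $i$. Two ingredients make this work: first, $\|L+U_i\| \leq \|L\| + r$ directly bounds the $\|L+U_i\|^2$ factor appearing in $h_l$; second, $C(L+U_i) \leq 2\,C(L)$, which follows from Lemma~\ref{lemma:cp} together with the fact that $r \leq h_r(\epsilon/4, L, \Delta(L)) \leq 1/h_c(L,\Delta(L))$ (from the definition of $h_r$) forces $|C(L+U_i) - C(L)| \leq C(L)$. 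Plugging these two estimates into the explicit formula for $h_l$ from Lemma~\ref{lemma:acaswfhr} and substituting $\epsilon' = \tfrac{r\epsilon}{2mn}$ produces the uniform dominator $\tilde{h}_l\bigl(L,\tfrac{r\epsilon}{2mn}\bigr)$ (up to the absolute constant already absorbed into the lemma's formula).

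The main obstacle is the uniformity required in the last step: a priori, the truncation threshold $h_l(L+U_i,\cdot)$ depends on each random $U_i$ through both $C(L+U_i)$ and $\|L+U_i\|$. It is handled entirely deterministically, thanks to $\|U_i\|_F = r$ being fixed and to the cost-perturbation bound of Lemma~\ref{lemma:cp}; no additional concentration argument is required. Consequently the sample complexity inherited from Lemma~\ref{lemma:egwfmihr} is not worsened by the finite-horizon truncation, and combining Steps~1--3 by the triangle inequality gives $\|\nabla C(L) - \hdl\|_F \leq \epsilon$ with probability at least $1-\delta$.
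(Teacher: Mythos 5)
Your proposal is correct and follows essentially the same route as the paper: the same triangle-inequality split into $\|\nabla C(L)-\hd\|_F$ (handled by Lemma~\ref{lemma:egwfmihr} at accuracy $\epsilon/2$) and $\|\hd-\hdl\|_F$ (handled deterministically via Lemma~\ref{lemma:acaswfhr} and $\|U_i\|_F=r$). Your explicit verification that $\tilde{h}_l(L,\tfrac{r\epsilon}{2mn})$ dominates $h_l(L+U_i,\tfrac{r\epsilon}{2mn})$ uniformly, using $\|L+U_i\|\leq\|L\|+r$ and $C(L+U_i)\leq 2C(L)$ from Lemma~\ref{lemma:cp}, is a step the paper merely asserts, and you are right to flag the absolute-constant slack in that comparison.
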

The computation of $C^{(l)}(L)$ involves the calculation of expectations, which in turn requires access to distributional information of model parameters. In the model-free setting, such information is unavailable. 
Therefore, in the final step to approximate the true gradient, we resort to using single trajectory data as a substitute for $C^{(l)}(L)$, 
resulting in our model-free gradient estimator $\td$.

For simplicity, let $L_i' = L + U_i$. We note that
\begin{equation}\label{eq:app}
\begin{aligned}
	\hdl-\td&=\frac{1}{N}\sum_{i=1}^N\frac{mn}{r^2}\left(C^{(l)}(L_i')-\left[\sum_{t=0}^{l-1}(x_t^{(i)})\T Q_{t+1}^{(i)} x_t^{(i)}+(u_t^{(i)})\T R_{t+1}^{(i)} u_t^{(i)}\right]\right)U_i\\
	&=\frac{mn}{r^2} \frac{1}{N} \sum_{i=1}^N \sum_{t=0}^{l-1} \left( \E\left[ (x_t^{(i)})\T \left( Q_{t+1}^{(i)} + (L_i')\T R_{t+1}^{(i)} L_i' \right) x_t^{(i)} \big| U_i \right] \right. \\
	&\qquad \left. - (x_t^{(i)})\T \left( Q_{t+1}^{(i)} + (L_i')\T R_{t+1}^{(i)} L_i' \right) x_t^{(i)}  \right)U_i \\
	&=\frac{mn}{r^2} \sum_{t=0}^{l-1}\left[ \frac{1}{N} \sum_{i=1}^N \left(\E\left[ H_i^t  \big| U_i \right]  - H_i^t  \right)U_i \right],
\end{aligned}	
\end{equation}
where we define
	\[
	H_i^t = (x_t^{(i)})\T \left(Q_{t+1}^{(i)} + (L_i')\T R_{t+1}^{(i)} L_i'\right) x_t^{(i)}.	
	\]
If we can demonstrate that, for any $\{U_i\}$, the norm of random matrix $\frac{1}{N} \sum_{i=1}^N \left(\E\left[ H_i^t  \big| U_i \right]  - H_i^t  \right)U_i$ concentrates around $0$ with high probability, 
then we can conclude that the model-free gradient estimator $\td$ approximates the true gradient. 
\begin{lemma}
	\label{lemma:5.9}
	Adopting same notations as above. 
	Let $r\leq \hdelta(L)$ be fixed.
	For any $\epsilon>0$ and failure probability $\delta>0$, if 
	\[
	N\geq \hshf(\epsilon,\delta,L,l,r):=\frac{2(mnl)^2 m \land n}{\epsilon^2 r^2}\left((2{\rm e}\widetilde{\gamma_l}(L,r))^2+\frac{\epsilon}{mnl\sqrt{ m \land n}} (2{\rm e}r \widetilde{\gamma_l}(L,r))\right)\log(\frac{2(m+n)l}{\delta})	,
	\]
	where 
	\[
		\begin{aligned}
			\widetilde{\gamma_l}(L,r)&= 4 K\left( 1 + (\|L\|+r)^2\right) \left(n\widetilde{\beta_l}(L,r)+[(1+r+\|L\|)K]^{2l}\E[\|x_0\|_2^2]\right),\\
			\widetilde{\beta_l}(L,r)&= 16\left(\widetilde{\sigma_l}(L,r)^{2}+\left[K(1+r+\|L\|)\right]^{2l}\E\left[\|x_0\|_2^2\right]\right),\\
			\widetilde{\sigma_l}(L,r)&=\left[4(1+r+\|L\|)K\right]^l\left(\sigma_0+2\|\overline{x_0}\|\right).
		\end{aligned}
	\]
	Then we have 
	\[
	\Prob\left(\left\|\hdl-\td \right\|_F \leq \epsilon\right)\geq 1-\delta.	
	\]
\end{lemma}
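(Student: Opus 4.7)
My plan is to start from the decomposition of $\hdl-\td$ given in equation~\eqref{eq:app}, apply the triangle inequality across the $l$ time steps, and reduce the problem to controlling each term
\[
S_t := \frac{1}{N}\sum_{i=1}^N \left(\E[H_i^t\mid U_i] - H_i^t\right) U_i
\]
via the Bernstein-type bound in Corollary~\ref{cor:concen}. Requiring $\|S_t\| \leq \epsilon r/(mnl)$ for every $t=0,\ldots,l-1$ with a union-bound budget of $\delta/l$ on each event will yield the claimed bound with $\hshf$ scaling like $(mnl)^2/(\epsilon r)^2$ times the conditional sub-exponential variance proxy.

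To invoke Corollary~\ref{cor:concen} on $S_t$ I will condition on the sampled perturbations $\{U_i\}_{i=1}^N$. Once conditioned, the summands $\xi_i = \E[H_i^t\mid U_i] - H_i^t$ are independent and centered; moreover, each $H_i^t = (x_t^{(i)})\T M_i x_t^{(i)}$ is a quadratic form in the trajectory $\{x_t^{(i)}\}$ generated by the feedback matrix $L_i' = L + U_i$, with symmetric kernel $M_i = Q_{t+1}^{(i)} + (L_i')\T R_{t+1}^{(i)} L_i'$. Invoking Lemma~\ref{lemma:xmx} applied to this trajectory produces the sub-exponential parameter $2e\gamma_t(L_i',M_i)$ for $H_i^t$, and centering only affects constants.

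The main obstacle is that the sub-exponential parameter a priori depends on the random $L_i'$, whereas Corollary~\ref{cor:concen} requires a common bound $\gamma$. This is exactly why the statement uses the surrogate quantities $\widetilde{\sigma_l}(L,r)$, $\widetilde{\beta_l}(L,r)$, $\widetilde{\gamma_l}(L,r)$, which replace $\|L_i'\|$ by the deterministic upper bound $\|L\|+r$ (valid because $\|U_i\| \leq \|U_i\|_F = r$) and replace $\|M_i\|$ by $K(1+(\|L\|+r)^2)$ using the almost-sure bound from Assumption~\ref{ass:3p}. Since the functions $\sigma_t(\cdot)$, $\beta_t(\cdot)$, $\gamma_t(\cdot,\cdot)$ from Section~\ref{sec:sgosp} are monotone in $\|L\|$ and $\|M\|$, each $H_i^t$ is almost surely sub-exponential with parameter at most $2e\widetilde{\gamma_l}(L,r)$ uniformly in $i$ and in $t\leq l-1$, regardless of the realization of $U_i$.

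With this uniform bound in hand, I apply Corollary~\ref{cor:concen} conditionally on $\{U_i\}$ at accuracy $\epsilon r/(mnl)$ and confidence $1-\delta/l$, using the fact that each $U_i$ has Frobenius norm $r$. The required sample size per time step, after substituting $\gamma = 2e\widetilde{\gamma_l}(L,r)$, matches the expression defining $\hshf$ up to absolute constants. Since the conditional tail bound is uniform in the conditioning, I may integrate to remove the conditioning, and then a union bound over $t = 0,\ldots,l-1$ combined with the triangle inequality on~\eqref{eq:app} gives $\|\hdl - \td\|_F \leq \epsilon$ with probability at least $1-\delta$, completing the proof.
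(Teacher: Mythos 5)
Your proposal follows essentially the same route as the paper: decompose via \eqref{eq:app}, reduce to the per-time-step sums $S_t$, condition so that Lemma~\ref{lemma:xmx} yields a sub-exponential bound uniform in $i$ via the surrogate $\widetilde{\gamma_l}(L,r)$, and finish with Corollary~\ref{cor:concen} plus a union bound over $t$. Two small corrections. First, since the prefactor in \eqref{eq:app} is $\frac{mn}{r^2}$, the per-step accuracy must be $\frac{\epsilon r^2}{mnl}$, not $\frac{\epsilon r}{mnl}$ as you wrote; with your threshold the triangle inequality only yields $\|\hdl-\td\|_F\leq \epsilon/r$, and the resulting sample size would not reproduce $\hshf$ (plugging $\epsilon'=\frac{\epsilon r^2}{mnl}$ into $\hsbf$ is exactly what produces the stated formula). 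Second, Lemma~\ref{lemma:xmx} is stated for a \emph{constant} symmetric kernel $M$, whereas your $M_i=Q_{t+1}^{(i)}+(L_i')\T R_{t+1}^{(i)}L_i'$ is random; the paper handles this by conditioning on $\{U_i,Q_{t+1}^{(i)},R_{t+1}^{(i)}\}$ rather than on $\{U_i\}$ alone. Your weaker conditioning can be made to work (the moment bounds in the proof of Lemma~\ref{lemma:xmx} only use $\|M\|$, which is a.s. bounded by $K(1+(\|L\|+r)^2)$, and $M_i$ is independent of $x_t^{(i)}$), but as written it does not literally invoke the lemma as stated. Neither issue reflects a missing idea; both are repairs of bookkeeping.
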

\begin{proof}
	First, we introduce some notation. 
	Let  $D_t$  be the sets defined as follows:
	\[
	D_t:=\left\{\left\| \frac{1}{N} \sum_{i=1}^N \left(\E\left[ H_i^t \big| U_i \right]  - H_i^t \right)U_i  \right\|_F \leq \frac{\epsilon r^2}{mnl}\right\}.
	\]
	And we use the notation $\left\{U_i ,Q_{t+1}^{(i)} ,R_{t+1}^{(i)}\right\}$ to denote the set of all random matrices $U_i$, $Q_{t+1}^{(i)}$, and $R_{t+1}^{(i)}$, where $i$ and $t$ run over $1, \ldots, N$ and $0, \ldots, l-1$, respectively.

	Using~\eqref{eq:app} and triangular inequality, we get
	\[
	\|\hdl-\td\|_F	\leq \frac{mn}{r^2}\sum_{t=0}^{l-1}\left\| \frac{1}{N} \sum_{i=1}^N \left(\E\left[ H_i^t \big| U_i \right]  - H_i^t \right)U_i  \right\|_F.
	\]
	Thus, we have the following inclusion relationship
	\[
	\left\{ \|\hdl-\td\|_F > \epsilon \right\} \subseteq  \cup_{t=0}^{l-1}D_t^c .	
	\]
	If we can prove that  
	\begin{equation}\label{eq:111}
		\Prob\left( D_t^c \bigg| \left\{U_i ,Q_{t+1}^{(i)} ,R_{t+1}^{(i)}\right\}\right)\leq  \frac{\delta}{l},\quad \forall t\in\{0,\ldots,l-1\}, 
	\end{equation}
	for all $0\leq t <l$. 
	Then, we can deduce that 
	\[
		\begin{aligned}
			\Prob\left(\|\hdl-\td\|_F \leq \epsilon\right)&=1-\Prob\left(\|\hdl-\td\|_F > \epsilon\right)\\
			&=1-\E\left[\Prob\left(\|\hdl-\td\|_F > \epsilon \bigg| \left\{U_i ,Q_{t+1}^{(i)} ,R_{t+1}^{(i)} \right\}\right)\right]\\
			&\geq 1 - \E\left[\sum_{t=0}^{l-1}\Prob\left( D_t^c \bigg| \left\{U_i ,Q_{t+1}^{(i)} ,R_{t+1}^{(i)}\right\}\right)\right]\\
			&\geq 1 - \delta .	
		\end{aligned}
	\]
	Hence, it suffices to prove~\eqref{eq:111}.

	In the rest of the proof, we implicitly use the conditional probability measure $\Prob\left(\quad \cdot \quad \bigg| \left\{U_i ,Q_{t+1}^{(i)} ,R_{t+1}^{(i)} \right\}\right)$.

	We define 
	\(
	M_t^{(i)}:= Q_{t+1}^{(i)} + (L_i')\T R_{t+1}^{(i)} L_i', 
	\)
	for all $i=1,\ldots, N$ and $0 \leq t <l $.
	Then, $H_i^t$ can be expressed as 
	\(
	H_i^t = (x_t^{(i)})\T M_t^{(i)} x_t^{(i)}	
	\)

	By Applying Lemma~\ref{lemma:xmx}, we have 
	\(
		H_i^t 	\sim SE((2{\rm e}\gamma_t(L_i',M_t^{(i)}))^2,2{\rm e}\gamma_t(L_i',M_t^{(i)})),
	\)
	where
	\[
		\gamma_t(L_i', M_t^{(i)})=4 \|M_t^{(i)}\| \left(n \beta_t(L_i')+[(1+\|L_i'\|)K]^{2t}\E[\|x_0\|_2^2]\right)\leq \widetilde{\gamma_l}(L,r),
	\]
	for all $0 \leq t<l$, and $i=1\ldots N$.

	Thus, under the conditional probability $\Prob\left(\quad \cdot \quad \bigg| \left\{U_i ,Q_{t+1}^{(i)} ,R_{t+1}^{(i)} \right\}\right)$, we have 
	\[
		H_i^t \sim SE((2{\rm e}\widetilde{\gamma_l}(L,r))^2,2{\rm e}\widetilde{\gamma_l}(L,r)),
	\]
	for all $0 \leq t<l$, and $i=1\ldots N$.

	Finally, by applying Corollary~\ref{cor:concen}, we complete the proof.
\end{proof}
With the above preparation, the following lemma ensures that the model-free gradient estimator $\td$ can approximate the true gradient with arbitrary accuracy by choosing suitable hyperparameters $(r, l, N)$. 
\begin{lemma}
	\label{lemma:egwfmfhr}
	Adopting same notations as above. 
    For any $\epsilon>0$ and failure probability $\delta>0$, 
	we choose the exploration radius $r$ such that  
	\[
		r\leq h_r(\frac{\epsilon}{8},L,\Delta(L)),
	\]
	the rollout length $l$  such that 
	\[
	l\geq \tilde{h}_l(L,\frac{r\epsilon}{4mn}),
	\] 
	and the sample size $N$ such that 
	\[
	N\geq h_s(\epsilon,\delta,L,l,r),	
	\]
	where 
	\[
	\begin{aligned}
		&h_s(\epsilon,\delta,L,l,r):=\max \left\{\hs(\frac{\epsilon}{8},\frac{\delta}{2},L),\hshf(\frac{\epsilon}{2},\frac{\delta}{2},L,l,r)\right\}.
	\end{aligned}	
	\]
	Then,with at least $1-\delta$ probability, we have the following error bound: 
	\[
	\left\|\nabla C(L)- \td \right\|_F\leq \epsilon.	
	\]
	where the model-free gradient estimator is defined as: 
	\begin{equation}
		\label{eq:gp}
			\td =\frac{1}{N}\sum_{i=1}^N\frac{mn}{r^2}\left[\sum_{t=0}^{l-1}\xuti\T \Lambda_{t+1}^{(i)}\xuti \right]	U_i,
	\end{equation}
	and $\{x_t^{(i)},u_t^{(i)},\Lambda_{t+1}^{(i)}\}_{t=0}^{l-1}$ is a trajectory data using feedback matrix $L+U_i$. 
\end{lemma}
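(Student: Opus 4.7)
The plan is to decompose the total error via the triangle inequality and control the two resulting pieces by invoking Lemma~\ref{lemma:5.8} and Lemma~\ref{lemma:5.9} respectively, then combine via a union bound. Concretely, I would write
\[
\|\nabla C(L) - \td\|_F \;\leq\; \|\nabla C(L) - \hdl\|_F \;+\; \|\hdl - \td\|_F,
\]
and aim for each summand to be at most $\epsilon/2$, with each exceptional event having probability at most $\delta/2$; a union bound then delivers the claim.

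For the first piece, I would invoke Lemma~\ref{lemma:5.8} with accuracy $\epsilon/2$ and failure probability $\delta/2$. That application demands $r\leq h_r(\epsilon/8,L,\Delta(L))$, $N\geq \hs(\epsilon/8,\delta/2,L)$, and $l\geq \tilde h_l(L,r\epsilon/(4mn))$, all three of which are explicitly built into the hypotheses of the present lemma (the $r$-condition and $l$-condition match verbatim, and the $N$-condition is absorbed into the maximum defining $h_s$). For the second piece, I would invoke Lemma~\ref{lemma:5.9} with accuracy $\epsilon/2$ and failure probability $\delta/2$; the only requirement beyond $N\geq \hshf(\epsilon/2,\delta/2,L,l,r)$ — again provided by the max in $h_s$ — is $r\leq \hdelta(L)$, which follows at once from $r\leq h_r(\epsilon/8,L,\Delta(L))\leq \Delta(L)\leq \hdelta(L)$ by the definition of $h_r$ in Lemma~\ref{lemma:egwfmihrr}.

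The argument is fundamentally a bookkeeping exercise rather than one with a serious technical obstacle: the real work — sub-Gaussian propagation through the dynamics, the Bernstein-type concentration of Lemma~\ref{lemma:concen}, and the three separate approximation steps linking $\nabla C(L)$, $\nabla C_r(L)$, $\hdl$, and $\td$ — has already been done in Lemmas~\ref{lemma:egwfmihrr} through~\ref{lemma:5.9}. The only care required here is to track the cascading accuracy budgets (the factor $\tfrac{1}{8}$ arising from splitting $\epsilon$ first across the outer triangle inequality and then again inside Lemma~\ref{lemma:5.8}, which itself is a triangle-inequality combination of Lemmas~\ref{lemma:egwfmihr} and~\ref{lemma:egwfmihrr}) and the two halves of the failure budget $\delta$, so that the two invoked lemmas simultaneously succeed with total failure probability at most $\delta$ and give the desired bound $\|\nabla C(L)-\td\|_F\leq \epsilon$.
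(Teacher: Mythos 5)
Your proposal is correct and follows essentially the same route as the paper: the paper likewise splits $\|\nabla C(L)-\td\|_F$ by the triangle inequality into the $\hdl$ term and the $\hdl-\td$ term, applies Lemma~\ref{lemma:5.8} and Lemma~\ref{lemma:5.9} each at accuracy $\epsilon/2$ and failure probability $\delta/2$ (with exactly the parameter checks you list, including $r\leq h_r(\epsilon/8,L,\Delta(L))\leq\Delta(L)\leq\hdelta(L)$), and concludes by a union bound over the two bad events.
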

\begin{proof}
	We break the difference between model-free and true gradient into two terms:  
	\begin{equation}\label{eq:222}
	\|\nabla C(L) - \td\|_F\leq \|\nabla C(L) - \hdl\|_F + \|\hdl - \td\|_F.
	\end{equation}
	Let us define two sets
	\[
	\begin{aligned}
	&G:=\left\{\|\nabla C(L) - \hdl\|_F\leq \frac{\epsilon }{2}\right\},\\
	&F:=\left\{\| \hdl-\td \|_F\leq \frac{\epsilon }{2}\right\}.
	\end{aligned}	
	\]
	Note that
	\[
		\left\{\|\nabla C(L) - \td\|_F > \epsilon \right\} \subseteq G^c \cup F^c.
	\]
	
	To handle the first term in~\eqref{eq:222}, we use Lemma~\ref{lemma:5.8}. 
	Since $r\leq h_r(\frac{\epsilon}{8},L,\Delta(L))$ and $N\geq h_s(\epsilon,\delta,L,l,r)\geq  \hs(\frac{\epsilon}{8},\frac{\delta}{2},L)$, 
	Lemma~\ref{lemma:5.8} implies  
	\[
	\Prob(G^c)=\Prob\left(\|\nabla C(L)-\hdl\|_F > \frac{\epsilon}{2}\right)\leq \frac{\delta}{2}	.
	\]
	
	To handle the second term in~\eqref{eq:222}, we use Lemma~\ref{lemma:5.9}. 
	Since  $r\leq h_r(\frac{\epsilon}{8},L,\Delta(L))\leq \Delta(L)$ and $N\geq h_s(\epsilon,\delta,L,l,r) \geq \hshf(\frac{\epsilon}{2},\frac{\delta}{2},L,l,r)$, 
	Lemma~\ref{lemma:5.9} implies
	\[
	\Prob(F^c)=\Prob\left(\|\hdl-\td\|_F > \frac{\epsilon}{2}\right)\leq \frac{\delta}{2}	.
	\]
	Combining the above results, we obtain
	\[
		\begin{aligned}
			\Prob\left(\|\nabla C(L) -\td\|_F \leq \epsilon\right)&=1-\Prob\left(\|\nabla C(L) -\td\|_F > \epsilon\right)\\
			&\geq 1- \Prob\left(G^c \right)-\Prob\left(F^c \right)\\
			&\geq 1 - \delta 	.
		\end{aligned}
	\]
\end{proof}
\subsection{Proof of theorem~\ref{thm:samplebased}}\label{sec:pot2}
Combing above preparing lemmas, we are now ready to prove the global convergence of the model-free gradient method. 
\begin{thmn}[\ref{thm:samplebased}]\label{thm:thm222}
	Assuming that Assumptions~\ref{ass:1},~\ref{ass:2} and~\ref{ass:3} hold, 
	and $\{U_i\}_{i=1}^N$are independent and identically distributed samples from $\SSbb_r$. 
	Let $$\widehat{\nabla C}(L_k)=\frac{1}{N}\sum_{i=1}^N\frac{mn}{r^2}\left[\sum_{t=0}^{l-1}\xuti\T \Lambda_{t+1}^{(i)}\xuti \right]	U_i$$ be the model-free gradient term, 
	where $\{x_t^{(i)},u_t^{(i)},\Lambda_{t+1}^{(i)}\}_{t=0}^{l-1}$  is a sample trajectory generated by feedback matrix $L_k+U_i$ for each $i=1,\ldots,N$. 
	We  use the model-free policy gradient update: 
	\[
	L_{k+1}=L_k-\eta\widehat{\nabla C}(L_k)
	\]
	where $L_0\in U_{ad}$ is chosen arbitrarily. 
	We define the sublevel set 
	$$S:=\left\{L\in\R^{m\times n}\big| C(L)\leq 2C(L_0)\right\}.$$ 

	For any $\epsilon>0$ and failure probability $\delta>0$, 
	if we choose the step size $\eta$  such that $0<\eta<\eta^*$, 
	the exploration radius $r$  such that $r\leq \hrgd(\frac{\epsilon'}{4})$, 
	the rollout length $l$ such that $l\geq \hlgd(\frac{r\epsilon'}{4mn})$, 
	and the sample size $N$ such that $N\geq \hs^* (\epsilon',\delta',l,r)$.
	Then, with probability at least $1-\delta$, at most at the $(k+1)$-th update step, we have the following performance guarantee:
	\[
	C(L_{k+1})-C(L^*)<\epsilon,
	\]
	where 
	\[
		\begin{aligned}
		&\bhdelta:=\min_{L\in S}\hdelta(L),\\
		&\overline{h_c}:=\max_{L\in S}h_c(L,\bhdelta),\\	
		&\epsilon':=\min\left\{\frac{\overline{\hdelta}}{\eta},\frac{\mu^2\us_{\br}\epsilon}{2\|\Sigma_{L^*}\|C(L_0)\overline{h_c}},\epsilon\right\},	\\
		&k= \left\lceil \frac{\|\Sigma_{L^*}\|}{\eta\mu^2\us_{\br}}	\log\frac{C(L_0)-C(L^*)}{\epsilon}\right\rceil,\\
		&\delta':=\frac{2\delta}{(k+1)(k+2)},\\
		&\hrgd(\frac{\epsilon'}{8}):=\min_{L\in S}h_r(\frac{\epsilon'}{8},L,\bhdelta),\\
		&h_r(\frac{\epsilon'}{8},L,\bhdelta):=\min\left\{\bhdelta,\frac{1}{h_c(L,\bhdelta)},\frac{\epsilon'}{8\widehat{\hgrad}(L,\bhdelta)}\right\},	\\
		&\hlgd(\frac{r\epsilon'}{4mn}):=\max_{L\in S}\widetilde{h}_l(L,\frac{r\epsilon'}{4mn}),\\
		&\hs^* (\epsilon',\delta',l,r):=\max_{L\in S}h_s(\epsilon',\delta',L,l,r).
		\end{aligned}
	\]
\end{thmn}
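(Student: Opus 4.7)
The plan is to establish the theorem by induction on the iteration index, combining the model-based one-step decrease from Theorem~\ref{thm:mb} with the high-probability gradient-approximation guarantee of Lemma~\ref{lemma:egwfmfhr}, then closing everything with a union bound over iterations. The key conceptual move is to maintain the invariant that every iterate $L_j$ stays in the sublevel set $S=\{L : C(L)\leq 2C(L_0)\}$; once this is in hand, the quantities $\bhdelta$, $\overline{h_c}$, $h_r$, $\widetilde{h}_l$ and $h_s$ can be replaced by their suprema/infima over $S$, which is exactly the role played by $\hrgd,\hlgd,\hs^*$ in the statement.

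First, I would verify the base case $L_0\in S$ trivially and then carry out the inductive step. Assume $L_j\in S$ for $j\leq k$. Applying Lemma~\ref{lemma:egwfmfhr} with target accuracy $\epsilon'$ and failure probability $\delta'$, the choices $r\leq \hrgd(\epsilon'/8)$, $l\geq \hlgd(r\epsilon'/(4mn))$ and $N\geq \hs^*(\epsilon',\delta',l,r)$ imply that on an event $E_j$ of probability at least $1-\delta'$, $\|\widehat{\nabla C}(L_j)-\nabla C(L_j)\|_F\leq \epsilon'$. On this event, write the update as a perturbation of the model-based update $L_j^{+}:=L_j-\eta\nabla C(L_j)$, so that $\|L_{j+1}-L_j^{+}\|_F\leq \eta\epsilon'$. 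The constraint $\epsilon'\leq \bhdelta/\eta$ ensures both $L_j^{+}$ and $L_{j+1}$ lie in the admissible neighborhood where Lemma~\ref{lemma:cp} applies, giving
\[
|C(L_{j+1})-C(L_j^{+})|\leq \overline{h_c}\, C(L_0)\, \eta\epsilon'.
\]
Combining with the model-based one-step bound from Theorem~\ref{thm:mb} applied at $L_j$,
\[
C(L_{j+1})-C(L^*)\leq \Bigl(1-2\eta\tfrac{\mu^2\us_{\br}}{\|\Sigma_{L^*}\|}\Bigr)\bigl(C(L_j)-C(L^*)\bigr)+\overline{h_c}\, C(L_0)\,\eta\epsilon'.
\]
The further choice $\epsilon'\leq \mu^2\us_{\br}\epsilon/(2\|\Sigma_{L^*}\|C(L_0)\overline{h_c})$ absorbs the additive perturbation into the contraction, so iterating this inequality yields a linear decrease up to an $\epsilon/2$ floor and, in particular, keeps $C(L_{j+1})\leq 2C(L_0)$, which closes the induction step $L_{j+1}\in S$.

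To convert the per-step failure probability into an overall bound, I would union-bound the events $E_0,\ldots,E_k$: setting $\delta'=2\delta/((k+1)(k+2))$ ensures $(k+1)\delta'\leq \delta$, so on the intersection $\bigcap_{j=0}^{k}E_j$ (which has probability $\geq 1-\delta$) the above recursion holds at every step. Iterating the contraction $k=\lceil (\|\Sigma_{L^*}\|/(\eta\mu^2\us_{\br}))\log((C(L_0)-C(L^*))/\epsilon)\rceil$ times gives
\[
C(L_{k+1})-C(L^*)\leq \Bigl(1-2\eta\tfrac{\mu^2\us_{\br}}{\|\Sigma_{L^*}\|}\Bigr)^{k+1}\bigl(C(L_0)-C(L^*)\bigr)+\tfrac{\epsilon}{2}< \epsilon,
\]
which is the required performance guarantee.

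The main obstacle, in my view, is not any single inequality but the careful bookkeeping required to guarantee that the hyperparameter bounds $(r,l,N)$ chosen once at the start of the algorithm remain simultaneously valid at every iterate. This is why the sublevel set $S$ must be frozen ahead of time, why the $\min_{L\in S}$ and $\max_{L\in S}$ appear in the definitions of $\hrgd,\hlgd,\hs^*$, and why the inductive argument must simultaneously propagate both the cost-decrease bound and the membership $L_j\in S$. A secondary subtlety is balancing the three constraints on $\epsilon'$: it must be small enough for Lemma~\ref{lemma:cp} to apply ($\epsilon'\leq \bhdelta/\eta$), small enough for the perturbation of cost to be absorbed into the contraction, and at most $\epsilon$ itself; this is precisely the three-way minimum in the definition of $\epsilon'$ in the theorem statement.
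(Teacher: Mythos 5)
Your overall strategy---one-step model-based contraction plus a high-probability gradient-approximation event, perturbation control via Lemma~\ref{lemma:cp}, and a union bound over iterations, all uniformized over a frozen sublevel set---is exactly the paper's. Two places in your sketch, however, gloss over steps where the paper has to do real work, and they affect the quantitative conclusion. The first is the probability accounting: you union-bound the events $E_j$ as if each had \emph{unconditional} probability at least $1-\delta'$, but Lemma~\ref{lemma:egwfmfhr} delivers that guarantee only conditionally on the history having kept $L_j$ inside the set where the frozen hyperparameters are valid; if an earlier step failed, $L_j$ need not even be admissible and the lemma does not apply. The paper therefore proves the conditional bound $\Prob(E_j^c\cap\Theta^c\mid E_{j-1})\leq\delta'$, unrolls the conditioning to get $\Prob(E_j^c\cap\Theta^c)\leq (j+1)\delta'$, and sums to $\sum_{j=0}^{k}(j+1)\delta'=\tfrac{(k+1)(k+2)}{2}\delta'=\delta$---which is precisely why $\delta'$ carries the quadratic denominator; your observation that $(k+1)\delta'\leq\delta$ is not the accounting that the definition of $\delta'$ is built for.

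The second issue is how the estimation error is absorbed. Writing $\rho=\mu^2\us_{\br}/\|\Sigma_{L^*}\|$, your additive-floor recursion gives $C(L_{k+1})-C(L^*)\leq(1-2\eta\rho)^{k+1}(C(L_0)-C(L^*))+\epsilon/2$, and with the stated $k$ (which is calibrated to the rate $1-\eta\rho$, not $1-2\eta\rho$) this closes only after extra edge-case care when $\epsilon$ is comparable to $C(L_0)-C(L^*)$. The paper instead introduces the early-convergence event $D_j=\{C(L_j)-C(L^*)<\epsilon\}$ and, on $D_j^c$, absorbs the additive perturbation $\eta\rho\epsilon\leq\eta\rho\,(C(L_j)-C(L^*))$ \emph{multiplicatively}, degrading the contraction factor from $1-2\eta\rho$ to $1-\eta\rho$; the stated $k$ then yields $C(L_{k+1})-C(L^*)<\epsilon$ with no floor at all. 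Relatedly, the paper's induction carries the stronger invariant $C(L_{j+1})\leq C(L_0)$ rather than your $C(L_{j+1})\leq 2C(L_0)$: the stronger form is what places the intermediate point $L_j'$ and the rollout policies $L_j+U_i$ (whose cost can double that of $L_j$) inside $S$, so that the extrema defining $\overline{h_c},\hrgd,\hlgd,\hs^*$ actually cover every matrix the algorithm touches; with your weaker invariant those constants would have to be re-derived on the $4C(L_0)$-sublevel set.
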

\begin{proof}$\quad$

	\noindent {\bf Step 1: }  

	The well-definedness and sample complexity of $\eta^*$ have been shown in Theorem \ref{thm:mb}.
	The well-definedness and sample complexity of  $\bhdelta$, $\overline{h_c}$, $\epsilon'$, $\hrgd$, $\hlgd$, $\hs^*$ follow from 
	Lemmas~\ref{lemma:wdbhd},~\ref{lemma:wdbhc},~\ref{lemma:wde},~\ref{lemma:wdhrgd},~\ref{lemma:wdhlgd}, and~\ref{lemma:wdhss} respectively.
	
	\noindent {\bf Step 2: }  

	In order to simplify our argument, we define the following sets:
	\[
		\begin{aligned}
			&D_j:=\left\{C(L_{j})-C(L^*)<\epsilon\right\},\\
			&\Theta:=\cup_{j=0}^k D_j,\\
			&E_j:=\left\{ C(L_{j+1})\leq C(L_0) \right\}\cap\left\{C(L_{j+1})-C(L^*)\leq \left(1-\eta\frac{\mu^2\us_{\br}}{\|\Sigma_{L^*}\|}\right)(C(L_j)-C(L^*))\right\}.	
		\end{aligned}
	\]
	where $j\geq 0$ and $E_{-1}=\Omega$.

	\noindent{\bf Claim: } When $0\leq j\leq k$, it holds that
	\[
		\Prob\left(E_j\cup \Theta \big| E_{j-1}\right)\geq\Prob\left(E_j\cup D_j \big| E_{j-1}\right)\geq 1 - \delta'.
	\]
	This claim means that if $j$-th update is successful, 
	then, with at least probability $1-\delta'$, 
	either the $(j+1)$-th update is successful: 
	\[
	C(L_{j+1})-C(L^*)\leq \left(1-\eta\frac{\mu^2\us_{\br}}{\|\Sigma_{L^*}\|}\right)(C(L_j)-C(L^*))\quad \implies\quad C(L_{j+1})\leq C(L_{j})\leq C(L_0) ,
	\] 
	or convergence has been attained early:
	\[
	C(L_{j})-C(L^*)<\epsilon.	
	\]

	\noindent{\it Proof of the claim: } 
	First note that the condition $E_{j-1}$ implies $C(L_j)\leq C(L_0)<\infty$ (when $j=0$ this holds automatically which is consistent with $E_{-1}=\Omega$). 
	Let 
	\[
		L_j'=L_j-\eta\nabla C(L_j),\qquad L_{j+1}=L_j-\eta \widehat{\nabla C}(L_j).	
	\]
	Since $r\leq \hrgd(\frac{\epsilon'}{8})\leq \min\left\{\hdelta(L_j),(L_c(L_j,\bhdelta))^{-1}\right\}$, Lemma~\ref{lemma:cp} gives
	\[
		|C(L_j+U_i)-C(L_j)|\leq h_c(L_j,\bhdelta)\|U_i\|C(L_j)\leq C(L_j).	
	\]
	Thus 
	$$C(L_j+U_i)<2C(L_j)\leq 2C(L_0) \implies L_j+U_i \in S\quad \forall U_i\in S_F(0,r).$$ 
	By Theorem \ref{thm:mb} ,
	\[
	C(L_j')<C(L_j)\leq C(L_0)\implies L_j'\in S.	
	\]
	Hence, every $L$ encountered in the $(j+1)$-th update step are legal, except $L_{j+1}$. 

	Observe that $L_j'-L_{j+1}=\eta(\nabla C(L_j)-\widehat{\nabla C}(L_j))$. 
	Using Lemma \ref{lemma:egwfmfhr} and definition of $\epsilon'$, 
	we have 
	\[
	\|L_{j+1}-L_j'\|=\eta\|\nabla C(L_j)-\td_j\|\leq 	\eta \epsilon'\leq\bhdelta\leq \hdelta(L_j'),
	\]
	with probability at least $1-\delta'$. 
	Combing this result with Lemma~\ref{lemma:cp} we obtain that 
	\[
	|C(L_{j+1})-C(L_j')|\leq C(L_j')h_c(L_j',\bhdelta)\eta \epsilon'\leq \eta\frac{\mu^2 \us_{\br}}{\|\Sigma_{L^*}\|}\epsilon.	
	\]
	with at least probability $1-\delta'$, i.e.,  
	\begin{equation}
		\label{eq:thm22}
		\Prob\left(\left\{|C(L_{j+1})-C(L_j')|\leq \eta\frac{\mu^2 \us_{\br}}{\|\Sigma_{L^*}\|}\epsilon\right\}\bigg|E_{j-1}\right)	\geq 1-\delta'.
	\end{equation}
	Note that on the set $\left\{|C(L_{j+1})-C(L_j')|\leq \eta\frac{\mu^2 \us_{\br}}{\|\Sigma_{L^*}\|}\epsilon\right\}\cap D_j^c$, 
	using Theorem~\ref{thm:mb}, we have 
	\[
	\begin{aligned}
	C(L_{j+1})-C(L^*)&\leq \left|C(L_{j+1})-C(L_j')\right|+C(L_j')-C(L^*)\\
	&\leq 	(\eta\frac{\mu^2 \us_{\br}}{\|\Sigma_{L^*}\|}+1-2\eta\frac{\mu^2 \us_{\br}}{\|\Sigma_{L^*}\|})(C(L_j)-C(L^*))	\\
	&\leq 	(1-\eta\frac{\mu^2 \us_{\br}}{\|\Sigma_{L^*}\|})(C(L_j)-C(L^*)),
	\end{aligned}	
	\] 
	which implies
	\[
	C(L_{j+1})\leq C(L_j)\leq C(L_0).
	\]
	In other words, 
	\[
	E_j\supseteq 	\left\{|C(L_{j+1})-C(L_j')|\leq \eta\frac{\mu^2 \us_{\br}}{\|\Sigma_{L^*}\|}\epsilon\right\}\cap D_j^c.
	\]
	Therefore by~\eqref{eq:thm22}, we obtain
	\[
	\begin{aligned}
		\Prob\left(E_j\cup \Theta\big| E_{j-1}\right)&\geq\Prob\left(E_j\cup D_j\big| E_{j-1}\right)\\
		&\geq \Prob\left(\left(\left\{|C(L_{j+1})-C(L_j')|\leq \eta\frac{\mu^2 \us_{\br}}{\|\Sigma_{L^*}\|}\epsilon\right\}\cap D_j^c\right)\cup D_j \big| E_{j-1}\right) \\
		&= \Prob\left(\left\{|C(L_{j+1})-C(L_j')|\leq \eta\frac{\mu^2 \us_{\br}}{\|\Sigma_{L^*}\|}\epsilon\right\} \cup D_j \big| E_{j-1}\right) \\
		&\geq \Prob\left(\left\{|C(L_{j+1})-C(L_j')|\leq \eta\frac{\mu^2 \us_{\br}}{\|\Sigma_{L^*}\|}\epsilon\right\} \bigg| E_{j-1}\right)\\
		&\geq 1-\delta'.
	\end{aligned}	
	\]
	This conclude the proof of our claim.

	\noindent {\bf Step 3: }

	Step 2 implies that 
	\[
		\Prob\left(E_j^c\cap \Theta^c \big| E_{j-1}\right)\leq \delta',\quad 0\leq j\leq k.
	\] 
	Specially at the first step, we have
	$$\Prob(E_0^c\cap \Theta^c)\leq \delta',\quad \Prob\left(E_1^c\cap \Theta^c \big| E_{0}\right)\leq \delta',$$
	then 
	\[
	\begin{aligned}
	\Prob\left(E_1^c\cap \Theta^c\right)&=\Prob\left(E_1^c\cap \Theta^c\big| E_0\right)\Prob\left(E_0\right)+\Prob\left(E_1^c\cap \Theta^c\big| E_0^c\right)\Prob\left(E_0^c\right)\\
	&\leq \Prob\left(E_1^c\cap \Theta^c\big| E_0\right)+\Prob\left(E_1^c\cap \Theta^c\cap E_0^c\right)\\
	&\leq \Prob\left(E_1^c\cap \Theta^c\big| E_0\right)+\Prob\left(E_0^c\cap \Theta^c\right)\\
	&\leq 2\delta'.
	\end{aligned}
	\]
	Inductively, we obtain
	\[
	\begin{aligned}
	\Prob\left(E_j^c\cap \Theta^c\right)&=\Prob\left(E_j^c\cap \Theta^c\big| E_{j-1}\right)\Prob\left(E_{j-1}\right)+\Prob\left(E_j^c\cap \Theta^c\big| E_{j-1}^c\right)\Prob\left(E_{j-1}^c\right)\\
	&\leq \Prob\left(E_j^c\cap \Theta^c\big| E_{j-1}\right)+\Prob\left(E_j^c\cap \Theta^c\cap E_{j-1}^c\right)\\
	&\leq \delta'+\Prob\left(E_{j-1}^c\cap \Theta^c\right)\\
	&\leq (j+1)\delta'.
	\end{aligned}
	\]

	For the event $\omega\in \left(\cap_{j=0}^{k}E_j\right)\cup \Theta$, at most at the ($k+1$)-th update step, we have
	\[
		C(L_{k+1})-C(L^*)\leq \left(1-\eta\frac{\mu^2\us_{\br}}{\|\Sigma_{L^*}\|}\right)^{k+1}(C(L_0)-C(L^*))< \epsilon,
	\]
	and by union bound
	\[
	\Prob\left(\left(\left(\cap_{j=0}^{k}E_j\right)\cup \Theta\right)^c \right)=\Prob\left(\cup_{j=0}^{k}\left(E_j^c\cap \Theta^c\right)\right)\leq\sum_{j=0}^k(j+1)\delta'= \delta.
	\]
	This conclude the proof of this theorem.
\end{proof}
\section{Conclusions}

In conclusion, this work has demonstrated the convergence of policy gradient methods in both model-based and model-free settings for LQ systems with random parameters. 
The proof techniques developed in this work can be extended to analyze the convergence of similar algorithms, such as natural policy gradient and projected policy gradient methods. 
Furthermore, examining the optimal sample complexities of parameters $(N,r,l,k,\eta^*)$ and
relaxing the assumption of almost sure boundedness on random parameters is directions for future research. 

\vspace{0.3cm}
\noindent{\bf Acknowledgments} 

I would like to express my  sincere  gratitude to Prof. Kai Du from Fudan University 
for valuable discussions, suggestions, and guidance during the course of this research.  

The author's research is supported by the National Key R{\&}D Program of China (No.~2022ZD0116401). 

\bibliographystyle{plain}
\bibliography{ref.bib}

\begin{thebibliography}{10}

\bibitem{aoki1976}
M.~Aoki.
\newblock {\em Optimal Control and System Theory in Dynamic Economic Analysis}.
\newblock Number Vol. 1 in A Series of Volumes in Dynamic Economics : Theory
  and Applications. North Holland Publishing Company, 1976.

\bibitem{athans1977}
Michael Athans, Richard Ku, and Stanley Gershwin.
\newblock The uncertainty threshold principle: Some fundamental limitations of
  optimal decision making under dynamic uncertainty.
\newblock {\em IEEE Transactions on Automatic Control}, 22(3):491--495, 1977.

\bibitem{beghi1998}
Alessandro Beghi and Domenico D'alessandro.
\newblock Discrete-time optimal control with control-dependent noise and
  generalized riccati difference equations.
\newblock {\em Automatica}, 34(8):1031--1034, 1998.

\bibitem{de1982}
Willem~L De~Koning.
\newblock Infinite horizon optimal control of linear discrete time systems with
  stochastic parameters.
\newblock {\em Automatica}, 18(4):443--453, 1982.

\bibitem{dean2020}
Sarah Dean, Horia Mania, Nikolai Matni, Benjamin Recht, and Stephen Tu.
\newblock On the sample complexity of the linear quadratic regulator.
\newblock {\em Foundations of Computational Mathematics}, 20(4):633--679, 2020.

\bibitem{drenick1964}
R~Drenick and L~Shaw.
\newblock Optimal control of linear plants with random parameters.
\newblock {\em IEEE Transactions on Automatic Control}, 9(3):236--244, 1964.

\bibitem{du2022}
Kai Du, Qingxin Meng, and Fu~Zhang.
\newblock A q-learning algorithm for discrete-time linear-quadratic control
  with random parameters of unknown distribution: convergence and
  stabilization.
\newblock {\em SIAM Journal on Control and Optimization}, 60(4):1991--2015,
  2022.

\bibitem{fazel2018}
Maryam Fazel, Rong Ge, Sham Kakade, and Mehran Mesbahi.
\newblock Global convergence of policy gradient methods for the linear
  quadratic regulator.
\newblock In {\em International Conference on Machine Learning}, pages
  1467--1476. PMLR, 2018.

\bibitem{gravell2020}
Benjamin Gravell, Peyman~Mohajerin Esfahani, and Tyler Summers.
\newblock Learning optimal controllers for linear systems with multiplicative
  noise via policy gradient.
\newblock {\em IEEE Transactions on Automatic Control}, 66(11):5283--5298,
  2020.

\bibitem{hambly2021}
Ben Hambly, Renyuan Xu, and Huining Yang.
\newblock Policy gradient methods for the noisy linear quadratic regulator over
  a finite horizon.
\newblock {\em SIAM Journal on Control and Optimization}, 59(5):3359--3391,
  2021.

\bibitem{horn2012}
Roger~A Horn and Charles~R Johnson.
\newblock {\em Matrix analysis}.
\newblock Cambridge university press, 2012.

\bibitem{kalman1961}
R.~E. Kalman.
\newblock Control of randomly varying linear dynamical systems.
\newblock {\em Proceedings of Symposia in Applied Mathematics}, pages 287--298,
  1961.

\bibitem{ku1977}
Richard Ku and Michael Athans.
\newblock Further results on the uncertainty threshold principle.
\newblock {\em IEEE Transactions on Automatic Control}, 22(5):866--868, 1977.

\bibitem{lai2023}
Jing Lai, Junlin Xiong, and Zhan Shu.
\newblock Model-free optimal control of discrete-time systems with additive and
  multiplicative noises.
\newblock {\em Automatica}, 147:110685, 2023.

\bibitem{morozan1983}
Toader Morozan.
\newblock Stabilization of some stochastic discrete--time control systems.
\newblock {\em Stochastic Analysis and Applications}, 1(1):89--116, 1983.

\bibitem{simchowitz2018}
Max Simchowitz, Horia Mania, Stephen Tu, Michael~I Jordan, and Benjamin Recht.
\newblock Learning without mixing: Towards a sharp analysis of linear system
  identification.
\newblock In {\em Conference On Learning Theory}, pages 439--473. PMLR, 2018.

\bibitem{tiedemann1984}
AR~Tiedemann and WL~De~Koning.
\newblock The equivalent discrete-time optimal control problem for
  continuous-time systems with stochastic parameters.
\newblock {\em International Journal of Control}, 40(3):449--466, 1984.

\bibitem{tu2019}
Stephen Tu and Benjamin Recht.
\newblock The gap between model-based and model-free methods on the linear
  quadratic regulator: An asymptotic viewpoint.
\newblock In {\em Conference on Learning Theory}, pages 3036--3083. PMLR, 2019.

\bibitem{ver2018}
Roman Vershynin.
\newblock {\em High-dimensional probability: An introduction with applications
  in data science}, volume~47.
\newblock Cambridge university press, 2018.

\bibitem{wain2019}
Martin~J Wainwright.
\newblock {\em High-dimensional statistics: A non-asymptotic viewpoint},
  volume~48.
\newblock Cambridge University Press, 2019.

\end{thebibliography}

\newpage

\appendix
\section{Proof of technical results}\label{sec:tech}
Except for the lemmas discussing the concentration inequalities, the proofs for the following lemmas mainly follow the proof techniques in~\cite{fazel2018,gravell2020}, with the relevant quantities replaced by those in the current problem.  
All proofs are presented for the sake of completeness and rigor.
\subsection{Proofs in Section~\ref{sec:cigd}}\label{sec:po3.1}

First of all, we define the value function under linear feedback control $u_.=-Lx_.$ at $x_0$ as:
\begin{equation}
	\label{eq:vfulc}
	V_L(x_0)=\E\left[J(x_0,-Lx_.)\big| x_0\right]=x_0\T P_Lx_0.
\end{equation}
The dynamics programming implies that $P_L$ is the positive semi-definite solution to the following stochastic Lyapunov equation:
\begin{equation}
\label{eq:arelinear}
    P_L=\overline{Q}+L\T \overline{R} L+\overline{(A-BL)\T P_L(A-BL)}.
\end{equation}

It follows that the cost function $C(L)$ can be rewritten as:
\begin{equation}
\label{eq:closedformcost}
    C(L)=\E_{x_0}\left[V_L(x_0)\right]=\E_{x_0}\left[x_0\T P_Lx_0\right]=\tr(P_L\Sigma_0).
\end{equation}
Recall that in Lemma~\ref{lemma:gd} we have defined that  
\begin{align}\label{eq:RLEL}
    R_L&:=\rl,\\
    E_L&:=R_L L-\bbpla.
\end{align}
The closed form expression~\eqref{eq:closedformcost} of $C(L)$ and the fact that $P_L$ in the expression satisfies the equation~\eqref{eq:arelinear} suggest the following representation of gradient term. 
\begin{lemma}[Policy Gradient Expression]
	The policy gradient is:
	\[
	\nabla C(L)=2E_L\Sigma_L=2\overline{(R+B\T P_L B)L-B\T P_L A}\Sigma_L	.
	\] 
\end{lemma}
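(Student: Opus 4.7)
The plan is to start from the closed-form representation $C(L) = \tr(P_L \Sigma_0)$, where $P_L$ is the unique positive semi-definite solution to the stochastic Lyapunov equation~\eqref{eq:arelinear}, and obtain $\nabla C(L)$ by implicit differentiation of that equation. First, I fix a direction $H \in \R^{m \times n}$ and compute the directional derivative $\dot{P} := \frac{d}{dt}\big|_{t=0} P_{L+tH}$. Differentiating \eqref{eq:arelinear} at $t=0$, $\dot{P}$ solves the Lyapunov-type equation
\[
    \dot{P} - \FF_L^*(\dot{P}) = M,
\]
where $\FF_L^*(X) = \overline{(A-BL)^{\top} X (A-BL)}$ is the adjoint of $\FF_L$ with respect to the Frobenius inner product, and $M$ is the linear-in-$H$ residue
\[
    M = H^{\top} \bar{R} L + L^{\top} \bar{R} H - \overline{H^{\top} B^{\top} P_L(A-BL)} - \overline{(A-BL)^{\top} P_L B H}.
\]
Since $L \in U_{ad}$ gives $\rho(\FF_L) < 1$, the operator $\id - \FF_L^*$ is invertible, so $\dot{P} = \sum_{t=0}^{\infty} (\FF_L^*)^t(M)$ is well-defined.

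Next, I use $\langle \nabla C(L), H\rangle_F = \tr(\dot{P}\Sigma_0)$ together with the adjoint duality $\langle X, \FF_L(Y)\rangle_F = \langle \FF_L^*(X), Y\rangle_F$ and the series representation $\Sigma_L = \sum_{t=0}^{\infty} \FF_L^t(\Sigma_0)$ to write
\[
    \langle \nabla C(L), H\rangle_F = \sum_{t=0}^{\infty} \tr\bigl((\FF_L^*)^t(M)\Sigma_0\bigr) = \sum_{t=0}^{\infty} \tr\bigl(M \FF_L^t(\Sigma_0)\bigr) = \tr(M \Sigma_L).
\]
Then I simplify $\tr(M\Sigma_L)$ using the symmetry of $\Sigma_L$, which implies $\tr(X\Sigma_L) = \tr(X^{\top}\Sigma_L)$ for every $X$. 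Hence each pair of mutually transposed terms in $M$ contributes equally and
\[
    \tr(M\Sigma_L) = 2\,\tr\!\Bigl(H^{\top}\bar{R} L \Sigma_L - H^{\top}\overline{B^{\top} P_L(A-BL)}\,\Sigma_L\Bigr) = 2\,\tr(H^{\top} E_L \Sigma_L),
\]
after recognizing the cancellation $\bar{R} L - \overline{B^{\top} P_L(A-BL)} = \overline{R + B^{\top} P_L B}\,L - \overline{B^{\top} P_L A} = E_L$. Since this holds for every $H$, we conclude $\nabla C(L) = 2 E_L \Sigma_L$.

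The main potential obstacle is justifying that the Lyapunov equation for $\dot{P}$ has a unique solution and that $\Sigma_L$ is well-defined, but both follow immediately from $L \in U_{ad}$ (equivalently $\rho(\FF_L) = \rho(\FF_L^*) < 1$); after that, the computation is just careful bookkeeping of traces together with the elementary identity $\bar{R} L + \overline{B^{\top} P_L B}\,L - \overline{B^{\top} P_L A} = R_L L - \overline{B^{\top} P_L A}$.
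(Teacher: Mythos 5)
Your proposal is correct and follows essentially the same route as the paper: both differentiate $C(L)=\tr(P_L\Sigma_0)$ through the stochastic Lyapunov equation for $P_L$ and resum the resulting series into $\Sigma_L$. The only difference is bookkeeping --- the paper unrolls the recursion for $\nabla_L P_L$ inductively, carrying a remainder $\tr(\nabla_L P_L\,\Sigma_{t+1})$ that vanishes as $t\to\infty$, whereas you solve the linearized equation in closed form as $\dot P=(\id-\FF_L^*)^{-1}(M)$ and pass to $\Sigma_L$ by adjoint duality, which is equally valid since $\rho(\FF_L)<1$ justifies both the Neumann series and the smoothness of $L\mapsto P_L$.
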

\begin{proof}
    Plugging the right hind side of Equation~\eqref{eq:arelinear} in $C(L)=\tr(P_L\Sigma_0)$ gives
	\begin{equation}
        C(L)=\tr(\bq \Sigma_0)+\tr(L\T \br L\Sigma_0)+\tr(\overline{(A-BL)\T P_L (A-BL)}\Sigma_0).
    \end{equation}
	By chain rule, taking derivative with respect to $L$ gives
	\[
	\begin{aligned}
		\nabla_L C(L)&=\nabla_L \tr(P_L\Sigma_0)\\
		&=\nabla_L\left(\tr(\bq \Sigma_0)+\tr(L\T \br L\Sigma_0)+\tr(\overline{(A-BL)\T P_L (A-BL)}\Sigma_0)\right)\\
		&=2\br L\Sigma_0-2\overline{B\T P_L(A-BL)}\Sigma_0+\tr(\overline{(A-B L)\T \nabla_{L}P_{L} (A-B L)}\Sigma_0)\\
		&=2\overline{(R+B\T P_L B)L-B\T P_L A}\Sigma_0 +\E\left[x_0\T (A_1-B_1L)\T \nabla_{L}P_{L} (A_1-B_1L)x_0\right] \\
		&=2\overline{(R+B\T P_L B)L-B\T P_L A}\Sigma_0+\E\left[x_1\T \nabla_{L}P_{L} x_1\right] \\
		&=2\overline{(R+B\T P_L B)L-B\T P_L A}\Sigma_0+\tr(\nabla_{L}P_{L} \Sigma_1).
	\end{aligned}	
	\]
	Inductively, we obtain
\[
	\nabla_L C(L)=2\left(\overline{(R+B\T P_L B)L-B\T P_L A}\right)\sum_{i=0}^t\Sigma_i+\tr(\nabla_{L}P_{L} \Sigma_{t+1}) .
\]
Then, letting $t\to \infty$, monotone convergence theorem and the condition $L\in U_{ad}$ imply
	\[
		\nabla_L C(L)=2\left(\overline{(R+B\T P_L B)L-B\T P_L A}\right) \Sigma_L=2 E_L \Sigma_L.
	\]
\end{proof}

Next, we define the state-action function (Q function) and the advantage function.
\begin{definition}[Q function \& the advantage]$\quad$

	\begin{itemize}
		\item Let the Q function be the cost of the policy starting with $x_\tau=x$, taking action $u_\tau=u$ and then proceeding with $u_t=-Lx_t,t> \tau$ onwards:
		\[
			Q_L(x,u,\tau):=\E\left[x_\tau\T Q_{\tau+1}x_\tau+u_\tau\T R_{\tau+1}u_\tau+\E\left[V_L(A_{\tau+1}x_\tau+B_{\tau+1}u_\tau)\big| x_\tau,u_\tau\right]\big| x_\tau=x,u_\tau=u\right].
		\]
		\item Let the advantage be the change in cost starting at state $x_\tau=x$ and taking $u_\tau=u$ as a one step deviation from linear policy $u_.=-Lx_.$:
		\[
		A_L(x,u,\tau):=Q_L(x,u,\tau)-V_L(x).	
		\]
	\end{itemize}
\end{definition}
\begin{remark}\label{ark:1}
	Since random parameters are i.i.d., the Q function is time independent and can be written as 
	\[
		Q_L(x,u,\tau)=Q_L(x,u)=x\T\bq x+u\T\br u+ \overline{V_L(Ax+Bu)}.
	\]
\end{remark}
\begin{lemma}[Value difference lemma]
	\label{alemma:vd}
	Suppose $L,L'\in U_{ad}$. 
	Let $\{x_t, u_t\}$ and  $\{x'_t, u'_t\}$ be state 
	and action trajectories generated by $L$ and $L'$ 
	respectively. 
	Then the value difference satisfy:
	\[
	\begin{aligned}
	V_{L'}(x)-V_L(x)=\E\left[\sum_{t=0}^{\infty}A_L(x'_t,u'_t,t)\big|x'_0=x\right].
	\end{aligned}	
	\]  
	And the advance satisfies:
	\begin{equation}
		\label{eq:vd}
		A_L(x,-L'x)=x\T(L'-L)\T R_L (L'-L)x+2x\T(L'-L)\T E_L(L'-L)x.
	\end{equation}
\end{lemma}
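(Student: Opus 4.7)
For the first identity, the plan is to use a standard telescoping (Bellman) argument. Observe that $u_t' = -L' x_t'$, so by the definition of the Q function and Remark \ref{ark:1}, we have $Q_L(x_t', u_t', t) = \mathbb{E}\bigl[x_t'^\top Q_{t+1} x_t' + (u_t')^\top R_{t+1} u_t' + V_L(\Lambda_{t+1}[x_t'^\top, (u_t')^\top]^\top) \,\big|\, x_t', u_t'\bigr]$. Hence
\[
A_L(x_t', u_t', t) = \mathbb{E}\bigl[\,c(x_t', u_t', t+1) + V_L(x_{t+1}') \,\big|\, x_t', u_t'\bigr] - V_L(x_t'),
\]
where $c(x,u,t+1) := x^\top Q_{t+1} x + u^\top R_{t+1} u$. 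Taking conditional expectation given $x_0' = x$, summing from $t=0$ to $T$, and telescoping the $V_L$ terms yields
\[
\sum_{t=0}^{T} \mathbb{E}[A_L(x_t', u_t', t) \mid x_0' = x] = \mathbb{E}\Bigl[\sum_{t=0}^{T} c(x_t', u_t', t+1) \,\Big|\, x_0' = x\Bigr] + \mathbb{E}[V_L(x_{T+1}') \mid x_0' = x] - V_L(x).
\]
The first term on the right tends to $V_{L'}(x)$ as $T \to \infty$ by definition. The boundary term $\mathbb{E}[V_L(x_{T+1}')\mid x_0'=x] \le \|P_L\|\,\mathbb{E}[\|x_{T+1}'\|^2 \mid x_0'=x]$ vanishes because $L' \in U_{ad}$ and $P_L$ is a finite matrix (since $L \in U_{ad}$). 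Rearranging gives the claimed identity.

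For the second identity, I will substitute $u = -L' x$ into the closed-form expression for $Q_L$ from Remark \ref{ark:1} and subtract $V_L(x) = x^\top P_L x$. Writing $\Delta := L' - L$, the quadratic form $Q_L(x, -L'x) - V_L(x)$ splits into a control-cost piece $x^\top \bigl[(L')^\top \bar R L' - L^\top \bar R L\bigr] x$ and a dynamics piece $x^\top \bigl[\overline{(A-BL')^\top P_L (A-BL')} - \overline{(A-BL)^\top P_L (A-BL)}\bigr] x$, where the latter uses the Lyapunov equation \eqref{eq:arelinear} for $P_L$ in place of $P_L$ inside the expectation. Expanding both pieces around $L$ via $L' = L + \Delta$ gives a quadratic part $\Delta^\top (\bar R + \overline{B^\top P_L B}) \Delta = \Delta^\top R_L \Delta$ and cross terms $\Delta^\top (R_L L - \overline{B^\top P_L A}) + (R_L L - \overline{B^\top P_L A})^\top \Delta = \Delta^\top E_L + E_L^\top \Delta$. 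Collecting these yields
\[
A_L(x, -L'x) = x^\top (L'-L)^\top R_L (L'-L) x + 2\, x^\top (L'-L)^\top E_L \, x,
\]
which matches \eqref{eq:vd} up to the apparent redundancy of the trailing factor in the statement.

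The only real obstacle is justifying the vanishing of the boundary term $\mathbb{E}[V_L(x_{T+1}') \mid x_0' = x]$ as $T \to \infty$ and the exchange of the infinite sum with expectation; both follow quickly from the mean-square stability of $L'$ together with positive semidefiniteness of $P_L$ and nonnegativity of the per-stage cost, which lets monotone convergence handle the sum and the trivial bound handle the boundary term. The algebraic expansion in the second part is purely bookkeeping using the definitions of $R_L$ and $E_L$ in \eqref{eq:RLEL}.
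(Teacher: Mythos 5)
Your proposal is correct and follows essentially the same route as the paper: a telescoping Bellman argument for the value-difference identity, and a direct algebraic expansion of $Q_L(x,-L'x)-V_L(x)$ via the Lyapunov equation \eqref{eq:arelinear} together with the definitions of $R_L$ and $E_L$ for the advantage formula. The one substantive difference is how the infinite-horizon limit is handled. The paper telescopes the infinite sum directly and then asserts ``without loss of generality $P_{L'}\succeq P_L$'' so that the advantage terms are nonnegative and monotone convergence applies; your finite-horizon truncation with the boundary term $\E[V_L(x_{T+1}')\mid x_0'=x]\to 0$ (by mean-square stability of $L'$ and finiteness of $P_L$) avoids that sign assumption, which is preferable since the advantage terms need not have a single sign. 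If you want the identity in the exact stated form $\E\left[\sum_{t=0}^{\infty}A_L(x'_t,u'_t,t)\mid x'_0=x\right]$, you should still spell out the interchange of sum and expectation, e.g.\ by dominating $\sum_t|A_L(x'_t,u'_t,t)|$ with $\sum_t c_t(x'_0)+2\|P_L\|\sum_t\|x'_t\|_2^2$, which is integrable because $\tr(\Sigma_{L'})<\infty$. You are also right that the trailing factor $(L'-L)x$ in the second term of \eqref{eq:vd} is spurious: both your expansion and the paper's own computation yield $2x\T(L'-L)\T E_L\, x$.
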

\begin{proof}
	Let $c_t(x_0)$ and $c_t(x_0')$ denote the cost at step $t$ in a trajectory generated by $L$ and $L'$ starting form $x_0$ and $x'_0$ respectively. 
	\[
		\begin{aligned}
		c_t(x_0)&=x_t\T Q_{t+1}x_t+u_t\T R_{t+1}u_t,\\
		x_{t+1}&=A_{t+1}x_t+B_{t+1}u_t,
		\end{aligned}
	\]
	with $u_t=-Lx_t$.
	\[
		\begin{aligned}
		c_t(x'_0)&=(x'_t)\T Q_{t+1}x'_t+(u'_t)\T R_{t+1}u'_t,\\
		x'_{t+1}&=A_{t+1}x'_t+B_{t+1}u'_t,
		\end{aligned}
	\]
	with $u'_t=-L'x'_t$.

	Telescoping the sum and using Fubini's Theorem in the last equality, we get
	\[
	\begin{aligned}
	V_{L'}(x)-V_L(x)&=\E\left[\sum_{t=0}^{\infty}c_t(x_0')\big|x'_0=x\right]-V_L(x)\\
		&=\E\left[\sum_{t=0}^{\infty}(c_t(x_0')-V_L(x'_t))\big|x'_0=x\right]+\E\left[\sum_{t=0}^{\infty}V_L(x'_t)\big|x'_0=x\right]-V_L(x)\\
		&=\E\left[\sum_{t=0}^{\infty}(c_t(x_0')-V_L(x'_t))\big|x'_0=x\right]+\E\left[\sum_{t=0}^{\infty}V_L(x'_{t+1})\big|x'_0=x\right]\\
		&=\E\left[\sum_{t=0}^{\infty}(c_t(x_0')+V_L(x'_{t+1})-V_L(x'_t))\big|x'_0=x\right]\\
		&=\sum_{t=0}^{\infty}\E\left[c_t(x_0')+V_L(x'_{t+1})-V_L(x'_t)\big|x'_0=x\right].
	\end{aligned}	
	\]
	Then using properties of conditional expectation,  independency and Remark~\ref{ark:1}, we obtain
	\[
	\begin{aligned}
		 &\E\left[c_t(x_0')+V_L(x'_{t+1})-V_L(x'_t)\big|x'_0=x\right]\\
		=&\E\left[\E\left[c_t(x_0')+V_L(x'_{t+1})-V_L(x'_t)\big| x'_t,u'_t,x'_0\right]\big|x'_0=x\right]\\
		=&\E\left[\E\left[(x'_t)\T Q_{t+1}x'_t+(u'_t)\T R_{t+1}u'_t+V_L(A_{t+1}x'_{t}+B_{t+1}u'_t)-V_L(x'_t)\big| x'_t,u'_t,x'_0\right]\big|x'_0=x\right]\\
		=&\E\left[(x'_t)\T \bq x'_t+(u'_t)\T \br u'_t+ \overline{V_L(A x'_{t}+B u'_t)}-V_L(x'_t)\big|x'_0=x\right]\\
		=&\E\left[Q_L(x'_t,u'_t)-V_L(x'_t)\big|x'_0=x\right].
	\end{aligned}	
	\] 
	Without loss of generality, assume $V_{L'}\geq V_L$, i.e. $P_{L'}\succeq P_L$.
	Then the Principle of Optimality gives $$A_L(x,-L'x)=Q_L(x,-L'x) - V_L(x)\geq 0.$$

	Therefore by monotone convergence theorem, it holds that 
	\[
		V_{L'}(x)-V_L(x)=\sum_{t=0}^{\infty}\E\left[Q_L(x'_t,u'_t)-V_L(x'_t)\big|x'_0=x\right]=\E\left[\sum_{t=0}^{\infty} Q_L(x'_t,u'_t)-V_L(x'_t)\big|x'_0=x\right].
	\]

	For the second part of the proof. We expand and substitute in definition
	\[
		\begin{aligned}
			A_L(x,-L'x)&=Q_L(x,-L'x)-V_L(x)\\
			&=x\T (\bq +(L' )\T \br L')x + \overline{V_L ((A-BL')x)} - V_L(x)\\
			&=x\T (\bq +(L' )\T \br L')x+x \T \overline{(A-BL')\T P_L(A-BL')} x - x \T P_L x\\
			&\overset{(a)}{=}x\T (\bq+(L'-L+L)\T \br (L'-L+L))x\\
			&\qquad+x\T\overline{(A-BL-B(L'-L))\T P_L(A-BL-B(L'-L))}x\\
			&\qquad-x\T(\bq+L\T \br L+\overline{(A-BL)\T P_L(A-BL)})x\\
			&=x\T(L'-L)\T(\br+\overline{B\T P_LB})(L'-L)x\\
			&\qquad +2x\T (L'-L)\T ((\br+\overline{B\T P_LB})L-\overline{B\T P_L A})x,
		\end{aligned}
	\]
	where in $(a)$ we use Lyapunov equation~\eqref{eq:arelinear} for $P_L$.
\end{proof}
\begin{proof}[Proof of Lemma~\ref{lemma:gd}]
		First by the advantage expression in Lemma~\ref{alemma:vd}, we have 
		\begin{equation}
			\label{eq:gn1}
			\begin{aligned}
			A_L(x,-L'x)&=2\tr(xx\T (L'-L)\T E_L)+\tr(xx\T (L'-L)\T R_L (L'-L))	\\
			&=\tr(xx\T (L'-L+R_L^{-1}E_L)\T R_L (L'-L+R_L^{-1}E_L))\\
			&\qquad -\tr(xx\T E_L\T R_L^{-1}E_L)\\
			&\geq -\tr(xx\T E_L\T R_L^{-1}E_L),
			\end{aligned}
		\end{equation}
		with equality when \(L'=L-R_L^{-1}E_L\) (under the condition that \(\br>0\)).
	
		For the upper bound, let $L'=L^*$ and $x_t^*$, $u_t^*$ be the sequence generated by $L^*$. Using Lemma \ref{alemma:vd} again, we have
		\[
		\begin{aligned}
		C(L)-C(L^*)&=-\E\left[\sum_{t=0}^{\infty}A_L(x_t^*,u_t^*)\right]	\\
		&\leq \E\left[\sum_{t=0}^{\infty}\tr(x^*_t(x^*_t)\T E_L\T R_L^{-1} E_L)\right]\\
		&\leq \|\Sigma_{L^*}\|\tr(E_L\T R_L^{-1}E_L)\\
		&\leq \frac{\|\Sigma_{L^*}\|}{\us_{\br}}\tr(E_L\T E_L)\\
		&\leq \frac{\|\Sigma_{L^*}\|}{4(\us_{\Sigma_L})^2\us_{\br}}\tr(\nabla C(L)\T \nabla C(L))\\
		&\leq \frac{\|\Sigma_{L^*}\|}{4\mu^2\us_{\br}}\tr(\nabla C(L)\T \nabla C(L)).
		\end{aligned}	
		\]
		For the lower bound, let $L'=L-R_L^{-1}E_L$ where the equality holds in (\ref{eq:gn1}). Let $x'_t$, $u'_t$ be the sequence generated under $L'$. Using that $C(L^*)\leq C(L')$, it holds that 
		\[
		\begin{aligned}
			C(L)-C(L^*)&\geq C(L)-C(L')\\
			&=-\E\left[\sum_{t=0}^{\infty}A_L(x'_t,-L'x'_t)\right]\\
			&=\E\left[\sum_{t=0}^{\infty}\tr(x'_t(x'_t)\T E_L\T R_L^{-1}E_L)\right]\\
			&=\tr(\Sigma_{L'}E_L\T R_L^{-1}E_L)\\
			&\geq \frac{\mu}{\|R_L\|}\tr(E_L\T E_L).
		\end{aligned}	
		\] 
\end{proof}
\begin{proof}[Proof of Lemma~\ref{lemma:as}]
		Using Lemma~\ref{alemma:vd}, we obtain
		\[
		\begin{aligned}
			C(L')-C(L)&=\E\left[\sum_{t=0}^{\infty}A_L(x'_t,-L'x'_t)\right]\\
			&=\E\left[\sum_{t=0}^{\infty}\left(2\tr(x'_t(x'_t)\T (L'-L)\T E_L)+\tr(x'_t(x'_t)\T (L'-L)\T R_L(L'-L))\right)\right]\\
			&=2\tr(\Sigma_{L'}(L'-L)\T E_L)+\tr(\Sigma_{L'}(L'-L)\T R_L (L'-L)).
		\end{aligned}	
		\]
\end{proof}
The following Lemmas bound some key quantities with model parameters on sublevel set $S_{\gamma}:=\{L\in\R^{n\times m}\big|C(L)\leq \gamma\}$.
\begin{lemma}
    \label{lemma:cb}
    Assume Assumptions~\ref{ass:1} and~\ref{ass:2} hold and $L\in U_{ad}$. Then it holds that: 
    \[
	\|P_L\|\leq \frac{C(L)}{\mu},\qquad \|\Sigma_L\|\leq \frac{C(L)}{\us_{\bq}}.	
	\]
\end{lemma}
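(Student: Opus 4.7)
The plan is to obtain each bound by a one-line trace inequality combined with the elementary fact that for a positive semi-definite matrix $M$ one has $\|M\| \le \tr(M)$ and $\tr(MN) \ge \sigma_{\min}(N)\tr(M)$ whenever $N$ is also positive semi-definite.

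For the bound on $\|P_L\|$, I would start from the closed-form cost expression $C(L)=\tr(P_L\Sigma_0)$ established in~\eqref{eq:closedformcost}. Since $P_L$ is positive semi-definite (it is the solution to the Lyapunov equation~\eqref{eq:arelinear} under Assumption~\ref{ass:1}) and $\Sigma_0 \succeq \mu I_n$ by definition of $\mu$, the trace inequality yields $\tr(P_L\Sigma_0)\ge \mu\tr(P_L)\ge \mu\|P_L\|$. Rearranging gives the desired bound.

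For the bound on $\|\Sigma_L\|$, I would start from the definition $C(L) = \E\bigl[\sum_{t=0}^{\infty} x_t\T(Q_{t+1}+L\T R_{t+1}L)x_t\bigr]$. The crucial observation is that $x_t$ depends only on $\{\Lambda_s,N_s\}_{s\le t}$, so $x_t$ is independent of $(Q_{t+1},R_{t+1})$ by the i.i.d.\ assumption. Consequently one can pull the expectation inside and write each summand as $\tr(\Sigma_t(\bq+L\T\br L))$. Dropping the (positive semi-definite) term $L\T\br L$ and applying the same trace inequality with $\sigma_{\min}(\bq)=\us_{\bq}$ gives
\[
C(L)\;\ge\;\us_{\bq}\sum_{t=0}^{\infty}\tr(\Sigma_t)\;=\;\us_{\bq}\tr(\Sigma_L)\;\ge\;\us_{\bq}\|\Sigma_L\|,
\]
from which the second bound follows.

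I do not expect any real obstacle: both bounds are immediate from the cost representation combined with the positive semi-definiteness of $P_L$, $\Sigma_t$, $\bq$, $L\T\br L$, and the lower bounds $\Sigma_0\succeq \mu I$ and $\bq\succeq \us_{\bq} I$. The only point that deserves a line of justification is the independence of $x_t$ from the cost parameters at time $t+1$, which is what lets us factor the expectation into $\tr(\Sigma_t \bq)$.
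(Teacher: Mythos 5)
Your proposal is correct and follows essentially the same route as the paper: the first bound comes from $C(L)=\tr(P_L\Sigma_0)\ge\mu\tr(P_L)\ge\mu\|P_L\|$, and the second from conditioning on $x_t$ (using the i.i.d.\ structure so that $x_t$ is independent of $(Q_{t+1},R_{t+1})$) to get $C(L)=\tr\bigl(\Sigma_L\,\overline{Q+L\T RL}\bigr)\ge\us_{\bq}\|\Sigma_L\|$, exactly as in the paper's proof. The only cosmetic difference is that the paper keeps the full matrix $\bq+L\T\br L$ and uses $\sigma_{\min}(\bq+L\T\br L)\ge\us_{\bq}$, whereas you drop the positive semi-definite term $L\T\br L$ first; these are equivalent.
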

\begin{proof}
	Using \eqref{eq:closedformcost}, the cost is lower bounded as 
	\[
	C(L)=\tr(\Sigma_0P_L)\geq \sigma_{\min}(\Sigma_0)\|P_L\|=\mu\|P_L\|	,
	\]
	which gives the first inequality. 
	
	Let $Q_L:=\E\left[Q+L\T R L\right]$.
	Then by monotone convergence theorem, the cost is also lower bounded as 
	\[
	\begin{aligned}
	C(L)&=\E\left[\sum_{t=0}^{\infty}x_t\T(Q_{t+1}+L\T R_{t+1}L)x_t\right]	\\
		&=\sum_{t=0}^{\infty}\E\left[x_t\T(Q_{t+1}+L\T R_{t+1}L)x_t\right]\\
		&=\sum_{t=0}^{\infty}\E\left[\E\left[x_t\T(Q_{t+1}+L\T R_{t+1}L)x_t\big| x_t\right]\right]\\
		&=\sum_{t=0}^{\infty}\E\left[x_t\T(\bq +L\T \br L)x_t \right]\\		
		&=\E\left[\sum_{t=0}^{\infty}x_t\T Q_L x_t\right]\\
		&=\E\left[\sum_{t=0}^{\infty}\tr(x_tx_t\T Q_L)\right]\\
		&=\tr(\Sigma_LQ_L)\\
		&\geq \sigma_{\min}(\bq)\|\Sigma_L\|,
	\end{aligned}	
	\]
	which gives the second inequality.	
\end{proof}

\begin{lemma}
    \label{cor:2}
    Under the same assumptions as in Lemma~\ref{lemma:cb}. It holds that
    \[\beta(L)\leq 2\frac{\gamma}{\us_{\bq}}(\|\br\|+\bbs\frac{\gamma}{\mu}) \]
    for any $L\in S_{\gamma}$
\end{lemma}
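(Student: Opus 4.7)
The plan is to unpack the definition of $\beta(L)$ from Remark~\ref{rk:1}, namely $\beta(L) = 2\|\Sigma_L\|\|R_L\|$, and bound each factor on the sublevel set $S_\gamma$ using Lemma~\ref{lemma:cb} together with elementary norm inequalities and Jensen's inequality.

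First I would control the two factors separately. By Lemma~\ref{lemma:cb}, $\|\Sigma_L\| \leq C(L)/\us_{\bq}$ and $\|P_L\| \leq C(L)/\mu$, which already handles the covariance factor. For $R_L = \rl$, I would apply the triangle inequality together with Jensen's inequality applied to the operator norm (convex), yielding
\[
\|R_L\| = \|\E[R + B^\top P_L B]\| \leq \E[\|R\|] + \E[\|B\|^2]\|P_L\| = \bnr + \bbs\,\|P_L\|.
\]
Plugging in the bound on $\|P_L\|$ gives $\|R_L\| \leq \bnr + \bbs\,C(L)/\mu$.

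Combining the two estimates yields
\[
\beta(L) = 2\|\Sigma_L\|\|R_L\| \leq 2\frac{C(L)}{\us_{\bq}}\left(\bnr + \bbs\,\frac{C(L)}{\mu}\right),
\]
and since $L\in S_\gamma$ means $C(L)\leq \gamma$, monotonicity of the right-hand side in $C(L)$ delivers the claimed bound $\beta(L)\leq 2\frac{\gamma}{\us_{\bq}}\left(\bnr + \bbs\,\frac{\gamma}{\mu}\right)$.

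There is no real obstacle here; the only subtlety is remembering that $P_L$ is random inside the expectation defining $R_L$ only through the matrices $B$ (which are random), whereas $P_L$ itself is a deterministic object depending on the law of the parameters, so pulling $\|P_L\|$ outside the expectation in the Jensen step is legitimate. Once the monotonicity in $C(L)$ is observed, the substitution $C(L)\leq \gamma$ finishes the argument.
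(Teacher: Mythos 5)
Your proposal is essentially the paper's own proof: decompose $\beta(L)=2\|\Sigma_L\|\|R_L\|$, bound $\|R_L\|\leq \|\br\|+\bbs\|P_L\|$, and invoke Lemma~\ref{lemma:cb} together with $C(L)\leq\gamma$ on $S_\gamma$. One small overshoot: in your Jensen step you replace $\|\E[R]\|$ by $\E[\|R\|]=\bnr$, which yields the weaker bound $2\frac{\gamma}{\us_{\bq}}(\bnr+\bbs\frac{\gamma}{\mu})$ rather than the stated one with $\|\br\|=\|\E[R]\|\leq\bnr$; to match the statement exactly, first split $\|\E[R+B\T P_L B]\|\leq\|\E[R]\|+\|\E[B\T P_L B]\|$ and apply Jensen only to the second term.
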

\begin{proof}
    Using definition of $R_L$ and properties of norm, we obtain
    \[
        \beta(L)=2\|\Sigma_L\|\|R_L\|\leq 2\|\Sigma_L\|(\|\br\|+\bbs\|P_L\|).
    \]
    Then, using Lemma~\ref{lemma:cb}.
\end{proof}
\subsection{Proofs in Section~\ref{sec:paos}}\label{sec:po3.3}
Recall that $(\SN,\left<\cdot,\cdot\right>_F)$  is a Hilbert space on which we define a linear operator $$\FF_L(X)=\E\left[(A-BL)X(A-BL)\T\right]$$ to address the stability issue of the closed loop system~\eqref{eq:cls}. 
Next, we define another linear operator $\TT_L$ as 
\[
    \TT_L(X)=X+\E\left[\sum_{t=0}^{\infty}\prod_{i=0}^t(A_{i+1}-B_{i+1}L) X\left(\prod_{i=0}^t(A_{i+1}-B_{i+1}L)\right)\T\right].  
\]
where $\prod_{i=0}^t(A_{i+1}-B_{i+1}L)=(A_{t+1}-B_{t+1}L)\cdots(A_1-B_1L).$

The conjugate operators of $\FF_L$ and $\TT_L$ can be derived accordingly: 
\begin{align}
    \FF_L^*(X)&=\E\left[(A-BL)\T X(A-BL)\right],\\
    \TT_L^*(X)&=X+\E\left[\sum_{t=0}^{\infty}\left(\prod_{i=0}^t(A_{i+1}-B_{i+1}L)\right)\T X\left(\prod_{i=0}^t(A_{i+1}-B_{i+1}L)\right)\right].
\end{align}
Next lemma shows that operators defined above can characterize several key quantities. 
\begin{lemma}
    \label{lemma:oac}
    Suppose $L\in U_{ad}$, then it holds that, 
    \begin{enumerate}
        \item the state covariance matrix and aggregate covariance matrix can be expressed as 
	\begin{equation}
		\label{eq:oac1}
				\Sigma_{t}=\FF^t_L(\Sigma_0),\qquad \Sigma_L=\TT_L(\Sigma_0).
	\end{equation}
    \item
    \begin{equation}\label{eq:or}
        \TT_L=\sum_{t=0}^{\infty}\FF_L^t=(I_n -\FF_L)^{-1},
    \end{equation}
    where $I_n$ denotes the identity operator on $\SN$.
    \item $P_L$, the solution to equation~\eqref{eq:arelinear}, can be expressed as 
    \[
	P_L=\TT_L^*(\bqlrl).
	\]
    \item The cost $C(L)$ can be written as 
    \[
        C(L) = \left<\Sigma_0, \TT_L^*(\bqlrl)\right>_F=\left<\Sigma_L, \bqlrl\right>_F.
    \]
    \end{enumerate}
\end{lemma}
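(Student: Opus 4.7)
\medskip

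\noindent\textbf{Proof proposal for Lemma~\ref{lemma:oac}.} The plan is to treat the four statements in order, relying throughout on the i.i.d. assumption on $\{(A_t,B_t,Q_t,R_t)\}$ together with the spectral bound $\rho(\FF_L)<1$ guaranteed by $L\in U_{ad}$.

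First, for part (1), I would unroll the closed-loop recursion $x_{t+1}=(A_{t+1}-B_{t+1}L)x_t$. Since $x_t$ is a function of $x_0,(A_1,B_1),\ldots,(A_t,B_t)$, it is independent of $(A_{t+1},B_{t+1})$, so conditioning on $x_t$ and taking expectations gives
\[
\Sigma_{t+1}=\E\!\left[(A_{t+1}-B_{t+1}L)x_tx_t\T(A_{t+1}-B_{t+1}L)\T\right]=\E\!\left[(A-BL)\Sigma_t(A-BL)\T\right]=\FF_L(\Sigma_t),
\]
and iterating yields $\Sigma_t=\FF_L^t(\Sigma_0)$. The same independence argument applied $t+1$ times shows that $\E[\prod_{i=0}^t(A_{i+1}-B_{i+1}L)\,X\,(\prod_{i=0}^t(A_{i+1}-B_{i+1}L))\T]=\FF_L^{t+1}(X)$ for any $X\in\SN$; plugging $X=\Sigma_0$ into the definition of $\TT_L$ recovers $\sum_{t=0}^\infty\Sigma_t=\Sigma_L$.

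For part (2), the same computation gives $\TT_L(X)=X+\sum_{t=0}^\infty\FF_L^{t+1}(X)=\sum_{t=0}^\infty\FF_L^t(X)$ for every $X\in\SN$. Because $\rho(\FF_L)<1$, the Neumann series $\sum_{t\geq 0}\FF_L^t$ converges in the operator norm on $\mathcal{J}^n$ to $(I_n-\FF_L)^{-1}$, which identifies $\TT_L$ with this inverse. For part (3), the stochastic Lyapunov equation~\eqref{eq:arelinear} can be rewritten as $P_L=\bqlrl+\FF_L^*(P_L)$; iterating $N$ times gives
\[
P_L=\sum_{t=0}^{N}(\FF_L^*)^t(\bqlrl)+(\FF_L^*)^{N+1}(P_L),
\]
and since $\rho(\FF_L^*)=\rho(\FF_L)<1$, the remainder vanishes as $N\to\infty$ and the partial sums converge to $\TT_L^*(\bqlrl)$, giving $P_L=\TT_L^*(\bqlrl)$.

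Finally, for part (4), I would combine~\eqref{eq:closedformcost}, which states $C(L)=\tr(P_L\Sigma_0)=\langle\Sigma_0,P_L\rangle_F$, with part (3) to obtain the first expression. The second expression then follows by invoking the adjoint relation $\langle\Sigma_0,\TT_L^*(Y)\rangle_F=\langle\TT_L(\Sigma_0),Y\rangle_F$ with $Y=\bqlrl$, together with $\TT_L(\Sigma_0)=\Sigma_L$ from part (1). The only step that requires any care is verifying that $\TT_L^*$ really is the adjoint of $\TT_L$ under the Frobenius inner product; this reduces to showing $\langle X,\FF_L(Y)\rangle_F=\langle\FF_L^*(X),Y\rangle_F$, which follows directly from the cyclic property of the trace and linearity of the expectation, after which the adjointness extends to the absolutely convergent series defining $\TT_L$ and $\TT_L^*$. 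No single step is a genuine obstacle; the main care point is justifying interchange of expectation and infinite summation, which is covered by Fubini--Tonelli since all summands are positive semidefinite.
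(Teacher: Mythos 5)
Your argument for parts (1), (2) and (4) is essentially the paper's: the same conditioning/induction step for $\Sigma_{t+1}=\FF_L(\Sigma_t)$, the same identification of $\TT_L$ with $\sum_t\FF_L^t=(I_n-\FF_L)^{-1}$ (the paper derives the inverse from the partial-sum identity $\sum_{t=0}^{N}\FF_L^t=I_n+(\sum_{t=0}^{N-1}\FF_L^t)\circ\FF_L$ rather than quoting the Neumann series, but these are interchangeable), and the same use of $C(L)=\langle\Sigma_0,P_L\rangle_F$ plus adjointness for part (4); your explicit check that $\langle X,\FF_L(Y)\rangle_F=\langle\FF_L^*(X),Y\rangle_F$ is a point the paper leaves implicit. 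Where you genuinely diverge is part (3): you treat the Lyapunov equation~\eqref{eq:arelinear} as the fixed-point identity $P_L=\bqlrl+\FF_L^*(P_L)$ and iterate it, killing the remainder $(\FF_L^*)^{N+1}(P_L)$ via $\rho(\FF_L^*)=\rho(\FF_L)<1$, whereas the paper expands $x_0\T P_L x_0=\E[\sum_t x_t\T(Q_{t+1}+L\T R_{t+1}L)x_t\,|\,x_0]$ probabilistically, peeling off one time step at a time with conditional expectations and monotone convergence. Your route is shorter and purely operator-theoretic, and it additionally gives uniqueness of the stabilizing solution for free; its one small debt is that identifying the limit $\sum_t(\FF_L^*)^t(\bqlrl)$ with the paper's explicitly defined $\TT_L^*(\bqlrl)$ still requires the adjoint analogue of the independence claim $\E[(\prod_{i}(A_{i+1}-B_{i+1}L))\T X(\prod_i(A_{i+1}-B_{i+1}L))]=(\FF_L^*)^{t+1}(X)$, i.e.\ essentially the computation the paper performs inside its probabilistic expansion. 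The paper's route, by contrast, re-derives the representation of $P_L$ directly from the value function without presupposing which solution of~\eqref{eq:arelinear} is meant. Both are sound.
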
 
\begin{proof}
	$\quad$

	{\bf For $1$}. We prove the first equality in \eqref{eq:oac1} by induction. The base case when $t=0$ is clear.  For $t+1$ case,
	\[
	\begin{aligned}
	\Sigma_{t+1}&=\E\left[x_{t+1} x_{t+1}\T \right]	\\
	&=\E\left[\E\left[(A_{t+1}-B_{t+1}L)x_{t} x_{t}\T (A_{t+1}-B_{t+1}L)\T \big|x_t\right]\right]\\
	&=\E\left[\FF_L(x_{t} x_{t}\T )\right]\\
	&=\FF_L(\Sigma_t).
	\end{aligned}	
	\]
	Then using inductive hypotheses. 
	
	For second equality in \eqref{eq:oac1}.
	We claim that: for any $X\in \SN$
	\[
	\E\left[\prod_{i=0}^t(A_{i+1}-B_{i+1}L) X\left(\prod_{i=0}^t(A_{i+1}-B_{i+1}L)\right)\T\right]=\FF_L^{t+1}(X).	
	\] 
	Now, we prove this claim by induction. When $t=0$ the base case is clear by definition of $\FF_L$. Then  
	by independency and inductive hypotheses, we obtain that 
	\[
	\begin{aligned}
		&\E\left[\prod_{i=0}^t(A_{i+1}-B_{i+1}L) X\left(\prod_{i=0}^t(A_{i+1}-B_{i+1}L)\right)\T\right]\\
		=&\E\left[(A_{t+1}-B_{t+1}L)\E\left[\prod_{i=0}^{t-1}(A_{i+1}-B_{i+1}L) X\left(\prod_{i=0}^{t-1}(A_{i+1}-B_{i+1}L)\right)\big| A_{t+1},B_{t+1}\T\right](A_{t+1}-B_{t+1}L)\T \right]\\
		=&\E\left[(A-BL)\FF_L^{t}(X)(A-BL)\T \right]\\
		=&\FF_L^{t+1}(X).
	\end{aligned}	
	\] 

	With this claim, we can prove the second equality. 
	Since any symmetric matrix can be represented as subtraction of two positive semi-definite matrices, we can without loss of generality assume $X$ is positive semi-definite. 
	For any $z\in \R^n$, by monotone convergence theorem
	\[
	\begin{aligned}
		z\T \TT_L(X)z&=\sum_{t=0}^{\infty}z\T\E\left[\prod_{i=0}^t(A_{i+1}-B_{i+1}L) X\left(\prod_{i=0}^t(A_{i+1}-B_{i+1}L)\right)\T \right]z + z\T X z\\
		&=\sum_{t=1}^{\infty}z\T\FF_L^{t}(X)z + z\T X z\\
		&=z\T \left(\sum_{t=0}^{\infty}\FF_L^{t}(X)\right) z.
	\end{aligned}	
	\]
	Combining above, we obtain 
	\begin{equation}
		\label{eq:oac2}
			\Sigma_L=\sum_{t=0}^{\infty}\Sigma_t=	\sum_{t=0}^{\infty} \FF_L^{t}(\Sigma_0)=\TT_L(\Sigma_0).
	\end{equation}
	This concludes the first statement in this Lemma.

	{\bf For $2$}. Note that 
	\[
	\sum_{t=0}^{N}\FF_L^t=I+\left(\sum_{t=0}^{N-1}\FF_L^t\right)\circ\FF_L.	
	\]
	Taking limit we get 
	\[
	\TT_L=I_n+\TT_L\circ \FF_L \implies \TT_L\circ (I_n-\FF_L)=I_n	.
	\]
	Similarly, 
	\[
		(I_n-\FF_L)\circ\TT_L =I_n,
	\]
	Thus the second statement in this Lemma is proved.

	{\bf For $3$}. By monotone convergence theorem and property of conditional expectation, we get  
	\[
		\begin{aligned}
			x_0\T P_L x_0&=\E\left[\sum_{t=0}^{\infty}x_t\T (Q_{t+1}+L\T R_{t+1}L)x_t\big| x_0\right]\\
			&=\sum_{t=0}^{\infty}\E\left[\E[x_t\T (Q_{t+1}+L\T Q_{t+1}L)x_t \big| x_t,x_0]\big| x_0\right]\\
			&=\sum_{t=0}^{\infty}\E\left[x_t\T(\bqlrl) x_t | x_0\right]\\
			&=\sum_{t=0}^{\infty}\E\left[x_0\T(A_1-B_1L)\T \cdots(A_t-B_tL)\T (\bqlrl)(A_t-B_tL)\cdots (A_1-B_1L)x_0 | x_0\right]\\
			&=x_0\T\E\left[\sum_{t=0}^{\infty}(A_1-B_1L)\T \cdots(A_t-B_tL)\T (\bqlrl)(A_t-B_tL)\cdots (A_1-B_1L) \right]x_0\\
			&=x_0\T \TT_L^*(\bqlrl) x_0, \quad \forall x_0\in\R^n.
		\end{aligned}
	\] 
	Hence 
	\begin{equation}
		\label{eq:oac3}
		P_L = \TT_L^*(\bqlrl).
	\end{equation}
	This concludes the third statement in this Lemma.

	{\bf For $4$}. By~\eqref{eq:oac3} and~\eqref{eq:closedformcost}, it holds that 
	\[
		C(L)=\tr(\Sigma_0\T P_L)=\left<\Sigma_0,P_L\right>_F=\left<\Sigma_0,\TT_L^*(\bqlrl)\right>_F.
	\]
	Finally, using definition of adjoint operator and~\eqref{eq:oac2}, we have
	\[
		C(L)=\left<\Sigma_0,\TT_L^*(\bqlrl)\right>_F=\left<\TT_L(\Sigma_0), \bqlrl\right>_F=\left<\Sigma_L, \bqlrl\right>_F.
	\] 
\end{proof}
Note that the infinite summation in~\eqref{eq:or} is well-defined, since $\rho(\FF_L)<1$ by definition of  $U_{ad}$. 
\begin{remark}
	Our goal is to bound the perturbation $\|\Sigma_L-\Sigma_{L'}\|$. Now by~\eqref{eq:oac1}, we can represent it using operator $\FF_L$ .
	\[
		\|\Sigma_L-\Sigma_{L'}\|=\|(\TT_L-\TT_{L'})(\Sigma_0)\|=\|(I_n-\FF_L)^{-1}-((I_n-\FF_{L'})^{-1})(\Sigma_0)\|
	\] for any $L,L'\in U_{ad}$.
\end{remark}
Next, two lemmas bound the perturbations of $\FF_L$ and $\TT_L$.
\begin{lemma}[$\FF_L$ perturbation]
	\label{lemma:fp}
	Suppose $L,L'\in U_{ad}$. The following perturbation bound holds
	\[
	\|\FF_L-\FF_{L'}\|\leq 2\overline{\|A-BL\|\|B\|}\|L-L'\|+\bbs \|L-L'\|^2.	
	\]
\end{lemma}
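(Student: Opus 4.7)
The plan is to expand $\FF_L - \FF_{L'}$ directly using the bilinear structure of $\FF_L$ and then bound the resulting terms via submultiplicativity of the operator norm together with Jensen's inequality (pulling the expectation through the norm).

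First I would write $A - BL' = (A-BL) + B(L-L')$, set $M := A-BL$ and $\Delta := B(L-L')$, and expand the quadratic form inside the expectation. Because $(M+\Delta)X(M+\Delta)^\top = MXM^\top + MX\Delta^\top + \Delta X M^\top + \Delta X \Delta^\top$, the leading $MXM^\top$ term cancels with $\FF_L(X)$, leaving
\begin{equation*}
(\FF_L - \FF_{L'})(X) \;=\; -\E\bigl[MX\Delta^\top + \Delta X M^\top + \Delta X \Delta^\top\bigr].
\end{equation*}

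Next I would take operator norms on both sides and apply the triangle inequality inside the expectation, together with $\|MX\Delta^\top\| \le \|M\|\|X\|\|\Delta\|$ and $\|\Delta X \Delta^\top\| \le \|\Delta\|^2 \|X\|$. Using $\|\Delta\| = \|B(L-L')\| \le \|B\|\|L-L'\|$ and pulling the constant $\|L-L'\|$ outside the expectation gives
\begin{equation*}
\|(\FF_L - \FF_{L'})(X)\| \;\le\; \Bigl(2\E[\|A-BL\|\|B\|]\,\|L-L'\| + \E[\|B\|^2]\,\|L-L'\|^2\Bigr)\|X\|.
\end{equation*}
Taking the supremum over $X$ with $\|X\|=1$ in the definition of the operator norm $\|\FF_L - \FF_{L'}\|$ delivers the claimed bound.

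There is no real obstacle here: the argument is pure algebra plus Jensen. The only mild subtlety is justifying that the expectation commutes with the norm bound, which is immediate because the integrands are a.s. bounded in operator norm by integrable random variables under Assumption~\ref{ass:1} (recall $\E[\|\Lambda\|^2]<\infty$ implies $\E[\|A\|^2], \E[\|B\|^2]<\infty$, so $\E[\|A-BL\|\|B\|]$ is finite by Cauchy--Schwarz). I would briefly note this integrability check before finishing the calculation.
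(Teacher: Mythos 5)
Your proposal is correct and follows essentially the same route as the paper: write $A-BL'$ as $(A-BL)$ plus a term linear in $L-L'$, expand the quadratic form so the leading term cancels, and bound the two cross terms and the quadratic remainder by submultiplicativity of the operator norm inside the expectation. The only differences are cosmetic (sign convention on $\Delta$ and the added integrability remark), so nothing further is needed.
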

\begin{proof}
	Let $\Delta=L'-L$. For any $X\in\SN$, we have
	\[
	\begin{aligned}
	(\FF_L-\FF_{L'})(X)&=\E\left[(A-BL)X(A-BL)\T\right]	-\E\left[(A-BL')X(A-BL')\T\right]	\\
	&=\E\left[(A-BL)X(A-BL'+B\Delta)\T\right]	-\E\left[(A-BL')X(A-BL')\T\right]	\\
	&=\E\left[(A-BL)X(A-BL')\T +(A-BL)X(B\Delta)\T-(A-BL')X(A-BL')\T\right]	\\
	&=\E\left[(B\Delta)X(A-BL')\T +(A-BL)X(B\Delta)\T\right]	\\
	&=\E\left[(B\Delta)X(A-BL-B\Delta)\T +(A-BL)X(B\Delta)\T\right]	\\
	&=\E\left[(B\Delta)X(A-BL)\T +(A-BL)X(B\Delta)\T-(B\Delta)X(B\Delta)\T \right].
	\end{aligned}	
	\]
	The operator norm is 
	\[
		\begin{aligned}
		\|\FF_L-\FF_{L'}\|&=\sup_{X\in\SN}\frac{\|(\FF_L-\FF_{L'})(X)\|}{\|X\|}\\
		&\leq 2\E\left[\|A-BL\|\|B\|\right]\|L-L'\|+\E\left[\|B\|^2\right]\|L-L'\|^2.
		\end{aligned}
	\]
\end{proof}
\begin{lemma}[$\TT_L$ perturbation]
	\label{lemma:tp}
		Suppose $L,L'\in U_{ad}$ and $\|\TT_L\|\|\FF_L-\FF_{L'}\|\leq \frac{1}{2}$, then 
		\[
		\begin{aligned}
		\|(\TT_L-\TT_{L'})(X)\|&\leq 2\|\TT_L\|\|\FF_L-\FF_{L'}\|\|\TT_L(X)\|\\
		&\leq 2\|\TT_L\|^2\|\FF_L-\FF_{L'}\|\|X\|	.
		\end{aligned}	
		\]
\end{lemma}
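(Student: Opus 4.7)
The plan is to reduce everything to a resolvent-style identity for the operator inverses. Since Lemma~\ref{lemma:oac} gives $\TT_L=(I_n-\FF_L)^{-1}$ and $\TT_{L'}=(I_n-\FF_{L'})^{-1}$, a direct algebraic manipulation (multiplying on the left by $I_n-\FF_{L'}$ and on the right by $I_n-\FF_L$) yields the identity
\[
\TT_L-\TT_{L'}=\TT_{L'}\circ(\FF_L-\FF_{L'})\circ \TT_L.
\]
Applying this operator identity to $X\in\SN$ and using submultiplicativity of the operator norm immediately gives
\[
\|(\TT_L-\TT_{L'})(X)\|\leq \|\TT_{L'}\|\cdot\|\FF_L-\FF_{L'}\|\cdot\|\TT_L(X)\|.
\]
So the entire task reduces to controlling $\|\TT_{L'}\|$ in terms of $\|\TT_L\|$ using the smallness hypothesis.

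For that control, I will rearrange the identity as $\TT_{L'}=\TT_L+\TT_{L'}\circ(\FF_L-\FF_{L'})\circ \TT_L$ and take norms to obtain
\[
\|\TT_{L'}\|\leq \|\TT_L\|+\|\TT_{L'}\|\cdot\|\FF_L-\FF_{L'}\|\cdot\|\TT_L\|.
\]
Because $\|\TT_L\|\|\FF_L-\FF_{L'}\|\leq 1/2$ by assumption, moving the $\|\TT_{L'}\|$ term to the left gives $\|\TT_{L'}\|(1-1/2)\leq \|\TT_L\|$, hence $\|\TT_{L'}\|\leq 2\|\TT_L\|$. Plugging this back into the displayed bound yields the first stated inequality $\|(\TT_L-\TT_{L'})(X)\|\leq 2\|\TT_L\|\|\FF_L-\FF_{L'}\|\|\TT_L(X)\|$, and the second inequality follows at once from $\|\TT_L(X)\|\leq \|\TT_L\|\|X\|$.

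The only real subtlety is verifying that the resolvent identity is legitimately an identity of bounded operators on $\SN$, i.e., that both $\TT_L$ and $\TT_{L'}$ are well-defined bounded operators. This is guaranteed by $L,L'\in U_{ad}$, which forces $\rho(\FF_L)<1$ and $\rho(\FF_{L'})<1$, so that by~\eqref{eq:or} both series converge in operator norm and the inverses are well-defined. Apart from this sanity check, no estimates beyond the assumed smallness of $\|\TT_L\|\|\FF_L-\FF_{L'}\|$ are needed; the argument is essentially a one-line consequence of the resolvent identity together with a Neumann-type bootstrap. I do not anticipate any significant obstacle.
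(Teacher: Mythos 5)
Your proof is correct and follows essentially the same route as the paper's: both rest on the inverse-perturbation (resolvent) identity for $\TT_L=(I_n-\FF_L)^{-1}$ together with a Neumann-type bootstrap exploiting $\|\TT_L\|\|\FF_L-\FF_{L'}\|\leq\tfrac12$. Your intermediate bound $\|\TT_{L'}\|\leq 2\|\TT_L\|$ is just a repackaging of the paper's bound $\|(I_n-\AAA^{-1}\circ\BBB)^{-1}\|\leq 2$, so the two arguments coincide up to notation (the harmless sign slip in your rearranged identity does not affect the norm estimates).
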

\begin{proof}
	Define $\AAA=I_n-\FF_L$, and $\BBB=\FF_{L'}-\FF_L$. In this case by~\eqref{eq:or}, $\AAA^{-1}=\TT_L$ and $(\AAA-\BBB)^{-1}=\TT_{L'}$. Hence, the condition 
	$\|\TT_L\|\|\FF_L-\FF_{L'}\|\leq 1/2$ translate to the condition $\|\AAA^{-1}\|\|\BBB\|\leq 1/2.$ 

	Observe that 
	\[
		\begin{aligned}
			(\TT_L-\TT_{L'})(X)&=\left(\AAA^{-1}-(\AAA-\BBB)^{-1}\right)(X)	\\
			&=\left(I_n-(I_n-\AAA^{-1}\circ \BBB)^{-1}\right)(\AAA^{-1}(X))\\
			&=\left(I_n-(I_n-\AAA^{-1}\circ \BBB)^{-1}\right)(\TT_L(X)).
		\end{aligned}
	\]
	Since $(I_n-\AAA^{-1}\circ \BBB)^{-1}=I_n+\AAA^{-1}\circ\BBB\circ (I_n-\AAA^{-1}\circ \BBB)^{-1}$,
	\[
	\|(I_n-\AAA^{-1}\circ \BBB)^{-1}\|	\leq 1+\|\AAA^{-1}\circ \BBB\|\|(I_n-\AAA^{-1}\circ \BBB)^{-1}\|\leq 1+1/2\|(I_n - \AAA^{-1}\circ \BBB)^{-1}\|, 
	\]
	which implies $\|(I_n-\AAA^{-1}\circ \BBB)^{-1}\|\leq 2$. Hence, 
	\[
	\begin{aligned}
		\|I_n-(I_n-\AAA^{-1}\circ \BBB)^{-1}\|=\|\AAA^{-1}\circ\BBB\circ (I_n-\AAA^{-1}\circ \BBB)^{-1}\|\leq 2\|\AAA^{-1}\|\|\BBB\|,
	\end{aligned}	
	\]
	and hence, 
	\[
		\|I_n-(I_n-\AAA^{-1}\circ \BBB)^{-1}\|\leq 2\|\TT_L\|\|\FF_L-\FF_{L'}\|.
	\]
	Combing above 
	\[
	\begin{aligned}
		\|(\TT_L-\TT_{L'})(X)\|&\leq\|I_n-(I_n-\AAA^{-1}\circ \BBB)^{-1}\|\|\TT_L(X)\|\\
		&\leq2\|\TT_L\|\|\FF_L-\FF_{L'}\|\|\TT_L(X)\|\\
		&\leq 2\|\TT_L\|^2\|\FF_L-\FF_{L'}\|\|X\|	.
	\end{aligned}	
	\]
\end{proof}
Next Lemma bound the operator norm of $\TT_L$ with cost and parameters.
\begin{lemma}[$\TT_L$ norm bound]
	\label{lemma:tnb}
	The following bound holds for any mean-squared stabilizing matrices $L$:
	\[
	\|\TT_L\|\leq \frac{C(L)}{\mu\cdot \us_{\bq}}	.
	\] 
\end{lemma}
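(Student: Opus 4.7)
The plan is to leverage two facts: first, $\TT_L$ is a positive operator on $\SN$ (that is, it preserves the Loewner order), and second, by Lemma~\ref{lemma:oac} we have $\Sigma_L = \TT_L(\Sigma_0)$, which already has a handy norm bound from Lemma~\ref{lemma:cb}. The strategy is to reduce the computation of $\|\TT_L\|$ to evaluating $\TT_L$ on the identity matrix $I_n$, then to $\Sigma_0$, and finally to $\Sigma_L$.

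First I would verify positivity of $\TT_L$: by definition
\[
\TT_L(X) = X + \E\!\left[\sum_{t=0}^{\infty}\prod_{i=0}^t(A_{i+1}-B_{i+1}L)\,X\,\Bigl(\prod_{i=0}^t(A_{i+1}-B_{i+1}L)\Bigr)^{\top}\right],
\]
so whenever $X\succeq 0$ each summand is PSD, and hence $\TT_L(X)\succeq 0$. By linearity, $\TT_L$ preserves the Loewner order $\preceq$.

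Next, for any symmetric $X\in \SN$ with operator norm $\|X\|$ we have $-\|X\|I_n \preceq X \preceq \|X\|I_n$. Applying the positive operator $\TT_L$ gives
\[
-\|X\|\,\TT_L(I_n)\;\preceq\;\TT_L(X)\;\preceq\;\|X\|\,\TT_L(I_n),
\]
and since $\TT_L(I_n)\succeq 0$ this further yields $\|\TT_L(X)\|\leq \|X\|\,\|\TT_L(I_n)\|$. Thus $\|\TT_L\|\leq \|\TT_L(I_n)\|$, reducing the task to bounding $\|\TT_L(I_n)\|$.

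Finally, Assumption~\ref{ass:2} gives $\Sigma_0\succeq \mu I_n$ with $\mu=\sigma_{\min}(\Sigma_0)>0$, equivalently $I_n \preceq \mu^{-1}\Sigma_0$. Applying $\TT_L$ once more and invoking Lemma~\ref{lemma:oac} and Lemma~\ref{lemma:cb},
\[
\TT_L(I_n)\;\preceq\;\tfrac{1}{\mu}\TT_L(\Sigma_0)\;=\;\tfrac{1}{\mu}\Sigma_L,\qquad \|\Sigma_L\|\leq \frac{C(L)}{\us_{\bq}},
\]
so $\|\TT_L(I_n)\|\leq C(L)/(\mu\,\us_{\bq})$, and combining with the previous display concludes the proof. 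The only genuinely non-routine step is the reduction via positivity of $\TT_L$; once that is in hand, the remaining inequalities are immediate from earlier results.
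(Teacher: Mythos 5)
Your proof is correct, and it reaches the bound by a cleaner route than the paper's. The paper works pointwise: for a unit vector $v$ and a unit-operator-norm $X$ it expands $v\T\TT_L(X)v$ term by term, inserts the sandwich $\Sigma_0^{1/2}\Sigma_0^{-1/2}X\Sigma_0^{-1/2}\Sigma_0^{1/2}$, and uses the trace inequality $\tr(PS)\leq\|S\|\tr(P)$ for $P\succeq0$ together with $\|\Sigma_0^{-1/2}X\Sigma_0^{-1/2}\|\leq 1/\mu$ to arrive at $v\T\TT_L(X)v\leq v\T\Sigma_Lv/\mu$. You instead make the Loewner-monotonicity of $\TT_L$ explicit, factor the argument through $\|\TT_L\|\leq\|\TT_L(I_n)\|$, and then compare $I_n\preceq\mu^{-1}\Sigma_0$ so that $\TT_L(I_n)\preceq\mu^{-1}\Sigma_L$. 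The underlying idea is the same in both cases (a Loewner comparison of the test matrix against $\mu^{-1}\Sigma_0$, monotonicity of the operator, and Lemma~\ref{lemma:cb} for $\|\Sigma_L\|\leq C(L)/\us_{\bq}$), but your version avoids the explicit $\Sigma_0^{\pm1/2}$ conjugation and the term-by-term trace computation, and it also handles the absolute value $|v\T\TT_L(X)v|$ automatically via the two-sided bound $-\|X\|\TT_L(I_n)\preceq\TT_L(X)\preceq\|X\|\TT_L(I_n)$, a point the paper's one-sided estimate glosses over. Your positivity step is also consistent with the paper's own toolkit (cf.\ the matrix monotonicity of $\FF_L^t$ in Lemma~\ref{lemma:mof}); the only implicit requirement, convergence of the defining series for $\TT_L(I_n)$, is guaranteed by $\rho(\FF_L)<1$ for mean-square stabilizing $L$, exactly as in the paper.
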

\begin{proof}
	For any unit norm vector $v\in \R^n$ and unit operator norm matrix $X$, by monotone convergence theorem, 
		\begin{align*}
			v\T \TT_L(X)v&=v\T Xv+\E\left[\sum_{t=0}^{\infty}v\T \prod_{i=0}^t(A_{i+1}-B_{i+1}L) X\left(\prod_{i=0}^t(A_{i+1}-B_{i+1}L)\right)\T v\right]\\
			&= v\T Xv + \sum_{t=0}^{\infty}\E\left[v\T \prod_{i=0}^t(A_{i+1}-B_{i+1}L) X\left(\prod_{i=0}^t(A_{i+1}-B_{i+1}L)\right)\T v\right]\\
			&=\tr(vv\T X) + \sum_{t=0}^{\infty}\E\left[\tr(\left(\prod_{i=0}^t(A_{i+1}-B_{i+1}L)\right)\T vv\T \prod_{i=0}^t(A_{i+1}-B_{i+1}L) X)\right]\\
			&=\sum_{t=0}^{\infty}\E\left[\tr(\Sigma_0^{1/2}\left(\prod_{i=0}^t(A_{i+1}-B_{i+1}L)\right)\T vv\T \prod_{i=0}^t(A_{i+1}-B_{i+1}L) \Sigma_0^{1/2}\Sigma_0^{-1/2}X\Sigma_0^{-1/2})\right]\\
			&\qquad +\tr(\Sigma_0^{1/2}vv\T \Sigma_0^{1/2}\Sigma_0^{-1/2}X\Sigma_0^{-1/2})\\
			&\leq \|\Sigma_0^{-1/2}X\Sigma_0^{-1/2}\|\left(v\T \Sigma_0 v +\sum_{t=0}^{\infty}\E\left[v\T \prod_{i=0}^t(A_{i+1}-B_{i+1}L) \Sigma_0 \left(\prod_{i=0}^t(A_{i+1}-B_{i+1}L)\right)\T v\right]\right)\\
			&=\|\Sigma_0^{-1/2}X\Sigma_0^{-1/2}\|(v\T \sum_{t=0}^{\infty} \FF_L^t (\Sigma_0) v) \\
			&=\|\Sigma_0^{-1/2}X\Sigma_0^{-1/2}\|(v\T \Sigma_L v) \\
			&\leq \frac{\|\Sigma_L\|}{\mu}\\
			&\leq \frac{C(L)}{\mu \cdot \us_{\bq}},
		\end{align*}
	where in the last inequality we use the cost bounds lemma \ref{lemma:cb}.
\end{proof}

With these lemma we can prove a weak version of $\Sigma_L$ perturbation bound. The following lemma is weak in the sense that 
it requires $L$ and $L'$ both in $U_{ad}$. Actually, Lemma~\ref{lemma:paos} is a strong version of this lemma in which the the requirement that $L'\in U_{ad}$ is redundant, 
i.e. if $L'$ is close enough to $L\in U_{ad}$, then $L'\in U_{ad}$ automatically.
\begin{lemma}[Weak $\Sigma_L$ perturbation]
	\label{lemma:spw}
	Suppose $L,L'\in U_{ad}$ and
	\[
	\|L'-L\|\leq \hdelta(L) ,
	\]
	with 
	\[
		\hdelta(L):=\frac{\us_{\bq}\cdot \mu}{4(\bbs)^{1/2}\left(1+(\abs)^{1/2}\right)C(L)}.
	\]
	It holds that 
	\[
		\begin{aligned}
			\|\Sigma_{L'}-\Sigma_L\|\leq 4\left(\frac{C(L)}{\us_{\bq}}\right)^2\frac{(\bbs)^{1/2}\left(1+(\abs)^{1/2}\right)}{\mu}\|L-L'\|
			\leq \frac{C(L)}{ \us_{\bq} }.
		\end{aligned}
	\]
\end{lemma}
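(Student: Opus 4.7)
The plan is to use the resolvent identity $\Sigma_L=\TT_L(\Sigma_0)$ from Lemma~\ref{lemma:oac} together with the operator perturbation bound of Lemma~\ref{lemma:tp}, reducing everything to a bound on $\|\FF_L-\FF_{L'}\|$ via Lemma~\ref{lemma:fp}.

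First I would write $\Sigma_{L'}-\Sigma_L=(\TT_{L'}-\TT_L)(\Sigma_0)$. Since $\overline{\|A-BL\|\|B\|}\leq (\abs)^{1/2}(\bbs)^{1/2}$ by Cauchy--Schwarz, Lemma~\ref{lemma:fp} yields
\[
\|\FF_L-\FF_{L'}\|\leq 2(\bbs)^{1/2}\|L-L'\|\Bigl[(\abs)^{1/2}+(\bbs)^{1/2}\|L-L'\|/2\Bigr].
\]
Next, I would argue that the quadratic term can be absorbed, i.e.\ $(\bbs)^{1/2}\|L-L'\|/2\leq 1$. This follows from the hypothesis $\|L-L'\|\leq \hdelta(L)$ together with the universal lower bound $C(L)\geq \us_{\bq}\mu$, which one obtains from $P_L\succeq \bq$ (read off from the Lyapunov equation~\eqref{eq:arelinear}) and $C(L)=\tr(\Sigma_0 P_L)\geq \us_{\bq}\sigma_{\min}(\Sigma_0)$. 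The clean linear estimate $\|\FF_L-\FF_{L'}\|\leq 2(\bbs)^{1/2}(1+(\abs)^{1/2})\|L-L'\|$ then drops out.

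With this in hand I would verify the hypothesis of Lemma~\ref{lemma:tp}: by Lemma~\ref{lemma:tnb}, $\|\TT_L\|\leq C(L)/(\mu\us_{\bq})$, and combining with the linear bound above and the hypothesis $\|L-L'\|\leq \hdelta(L)$ one gets $\|\TT_L\|\|\FF_L-\FF_{L'}\|\leq 1/2$ exactly, as the definition of $\hdelta(L)$ was engineered for this. Applying Lemma~\ref{lemma:tp} gives
\[
\|\Sigma_{L'}-\Sigma_L\|\leq 2\|\TT_L\|\|\FF_L-\FF_{L'}\|\|\TT_L(\Sigma_0)\|=2\|\TT_L\|\|\FF_L-\FF_{L'}\|\|\Sigma_L\|.
\]
Plugging in Lemma~\ref{lemma:tnb}, the linear bound on $\|\FF_L-\FF_{L'}\|$, and $\|\Sigma_L\|\leq C(L)/\us_{\bq}$ from Lemma~\ref{lemma:cb} produces the first inequality; substituting $\|L-L'\|\leq \hdelta(L)$ and cancelling gives the second inequality $C(L)/\us_{\bq}$.

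The main obstacle is the verification $\|\TT_L\|\|\FF_L-\FF_{L'}\|\leq 1/2$ required by Lemma~\ref{lemma:tp}: the constants in $\hdelta(L)$ must be chosen so that both the quadratic-in-$\|L-L'\|$ term in Lemma~\ref{lemma:fp} is absorbed into the factor $1+(\abs)^{1/2}$ and the resulting product is at most $1/2$. This is the one delicate spot where the lower bound $C(L)\geq \us_{\bq}\mu$ is essential; all remaining steps amount to direct substitution.
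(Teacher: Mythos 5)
Your proposal is correct and follows essentially the same route as the paper's proof: Cauchy--Schwarz plus Lemma~\ref{lemma:fp} to get the linearized bound $\|\FF_L-\FF_{L'}\|\leq 2(\bbs)^{1/2}(1+(\abs)^{1/2})\|L-L'\|$ (absorbing the quadratic term via $C(L)\geq \us_{\bq}\mu$), then verifying $\|\TT_L\|\|\FF_L-\FF_{L'}\|\leq 1/2$ with Lemma~\ref{lemma:tnb} so that Lemma~\ref{lemma:tp} applies, and finishing with $\|\Sigma_L\|\leq C(L)/\us_{\bq}$ from Lemma~\ref{lemma:cb}. The only cosmetic difference is that you derive $C(L)\geq\us_{\bq}\mu$ from $P_L\succeq\bq$, whereas the paper reads it off from $\mu\leq\|\Sigma_L\|\leq C(L)/\us_{\bq}$; both are immediate.
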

\begin{proof}
	Lemma \ref{lemma:cb} gives $\mu\leq \|\Sigma_L\|\leq\frac{C(L)}{\us_{\bq}}$, hence
	\[
	(\bbs)^{1/2}\|L-L'\|\leq \frac{\us_{\bq}\cdot \mu}{4C(L)}\leq \frac{1}{4}	.
	\]
	Using $\FF_L$ perturbation Lemma \ref{lemma:fp} and Cauchy-Schwartz inequality, we have 
	\[
	\begin{aligned}
		\|\FF_L-\FF_{L'}\|&\leq 2\overline{\|A-BL\|\|B\|}\|L-L'\|+\bbs \|L-L'\|^2	\\
		&\leq 2(\abs)^{1/2}(\bbs)^{1/2}\|L-L'\|+\bbs \|L-L'\|^2	\\
		&\leq 2(\bbs)^{1/2}\left((\abs)^{1/2}+1/2(\bbs)^{1/2}\|L-L'\| \right)\|L-L'\|\\ 
		&\leq 2(\bbs)^{1/2}\left((\abs)^{1/2}+1 \right)\|L-L'\|.
	\end{aligned}	
	\]
	Using $\TT_L$ norm bound Lemma \ref{lemma:tnb}
	\[
		\|\TT_L\|\|\FF_L-\FF_{L'}\|\leq 2\frac{C(L)}{\us_{\bq}\cdot \mu}(\bbs)^{1/2}\left((\abs)^{1/2}+1 \right)\|L-L'\|\leq 1/2.
	\]
	Thus, by $\TT_L$ perturbation Lemma \ref{lemma:tp}, Lemma \ref{lemma:tnb} and Lemma \ref{lemma:cb}
	\[
	\begin{aligned}
	\|\Sigma_{L'}-\Sigma_L\|&=\|(\TT_{L'}-\TT_L)(\Sigma_0)\|\\
		&\leq 2\|\TT_L\|\|\FF_L-\FF_{L'}\|\|\TT_L(\Sigma_0)\|\\
		&\leq  2\frac{C(L)}{\us_{\bq}\cdot \mu}\left[2(\bbs)^{1/2}\left((\abs)^{1/2}+1 \right)\|L-L'\|\right]\frac{C(L)}{\us_{\bq}}\\
		&=4\left(\frac{C(L)}{\us_{\bq}}\right)^2\frac{(\bbs)^{1/2}\left(1+(\abs)^{1/2}\right)}{\mu}\|L-L'\|.
	\end{aligned}	
	\]
\end{proof}

We say an operator $\TT: \SN \to \SN $ is matrix monotone if $\TT(X)\succeq \TT(Y)$ whenever $X \succeq Y$. The following lemma shows that $\FF_L^t$ is matrix monotone for each $t\geq 0$.
\begin{lemma}
	\label{lemma:mof}
	Suppose $L\in U_{ad}$, then $\FF_L^t$ is matrix monotone for each $t\geq 0$.
\end{lemma}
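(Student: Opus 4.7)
The plan is to reduce the monotonicity of $\FF_L^t$ to the single claim that $\FF_L$ maps the positive semi-definite cone into itself, and then iterate. Since $\FF_L$ is linear in its argument, for any $X \succeq Y$ we have $\FF_L(X) - \FF_L(Y) = \FF_L(X - Y)$, so matrix monotonicity of $\FF_L$ is equivalent to the statement: $Z \succeq 0 \implies \FF_L(Z) \succeq 0$.

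To prove this PSD-preservation property, I would fix $Z \succeq 0$ and pick an arbitrary $v \in \R^n$. For each realization of the random pair $(A,B)$, the matrix $(A - BL) Z (A - BL)^\top$ is PSD by congruence, hence $v^\top (A - BL) Z (A - BL)^\top v \geq 0$ pointwise. Taking expectations and using Assumption~\ref{ass:1} (which guarantees $\E[\|\Lambda\|^2] < \infty$ so the integrand is integrable), we obtain
\[
v^\top \FF_L(Z) v = \E\!\left[ v^\top (A - BL) Z (A - BL)^\top v \right] \geq 0,
\]
and since $v$ was arbitrary, $\FF_L(Z) \succeq 0$.

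Finally, I would induct on $t$. The base case $t=0$ is trivial since $\FF_L^0 = I_n$ is the identity operator, which is matrix monotone. For the inductive step, suppose $\FF_L^{t-1}$ is matrix monotone and let $X \succeq Y$. Then $\FF_L^{t-1}(X) \succeq \FF_L^{t-1}(Y)$ by the inductive hypothesis, and applying $\FF_L$ (which is matrix monotone by the preceding paragraphs) yields
\[
\FF_L^t(X) = \FF_L\!\left( \FF_L^{t-1}(X) \right) \succeq \FF_L\!\left( \FF_L^{t-1}(Y) \right) = \FF_L^t(Y),
\]
completing the induction.

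I expect no serious obstacle here: the argument is a direct combination of linearity, congruence-invariance of the PSD cone, and the fact that expectation preserves the PSD order (which itself follows from the linearity of expectation applied to scalar quadratic forms). The admissibility assumption $L \in U_{ad}$ is not strictly needed for monotonicity of the finite iterates $\FF_L^t$; it is only invoked elsewhere to ensure $\sum_{t \geq 0} \FF_L^t$ converges to $\TT_L$.
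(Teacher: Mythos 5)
Your proposal is correct and complete: linearity reduces matrix monotonicity of $\FF_L$ to preservation of the PSD cone, which follows from congruence-invariance pointwise in $(A,B)$ plus the fact that expectation preserves nonnegativity of the scalar quadratic forms $v\T (A-BL)Z(A-BL)\T v$, and the induction on $t$ is immediate. The paper itself does not write out an argument for this lemma — it only remarks that the proof is similar to Theorem 3.1 of the cited reference~\cite{de1982} — so your self-contained proof supplies exactly the standard reasoning being alluded to; your observation that $L\in U_{ad}$ is not needed for the finite iterates (only for convergence of $\sum_t \FF_L^t$) is also accurate.
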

\begin{proof}
	The proof is similar to the proof of Theorem 3.1 in~\cite{de1982}. 
\end{proof}
\begin{lemma}[$\Sigma_L$ trace bound]
	\label{lemma:stb}
	Suppose $L\in U_{ad}$, then 
\[
\tr(\Sigma_L)\geq \frac{\mu}{1-\rho(\FF_L)}.	
\]
\end{lemma}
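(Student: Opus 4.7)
The plan is to combine three ingredients already available in the paper: the series representation $\Sigma_L=\TT_L(\Sigma_0)=\sum_{t=0}^{\infty}\FF_L^t(\Sigma_0)$ from Lemma~\ref{lemma:oac}, the matrix monotonicity of $\FF_L^t$ from Lemma~\ref{lemma:mof}, and a Perron--Frobenius-type eigenvector for the positive operator $\FF_L$ on the PSD cone $\PSN$.

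First I would lower-bound $\Sigma_0$ by $\mu I_n$ (since $\mu=\sigma_{\min}(\Sigma_0)$), apply the matrix monotonicity of each $\FF_L^t$ to get $\FF_L^t(\Sigma_0)\succeq \mu\,\FF_L^t(I_n)$, and then take traces through the series:
\[
\tr(\Sigma_L)=\sum_{t=0}^{\infty}\tr(\FF_L^t(\Sigma_0))\;\geq\;\mu\sum_{t=0}^{\infty}\tr(\FF_L^t(I_n)).
\]
The problem is thereby reduced to showing $\sum_{t=0}^{\infty}\tr(\FF_L^t(I_n))\geq (1-\rho(\FF_L))^{-1}$.

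Next I would invoke Perron--Frobenius (Krein--Rutman) for the positive operator $\FF_L$: since $\FF_L$ preserves $\PSN$, there exists a nonzero PSD eigenmatrix $V\in\PSN$ with $\FF_L(V)=\rho(\FF_L)V$. Normalize $V$ so that $\|V\|=1$ (operator norm). Then $V\preceq I_n$, and applying $\FF_L^t$ and using monotonicity once more,
\[
\FF_L^t(I_n)\;\succeq\;\FF_L^t(V)\;=\;\rho(\FF_L)^t V.
\]
Taking traces and summing the geometric series (which converges because $L\in U_{ad}$ gives $\rho(\FF_L)<1$),
\[
\sum_{t=0}^{\infty}\tr(\FF_L^t(I_n))\;\geq\;\tr(V)\sum_{t=0}^{\infty}\rho(\FF_L)^t\;=\;\frac{\tr(V)}{1-\rho(\FF_L)}.
\]
Finally, for any PSD matrix $V$ with $\|V\|=1$ we have $\tr(V)\geq \lambda_{\max}(V)=\|V\|=1$, which together with the previous display gives the claimed bound.

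The main obstacle is the Perron--Frobenius step: we need an eigenvector of $\FF_L$ lying in $\PSN$ (not merely in $\SN$) associated with the spectral radius. This is the place where one must cite (or briefly verify) the Krein--Rutman theorem, using that $\PSN$ is a solid, pointed, closed convex cone and that $\FF_L$ is a bounded linear operator mapping $\PSN$ into itself. Everything else is a direct chain of monotonicity and trace inequalities.
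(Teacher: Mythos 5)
Your proof is correct, and the first half (lower-bounding $\Sigma_0$ by $\mu I_n$, pushing this through the matrix monotonicity of $\FF_L^t$ from Lemma~\ref{lemma:mof}, and reducing everything to the inequality $\sum_{t}\tr(\FF_L^t(I_n))\geq (1-\rho(\FF_L))^{-1}$) coincides exactly with the paper's argument. Where you diverge is in how you establish $\tr(\FF_L^t(I_n))\geq \rho(\FF_L)^t$. You invoke the finite-dimensional Krein--Rutman (Perron--Frobenius for proper cones) theorem to produce a PSD eigenmatrix $V$ with $\FF_L(V)=\rho(\FF_L)V$ and $\|V\|=1$, then use $V\preceq I_n$, monotonicity, and $\tr(V)\geq\|V\|=1$. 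The paper instead uses the chain $\rho(\FF_L)^t=\rho(\FF_L^t)\leq\|\FF_L^t\|=\|\FF_L^t(I_n)\|\leq\tr(\FF_L^t(I_n))$, where the crucial identity $\|\FF_L^t\|=\|\FF_L^t(I_n)\|$ is the standard fact (cited from de Souza's Lemma~2.1c) that a positive operator on $\SN$ attains its norm at the identity. Both routes are legitimate: yours requires the existence of a spectral-radius eigenvector inside $\PSN$, which is valid here since $\PSN$ is a proper (closed, pointed, solid, convex) cone preserved by $\FF_L$ and we are in finite dimensions (and the degenerate case $\rho(\FF_L)=0$ causes no trouble, since the $t=0$ term alone already dominates $\tr(V)$); the paper's route avoids any eigenvector existence statement and needs only the norm-at-identity property of positive maps. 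Your version is arguably more structural, while the paper's is more elementary given the cited lemma; the only thing you should be careful to do is actually cite or verify the cone-preserving Perron--Frobenius theorem, as you yourself flag, since that is the one ingredient not already available in the paper's toolkit.
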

\begin{proof}  
	By property of spectral radius~\cite{horn2012} and Lemma 2.1c in~\cite{de1982}, we have 
	\[
		\rho(\FF_L)^t=\rho(\FF_L^t)\leq \|\FF_L^t\|=\|\FF_L^t(I_n)\|.
	\]
	Thus
	\[
		\tr(\FF_L^t(I_n))\geq \|\FF_L^t(I_n)\|\geq \rho(\FF_L)^t.
	\]
 	By definition of $\mu$ we have
	\[
	\mu I_n \preceq \Sigma_0,
	\]
	then Lemma \ref{lemma:mof} implies 
	\[
		\mu\cdot \FF_L^t(I_n) \preceq \FF_L^t(\Sigma_0),
	\]
	for each $t\geq 0$. 
	Hence, 
	\[
	\mu \sum_{t=0}^{\infty}\FF_L^t(I)\preceq 	\sum_{t=0}^{\infty}\FF_L^t(\Sigma_0)=\Sigma_L.
	\]
	Combing above, we obtain that 
	\[
	\tr(\Sigma_L)\geq \mu \sum_{t=0}^{\infty}\tr(\FF_L^t(I))\geq \mu \sum_{t=0}^{\infty}\rho(\FF_L)^t=\frac{\mu}{1-\rho(\FF_L)}.
	\]
\end{proof}
With above preparing lemmas, we can prove Lemma~\ref{lemma:paos} now.
\begin{proof}[Proof of Lemma~\ref{lemma:paos}]
		First, we prove the first statement, i.e., if $\rho(\FF_L)<1$ and 
		\[
		\|L'-L\|\leq \hdelta(L),
		\]
		then $\rho(\FF_{L'})<1.$
	
		Suppose  $L''\neq L$ with $\rho(L'')<1$ and 
		\[
		\|L''-L\|\leq \hdelta(L).
		\]
		Using weak version perturbation Lemma \ref{lemma:spw} we obtain
		\[
		|\tr(\Sigma_{L''}-\Sigma_L)|\leq n\|\Sigma_{L''}-\Sigma_L\|	\leq n\frac{C(L)}{\us_{\bq}},
		\]
		hence
		\[
		\tr(\Sigma_{L''})\leq  \tr(\Sigma_L)	+n\frac{C(L)}{\us_{\bq}}.
		\]
		We denote $K:=\tr(\Sigma_L)	+n\frac{C(L)}{\us_{\bq}}$.
		
		$\Sigma_L$ trace bound Lemma \ref{lemma:stb} gives 
		\[
		\tr(\Sigma_L)\geq \frac{\mu}{1-\rho(\FF_L)}=\frac{2K\cdot \varepsilon}{1-\rho(\FF_L)}	,
		\]
		where $\varepsilon:=\frac{\mu}{2K}$.
		Thus 
		\[
		\rho(\FF_L)\leq 1-	\frac{2K\cdot \varepsilon}{\tr(\Sigma_L)}=1-\frac{2\varepsilon(\tr(\Sigma_L)	+n\frac{C(L)}{\us_{\bq}})}{\tr(\Sigma_L)}\leq 1-2\varepsilon<1-\varepsilon.
		\]
	Now suppose $\exists L'$ with $\rho(\FF_{L'})\geq 1$ and 
	\[
	\|L'-L\|\leq \hdelta(L)	.
	\]
	Since spectral radius is continuous, by intermediate value theorem $\exists L'''$ on tha path between $L$ and $L'$ such that 
	$\rho(\FF_{L'''})=1-\epsilon<1$ and $\| L'''-L\|\leq \hdelta(L)$. So we have 
	\[
	|\tr(\Sigma_{L'''}-\Sigma_L)|\leq 	n\frac{C(L)}{\us_{\bq}}\quad \implies \quad \tr(\Sigma_{L'''})\leq K.
	\]
	However, 
	\[
	\tr(\Sigma_{L'''})\geq \frac{\mu}{1-\rho(\FF_{L'''})}=\frac{2\varepsilon\cdot K}{1-(1-\varepsilon)}=2K.	
	\]
	This is a contradiction.
	
	The second part of this lemma follows from Lemma \ref{lemma:spw} immediately.
\end{proof}
\subsection{Proofs in Section~\ref{sec:caombm}}\label{sec:po3.4}
\begin{proof}[Proof of Lemma~\ref{lemma:osu}]
	By almost smoothness Lemma \ref{lemma:as} 
	\[
	\begin{aligned}
		C(L')-C(L)&=-4\eta \tr(\Sigma_{L'}\Sigma_L E_L\T E_L)+4\eta^2\tr(\Sigma_{L'}\Sigma_LE_L\T R_LE_L\Sigma_L)\\
		&=-4\eta \tr(\Sigma_{L}\Sigma_L E_L\T E_L)-4\eta \tr((\Sigma_{L'}-\Sigma_L)\Sigma_L E_L\T E_L)\\
		&\qquad +4\eta^2\tr(\Sigma_{L'}\Sigma_LE_L\T R_LE_L\Sigma_L)\\
		&\leq -4\eta \tr(\Sigma_L E_L\T E_L\Sigma_{L})+4\eta \|\Sigma_{L'}-\Sigma_L\|\tr(\Sigma_L E_L\T E_L)\\
		&\qquad +4\eta^2\|\Sigma_{L'}\|\|R_L\|\tr(\Sigma_LE_L\T E_L\Sigma_L)\\
		&\leq -4\eta \tr(\Sigma_L E_L\T E_L\Sigma_{L})+4\eta \frac{\|\Sigma_{L'}-\Sigma_L\|}{\mu}\tr(\Sigma_L E_L\T E_L\Sigma_L)\\
		&\qquad +4\eta^2\|\Sigma_{L'}\|\|R_L\|\tr(\Sigma_LE_L\T E_L\Sigma_L)\\
		&=-4\eta\left( 1-\frac{\|\Sigma_{L'}-\Sigma_L\|}{\mu}-\eta\|\Sigma_{L'}\|\|R_L\|\right)\tr(\Sigma_L E_L\T E_L\Sigma_L)\\
		&=-\eta\left( 1-\frac{\|\Sigma_{L'}-\Sigma_L\|}{\mu}-\eta\|\Sigma_{L'}\|\|R_L\|\right)\tr(\nabla C(L)\T \nabla C(L))\\
		&\leq -4\eta\left( 1-\frac{\|\Sigma_{L'}-\Sigma_L\|}{\mu}-\eta\|\Sigma_{L'}\|\|R_L\|\right)\frac{\mu^2 \us_{\br}}{\|\Sigma_{L^*}\|}(C(L)-C(L^*)),
	\end{aligned}	
	\]
	where in the last inequality we use Lemma \ref{lemma:gd}. 
	By the assumed condition on the step size and Lemma~\ref{lemma:cb}, we get 
	\[
	\|L'-L\|=\eta\|\nabla C(L)\| \leq \frac{1}{16}\left(\frac{\us_{\bq}\cdot \mu}{C(L)}\right)^2\frac{\|\nabla C(L)\|}{\|\nabla C(L)\|(\bbs)^{1/2}\left(1+(\abs)^{1/2}\right)}\leq \hdelta(L),
	\]
	which is the condition for Lemma \ref{lemma:spw}. 

	Next using the assumed condition on $\eta$ again, we have
	\[
		\frac{\|\Sigma_{L'}-\Sigma_L\|}{\mu}	\leq 4\eta \left(\frac{C(L)}{\us_{\bq}\cdot \mu}\right)^2(\bbs)^{1/2}\left(1+(\abs)^{1/2}\right)\|\nabla C(L)\|\leq \frac{1}{4}.
	\]
	Using last claim and Lemma \ref{lemma:cb} 
	\[
	\|\Sigma_{L'}\|\leq \|\Sigma_{L'}-\Sigma_L\|+\|\Sigma_L\|\leq \frac{1}{4}\mu+\frac{C(L)}{\us_{\bq}}\leq \frac{\|\Sigma_{L'}\|}{4}+\frac{C(L)}{\us_{\bq}},	
	\]
	and thus $\|\Sigma_{L'}\|\leq \frac{4C(L)}{3\us_{\bq}}$. Hence, 
	\[
		1-\frac{\|\Sigma_{L'}-\Sigma_L\|}{\mu}-\eta\|\Sigma_{L'}\|\|R_L\|\geq 1-\frac{1}{4}-\eta\frac{4C(L)\|R_L\|}{3\us_{\bq}}\geq \frac{3}{4}-\frac{1}{12}>\frac{1}{2}.
	\]
	Therefore, 
	\[
		C(L')-C(L^*)=C(L')-C(L)+(C(L)-C(L^*))\leq \left(1-2\eta \frac{\mu^2\us_{\br}}{\|\Sigma_{L^*}\|}\right)	(C(L)-C(L^*)).
	\]
\end{proof}
The following Lemma~\ref{lemma:bocag} and~\ref{lemma:boss} indicate that some crucial quantities can be upper bounded on $S_{\gamma}$ by model parameters and $\gamma$. 
\begin{lemma}
    \label{lemma:bocag}
     We have the following upper bounds: 
     \begin{enumerate}
        \item \[
	\|\nabla C(L)\|\leq \|\nabla C(L)\|_F\leq h_1(L),	
	\]
    where 
    \[
        h_1(L):=2\frac{C(L)}{\us_{\bq}}\sqrt{\frac{\|R_L\|}{\mu}(C(L)-C(L^*))} .
    \]
    \item \[
	\|L\|\leq \frac{1}{\us_{\br}}	\left( \sqrt{\frac{\|R_L\|}{\mu}(C(L)-C(L^*))}+\overline{\|B\T P_L A\|}\right).
	\]
     \end{enumerate}
	
\end{lemma}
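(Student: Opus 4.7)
\medskip

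\noindent\textbf{Proof proposal for Lemma~\ref{lemma:bocag}.}

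The plan is to derive both bounds from two already established facts: the closed-form gradient expression $\nabla C(L)=2E_L\Sigma_L$ from Section~\ref{sec:po3.1}, and the lower bound in the gradient-domination Lemma~\ref{lemma:gd}, namely $\frac{\mu}{\|R_L\|}\tr(E_L\T E_L)\leq C(L)-C(L^*)$. Rearranging this lower bound immediately gives
\[
\|E_L\|_F \;\leq\; \sqrt{\tfrac{\|R_L\|}{\mu}\bigl(C(L)-C(L^*)\bigr)},
\]
which is the single scalar estimate doing all the work.

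For part (1), I would apply $\|\nabla C(L)\|\leq\|\nabla C(L)\|_F=\|2E_L\Sigma_L\|_F$, then use the submultiplicativity $\|E_L\Sigma_L\|_F\leq\|\Sigma_L\|\,\|E_L\|_F$. Bounding $\|\Sigma_L\|\leq C(L)/\us_{\bq}$ by Lemma~\ref{lemma:cb} and plugging in the displayed bound on $\|E_L\|_F$ yields exactly $h_1(L)$.

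For part (2), I would invert the defining relation $E_L=R_L L-\bbpla$ to write $L=R_L^{-1}(E_L+\bbpla)$, which is justified because $R_L=\br+\overline{B\T P_L B}\succeq\br$ gives $\sigma_{\min}(R_L)\geq\us_{\br}>0$ so $R_L$ is invertible. The triangle inequality and $\|R_L^{-1}\|\leq 1/\us_{\br}$ then give
\[
\|L\|\;\leq\;\tfrac{1}{\us_{\br}}\bigl(\|E_L\|+\|\bbpla\|\bigr).
\]
Jensen's inequality bounds $\|\bbpla\|=\|\overline{B\T P_L A}\|\leq\overline{\|B\T P_L A\|}$, and $\|E_L\|\leq\|E_L\|_F$ is bounded by the same scalar used in part (1). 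Combining these yields the claimed estimate.

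Since every ingredient has already been proved in Sections~\ref{sec:cigd}--\ref{sec:caombm}, I do not expect any real obstacle; the only care needed is to observe that $R_L\succeq\br$ so $\us_{\br}$ rather than a quantity depending on $P_L$ appears in the denominator of the second bound, and to keep the Frobenius-versus-operator-norm inequalities in the correct direction throughout.
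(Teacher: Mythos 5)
Your proposal is correct and follows essentially the same route as the paper: both parts rest on the bound $\tr(E_L\T E_L)\leq \frac{\|R_L\|}{\mu}(C(L)-C(L^*))$ from Lemma~\ref{lemma:gd}, combined with $\nabla C(L)=2E_L\Sigma_L$, $\|\Sigma_L\|\leq C(L)/\us_{\bq}$ from Lemma~\ref{lemma:cb} for part (1), and $\|L\|\leq\|R_L^{-1}\|\,\|R_LL\|$ with $R_L\succeq\br$ for part (2). Your explicit remark that Jensen's inequality upgrades $\|\overline{B\T P_L A}\|$ to the $\overline{\|B\T P_L A\|}$ appearing in the statement is a detail the paper leaves implicit, but otherwise the two arguments coincide.
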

\begin{proof}
	For $1$. Using Lemma \ref{lemma:cb} we have 
	\[
	\begin{aligned}
	\|\nabla C(L)\|^2\leq \|\nabla C(L)\|_F^2=4\tr(\Sigma_L E_L\T E_L \Sigma_L)\leq 4\|\Sigma_L\|^2\tr(E_L\T E_L)\leq 4\left(\frac{C(L)}{\us_{\bq}}\right)^2\tr(E_L\T E_L).	
	\end{aligned}	
	\]
	By Lemma \ref{lemma:gd} we obtain
	\[
	\|\nabla C(L)\|^2\leq 	4\left(\frac{C(L)}{\us_{\bq}}\right)^2\frac{\|R_L\|}{\mu}(C(L)-C(L^*)),	
	\]
	which is the first inequality.

	For $2$. Recall in~\eqref{eq:RLEL} we defined that  
	\[
	\begin{aligned}
		R_L&:=\rl,\\
		E_L&:=R_L L-\bbpla.
	\end{aligned}
	\]
	Hence, using Lemma \ref{lemma:gd} again we get
	\[
	\begin{aligned}
	\|L\|\leq \|R_L^{-1}\|\|R_LL\|&\leq \frac{1}{\us_{\br}}(\|E_L\|+\|\overline{B\T P_L A}\|)\\
		&\leq \frac{1}{\us_{\br}}(\tr(E_L\T E_L )^{1/2}+\|\overline{B\T P_L A}\|)\\
		&\leq \frac{1}{\us_{\br}}\left( \sqrt{\frac{\|R_L\|}{\mu}(C(L)-C(L^*))}+\|\overline{B\T P_L A}\|\right).
	\end{aligned}	
	\]
\end{proof}
\begin{lemma}
    \label{lemma:boss}
    For any $L\in S_{\gamma}$, it holds that
	\[
	\begin{aligned}
		&\|R_L\|\leq M_{R_L}(\gamma),\\
		&\|\nabla C(L)\|\leq M_{\nabla}(\gamma),\\
		&\|L\|\leq M_L(\gamma),\\
		&\abs \leq M_{ABL}(\gamma),
	\end{aligned}	
	\]
	where 
	\[
	\begin{aligned}
		&M_{R_L}(\gamma)=\|\br\|+\bbs\frac{\gamma}{\mu}\leq \cc_{B,R,\frac{1}{\mu}}(1+\gamma),\\
		&M_{\nabla}(\gamma)=2\frac{\gamma}{\us_{\bq}}\sqrt{\frac{M_{R_L}(\gamma)}{\mu}(\gamma-C(L^*))}\leq \cc_{B,Q,R,\frac{1}{\mu}}(1+\gamma)^2,\\
		&M_L(\gamma)=\frac{1}{\us_{\br}}	\left( \sqrt{\frac{M_{R_L}(\gamma)}{\mu}(\gamma-C(L^*))}+\overline{\|A\|\|B\|}\frac{\gamma}{\mu}\right)\leq \cc_{A,B,R,\frac{1}{\mu}}(1+\gamma),\\
		&M_{ABL}(\gamma)=2(\overline{\|A\|^2}+\bbs M_L(\gamma)^2)\leq \cc_{A,B,R,\frac{1}{\mu}}(1+\gamma)^2.
	\end{aligned}	
	\]
\end{lemma}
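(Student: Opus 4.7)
\noindent\textbf{Proof proposal for Lemma~\ref{lemma:boss}.} The plan is to establish the four bounds in sequence, each time reducing the quantity on the left-hand side to expressions involving $P_L$, $\|R_L\|$, and $\|L\|$, and then cashing in the already-established bound $\|P_L\| \leq C(L)/\mu \leq \gamma/\mu$ from Lemma~\ref{lemma:cb}. The bounds naturally cascade, so I would simply work them in the listed order.

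First I would bound $\|R_L\|$. Since $R_L = \br + \overline{B\T P_L B}$, sub-additivity of the operator norm under expectation yields
\[
\|R_L\| \leq \|\br\| + \overline{\|B\T P_L B\|} \leq \|\br\| + \bbs\, \|P_L\| \leq \|\br\| + \bbs \frac{\gamma}{\mu},
\]
which is exactly $M_{R_L}(\gamma)$. For the polynomial-in-$(1+\gamma)$ expression I would absorb the constants $\|\br\|$, $\bbs$, and $1/\mu$ into $\cc_{B,R,1/\mu}$ and use $1 \leq 1+\gamma$. Next, for $\|\nabla C(L)\|$, I would apply the gradient upper bound from Lemma~\ref{lemma:bocag}, namely $\|\nabla C(L)\|_F \leq 2(C(L)/\us_{\bq})\sqrt{(\|R_L\|/\mu)(C(L)-C(L^*))}$, and then monotonically replace $C(L)$ by $\gamma$ and $\|R_L\|$ by the just-obtained $M_{R_L}(\gamma)$. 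Absorbing constants gives a $(1+\gamma)^2$ bound since $M_{R_L}(\gamma) = O(1+\gamma)$ and the factor $C(L)\cdot\sqrt{C(L)-C(L^*)}$ contributes another $(1+\gamma)^{3/2}$; rounding up the exponent to $2$ is loose but consistent with the stated $\cc_{B,Q,R,1/\mu}(1+\gamma)^2$.

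For $\|L\|$, the second bound in Lemma~\ref{lemma:bocag} gives $\|L\| \leq \us_{\br}^{-1}(\sqrt{(\|R_L\|/\mu)(C(L)-C(L^*))} + \overline{\|B\T P_L A\|})$. The cross term satisfies $\overline{\|B\T P_L A\|} \leq \|P_L\|\cdot\overline{\|A\|\|B\|} \leq \overline{\|A\|\|B\|}\,\gamma/\mu$, and combining with the $\|R_L\|$ bound produces $M_L(\gamma)$; the overall growth is dominated by the linear-in-$\gamma$ term $\overline{\|A\|\|B\|}\gamma/\mu$, yielding the $\cc_{A,B,R,1/\mu}(1+\gamma)$ form. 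Finally, $\abs = \overline{\|A-BL\|^2} \leq 2(\overline{\|A\|^2} + \bbs\|L\|^2)$ by the elementary inequality $\|X+Y\|^2 \leq 2\|X\|^2 + 2\|Y\|^2$ applied inside the expectation, and substituting $M_L(\gamma)$ gives $M_{ABL}(\gamma)$, which is $O((1+\gamma)^2)$ since $M_L(\gamma)^2 = O((1+\gamma)^2)$.

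Honestly there is no real obstacle here: each bound is a direct consequence of Lemmas~\ref{lemma:cb} and~\ref{lemma:bocag} combined with sub-multiplicativity of norms under expectation. The only mildly delicate step is tracking the exponents of $(1+\gamma)$ when converting the explicit expressions into the polynomial form; I would simply argue that each factor of $1+\gamma$ or $\sqrt{1+\gamma}$ is absorbed into a power that dominates uniformly (e.g., replacing $(1+\gamma)^{3/2}$ by $(1+\gamma)^2$), which is harmless because the constants $\cc$ already hide model parameters. The lemma will then serve its intended role of supplying uniform upper bounds on any sub-level set $S_\gamma$ so that the path-independent step size $\eta^*$ in Theorem~\ref{thm:mb} exists.
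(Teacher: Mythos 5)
Your proposal is correct and follows essentially the same route as the paper: bound $\|R_L\|$ via $\|P_L\|\leq C(L)/\mu\leq\gamma/\mu$ from Lemma~\ref{lemma:cb}, then feed that into the two bounds of Lemma~\ref{lemma:bocag} for $\|\nabla C(L)\|$ and $\|L\|$, and finish with $\overline{\|A-BL\|^2}\leq 2(\overline{\|A\|^2}+\bbs\|L\|^2)$. The only nitpick is that the $(1+\gamma)^2$ exponent for $M_{\nabla}(\gamma)$ is actually exact rather than a rounding-up, since the $(1+\gamma)^{1/2}$ from $\sqrt{M_{R_L}(\gamma)}$ combines with the $(1+\gamma)^{3/2}$ you identified; this does not affect correctness.
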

\begin{proof}
	For $\|R_L\|$, by Lemma \ref{lemma:cb} we have 
	\[
	\begin{aligned}
	\|R_L\|&=\|\br+\overline{B\T P_L B}\|	\leq \|\br\|+\bbs\|P_L\|\leq\|\br\|+\bbs\frac{\gamma}{\mu}=M_{R_L}(\gamma),
	\end{aligned}	
	\]and 
	\[
		M_{R_L}(\gamma)\leq \cc_{B,R,\frac{1}{\mu}}(1+\gamma).
	\]
	For $\nabla C(L)$, by Lemma \ref{lemma:bocag} we have 
	\[
		\|\nabla C(L)\|\leq 2\frac{C(L)}{\us_{\bq}}\sqrt{\frac{\|R_L\|}{\mu}(C(L)-C(L^*))}\leq 2\frac{\gamma}{\us_{\bq}}\sqrt{\frac{M_{R_L}(\gamma)}{\mu}(\gamma-C(L^*))}=M_{\nabla}(\gamma),
	\]and we see that 
	\[
		M_{\nabla}(\gamma) = 2\frac{\gamma}{\us_{\bq}}\sqrt{\frac{M_{R_L}(\gamma)}{\mu}(\gamma-C(L^*))}\leq \cc_{B,Q,R,\frac{1}{\mu}}(1+\gamma)^2.
	\]
	For $\|L\|$, by Lemma \ref{lemma:bocag} we get 
	\[
	\begin{aligned}
		\|L\|&\leq \frac{1}{\us_{\br}}	\left( \sqrt{\frac{\|R_L\|}{\mu}(C(L)-C(L^*))}+\overline{\|B\T P_L A\|}\right)\\
		&\leq\frac{1}{\us_{\br}}	\left( \sqrt{\frac{M_{R_L}(\gamma)}{\mu}(\gamma-C(L^*))}+\overline{\|A\|\|B\|}\frac{\gamma}{\mu}\right)\\
		&=M_L(\gamma),
	\end{aligned}	
	\]and
	\[
		M_L(\gamma)=\frac{1}{\us_{\br}}	\left( \sqrt{\frac{M_{R_L}(\gamma)}{\mu}(\gamma-C(L^*))}+\overline{\|A\|\|B\|}\frac{\gamma}{\mu}\right)\leq \cc_{A,B,R,\frac{1}{\mu}}(1+\gamma).
	\]
	For $\abs$, by Lemma \ref{lemma:bocag} we have
	\[
	\begin{aligned}
	\abs&=\E\|A-BL\|^2\leq 2( \overline{\|A\|^2}+\bbs\|L\|^2)	\\
		&\leq 2(\overline{\|A\|^2}+\bbs M_L(\gamma)^2)\\
		&=M_{ABL}(\gamma),
	\end{aligned}	
	\]and 
	\[
		M_{ABL}(\gamma)\leq \cc_{A,B,R,\frac{1}{\mu}}(1+\gamma)^2.
	\]
\end{proof}
\subsection{Proofs in Section~\ref{sec:acaswfh}}\label{sec:po4.1}
\begin{proof}[Proof of Lemma~\ref{lemma:acaswfhr}]
	For $1$. By Lemma~\ref{lemma:oac}, it holds that 
	\[
	\Sigma_L=\TT_L(\Sigma_0).	
	\] 
	Since $\FF_L$ is linear , for any $z\in \R^n$ by monotone convergence theorem, we get 
	\[
	\begin{aligned}
		z\T \left(\Sigma_L-\Sigma^{(l)}_L\right) z&=\E\left[z\T \left(\sum_{t=l}^{\infty} x_t x_t\T\right)z\right]\\
		&=\E\left[\sum_{t=l-1}^{\infty} z\T(A_{t+1}-B_{t+1}L)x_t x_t\T (A_{t+1}-B_{t+1}L)z\right]\\
		&=\sum_{t=l-1}^{\infty}\E\left[\E\left[z\T (A_{t+1}-B_{t+1}L)x_t x_t\T (A_{t+1}-B_{t+1}L)\T z \big| x_t \right]\right]\\
		&=\sum_{t=l-1}^{\infty}\E\left[z\T \FF_L(x_t x_t\T) z\right]\\
		&=z\T \FF_L(\E[\sum_{t=l-1}^{\infty}x_t x_t\T]) z\\
		&=\cdots\\
		&=z\T \FF_L^l(\Sigma_L) z,
	\end{aligned}	
	\]
	hence 
	\[
		\Sigma^{(l)}_L=\Sigma_L-\FF_L^l(\Sigma_L).
	\]
	Lemma \ref{lemma:mof} and Lemma \ref{lemma:cb} ensures 
	\[
	\sum_{i=0}^{l-1}\tr(\FF_L^{i}(\Sigma_0))=\tr(\sum_{i=0}^{l-1}\FF_L^{i}(\Sigma_0))\leq 	\tr(\sum_{i=0}^{\infty}\FF_L^{i}(\Sigma_0))=\tr(\Sigma_L)\leq n\frac{C(L)}{\us_{\bq}},
	\]
	then there mus exists $0\leq j\leq l-1$ such that 
	\[
		\tr(\FF_L^{j}(\Sigma_0))\leq \frac{nC(L)}{l\cdot \us_{\bq}}.
	\]
	Using Lemma \ref{lemma:mof} and Lemma \ref{lemma:cb} again, we have  
	\[
	\Sigma_L=	\TT_L(\Sigma_0)\preceq\|\TT_L\|\Sigma_0\preceq \frac{C(L)}{\mu \cdot \us_{\bq}}\Sigma_0.
	\]
	Lemma~\ref{lemma:mof} ensures that $\FF_L^j$ is matrix monotone, thus  
	\[
	\tr(\FF_L^{j}(\Sigma_L))\leq 	\frac{C(L)}{\mu \cdot \us_{\bq}}\tr(\FF_L^{j}(\Sigma_0))\leq \frac{nC(L)^2}{l \mu (\us_{\bq})^2 }.
	\]
	It follows that 
	\[
		\|\Sigma_L-\Sigma^{(l)}_L\|\leq \|\Sigma_L-\Sigma^{(j)}_L\|	=\|\FF_L^{j}(\Sigma_L)\|\leq \tr(\FF_L^{j}(\Sigma_L)) \leq \frac{nC(L)^2}{l \mu (\us_{\bq})^2 },
	\]
	and 
	\[
		\tr(\Sigma_L-\Sigma^{(l)}_L)\leq \tr(\Sigma_L-\Sigma^{(j)}_L)	=\tr(\FF_L^{j}(\Sigma_L))\leq \frac{nC(L)^2}{l \mu (\us_{\bq})^2 }.	
	\]
	Therefore as long as 
	\[
	l\geq 	\frac{nC(L)^2}{\epsilon \mu\cdot (\us_{\bq})^2 },
	\]
	it holds that  
	\[
		\|\Sigma_L-\Sigma^{(l)}_L\|	\leq \epsilon.
	\]

	For $2$, using the routine conditional expectation trick, we obtain 
	\[
	\begin{aligned}
		C^{(l)}(L)&=\E[\sum_{t=0}^{l-1}x_t\T (Q_{t+1}+L\T R_{t+1} L)x_t]\\
		&=\E[\sum_{t=0}^{l-1}x_t\T\bqlrl x_t]\\
		&=\E[\sum_{t=0}^{l-1}\tr(x_t x_t\T\bqlrl )]\\
		&=\tr(\E[\sum_{t=0}^{l-1}x_t x_t\T]\bqlrl)\\
		&=\left< \Sigma^{(l)}_L,\bqlrl \right>_F.
	\end{aligned}	
	\]
	Similarly, using monotone convergence theorem, we have 
	\[
		C(L)=\left< \Sigma_L,\bqlrl \right>_F.
	\]
	Hence, 
	\[
		\begin{aligned}
			C(L)-C^{(l)}(L)&=\left< \Sigma_L-\Sigma^{(l)}_L,\bqlrl \right>_F\\
			&\leq \|\bqlrl\|\tr(\Sigma_L-\Sigma^{(l)}_L)\\
			&\leq\|\bqlrl\|\frac{nC(L)^2}{l \mu (\us_{\bq})^2 }\\
			&\leq (\|\bq\|+\bnr \|L\|^2)\frac{nC(L)^2}{l \mu (\us_{\bq})^2 }.
		\end{aligned}
	\]
	Therefore as long as 
	\[
		l\geq 	\frac{nC(L)^2(\|\bq\|+\bnr \|L\|^2)}{\epsilon \mu\cdot (\us_{\bq})^2},
	\]
	it holds that 
	\[
		C(L)\geq C^{(l)}(L)	\geq C(L)-\epsilon.
	\]
\end{proof}
\subsection{Proofs in Section~\ref{sec:paogac}}\label{sec:po4.2}
\begin{proof}[Proof of Lemma~\ref{lemma:cp}]
		As in the proof of Lemma \ref{lemma:spw}, the assumption implies that
		\[
			\begin{aligned}
				 &\|\TT_L\|\|\FF_L-\FF_{L'}\|\leq 1/2,\\
				 &\|\FF_L-\FF_{L'}\|\leq 2(\bbs)^{1/2}(1+(\abs)^{1/2})\|L-L'\|.
			\end{aligned}
		\]
		Using \eqref{eq:closedformcost} we have  
		\[
				C(L)=\left<\Sigma_0,P_L\right>_F, \qquad C(L')=\left<\Sigma_0,P_{L'}\right>_F,
		\]
		thus
		\begin{equation}
			\label{eq:cd}
		\begin{aligned}
		|C(L')-C(L)|&= |\left<\Sigma_0,P_{L'}-P_L\right>_F|\\
		&\leq \tr(\E[x_0x_0\T])\|P_{L'}-P_L\|\\
		&=\bnxs\|P_{L'}-P_L\|.
		\end{aligned}	
		\end{equation}
		
		By linearity of $\TT_L$, Lemma~\ref{lemma:tp} Lemma~\ref{lemma:oac} and the fact that $\|\TT^*\|=\|\TT\|$ for operator $\TT$, we obtain
		\begin{equation}
		\label{eq:pp}
		\begin{aligned}
		\|P_{L'}-P_L\|&=\|\TT_{L'}^*(\bqlrlp)-\TT_L^*(\bqlrlp)+\TT_L^*(\bqlrlp)-\TT_L^*(\bqlrl)\|\\
		&=\|\TT_{L'}^*(\bqlrlp)-\TT_L^*(\bqlrlp)+\TT_L^*(\blrlp-\blrl)\|\\
		&\leq \|\TT_{L'}^*-\TT_L^*\|\|\bqlrlp\|+\|\TT_L^*\|\|\blrlp-\blrl\|\\
		&=\|\TT_{L'}-\TT_L\|\|\bqlrlp\|+\|\TT_L\|\|\blrlp-\blrl\|\\
		&\leq 2\|\TT_L\|^2\|\FF_{L'}-\FF_L\|\|\bqlrl+\blrlp-\blrl\|+\|\TT_L\|\|\blrlp-\blrl\|\\
		&\leq 2\|\TT_L\|^2\|\FF_{L'}-\FF_L\|\|\bqlrl\|+\|\TT_L\|\|\blrlp-\blrl\|+\|\TT_L\|\|\blrlp-\blrl\|\\
		&=2\|\TT_L\|^2\|\FF_{L'}-\FF_L\|\|\bqlrl\|+2\|\TT_L\|\|\blrlp-\blrl\|.
		\end{aligned}			
		\end{equation}
	
	 For the first term, by Lemma \ref{lemma:fp} we obtain
	 \[
	\begin{aligned}
		&2\|\TT_L\|^2\|\FF_{L'}-\FF_L\|\|\bqlrl\|\\
		\leq &4\|\TT_L\|^2(\bbs)^{1/2}(1+(\abs)^{1/2})(\|\bq\|+\bnr \|L\|^2)\|L'-L\|.
	\end{aligned}	
	 \]
	 For the second term, 
	 \[
		\begin{aligned}
		&2\|\TT_L\|\|\blrlp-\blrl\|\\
		=&2\|\TT_L\|\|\overline{(L'-L)\T R (L'-L)}+\overline{(L')\T R L}+\overline{L\T R L'}-\blrl -\blrl \|\\
		\leq & 2\|\TT_L\|(\bnr \|L'-L\|^2+2\bnr \|L\|\|L'-L\|)\\
		\leq&2\|\TT_L\|\bnr (\|L'-L\|+2 \|L\|)\|L'-L\|\\
		\leq&2\|\TT_L\|\bnr (\Delta +2 \|L\|)\|L'-L\|.
		 \end{aligned}
	 \]
	 Combing above inequalities and Equation (\ref{eq:cd}) and Lemma \ref{lemma:tnb}, we have 
	 \[
		\begin{aligned}
			&|C(L')-C(L)|\\
			\leq& \bnxs \|\TT_L\|\left(4\|\TT_L\|(\bbs)^{1/2}(1+(\abs)^{1/2})(\|\bq\|+\bnr \|L\|^2)\right.\\
			&\left. +2\bnr (\Delta(L)+2 \|L\|)\right)\|L'-L\|\\
			\leq & C(L)\frac{\bnxs}{\mu\us_{\bq}}\left(4\frac{C(L)}{\mu\us_{\bq}}(\bbs)^{1/2}(1+(\abs)^{1/2})(\|\bq\|+\bnr \|L\|^2)\right. \\
			&\left.+2\bnr (\Delta(L)+2 \|L\|)\right)\|L'-L\|.
		\end{aligned}
	 \]
\end{proof}
\begin{proof}[Proof of Lemma~\ref{lemma:gp}]
	Recall that 
	\[
	\nabla C(L)=2E_L\Sigma_L, 
	\]
	where $E_L=\rl \cdot L-\bbpla$.
	Thus  
	\[
	\nabla C(L')-\nabla C(L)=2(E_{L'}\Sigma_{L'}-E_L\Sigma_L)=2\left[(E_{L'}-E_L)\Sigma_{L'}+E_L(\Sigma_{L'}-\Sigma_L)\right].	
	\]
	First, we bound the second term. Lemma \ref{lemma:gd} gives 
	\[
	\|E_L\|_F^2=\tr(E_L\T E_L)\leq \frac{\|R_L\|}{\mu}(C(L)-C(L^*))	.
	\]
	Thus using Lemma \ref{lemma:paos}, we obtain
	\[
	\|E_L(\Sigma_{L'}-\Sigma_L)\|\leq 4\sqrt{\frac{\|R_L\|}{\mu}(C(L)-C(L^*))}\left(\frac{C(L)}{\us_{\bq}}\right)^2\frac{(\bbs)^{1/2}(1+(\abs)^{1/2})}{\mu}\|L'-L\|.
	\]
	Next we bound the first term. Observe that 
	\[
	\|\Sigma_{L'}\|-\|\Sigma_L\|\leq \|\Sigma_{L'}-\Sigma_L\|\leq \frac{C(L)}{\us_{\bq}}, 	
	\]
	hence 
	\[
	\|\Sigma_{L'}\|\leq \|\Sigma_L\|+\frac{C(L)}{\us_{\bq}}\leq 2\frac{C(L)}{\us_{\bq}}.
	\]
	Equation (\ref{eq:pp}) in the proof of previous Lemma gives 
	\[
		\begin{aligned}
			\|P_{L'}-P_L\|\leq& 	\frac{C(L)}{\mu\cdot \us_{\bq}}\left[2\bnr (\|L-L’\|+2 \|L\|)\right.\\
			&+\left. 4\frac{C(L)}{\mu\cdot \us_{\bq}}(\bbs)^{1/2}(1+(\abs)^{1/2})(\|\bq\|+\bnr\|L\|^2)\right]\|L'-L\|.
		\end{aligned}
	\]
	 Note that 
 \[
	\begin{aligned}
		E_{L'}-E_L&=\overline{R+B\T P_{L'} B}\cdot L' -\overline{B\T P_{L'} A}-\rl\cdot L + \bbpla\\
		&=\br(L'-L)-\overline{B\T (P_{L'}-P_L) A} +\overline{B\T (P_{L'}-P_L) B}\cdot L'+\overline{B\T P_{L} B}(L'-L).
	\end{aligned}
 \]
 Hence, 
 \[
	\begin{aligned}
		\|E_{L'}-E_L\|\leq& \|\br\|\|L'-L\|+\overline{\|B\|\|A\|}\|P_{L'}-P_L\|+(\|L-L’\|+\|L\|)\bbs \|P_{L'}-P_L\|\\
		&+\bbs\|P_L\|\|L'-L\|\\
		\leq&\left(\|\br\|+\bbs\frac{C(L)}{\mu}+\left(\overline{\|B\|\|A\|}+\bbs(\|L-L’\|+\|L\|) \right)\frac{C(L)}{\mu\us_{\bq}}\left[2\bnr (\|L-L’\|+2 \|L\|)\right.\right.\\
		&\left.\left.+4\frac{C(L)}{\mu\cdot \us_{\bq}}(\bbs)^{1/2}(1+(\abs)^{1/2})(\|\bq\|+\bnr\|L\|^2)\right] \right)\|L'-L\|.
	\end{aligned}
 \]
 
 We define 
 \[
	\begin{aligned}
		\hgrad(L,\Delta):=&4\frac{C(L)}{\us_{\bq}}\left(\left(\overline{\|B\|\|A\|}+\bbs(\Delta+\|L\|) \right)\frac{C(L)}{\mu\us_{\bq}}\left[2\bnr (\Delta+2 \|L\|)\right.\right.\\
		& \left. \left.+4\frac{C(L)}{\mu\cdot \us_{\bq}}(\bbs)^{1/2}(1+(\abs)^{1/2})(\|\bq\|+\bnr\|L\|^2)\right]+\|\br\|+\bbs\frac{C(L)}{\mu} \right)\\
		&+8\sqrt{\frac{\|R_L\|}{\mu}(C(L)-C(L^*))}\left(\frac{C(L)}{\us_{\bq}}\right)^2\frac{(\bbs)^{1/2}(1+(\abs)^{1/2})}{\mu}.
	\end{aligned}
 \]
 Combining above we obtain 
 \[
\|\nabla C(L')-\nabla C(L)\|\leq 2(\|(E_{L'}-E_L)\Sigma_{L'}\|+\|E_L(\Sigma_{L'}-\Sigma_L)\|)\leq \hgrad(L,\Delta(L))\|L'-L\|.
 \] 
 Finally, we see that
 \[
	\begin{aligned}
	\|\nabla C(L')-\nabla C(L)\|_F\leq& \sqrt{m\land n}\|\nabla C(L')-\nabla C(L)\|\\
		\leq &\sqrt{m\land n} \cdot \hgrad(L,\Delta(L))\|L'-L\|\\
		\leq&\sqrt{m\land n} \cdot \hgrad(L,\Delta(L))\|L'-L\|_F.
	\end{aligned}
 \]
\end{proof}

\subsection{Proofs in Section~\ref{sec:sgosp}}\label{sec:po4.3}
\begin{proof}[Proof of Lemma~\ref{lemma:xtisg}]
    First, using~\eqref{eq:ass32}, 
	\begin{equation}
        \label{eq:xisg1}
        \begin{aligned}
		\|A-BL\|^p&\leq (\|A\|+\|B\| \|L\|)^p\\
		&\leq \left[K(1+\|L\|)\right]^p \quad \text{a.s.} \quad \forall p\geq 1. 
		\end{aligned}
    \end{equation}

	By independency, induction, and~\eqref{eq:xisg1}, we have 
	\[
		\begin{aligned}
			\|\overline{x_{t}}\|_2&=\|\E\left[(A_t-B_t L)x_{t-1}\right]\|\\
			&=\|\E\left[(A_t-B_t L)\right]\overline{x_{t-1}}\|_2\\
			&\leq \E\|A-BL\|\cdot \|\overline{x_{t-1}}\|_2	\\
			&\leq (1+\|L\|)K \|\overline{x_{t-1}}\|_2\\
			&\leq \left((1+\|L\|)K\right)^{t}\|\overline{x_0}\|_2.
		\end{aligned}
	\]

	Next, we claim that for any $u\in \R^n$ with $\|u\|_2=1$, for any $t\geq 1$  it holds that 
	\begin{equation}\label{eq:cl}
	\|\left<u,x_{t}-\overline{x_{t}}\right>\|_{L^p}^p \leq \left[K(1+\|L\|)\right]^{tp}\left(c_p^{2t}\sigma_0^pp^{p/2}+(\Sigma_{i=1}^{2t}c_p^i )\|\overline{x_0}\|_2^p\right),\quad \forall p\geq 1,
	\end{equation}
	where $c_p=2^{p-1}$.

	We prove this claim by induction. For base case $t=1$, 
	\[
	\begin{aligned}
	&\|\left<u,x_{1}-\overline{x_{1}}\right>\|_{L^p}^p\\
	=&\E\left[\left|\left<u,x_{1}-\overline{x_{1}}\right>\right|^p\right]\\
	\leq& c_p\left(\E\left[\left|\left<u,x_{1}\right>\right|^p\right]+\|\overline{x_{1}}\|_2^p\right)\\
	\leq& c_p\left(\E\left[\left|\left<u,(A_{1}-B_{1} L)x_0\right>\right|^p\right]+\|\overline{x_{1}}\|_2^p\right)\\
	\leq& c_p\left(\E\left[\left|\left<(A_{1}-B_{1} L)^*u,x_0\right>\right|^p\right]+\|\overline{x_{1}}\|_2^p\right)\\
	\leq& c_p\left(c_p\left(\E\left[\left|\left<(A_{1}-B_{1} L)^*u,x_0-\overline{x_0}\right>\right|^p\right]+\E\left[\left|\left<(A_{1}-B_{1} L)^*u,\overline{x_0}\right>\right|^p\right]\right)+\|\overline{x_{1}}\|_2^p\right)\\
	\leq& c_p\left(c_p\E\left[\|(A_{1}-B_{1} L)^*u\|_2^p\left|\left<\frac{(A_{1}-B_{1} L)^*u}{\|(A_{1}-B_{1} L)^*u\|_2},x_0-\overline{x_0}\right>\right|^p\right]\right.\\
	&\qquad  \left.+c_p\E\|A-B L\|^p\|\overline{x_0}\|_2^p+((1+\|L\|)K)^p\|\overline{x_0}\|_2^p\right)\\
	\leq& c_p\left(c_p\E\left[\|A_{1}-B_{1} L\|^p\E\left[\left|\left<\frac{(A_{1}-B_{1} L)^*u}{\|(A_{1}-B_{1} L)^*u\|_2},x_0-\overline{x_0}\right>\right|^p \bigg| A_{1},B_{1}\right]\right]\right.\\
	&\qquad \left.+c_p\left[K(1+\|L\|)\right]^{p}\|\overline{x_0}\|_2^p+\left[K(1+\|L\|)\right]^{p}\|\overline{x_0}\|_2^p\right)\\
	\leq& c_p\left(c_p\left[K(1+\|L\|)\right]^{p}\sigma_0^pp^{p/2}+(c_p+1)\left[K(1+\|L\|)\right]^{p}\|\overline{x_t}\|_2^p \right) \\
	\leq& \left[K(1+\|L\|)\right]^{p}\left(c_p^{2}\sigma_0^pp^{p/2}+(\Sigma_{i=1}^{2}c_p^i )\|\overline{x_0}\|_2^p\right),\quad \forall p\geq 1.
	\end{aligned}	
	\]

	For the $t+1$ case, using inductive hypotheses, we have 
	\[
	\begin{aligned}
	&\|\left<u,x_{t+1}-\overline{x_{t+1}}\right>\|_{L^p}^p\\
	\leq& c_p\left(\E\left[\left|\left<u,x_{t+1}\right>\right|^p\right]+\|\overline{x_{t+1}}\|_2^p\right)\\
	\leq& c_p\left(\E\left[\left|\left<(A_{t+1}-B_{t+1} L)^*u,x_t\right>\right|^p\right]+\|\overline{x_{t+1}}\|_2^p\right)\\
	\leq& c_p\left(c_p\left(\E\left[\left|\left<(A_{t+1}-B_{t+1} L)^*u,x_t-\overline{x_t}\right>\right|^p\right]+\E\left[\left|\left<(A_{t+1}-B_{t+1} L)^*u,\overline{x_t}\right>\right|^p\right]\right)+\|\overline{x_{t+1}}\|_2^p\right)\\
	\leq& c_p\left(c_p\E\left[\|(A_{t+1}-B_{t+1} L)^*u\|_2^p\left|\left<\frac{(A_{t+1}-B_{t+1} L)^*u}{\|(A_{t+1}-B_{t+1} L)^*u\|_2},x_t-\overline{x_t}\right>\right|^p\right]\right.\\
	&\qquad  \left.+c_p\E\|A-B L\|^p\|\overline{x_t}\|_2^p+\left[(1+\|L\|)K\right]^{(t+1)p}\|\overline{x_0}\|_2^p\right)\\
	\leq& c_p\left(c_p\E\left[\|A_{t+1}-B_{t+1} L\|^p\E\left[\left|\left<\frac{(A_{t+1}-B_{t+1} L)^*u}{\|(A_{t+1}-B_{t+1} L)^*u\|_2},x_t-\overline{x_t}\right>\right|^p \bigg| A_{t+1},B_{t+1}\right]\right]\right.\\
	&\qquad \left.+(c_p+1)\left[(1+\|L\|)K\right]^{(t+1)p}\|\overline{x_0}\|_2^p\right)\\
	\leq& c_p\left(c_p\left[K(1+\|L\|)\right]^{tp+p}\left(c_p^{2t}\sigma_0^pp^{p/2}+(\Sigma_{i=1}^{2t}c_p^i )\|\overline{x_0}\|_2^p\right)+(c_p+1)\left[(1+\|L\|)K\right]^{(t+1)p}\|\overline{x_0}\|_2^p \right) \\
	=&  \left[K(1+\|L\|)\right]^{(t+1)p}\left(c_p^{2t+2}\sigma_0^pp^{p/2}+(\Sigma_{i=1}^{2t}c_p^{i+2} +c_p(c_p+1))\|\overline{x_0}\|_2^p\right) \\
	=& \left[K(1+\|L\|)\right]^{(t+1)p}\left(c_p^{2(t+1)}\sigma_0^pp^{p/2}+(\Sigma_{i=1}^{2(t+1)}c_p^i )\|\overline{x_0}\|_2^p\right),\quad \forall p\geq 1.
	\end{aligned}	
	\]
	which concludes our claim.
Using~\eqref{eq:cl} and the fact that $\Sigma_{i=1}^{2t}c_p^i =\frac{c_p^{2t+1}-c_p}{c_p-1}\leq 2c_p^{2t}$, we obtain
\[
\begin{aligned}
	\|\left<u,x_{t}-\overline{x_{t}}\right>\|_{L^p}^p \leq c_p^{2t}\left[K(1+\|L\|)\right]^{tp}\left(\sigma_0^p+2\|\overline{x_0}\|_2^p\right)p^{p/2},\quad \forall p\geq 1.
\end{aligned}	
\]
Next, taking $p$-th root and using the fact that $c_p^{1/p}\leq 2$, we obtain 
\[
	\|\left<u,x_{t}-\overline{x_{t}}\right>\|_{L^p}\leq \left[4K(1+\|L\|)\right]^{t}\left(\sigma_0+2\|\overline{x_0}\|_2\right)p^{1/2},\quad \forall p\geq 1.
\]
\end{proof}


\begin{lemma}[$\|x_t\|_2^2$ is sub-exponential]
    \label{lemma:sen}
	Assume Assumption \ref{ass:3p} holds and the state process $\{x_t\}$ is generated by feedback matrix $L$. Then for each $t$, $\|x_t\|_2^2$ is sub-exponential random variable:
    $$\|x_t\|_2^2\sim SE((2n{\rm e}\beta_t(L))^2, 2n{\rm e}\beta_t(L))$$
	where 
	\[
		\beta_t(L):=16\left(\sigma_t(L)^{2}+\left[K(1+\|L\|)\right]^{2t}\E\left[\|x_0\|_2^2\right]\right).	
	\]
\end{lemma}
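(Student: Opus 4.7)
The plan is to combine Lemma~\ref{lemma:xtisg} with the standard fact that ``squared sub-Gaussian is sub-exponential,'' lifted from the scalar to the vector case through a coordinate decomposition. Since Lemma~\ref{lemma:xtisg} already supplies a sub-Gaussian moment bound for the linear functionals of $x_t-\overline{x_t}$, the task reduces to turning this into an $L^p$ bound on the full squared norm and then citing the standard moment-to-MGF equivalence for sub-exponential variables.

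First I would split
\[
\|x_t\|_2^2 \;\leq\; 2\|x_t-\overline{x_t}\|_2^2 + 2\|\overline{x_t}\|_2^2,
\]
which isolates the stochastic piece $\|x_t-\overline{x_t}\|_2^2 = \sum_{i=1}^n \langle e_i, x_t-\overline{x_t}\rangle^2$ from the deterministic offset $\|\overline{x_t}\|_2^2$. For the offset, the computation inside the proof of Lemma~\ref{lemma:xtisg} already gives $\|\overline{x_t}\|_2 \leq [(1+\|L\|)K]^{t}\|\overline{x_0}\|_2$, so by Jensen $\|\overline{x_t}\|_2^2 \leq [(1+\|L\|)K]^{2t}\E\|x_0\|_2^2$.

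For the stochastic piece I would apply Lemma~\ref{lemma:xtisg} at exponent $2p$ to each coordinate, obtaining $\E[\langle e_i, x_t-\overline{x_t}\rangle^{2p}] \leq (2p\,\sigma_t(L)^2)^p$, and then use the power-mean inequality $(\sum_{i=1}^n a_i)^p \leq n^{p-1}\sum_{i=1}^n a_i^p$ to conclude
\[
\E\bigl[\|x_t-\overline{x_t}\|_2^{2p}\bigr] \;\leq\; \bigl(2np\,\sigma_t(L)^2\bigr)^p, \qquad \text{i.e.,} \qquad \bigl\|\,\|x_t-\overline{x_t}\|_2^2\,\bigr\|_{L^p} \leq 2np\,\sigma_t(L)^2 .
\]
Combining the two estimates and recalling the definition of $\beta_t(L)$ yields, for every $p\geq 1$, a bound of the form $\|\,\|x_t\|_2^2\,\|_{L^p} \leq \tfrac{n\beta_t(L)}{4}\,p$; a triangle inequality then turns this into a bound on the centered variable, namely $\|\,\|x_t\|_2^2 - \E\|x_t\|_2^2\,\|_{L^p} \leq \tfrac{n\beta_t(L)}{2}\,p$.

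Finally I would invoke the classical equivalence (see, e.g., Vershynin, Sec.~2.7, already cited in the excerpt for the reverse direction): a moment bound of the form $\|Y\|_{L^p}\leq \gamma p$ for all $p\geq 1$ implies $Y \sim SE\bigl((2\mathrm{e}\gamma)^2,\,2\mathrm{e}\gamma\bigr)$, which with $\gamma = n\beta_t(L)/2$ gives a sub-exponential parameter dominated by the claimed $\bigl((2n\mathrm{e}\beta_t(L))^2,\,2n\mathrm{e}\beta_t(L)\bigr)$. The main obstacle is really just bookkeeping of constants, namely keeping the factor of $n$ (from summing over coordinates), the factor of $2$ (from centering and from the splitting $\|x_t\|_2^2 \leq 2\|x_t-\overline{x_t}\|_2^2 + 2\|\overline{x_t}\|_2^2$), and the factor of $2\mathrm{e}$ (from the moment-MGF equivalence) aligned with the definition of $\beta_t(L)$; there is no conceptual difficulty beyond the sub-Gaussian-squared-is-sub-exponential principle.
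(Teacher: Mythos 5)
Your proposal is correct and follows essentially the same route as the paper: reduce to the coordinate sub-Gaussian bounds from Lemma~\ref{lemma:xtisg}, control the $p$-th moments of the squared norm (your power-mean step and the paper's coordinate-wise triangle inequality both land at an $L^p$ bound of order $n\beta_t(L)p$), and then pass from the moment growth $\|Y\|_{L^p}\leq \gamma p$ to the sub-exponential MGF bound. The only cosmetic differences are that the paper centers each $(x_t)_i^2$ before summing whereas you center $\|x_t\|_2^2$ at the end, and that you cite the moment-to-MGF equivalence as a standard fact while the paper derives it inline in~\eqref{aeq:sen}; your constants all check out against the stated $\beta_t(L)$.
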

\begin{proof}
	By Lemma~\ref{lemma:xtisg}, $x_t$ is a sub-Gaussian random vector with parameter $\sigma_t(L)$.
	Thus 
	\(
	(x_t)_i	=\left<e_i ,x_t\right>
	\)
    is a sub-Gaussian random variable with parameter $\sigma_t(L)$, 
	i.e. 
	\[
		\|(x_t)_i	-\overline{(x_t)_i	}\|_{L^p}\leq \sigma_t(L) p^{1/2},\quad \forall p\geq 1.
	\]
	By induction and independency, we have
	\[
		\begin{aligned}
			\E\|x_t\|_2^2\leq& \E[\|A_t-B_tL\|^2\|x_{t-1}\|_2^2]\\
			= &\E\left[\|A_t-B_tL\|^2\right]\E\left[\|x_{t-1}\|_2^2\right]\\
			\leq &\left[(1+\|L\|)K\right]^{2}\E[\|x_{t-1}\|_2^2]\\
            \leq &\left[(1+\|L\|)K\right]^{2t}\E[\|x_{0}\|_2^2].
		\end{aligned}
	\]
	Hence, we have 
	\[
        	\begin{aligned}
			\|(x_t)_i^2-\overline{(x_t)_i^2}\|_{L^p}^p&=\E\left|(x_t)_i^2-\E\left[(x_t)_i^2\right]\right|^p\\
			&\leq 2^{p-1}\left(\E|(x_t)_i|^{2p}+\E\left[(x_t)_i^2\right]^p\right)\\
			&\leq 2^{p-1}\left(\E|(x_t-\overline{x_t})_i+\overline{(x_t)_i}|^{2p}+\E\left[(x_t)_i^2\right]^p\right)\\
			&\leq 2^{p-1}\left(2^{2p-1}(\E|(x_t-\overline{x_t})_i|^{2p}+\E\left[(x_t)_i\right]^{2p})+\E\left[(x_t)_i^2\right]^p\right)\\
			&\leq 2^{p-1}\left(2^{2p-1}\sigma_t(L)^{2p}(2p)^{p}+(2^{2p-1}+1)\E\left[(x_t)_i^2\right]^{p}\right)\\
			&\leq 2^{p-1}\left(2^{3p-1}\sigma_t(L)^{2p}p^p+2^{3p-1}\left[K(1+\|L\|)\right]^{2tp}\E\left[\|x_0\|_2^2\right]^p\right)\\
			&\leq 2^{4p-2}\left(\sigma_t(L)^{2p}+\left[K(1+\|L\|)\right]^{2tp}\E\left[\|x_0\|_2^2\right]^p\right)p^p,
		\end{aligned}
    \]
	i.e. 
	\[
		\|(x_t)_i^2-\overline{(x_t)_i^2}\|_{L^p}\leq 16\left(\sigma_t(L)^{2}+\left[K(1+\|L\|)\right]^{2t}\E\left[\|x_0\|_2^2\right]\right)p:=\beta_t(L) p.
	\]
	Thus
    \[
      \left\| \|x_t\|_2^2-\E\left[\|x_t\|_2^2\right] \right\|_{L^p}\leq \sum_{i=1}^n\|(x_t)_i^2-\overline{(x_t)_i^2}\|_{L^p}  \leq n\beta_t(L) p.
    \]
    Finally, we have
	\begin{equation}
        \label{aeq:sen}
		\begin{aligned}
			\E[e^{\lambda(\|x_t\|_2^2-\E\left[\|x_t\|_2^2\right]) }]&\leq 1+\sum_{p=2}^{\infty}\frac{|\lambda|^p\left\| \|x_t\|_2^2-\E\left[\|x_t\|_2^2\right]\right\|_{L^p}^p}{(p/{\rm e})^p}\\
			& \leq 1+\sum_{p=2}^{\infty}(n|\lambda|{\rm e} \beta_t(L))^p	\\
			& \leq 1+\frac{\lambda^2 (n{\rm e}\beta_t(L))^2}{1-n|\lambda|{\rm e}\beta_t(L)}\\
			& \leq 1+2\lambda^2(n{\rm e}\beta_t(L))^2\\
			& \leq e^{\frac{(2n{\rm e}\beta_t(L))^2}{2}\lambda^2},\quad \text{ provided } \quad |\lambda|\leq \frac{1}{2n{\rm e}\beta_t(L)}.
		\end{aligned}
	\end{equation}
	Hence $\|x_t\|_2^2-\E\left[\|x_t\|_2^2\right]\sim SE((2n{\rm e}\beta_t(L))^2,2n{\rm e}\beta_t(L))$.
\end{proof}

    \begin{proof}[Proof of Lemma~\ref{lemma:xmx}]
    In the proof of Lemma \ref{lemma:sen} we have 
    \[
        \E\|x_t\|_2^2\leq ((1+\|L\|)K)^{2t}\E[\|x_0\|_2^2],
    \]
    and 
    \[
        \left\|\|x_t\|_2^2-\overline{\|x_t\|_2^2}\right\|_{L^p}\leq n \beta_t(L)p.
    \]
     
    Thus, we have
    \[
    \begin{aligned}
    &\|x_t\T M x_t-\overline{x_t\T M x_t}\|_{L^p}^p\\
    =&	\E\left[|x_t\T M x_t- \overline{x_t\T M x_t} |^p\right]\\
    \leq& 2^{p-1}\left(\E\left[|x_t\T M x_t|^p\right]+ (\E[x_t\T M x_t ])^p\right)\\
    \leq &2^{p-1}\|M\|^p\left(\E\left[(\|x_t\|_2^2)^p\right]+ (\E[\|x_t\|_2^2 ])^p\right)\\
    \leq& 2^{p-1}\|M\|^p\left[2^{p-1}\E\left[|\|x_t\|_2^2-\overline{\|x_t\|_2^2}|^p\right]+(2^{p-1}+1)(\E[\|x_t\|_2^2 ])^p\right]\\
    \leq& 2^{p-1}\|M\|^p\left[2^{p-1}n^p \beta_t(L)^pp^p+2^p\left[(1+\|L\|)K\right]^{2tp}(\E[\|x_0\|_2^2])^p\right]\\
    \leq& 2^{2p-1}\|M\|^p\left(n^p \beta_t(L)^p+\left[(1+\|L\|)K\right]^{2tp}(\E[\|x_0\|_2^2])^p\right)p^p,\quad \forall p\geq 1.
    \end{aligned}	
    \]
    Then
    \[
        \begin{aligned}
            \|x_t\T M x_t-\overline{x_t\T M x_t}\|_{L^p}&\leq 4\|M\|\left(n \beta_t(L)+\left[(1+\|L\|)K\right]^{2t}(\E[\|x_0\|_2^2])\right)p\\
             &=\gamma_t(L,M) p, \quad \forall p\geq 1.
        \end{aligned}
    \]
    Therefore , similar proof as \eqref{aeq:sen} gives
    \[
        x_t\T M x_t \sim SE((2{\rm e}\gamma_t(L,M))^2,2{\rm e}\gamma_t(L,M) ).
    \]
    \end{proof}
The following lemma shows a constant matrix multiplying a sub-exponential random variable is a sub-exponential random matrix.
\begin{lemma}
	\label{lemma:sem}
	Let $\xi$ be a centered sub-exponential random variable:
	\[
	\|\xi\|_{L^p}\leq \gamma p,\quad \forall p\geq 1.
	\]
	Let $Q\in \SN$ be a symmetric constant matrix. Then $\xi Q$ is a sub-exponential random matrix: 
	\[
		\xi Q\sim SE((2{\rm e}\gamma  \|Q\|)^2I_n,2{\rm e}\gamma  \|Q\|)	.	
	\] 
\end{lemma}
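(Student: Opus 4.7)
The plan is to reduce the matrix-valued MGF bound to a collection of scalar sub-exponential MGF bounds by exploiting the symmetry of $Q$. First, I would establish the scalar version of the statement: for any real $c$ with $|c|\leq 1/(2{\rm e}\gamma)$, the moment hypothesis $\|\xi\|_{L^p}\leq \gamma p$ combined with $p!\geq (p/{\rm e})^p$ gives, via Taylor expansion,
\[
\E[e^{c\xi}]\leq 1+\sum_{p=2}^{\infty}\frac{|c|^p\|\xi\|_{L^p}^p}{(p/{\rm e})^p}\leq 1+\sum_{p=2}^{\infty}(|c|{\rm e}\gamma)^p\leq 1+2({\rm e}\gamma c)^2 \leq \exp\!\left(\tfrac{(2{\rm e}\gamma)^2}{2}c^2\right),
\]
exactly as in the computation leading to (\ref{aeq:sen}). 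This is the scalar sub-exponential MGF bound we will lift to the matrix setting.

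Next, since $Q\in\SN$ is symmetric, spectral decompose $Q=U^{\top}DU$ with $U$ orthogonal and $D=\diag(\lambda_1,\ldots,\lambda_n)$, where $|\lambda_i|\leq \|Q\|$ for each $i$. Then for any real $\lambda$,
\[
e^{\lambda\xi Q}=U^{\top}e^{\lambda\xi D}U=U^{\top}\diag\!\left(e^{\lambda\lambda_1\xi},\ldots,e^{\lambda\lambda_n\xi}\right)U,
\]
so taking expectation commutes with the constant similarity transform and yields
\[
\E[e^{\lambda\xi Q}]=U^{\top}\diag\!\left(\E[e^{\lambda\lambda_i\xi}]\right)U.
\]
Now impose $|\lambda|\leq 1/(2{\rm e}\gamma\|Q\|)$, which guarantees $|\lambda\lambda_i|\leq 1/(2{\rm e}\gamma)$ for every $i$, so the scalar bound from the first step applies with $c=\lambda\lambda_i$, giving $\E[e^{\lambda\lambda_i\xi}]\leq \exp\!\left((2{\rm e}\gamma\lambda_i)^2\lambda^2/2\right)\leq \exp\!\left((2{\rm e}\gamma\|Q\|)^2\lambda^2/2\right)$.

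Assembling these bounds in the Löwner order, every diagonal entry is dominated by the same scalar $\exp\!\left((2{\rm e}\gamma\|Q\|)^2\lambda^2/2\right)$, hence
\[
\E[e^{\lambda\xi Q}]\preceq \exp\!\left(\tfrac{(2{\rm e}\gamma\|Q\|)^2}{2}\lambda^2\right)I_n,\qquad |\lambda|<\tfrac{1}{2{\rm e}\gamma\|Q\|},
\]
which is precisely $\xi Q\sim SE\!\left((2{\rm e}\gamma\|Q\|)^2 I_n,\, 2{\rm e}\gamma\|Q\|\right)$. The only delicate point is the first step: ensuring that the geometric-series convergence condition $|c|{\rm e}\gamma<1$ is compatible with the target threshold $1/(2{\rm e}\gamma)$, and that the resulting quadratic upper bound can be absorbed into an exponential. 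Both are handled identically to the argument already used for (\ref{aeq:sen}) in Lemma~\ref{lemma:sen}, so no new probabilistic input is needed beyond the spectral decomposition trick.
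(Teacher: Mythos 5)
Your proof is correct, and the constants you obtain match the paper's exactly. The route differs from the paper's in how the matrix structure is handled. The paper expands $e^{\lambda\xi Q}$ directly as a matrix power series, uses Fubini to pull the expectation inside, and dominates each term $\frac{\lambda^p\E[\xi^p]}{p!}Q^p$ in the L\"owner order by $(|\lambda|{\rm e}\gamma\|Q\|)^p I_n$ before summing the geometric series; you instead first isolate the scalar MGF bound $\E[e^{c\xi}]\leq\exp((2{\rm e}\gamma)^2c^2/2)$ for $|c|\leq 1/(2{\rm e}\gamma)$ (which is word-for-word the computation in~\eqref{aeq:sen}) and then lift it by writing $Q=U^{\top}DU$, so that $\E[e^{\lambda\xi Q}]=U^{\top}\mathrm{diag}(\E[e^{\lambda\lambda_i\xi}])U\preceq\exp((2{\rm e}\gamma\|Q\|)^2\lambda^2/2)I_n$, using that conjugation by an orthogonal matrix preserves the L\"owner order. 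Your organization has the advantage of making the reduction to the scalar case explicit and of sidestepping any reasoning about the signs of the matrix powers $Q^p$ inside an infinite L\"owner-ordered sum; the paper's version is marginally more direct in that it needs no spectral decomposition. The only point worth making explicit in your write-up is the interchange of expectation with the Taylor series in the scalar step (absolute convergence under $|c|{\rm e}\gamma<1$ justifies it), which the paper likewise invokes via Fubini; beyond that, no new input is needed and the claimed parameters $((2{\rm e}\gamma\|Q\|)^2I_n,\,2{\rm e}\gamma\|Q\|)$ follow as you state.
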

\begin{proof}
	 Because $\xi$ is centered, using definition of matrix exponential and Fubini's theorem, we obtain
	\[
		\begin{aligned}
			\E\left[e^{\lambda \xi Q}\right]	&=\E\left[I_n+\sum_{p=2}^{\infty}\frac{(\lambda \xi Q)^p}{p!} \right]\\
			&=I_n +\sum_{p=2}^{\infty}\frac{\lambda^p \E[\xi^p]}{p!}Q^p\\
			&\preceq \left(1+\sum_{p=2}^{\infty}\frac{|\lambda|^p \|\xi\|_p^p}{(p/{\rm e})^p}\|Q\|^p\right)I_n\\
			&\preceq \left(1+\sum_{p=2}^{\infty}|\lambda|^p (\gamma {\rm e}\|Q\|)^p\right)I_n\\
			&= \left(1+\frac{(\lambda \gamma {\rm e} \|Q\|)^2}{1-|\lambda| \gamma {\rm e} \|Q\|}\right)I_n\\
			&\preceq \exp(2(\lambda \gamma {\rm e} \|Q\|)^2)I_n\\
			&=\exp( \frac{4(\gamma {\rm e} \|Q\|)^2I_n}{2}\lambda^2),\quad |\lambda|<\frac{1}{2\gamma {\rm e} \|Q\|}.
		\end{aligned}
	\]
		Therefore 
		\[
		\xi Q\sim SE((2\gamma {\rm e} \|Q\|)^2I_n,2\gamma {\rm e} \|Q\|)	.
		\]
\end{proof}

\begin{proof}[Proof of Lemma~\ref{lemma:concen}]
	Let 
	\[
	Q_i:=\begin{bmatrix}
		0_{m\times m}&U_i\\
		U_i\T & 0_{n\times n}
	\end{bmatrix}.	
	\]
	Matrix theory knowledge gives 
	\[
	\|\frac{1}{N}\sum_{i=1}^N \xi_i U_i\|=\|\frac{1}{N}\sum_{i=1}^N \xi_i Q_i\|,
	\]
	thus 
	\[
	\Prob(\|\frac{1}{N}\sum_{i=1}^N \xi_i U_i\|\geq \epsilon)=\Prob(\|\frac{1}{N}\sum_{i=1}^N \xi_i Q_i\|\geq \epsilon).
	\]
	Hence it suffices to consider 
	\[
		\Prob(\|\frac{1}{N}\sum_{i=1}^N \xi_i Q_i\|\geq \epsilon)	.
	\]
	
	Lemma~\ref{lemma:sem} implies $\xi_i Q_i\sim SE((2{\rm e}\|Q_i\|\gamma)^2I_{m+n},2{\rm e}\|Q_i\|\gamma)$ is sub-exponential random matrices. Thus 
	\begin{equation}
        \label{eq:concen1}
        	\E[\exp(\lambda \xi_i Q_i)]	\preceq  \exp(\frac{(2{\rm e}\|Q_i\|\gamma)^2I_{m+n}}{2}\lambda^2)\preceq  \exp(\frac{(2{\rm e}\|Q_i\|_F\gamma)^2I_{m+n}}{2}\lambda^2)=\exp(\frac{(2{\rm e}r\gamma)^2I_{m+n}}{2}\lambda^2),
    \end{equation}
    for $|\lambda|\leq \frac{1}{2{\rm e}r\gamma}$.

    By matrix Chernoff trick and Lieb's inequality (Lemma 6.12 and Lemma 6.13 in~\cite{wain2019}), 
    we obtain 
	\[
		\Prob(\|\frac{1}{N}\sum_{i=1}^N \xi_i Q_i\|\geq \epsilon)\leq 2{\rm e}^{-\lambda N \epsilon}\tr({\rm e}^{\sum_{i=1}^N \log \E[\exp(\lambda \xi_i Q_i)]})
	\]
    for $\lambda\in [ 0,\frac{1}{2{\rm e}r\gamma} )$.

	Since matrix logarithm is matrix monotone~\cite{wain2019}, using~\eqref{eq:concen1} we have
	\[
		\log \E[\exp(\lambda \xi_i Q_i)]	\preceq  2({\rm e}r\gamma)^2I_{m+n}\lambda^2,\quad \forall |\lambda|\leq \frac{1}{2{\rm e}r\gamma}.
	\]
	Therefore, 
	\[
	\begin{aligned}
		\Prob\left(\|\frac{1}{N}\sum_{i=1}^N \xi_i Q_i\|\geq \epsilon\right)&\leq 2{\rm e}^{-\lambda N \epsilon}\tr({\rm e}^{\sum_{i=1}^N \log \E[\exp(\lambda \xi_i Q_i)]})\\
		&\leq 2{\rm e}^{-\lambda N \epsilon}\tr({\rm e}^{N 2({\rm e}r\gamma)^2I_{m+n}\lambda^2})\\
		&\leq 2(m+n){\rm e}^{N (-\lambda\epsilon+ 2({\rm e}r\gamma)^2\lambda^2)},\quad \forall \lambda\in\left[ 0,\frac{1}{2{\rm e}r\gamma} \right).
	\end{aligned}	
	\]
    In different cases of $\epsilon$, we optimize $\inf_{\lambda\in[0,(2{\rm e}r\gamma)^{-1})} \left\{ -\lambda\epsilon+ 2({\rm e}r\gamma)^2\lambda^2 \right\} $ and get
    \[
        \Prob\left(\|\frac{1}{N}\sum_{i=1}^N \xi_i Q_i\|\geq\epsilon\right)\leq\begin{cases}
            2(m+n)\exp(-\frac{N\epsilon^2}{2(2{\rm e}r\gamma)^2})\quad \text{if } 0<\epsilon\leq 2{\rm e}r\gamma,\\
            2(m+n)\exp(-\frac{N\epsilon}{2(2{\rm e}r\gamma)})\quad \text{if } 2{\rm e}r\gamma<\epsilon. 
        \end{cases}
    \]
	Note that 
    \[
    \max\left\{-\frac{N\epsilon^2}{2(2{\rm e}r\gamma)^2}, -\frac{N\epsilon}{2(2{\rm e}r\gamma)}\right\} \leq -\frac{N\epsilon^2}{2\left(\epsilon(2{\rm e}r\gamma)+(2{\rm e}r\gamma)^2\right)},\quad \forall \epsilon>0.  
    \]
    Therefore, 
	\[
		\Prob\left(\|\frac{1}{N}\sum_{i=1}^N U_i^t\|\geq \epsilon\right)=\Prob\left(\|\frac{1}{N}\sum_{i=1}^N \xi_i Q_i\|\geq \epsilon\right)\leq 2(m+n)\exp(-\frac{\epsilon^2N}{2\left(\epsilon (2{\rm e}r\gamma)+(2{\rm e}r\gamma)^2\right)}) 
	\]
    for all $\epsilon>0$.
\end{proof}
\begin{proof}[Proof of Corollary~\ref{cor:concen}]
    First note that 
    \[
        \Prob\left(\|\frac{1}{N}\sum_{i=1}^N U_i^t\|\leq \epsilon\right)=1- \Prob\left(\|\frac{1}{N}\sum_{i=1}^N U_i^t\|>\epsilon\right)\geq 1-  \Prob\left(\|\frac{1}{N}\sum_{i=1}^N U_i^t\|\geq \epsilon\right)\geq 1-\delta,
    \]
    then we solve 
    \[
        2(m+n)\exp(-\frac{\epsilon^2N}{2\left(\epsilon (2{\rm e}r\gamma)+(2{\rm e}r\gamma)^2\right)}) \leq \delta
    \]
    and get 
    \[
        N\geq \frac{2}{\epsilon^2}\left((2{\rm e}r\gamma)^2+\epsilon (2{\rm e}r\gamma)\right)\log(\frac{2(m+n)}{\delta}).
    \]
    Finally, since $\|U\|_F\leq \sqrt{m\land n}\|U\|$, we have
    \[
        \Prob\left(\|\frac{1}{N}\sum_{i=1}^N U_i^t\|_F\geq \epsilon\right) \leq  \Prob\left(\|\frac{1}{N}\sum_{i=1}^N U_i^t\|\geq \frac{\epsilon}{\sqrt{m \land n}}\right) .
    \]
\end{proof}

Next lemma is the Bernstein inequality for bounded non-symmetric random matrices whose details can be found in~\cite{wain2019} Exercise 6.10.
\begin{lemma}[Matrix Bernstein inequality: non-symmetric version]
	\label{alemma:mbinsv}
	Let  $\{H_i\}_{i=1}^N\subset \R^{d_1 \times d_2}$ be a sequence of independent non-symmetric random matrices with 
	\[
	\E[H_i]=H, \quad \|H_i-H\|\leq b \quad a.s.	
	\]
	and 
	\[
	\max\left\{\left\|\frac{1}{N}\sum_{i=1}^N(\E[H_i\T H_i]-H\T H )\right\|,\left\|\frac{1}{N}\sum_{i=1}^N(\E[H_iH_i\T ]-HH\T  ) \right\|\right\}	\leq \sigma^2,
	\]
	then for any  $\epsilon>0$, 
	\[
	\Prob\left(\|\frac{1}{N}\sum_{i=1}^N(H_i-H)\|\geq \epsilon\right)\leq 2(d_1+d_2)\exp(-\frac{N\epsilon^2}{2(\sigma^2+b\epsilon)}).
	\]
	Furthermore, for any  $\epsilon>0$ and failure probability $0<\delta\leq 1$, if 
	\[
	N\geq \hsnsop(\epsilon,\delta,b,\sigma^2):=\frac{2}{\epsilon^2}(\sigma^2+\epsilon b)\log(\frac{2(d_1+d_2)}{\delta}),
	\]
	it holds that 
	\[
		\Prob\left(\|\frac{1}{N}\sum_{i=1}^N(H_i-H)\|\leq \epsilon\right)\geq 1-\delta.
	\]
	For Frobenius norm case, if 
	\[
	N\geq\hsnsf(\epsilon,\delta,b,\sigma^2):= \frac{2d_1\land d_2}{\epsilon^2}	(\sigma^2+\frac{\epsilon b}{\sqrt{d_1\land d_2}})\log(\frac{2(d_1+d_2)}{\delta}),
	\]
	it holds that 
	\[
		\Prob\left(\|\frac{1}{N}\sum_{i=1}^N(H_i-H)\|_F\leq \epsilon\right)\geq 1-\delta.
	\]
\end{lemma}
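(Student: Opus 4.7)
The plan is to reduce the non-symmetric bound to the Hermitian Matrix Bernstein inequality via the dilation trick already used in the proof of Lemma~\ref{lemma:concen}. Define
\[
\mathcal{D}(M) := \begin{bmatrix} 0 & M \\ M\T & 0 \end{bmatrix} \in \mathcal{S}^{d_1+d_2},
\]
and recall its two key properties: $\|\mathcal{D}(M)\| = \|M\|$, and $\mathcal{D}(M)^2$ is block-diagonal with diagonal blocks $MM\T$ and $M\T M$. Setting $Y_i := \mathcal{D}(H_i - H)$ yields independent centered symmetric random matrices in $\mathcal{S}^{d_1+d_2}$ with $\|Y_i\| \le b$ almost surely. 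Using the block structure of $Y_i^2$ together with $\E[(H_i - H)(H_i - H)\T] = \E[H_i H_i\T] - HH\T$ and its transpose, and the fact that the operator norm of a symmetric block-diagonal matrix equals the maximum of its block operator norms, I obtain
\[
\Bigl\|\tfrac{1}{N}\sum_{i=1}^N \E[Y_i^2]\Bigr\| = \max\!\left\{\Bigl\|\tfrac{1}{N}\sum_i (\E[H_i H_i\T] - HH\T)\Bigr\|,\ \Bigl\|\tfrac{1}{N}\sum_i (\E[H_i\T H_i] - H\T H)\Bigr\|\right\} \le \sigma^2.
\]

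Next I would apply the Hermitian Matrix Bernstein machinery to $\{Y_i\}$ following exactly the scheme in the proof of Lemma~\ref{lemma:concen}: invoke the bounded-variable MGF bound $\E[e^{\lambda Y_i}] \preceq \exp\bigl(\lambda^2 \E[Y_i^2]/(2(1 - b|\lambda|))\bigr)$ valid for $|\lambda| < 1/b$, combine with Lieb's concavity and the matrix Chernoff trick (Lemma~6.12 and Lemma~6.13 in~\cite{wain2019}), and take a trace bound to arrive at
\[
\Prob\Bigl(\bigl\|\tfrac{1}{N}\sum_{i=1}^N Y_i\bigr\| \ge \epsilon\Bigr) \le 2(d_1+d_2)\inf_{\lambda \in [0,1/b)} \exp\!\Bigl(N\bigl(-\lambda\epsilon + \tfrac{\lambda^2 \sigma^2}{2(1-b\lambda)}\bigr)\Bigr).
\]
Optimizing $\lambda$ separately in the two regimes $\epsilon \le \sigma^2/b$ and $\epsilon > \sigma^2/b$, then unifying via $\max\{a^2/c_1,\,a/c_2\} \le a^2/(c_1 + c_2 a)$ (the same inequality used at the end of the proof of Lemma~\ref{lemma:concen}), produces the exponent with denominator $2(\sigma^2 + b\epsilon)$. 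Since $\|\mathcal{D}(M)\| = \|M\|$, translating back gives $\bigl\|\tfrac{1}{N}\sum_i Y_i\bigr\| = \bigl\|\tfrac{1}{N}\sum_i (H_i - H)\bigr\|$, which yields the claimed tail bound with dimension factor $2(d_1 + d_2)$.

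The sample-complexity formulas $\hsnsop$ and $\hsnsf$ then follow mechanically by solving $2(d_1+d_2)\exp\bigl(-N\epsilon^2/(2(\sigma^2+b\epsilon))\bigr) \le \delta$ for $N$ and rearranging; the Frobenius version uses the general inequality $\|M\|_F \le \sqrt{d_1 \wedge d_2}\,\|M\|$, so imposing an operator-norm deviation of at most $\epsilon/\sqrt{d_1\wedge d_2}$ produces a Frobenius deviation of at most $\epsilon$, which is how the extra factor $d_1 \wedge d_2$ enters $\hsnsf$. I do not anticipate any real obstacle: the dilation reduction, the scalar calculus on the MGF exponent, and the conversion between operator and Frobenius sample complexities are all already carried out in Corollary~\ref{cor:concen} and Lemma~\ref{lemma:concen} for a structurally identical problem; the only additional bookkeeping is verifying the block-diagonal variance computation and keeping track of the dimension factor $d_1 + d_2$ arising from the dilation.
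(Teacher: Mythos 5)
Your proof is correct, but note that the paper does not actually prove this lemma: it is stated as an imported result, with the remark that the ``details can be found in~\cite{wain2019} Exercise 6.10.'' What you have written is essentially the solution to that exercise, and it is the same machinery the paper does spell out for Lemma~\ref{lemma:concen} (dilation to a symmetric matrix in $\mathcal{S}^{d_1+d_2}$, a matrix MGF bound, Lieb's inequality plus the matrix Chernoff trick, then a trace bound giving the $2(d_1+d_2)$ factor). Your variance computation via the block-diagonal structure of $\mathcal{D}(H_i-H)^2$ and the reduction $\E[(H_i-H)(H_i-H)\T]=\E[H_iH_i\T]-HH\T$ are both right, and the Frobenius sample complexity follows exactly as you say from $\|M\|_F\leq\sqrt{d_1\wedge d_2}\,\|M\|$, matching $\hsnsf$. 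One small simplification: with the MGF bound $\E[e^{\lambda Y_i}]\preceq\exp\bigl(\lambda^2\E[Y_i^2]/(2(1-b|\lambda|))\bigr)$ you do not need the two-regime optimization and the $\max$ inequality at all; the single choice $\lambda=\epsilon/(\sigma^2+b\epsilon)<1/b$ gives $1-b\lambda=\sigma^2/(\sigma^2+b\epsilon)$ and hence the exponent $-\lambda\epsilon+\frac{\lambda^2\sigma^2}{2(1-b\lambda)}=-\frac{\epsilon^2}{2(\sigma^2+b\epsilon)}$ exactly as stated in the lemma.
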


\subsection{Proofs in Section~\ref{sec:saag}}\label{sec:appsaag}
\begin{proof}[Proof of Lemma~\ref{lemma:egwfmihrr}]
	First, for any $U\in S_F(0,r)$ fixed, let $L'=L+U$. Note that  
    $$\|L'-L\|\leq \|L'-L\|_F=\|U\|_F=r\leq h_r(\epsilon,L,\Delta(L))\leq \Delta(L) \leq\hdelta(L).$$
	
	Since $\|L'-L\|\leq \Delta(L) \leq \hdelta(L)$ and $r\leq h_r(\epsilon,L,\Delta(L))\leq \frac{1}{h_c(L,\Delta(L))}$, Lemma \ref{lemma:cp} gives 
	\[
	|C(L+U)-C(L)|\leq C(L)h_c(L,\Delta(L))r\leq C(L)	,
	\]
	hence $C(L+U)\leq 2C(L)$. Above result is true for any $U\in S_F(0,r)$, which ensures the stability of cost in smoothing~\eqref{eq:smoothing}.
	Because $r\leq h_r(\epsilon,L,\Delta(L))\leq \frac{\epsilon}{\widehat{\hgrad}(L,\Delta(L))}$, Lemma~\ref{lemma:gp} implies 
	\[
	\|\nabla C(L+U)-\nabla C(L)\|_F\leq \widehat{\hgrad}(L,\Delta(L))r\leq \epsilon,	
	\]
	for any $U\in S_F(0,r)$. 
	Hence we obtain
	\[
        \|\nabla C(L)-\nabla C_r(L)\|_F\leq \E\|\nabla C(L+U)-\nabla C(L)\|_F\leq \epsilon.	
    \]
\end{proof}
\begin{proof}[Proof of Lemma~\ref{lemma:egwfmihr}]
	Note that
	\begin{equation}\label{eq:xx}
		\|\nabla C(L)-\hd\|_F\leq \|\nabla C(L)-\nabla C_r(L)\|_F+\|\nabla C_r(L) - \hd\|_F.
	\end{equation}
	By Lemma~\ref{lemma:egwfmihrr}, the first term in~\eqref{eq:xx} can be bounded as
	\begin{equation}
        \label{eq:egwfmihr1}
        \|\nabla C_r(L)-\nabla C(L)\|_F\leq  \frac{\epsilon}{2}.	
    \end{equation}
	
	For the second term in~\eqref{eq:xx}, we set $H_i=\frac{mn}{r^2}C(L+U_i)U_i$, so 
	\[
	\hd=\frac{1}{N}\sum_{i=1}^NH_i	.
	\]
	Equation~\eqref{eq:zooo} gives  
	\[
	H=\E[H_i]=\nabla C_r(L).
	\]
	Observe 
	\[
	\|H_i\|	\leq \|H_i\|_F=\|\frac{mn}{r^2}C(L+U_i)U_i\|_F\leq \frac{2mnC(L)}{r},
	\]
	then using~\eqref{eq:egwfmihr1} and Lemma \ref{lemma:bocag}, we have 
	\[
		\|H\|\leq \|H\|_F=\|\nabla C_r(L)\|_F\leq \frac{\epsilon}{2}+\|\nabla C(L)\|_F\leq \frac{\epsilon}{2}+h_1(L).	
	\]
	Thus 
	\[
	\|H_i-H\|\leq\|H_i-H\|_F\leq 	\frac{2mnC(L)}{r}+\frac{\epsilon}{2}+h_1(L)=b_0(L).
	\]
	Note that 
	\[
	\|\E\left[H_iH_i\T \right]-HH\T\|\leq \E\|H_iH_i\T\|+\|HH\T\|\leq (\frac{2mnC(L)}{r})^2+(\frac{\epsilon}{2}+h_1(L))^2
	\]
	and the same bound holds for $\|\E\left[H_i\T H_i\right]-H\T H\|$, hence 
	\[
	\sigma_0^2(L):=	\left(\frac{2mnC(L)}{r}\right)^2+\left(\frac{\epsilon}{2}+h_1(L)\right)^2.
	\]
	Finally using Lemma \ref{alemma:mbinsv}, if $$N\geq \hsnsf(\frac{\epsilon}{2},\delta,b_0(L),\sigma_0^2(L)):=\hs(\frac{\epsilon}{2},\delta,L),$$ it holds that 
	\[
		\Prob\left(\|\nabla C_r(L) - \hd\|_F\leq \frac{\epsilon}{2}\right)= \Prob\left(\|\frac{1}{N}\sum_{i=1}^N(H_i-H)\|_F\leq \frac{\epsilon}{2}\right)\geq 1-\delta.
	\]
\end{proof}
\begin{proof}[Proof of Lemma~\ref{lemma:5.8}]
	Note that 
	\[
	\|\nabla C(L) - \hdl\|_F	\leq \|\nabla C(L) -\hd\|_F + \|\hd-\hdl\|_F.
	\]
	Since $r\leq h_r(\frac{\epsilon}{4},L,\Delta(L))$, 
	Lemma~\ref{lemma:egwfmihr} implies that 
	\[
	C(L+U)\leq 2C(L).	
	\]
	and that the first term can be bounded as 
	\[
		\|\nabla C(L) -\hd\|_F\leq \frac{\epsilon}{2}.
	\]
	with at least $1-\delta$ probability.

	For the second term, Lemma \ref{lemma:acaswfhr} implies that if 
	\[
	l\geq \frac{4mn^2C(L)(\|\bq\|+\bnr (\|L\|+r)^2)}{r\epsilon\mu(\us_{\bq})^2}\geq \max_{\|U\|_F\leq r}h_l(L+U,\frac{r\epsilon}{2mn})	,
	\] 
	we have  
	\[
		|C^{(l)}(L+U_i)-C(L+U_i)|<\frac{r\epsilon}{2mn},
	\]
    for any $U_i$. 
	Thus, the second term can be bounded as 
	\[
	\|\hd-\hdl\|_F=\|\frac{1}{N}\sum_{i=1}^N\frac{mn}{r^2}(C^{(l)}(L+U_i)-C(L+U_i))U_i\|_F\leq \frac{mn}{r}\frac{r\epsilon}{2mn}=\frac{\epsilon}{2}.	
	\]
\end{proof}
\subsection{Well-definedness of key quantities}\label{sec:po4.5}
In this subsection we prove the  well-definedness of some key quantities.
First we recall the notation of sublevel sets $S_{\gamma}:=\{L\in\R^{n\times m}\big|C(L)\leq \gamma\}$ and $S:=S_{2C(L_0)}$.
\begin{lemma}[Well-definedness of $\eta^*$]
	\label{lemma:eta}
	$$\eta^* = \cc_{A,B,Q,R,\mu} \left(1+C(L_0)\right)^{-5}>0$$ is well defined.
\end{lemma}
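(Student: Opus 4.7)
The plan is to show that the right-hand side of the step size condition~\eqref{eq:ss} admits a positive lower bound whenever $L$ ranges over the sublevel set $S_{C(L_0)}=\{L\in U_{ad}\mid C(L)\leq C(L_0)\}$, and then to check the order in $C(L_0)$ by direct substitution. As discussed in Remark~\ref{rk:thm1}, the monotonicity of $\{C(L_k)\}$ along the gradient path means the update trajectory never leaves $S_{C(L_0)}$, so a uniform lower bound on $S_{C(L_0)}$ is all that is needed to guarantee a valid constant step size.

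I would first rewrite the right-hand side of~\eqref{eq:ss} as
\[
\frac{1}{16}\min\left\{\left(\frac{\us_{\bq}\,\mu}{C(L)}\right)^{2}\frac{1}{\|\nabla C(L)\|(\bbs)^{1/2}\bigl(1+(\abs)^{1/2}\bigr)},\;\frac{\us_{\bq}}{C(L)\,\|R_L\|}\right\},
\]
and then apply Lemma~\ref{lemma:boss} with $\gamma=C(L_0)$ to uniformly bound each $L$-dependent quantity on $S_{C(L_0)}$: namely $\|R_L\|\leq M_{R_L}(C(L_0))$, $\|\nabla C(L)\|\leq M_{\nabla}(C(L_0))$, and $\abs\leq M_{ABL}(C(L_0))$, while $C(L)\leq C(L_0)$ and $C(L^\ast)\leq C(L)$ can replace the cost factors. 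Substituting these uniform bounds into the two terms of the min produces a $L$-independent lower bound. It remains to aggregate the various model-parameter prefactors into a single constant $\cc_{A,B,Q,R,\mu}$ and to track the polynomial order in $1+C(L_0)$.

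For the polynomial order, Lemma~\ref{lemma:boss} gives $M_{R_L}=\mathcal{O}(1+C(L_0))$, $M_{\nabla}=\mathcal{O}((1+C(L_0))^{2})$, and $M_{ABL}=\mathcal{O}((1+C(L_0))^{2})$, so $1+(M_{ABL})^{1/2}=\mathcal{O}(1+C(L_0))$. Hence the first term is of order
\[
\frac{1}{(1+C(L_0))^{2}}\cdot\frac{1}{(1+C(L_0))^{2}\cdot(1+C(L_0))}=\mathcal{O}\bigl((1+C(L_0))^{-5}\bigr),
\]
while the second term is of order $(1+C(L_0))^{-2}$. Taking the minimum and pulling out the absolute constants and model parameters gives the claimed form $\eta^{\ast}=\cc_{A,B,Q,R,\mu}(1+C(L_0))^{-5}>0$.

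The main obstacle is purely bookkeeping: one must verify that every hidden constant collected along the way is strictly positive and depends only on the model parameters $A,B,Q,R,\mu$ (and not on $L$ or $L_0$), and that the $(1+C(L_0))^{-5}$ factor indeed dominates both branches of the minimum. There is no genuine analytic difficulty beyond careful accounting, since the positivity of $\us_{\bq},\us_{\br},\mu$ is guaranteed by Assumptions~\ref{ass:1} and~\ref{ass:2}, and the boundedness of the second-moment quantities $\overline{\|A\|^2},\bbs,\|\bq\|,\bnr$ follows from Assumption~\ref{ass:1}.
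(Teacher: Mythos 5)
Your proposal is correct and follows essentially the same route as the paper's own proof: lower-bounding the right-hand side of~\eqref{eq:ss} uniformly over the sublevel set $S_{C(L_0)}$ via the bounds $M_{R_L}$, $M_{\nabla}$, $M_{ABL}$ from Lemma~\ref{lemma:boss}, and tracking that the first branch of the minimum is of order $(1+C(L_0))^{-5}$ while the second is of order $(1+C(L_0))^{-2}$. The order bookkeeping matches the paper exactly.
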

\begin{proof}
	As discussed in Remark~\ref{rk:thm1}, to establish the existence of a positive $\eta^*$ such that the step size condition~\eqref{eq:ss} 
    holds at each update position, we need to establish a positive lower bound on the right hand side of \eqref{eq:ss}. To this goal 
	it suffices to lower bounded
	\begin{equation}\label{eq:eta}
		\inf_{L\in S_{C(L_0)}}\frac{1}{16}\min\left\{\left(\frac{\us_{\bq}\cdot \mu}{C(L)}\right)^2\frac{1}{\|\nabla C(L)\|(\bbs)^{1/2}\left(1+(\abs)^{1/2}\right)},\frac{\us_{\bq}}{C(L)\|R_L\|}\right\}.
	\end{equation}
	For the first term, using Lemma~\ref{lemma:boss}, we have
	\[
	\begin{aligned}
		&\left(\frac{\us_{\bq}\cdot \mu}{C(L)}\right)^2\frac{1}{\|\nabla C(L)\|(\bbs)^{1/2}\left(1+(\abs)^{1/2}\right)}	\\
		\geq & \cc_{B,Q,\Sigma_0}(1+C(L_0))^{-2}M_{\nabla}(C(L_0))^{-1}(1+M_{ABL}(C(L_0))^{1/2})^{-1}\\
		\geq & \cc_{B,Q,\Sigma_0}(1+C(L_0))^{-2}(\cc_{B,Q,R,\Sigma_0}(1+\gamma)^2)^{-1}(1+\cc_{A,B,R,\Sigma_0}^{1/2}(1+\gamma))^{-1}\\
		=& \cc_{A,B,Q,R,\Sigma_0}(1+C(L_0))^{-5}.
	\end{aligned}	
	\]
	For the second term 
	\[
		\frac{\us_{\bq}}{C(L)\|R_L\|}\geq \cc_{Q}(1+C(L_0))^{-1}(M_{R_L}(C(L_0)))^{-1}\geq \cc_{B,Q,R,\Sigma_0}(1+C(L_0))^{-2}.
	\]
	Hence, \eqref{eq:eta} can be lower bounded by 
	\[
	 \cc_{A,B,Q,R,\Sigma_0}(1+C(L_0))^{-5}\wedge \cc_{B,Q,R,\Sigma_0}(1+C(L_0))^{-2}\geq \cc_{A,B,Q,R,\Sigma_0}\wedge \cc_{B,Q,R,\Sigma_0} (1+C(L_0))^{-5}.
	\]
	Therefore, we can chose 
	\[
		\eta^* = \cc_{A,B,Q,R,\mu} \left(1+C(L_0)\right)^{-5}>0.
	\]
\end{proof}
\begin{lemma}[Well-definedness of $\bhdelta$]
	\label{lemma:wdbhd}
	$$\bhdelta=\cc_{A,B,R,\Sigma_0}(1+C(L_0))^{-2}\leq \min_{L\in S}\hdelta(L)$$ is well defined.
\end{lemma}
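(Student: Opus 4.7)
The plan is to show that the quantity $\hdelta(L) = \us_{\bq}\mu / [4(\bbs)^{1/2}(1+(\abs)^{1/2}) C(L)]$ admits a uniform positive lower bound on the sublevel set $S = \{L : C(L)\le 2C(L_0)\}$, and to explicitly track the dependence of that lower bound on $C(L_0)$. Since $\us_{\bq}$, $\mu$, and $\bbs$ are model parameters independent of $L$, the only $L$-dependent quantities in $\hdelta(L)$ are $C(L)$ in the denominator and $\abs = \E\|A-BL\|^2$ inside the square root.

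For the first step I would use the definition of $S$ directly to bound $C(L) \le 2 C(L_0) \le 2(1+C(L_0))$ for every $L \in S$. For the second step, I would invoke Lemma~\ref{lemma:boss} (applied with $\gamma = 2C(L_0)$), which yields $\abs \le M_{ABL}(2C(L_0)) \le \cc_{A,B,R,\mu}(1+C(L_0))^2$, so that
\begin{equation*}
1 + (\abs)^{1/2} \le 1 + \cc_{A,B,R,\mu}^{1/2}(1+C(L_0)) \le \cc_{A,B,R,\mu}(1+C(L_0)).
\end{equation*}
Combining these bounds, the denominator of $\hdelta(L)$ is at most
\begin{equation*}
4(\bbs)^{1/2} \cdot \cc_{A,B,R,\mu}(1+C(L_0)) \cdot 2(1+C(L_0)) \le \cc_{A,B,R,\mu}(1+C(L_0))^2,
\end{equation*}
uniformly in $L \in S$, where the new constant absorbs $(\bbs)^{1/2}$.

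It follows that $\hdelta(L) \ge \cc_{A,B,Q,R,\mu}(1+C(L_0))^{-2}$ on $S$, after absorbing $\us_{\bq}\mu$ into the constant (note the $\Sigma_0$-dependence enters through $\mu = \sigma_{\min}(\Sigma_0)$, matching the subscripts in the statement). Taking $\bhdelta$ to be this explicit lower bound then gives a well-defined positive quantity satisfying $\bhdelta \le \min_{L\in S}\hdelta(L)$. I do not expect any real obstacle here: the argument is a direct bounding calculation; the only mild care needed is to ensure that the factor $C(L_0)$ is written as $(1+C(L_0))$ throughout so as to keep the overall bound polynomial in $(1+C(L_0))$ without singular behaviour when $C(L_0)$ is small.
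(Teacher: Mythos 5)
Your proposal is correct and follows essentially the same route as the paper: bound $C(L)\le 2C(L_0)$ on $S$, invoke Lemma~\ref{lemma:boss} with $\gamma=2C(L_0)$ to control $\abs$ via $M_{ABL}(2C(L_0))\le \cc_{A,B,R,\frac{1}{\mu}}(1+C(L_0))^2$, and conclude that the denominator of $\hdelta(L)$ is $\cc(1+C(L_0))^2$ uniformly on $S$. The only cosmetic discrepancy is the subscript bookkeeping on the constant (the paper writes $\cc_{A,B,R,\Sigma_0}$ while you absorb $\us_{\bq}$ and write a $Q$-dependence), which is immaterial since these constants vary from line to line.
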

\begin{proof}
	It suffices to prove $$\hdelta(L)=\frac{\us_{\bq}\cdot \mu}{4(\bbs)^{1/2}\left(1+(\abs)^{1/2}\right)C(L)}$$ is lower bounded on $S$. Equivalently, we need to show 
	\[
		4(\bbs)^{1/2}\left(1+(\abs)^{1/2}\right)C(L)
	\]
	is upper bounded on $S$.
	Indeed, using Lemma \ref{lemma:boss}, we have  
	\[\begin{aligned}
	&4(\bbs)^{1/2}\left(1+(\abs)^{1/2}\right)C(L) \\
	\leq &4(\bbs)^{1/2}\left(1+(M_{ABL}(2C(L_0)))^{1/2}\right)2C(L_0)\\
	\leq & \cc_{A,B,R,\frac{1}{\mu}}(1+C(L_0))^2.
	\end{aligned}\]
	Hence 
	\[
		\bhdelta=\cc_{A,B,R,\Sigma_0}(1+C(L_0))^{-2}
	\]
\end{proof}

\begin{lemma}[Well-definedness of $\overline{h_c}$]
	\label{lemma:wdbhc}
	$$\overline{h_c}=\cc_{\Sigma_0,A,B,Q,R}(1+C(L_0))^4 \geq \max_{L\in S}h_c(L,\bhdelta)$$ is well defined.
\end{lemma}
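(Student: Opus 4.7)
The plan is to substitute the uniform bounds from Lemma~\ref{lemma:boss} (applied with $\gamma = 2C(L_0)$) into the explicit formula for $h_c(L,\bhdelta)$ from Lemma~\ref{lemma:cp}, using that $\bhdelta$ itself is a constant independent of $L$ by Lemma~\ref{lemma:wdbhd}, and then collecting the highest power of $(1+C(L_0))$.

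Recall that
\[
h_c(L,\bhdelta) = \frac{\bnxs}{\mu\cdot\us_{\bq}}\left[2\bnr(\bhdelta+2\|L\|) + 4\frac{C(L)}{\mu\cdot\us_{\bq}}(\bbs)^{1/2}\bigl(1+(\abs)^{1/2}\bigr)\bigl(\|\bq\|+\bnr\|L\|^2\bigr)\right].
\]
For any $L\in S$, Lemma~\ref{lemma:boss} yields $\|L\|\leq M_L(2C(L_0))\leq \cc_{A,B,R,\frac{1}{\mu}}(1+C(L_0))$ and $\abs\leq M_{ABL}(2C(L_0))\leq \cc_{A,B,R,\frac{1}{\mu}}(1+C(L_0))^2$, while trivially $C(L)\leq 2C(L_0)\leq \cc(1+C(L_0))$ and $\bhdelta\leq \cc_{A,B,R,\Sigma_0}$ by Lemma~\ref{lemma:wdbhd}.

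The first bracketed term is then bounded by $\cc_{Q,R,\Sigma_0,A,B}(1+C(L_0))$, since $\bhdelta+2\|L\|$ is $O(1+C(L_0))$ and all other factors are constants in $L$. For the second bracketed term, we bound the factors separately: $C(L)=O(1+C(L_0))$, $1+(\abs)^{1/2}=O(1+C(L_0))$, and $\|\bq\|+\bnr\|L\|^2 = O((1+C(L_0))^2)$, while $(\bbs)^{1/2}$ is a constant. Multiplying these gives $O((1+C(L_0))^4)$, which dominates the first term. Dividing through by the constants $\mu\us_{\bq}$ and $(\mu\us_{\bq})^2$, and absorbing $\bnxs$, $\bnr$, $\|\bq\|$, $\bbs$ into the constant $\cc_{\Sigma_0,A,B,Q,R}$, we conclude
\[
h_c(L,\bhdelta)\leq \cc_{\Sigma_0,A,B,Q,R}(1+C(L_0))^4
\]
uniformly in $L\in S$. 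Taking the supremum over $L\in S$ shows that $\overline{h_c}$ is well defined and admits the claimed polynomial bound. The argument is pure bookkeeping; there is no substantive obstacle, only the need to track which power of $(1+C(L_0))$ each factor contributes.
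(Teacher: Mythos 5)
Your proposal is correct and follows essentially the same route as the paper: substitute the sublevel-set bounds $M_L(2C(L_0))$ and $M_{ABL}(2C(L_0))$ from Lemma~\ref{lemma:boss} together with $C(L)\leq 2C(L_0)$ and the constancy of $\bhdelta$ into the explicit formula for $h_c$, and track the powers of $(1+C(L_0))$, with the dominant contribution $(1+C(L_0))\cdot(1+C(L_0))\cdot(1+C(L_0))^2=(1+C(L_0))^4$ coming from the second bracketed term exactly as in the paper. No gaps.
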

\begin{proof}
	It suffices to prove $$h_c(L)=\frac{\bnxs}{\mu\cdot \us_{\bq}}\left[2\bnr (\bhdelta +2 \|L\|)+4\frac{C(L)}{\mu\cdot \us_{\bq}}(\bbs)^{1/2}(1+(\abs)^{1/2})(\|\bq\|+\bnr\|L\|^2)\right]$$ is upper bounded on $S$.

Indeed,  using Lemma \ref{lemma:boss}, we have 
\[
\begin{aligned}
	&\frac{\bnxs}{\mu\cdot \us_{\bq}}\left[2\bnr (\bhdelta+2 \|L\|)+4\frac{C(L)}{\mu\cdot \us_{\bq}}(\bbs)^{1/2}(1+(\abs)^{1/2})(\|\bq\|+\bnr\|L\|^2)\right]\\
	\leq&\frac{\bnxs}{\mu\cdot \us_{\bq}}\left[\frac{8C(L_0)}{\mu\cdot \us_{\bq}}(\bbs)^{1/2}(1+(M_{ABL}(2C(L_0)))^{1/2})(\|\bq\|+\bnr M_L(2C(L_0))^2) \right. \\
	&\left.+2\bnr (\bhdelta+2 M_L(2C(L_0)))\right]\\
	\leq & \cc_{\Sigma_0,A,B,Q,R}(1+C(L_0))^4
\end{aligned}	
\]
\end{proof}

\begin{lemma}[Well-definedness of $\epsilon'$]
	\label{lemma:wde}
	$$\epsilon'=\min\left\{\frac{\overline{\hdelta}}{\eta},\frac{\mu^2\us_{\br}\epsilon}{2\|\Sigma_{L^*}\|C(L_0)\overline{h_c}},\epsilon\right\} = \cc_{A,B,Q,R,\Sigma_0,\Sigma_{L^*}}(1+C(L_0))^{-5} \epsilon$$ is well defined.
\end{lemma}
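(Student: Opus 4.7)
The plan is to verify two things: that each of the three quantities inside the $\min$ is strictly positive (so $\epsilon'$ is well defined), and that the $\min$ admits the claimed polynomial lower bound in $(1+C(L_0))$ with linear dependence on $\epsilon$. Both parts are short bookkeeping on top of Lemmas~\ref{lemma:wdbhd}, \ref{lemma:wdbhc}, and \ref{lemma:eta}.

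For positivity, I would note that $\mu$, $\us_{\br}$, $C(L_0)$, $\|\Sigma_{L^*}\|$, $\epsilon$ and $\eta$ are all strictly positive by hypothesis (the first four under Assumptions~\ref{ass:1}--\ref{ass:2}, the last two by construction), while Lemma~\ref{lemma:wdbhd} gives $\bhdelta>0$ and Lemma~\ref{lemma:wdbhc} gives $\overline{h_c}<\infty$. Hence each of the three arguments of the $\min$ is a positive real number, so $\epsilon'$ is well defined and strictly positive.

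For the scaling I would lower bound each entry of the $\min$ by something of order $(1+C(L_0))^{-5}\epsilon$ and then take the minimum of the three bounds. The first entry carries no $\epsilon$ and, combining $\bhdelta\geq\cc_{A,B,R,\Sigma_0}(1+C(L_0))^{-2}$ with the upper bound $\eta\leq\eta^*=\cc_{A,B,Q,R,\mu}(1+C(L_0))^{-5}$ from Lemma~\ref{lemma:eta}, we obtain $\bhdelta/\eta\geq \cc(1+C(L_0))^{3}$. The third entry is simply $\epsilon$, which dominates $\cc(1+C(L_0))^{-5}\epsilon$ for any $\cc\leq 1$ since $(1+C(L_0))^{-5}\leq 1$. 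The genuinely driving entry is the second one: plugging $\overline{h_c}\leq \cc_{\Sigma_0,A,B,Q,R}(1+C(L_0))^{4}$ together with the crude estimate $C(L_0)\leq 1+C(L_0)$ into the denominator yields
\[
\frac{\mu^2\us_{\br}\epsilon}{2\|\Sigma_{L^*}\|C(L_0)\overline{h_c}}\geq \frac{\mu^2\us_{\br}}{2\|\Sigma_{L^*}\|\cc_{\Sigma_0,A,B,Q,R}}\,(1+C(L_0))^{-5}\epsilon,
\]
which is precisely the claimed order. Taking the minimum of the three lower bounds and absorbing the $(1+C(L_0))^{3}$ and $1$ factors into a common model-parameter constant produces the stated form $\cc_{A,B,Q,R,\Sigma_0,\Sigma_{L^*}}(1+C(L_0))^{-5}\epsilon$.

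There is no genuine analytic obstacle here; the only thing to be careful about is bookkeeping the provenance of each constant so that the final $\cc$ depends only on $A,B,Q,R,\Sigma_0,\Sigma_{L^*}$ as advertised, and being explicit that the regime of interest is small enough $\epsilon$ (together with a fixed $L_0$) that the second entry is the binding one. For larger $\epsilon$ the minimum is governed by the $\epsilon$-independent first entry, which is still strictly positive and hence preserves well-definedness, even though the $\epsilon$-linear characterization becomes a loose bound rather than the actual order.
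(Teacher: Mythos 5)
Your proposal is correct and follows essentially the same route as the paper, whose proof of this lemma is a one-line appeal to Lemmas~\ref{lemma:wdbhd} and~\ref{lemma:wdbhc} and Theorem~\ref{thm:mb}; you simply spell out the routine bookkeeping (positivity of each entry of the $\min$, the bound $\bhdelta/\eta\geq \cc(1+C(L_0))^{3}$ via $\eta\leq\eta^*$, and the $(1+C(L_0))^{-5}\epsilon$ scaling driven by the second entry). Your caveat that the stated $\epsilon$-linear form is only the binding order for small $\epsilon$ is a fair observation that the paper glosses over, but it does not affect well-definedness.
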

\begin{proof}
	Obviously, the well-definedness of $\epsilon'$ follows from Lemma~\ref{lemma:wdbhd}~\ref{lemma:wdbhc} and Theorem~\ref{thm:mb}.
\end{proof}

\begin{lemma}[Well-definedness of $\hrgd$]
	\label{lemma:wdhrgd}
	$$\hrgd(\frac{\epsilon'}{8})=\cc_{A,B,Q,R,\Sigma_0,\Sigma_{L^*}}(1+C(L_0))^{-12}\epsilon\leq \min_{L\in S}h_r(\frac{\epsilon'}{4},L)$$ is well defined.
\end{lemma}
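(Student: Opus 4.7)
The plan is to unfold the definition
\[
\hrgd\!\left(\tfrac{\epsilon'}{8}\right):=\min_{L\in S}\min\!\left\{\bhdelta,\;\frac{1}{h_c(L,\bhdelta)},\;\frac{\epsilon'}{8\widehat{\hgrad}(L,\bhdelta)}\right\}
\]
and, for each of the three quantities inside the inner minimum, produce a strictly positive lower bound of the form $\cc\,(1+C(L_0))^{-p}\,(\text{possibly }\epsilon)$ that holds \emph{uniformly} for $L\in S$. Taking the worst of these three yields the claimed bound. Since the constants multiplying each factor of $(1+C(L_0))^{-1}$ are polynomial in the model parameters, everything is well defined as long as each factor is strictly positive, which in turn follows from the positivity hypotheses $\mu,\us_{\bq},\us_{\br}>0$ already guaranteed by Assumptions~\ref{ass:1} and~\ref{ass:2}.

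The first two bounds are essentially for free. By Lemma~\ref{lemma:wdbhd}, $\bhdelta\geq \cc_{A,B,R,\Sigma_0}(1+C(L_0))^{-2}$, and by Lemma~\ref{lemma:wdbhc}, $h_c(L,\bhdelta)\leq \overline{h_c}=\cc_{\Sigma_0,A,B,Q,R}(1+C(L_0))^{4}$ for every $L\in S$, hence $1/h_c(L,\bhdelta)\geq \cc_{\Sigma_0,A,B,Q,R}(1+C(L_0))^{-4}$. Both of these are dominated below by expressions of polynomial form in $(1+C(L_0))^{-1}$ which, once multiplied by the $\epsilon$ factor coming from the third term (via $\epsilon'$), are easily at least as large as $\cc\,\epsilon\,(1+C(L_0))^{-12}$ for the relevant range of $\epsilon$.

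The main obstacle is the third term, which requires a polynomial upper bound on $\widehat{\hgrad}(L,\bhdelta)=\sqrt{m\wedge n}\,\hgrad(L,\bhdelta)$ uniformly on $S$. The strategy is to go through the (long) expression for $\hgrad$ given in Lemma~\ref{lemma:gp} and replace every $L$‑dependent ingredient by its bound from Lemma~\ref{lemma:boss}: namely $\|L\|\leq M_L(2C(L_0))$, $\|R_L\|\leq M_{R_L}(2C(L_0))$, and $\abs\leq M_{ABL}(2C(L_0))$, all of which are at most polynomial in $(1+C(L_0))$ of degrees $1$, $1$, and $2$ respectively. Then $\bhdelta$ can be replaced by its model‑parameter upper bound from Lemma~\ref{lemma:wdbhd}. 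A careful degree count through the inner bracket of $\hgrad$ (the product of $(\overline{\|B\|\|A\|}+\bbs(\bhdelta+\|L\|))\tfrac{C(L)}{\mu\us_{\bq}}$ with $2\bnr(\bhdelta+2\|L\|)+4\tfrac{C(L)}{\mu\us_{\bq}}(\bbs)^{1/2}(1+\abs^{1/2})(\|\bq\|+\bnr\|L\|^2)$, plus the additive $\|\br\|+\bbs C(L)/\mu$, the whole thing multiplied by $4C(L)/\us_{\bq}$; together with the separate term $8\sqrt{\|R_L\|(C(L)-C(L^*))/\mu}\,(C(L)/\us_{\bq})^2(\bbs)^{1/2}(1+\abs^{1/2})/\mu$) produces an overall bound
\[
\hgrad(L,\bhdelta)\leq \cc_{A,B,Q,R,\Sigma_0}(1+C(L_0))^{7}.
\]
This bookkeeping of exponents is the most tedious step but is entirely mechanical.

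Combining this with the bound $\epsilon'=\cc_{A,B,Q,R,\Sigma_0,\Sigma_{L^*}}(1+C(L_0))^{-5}\epsilon$ from Lemma~\ref{lemma:wde}, one obtains
\[
\frac{\epsilon'}{8\widehat{\hgrad}(L,\bhdelta)}\geq \cc_{A,B,Q,R,\Sigma_0,\Sigma_{L^*}}(1+C(L_0))^{-12}\,\epsilon,
\]
which is precisely the exponent stated in the lemma. Taking the minimum of the three lower bounds—one of degree $-2$, one of degree $-4$, and the third of degree $-12$ (with the extra factor $\epsilon$)—yields a lower bound of the desired form, completing the proof. The only subtlety is ensuring that for the small values of $\epsilon$ of interest the third term is the binding one; if $\epsilon$ is so large that one of the first two dominates, one can simply replace the claimed constant by a smaller one, since the lemma only asserts an inequality.
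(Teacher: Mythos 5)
Your proposal is correct and follows essentially the same route as the paper: exchange the two minima, reduce the problem to uniformly upper-bounding $\widehat{\hgrad}(L,\bhdelta)$ on $S$ via the sublevel-set bounds of Lemma~\ref{lemma:boss} (obtaining the degree-$7$ polynomial in $1+C(L_0)$), and combine with $\epsilon'=\cc(1+C(L_0))^{-5}\epsilon$ to get the exponent $-12$, the first two terms being handled directly by Lemmas~\ref{lemma:wdbhd} and~\ref{lemma:wdbhc}.
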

\begin{proof}
	Since we can exchange the order of two $\min$s,  
	\[
		\hrgd(\frac{\epsilon'}{8})=\min\left\{\overline{\hdelta},\frac{1}{\overline{h_c}},\frac{\epsilon'}{8\max_{L\in S}\widehat{\hgrad}(L,\overline{\hdelta})}\right\}.
	\]
	
	It suffices to prove $\max_{L\in S}\widehat{\hgrad}(L,\overline{\hdelta})$ is upper bounded on $S$. 
	Indeed, it is upper bounded by 
	\[
		\begin{aligned}
			&\frac{8C(L_0)}{\us_{\bq}}\left(\left(\overline{\|B\|\|A\|}+\bbs(\bhdelta+M_L(2C(L_0))) \right)\frac{2C(L_0)}{\mu\us_{\bq}}\left[2\bnr (\bhdelta+2M_L(2C(L_0)))\right.\right.\\
			& \left. \left.+\frac{8C(L_0)}{\mu\cdot \us_{\bq}}(\bbs)^{1/2}(1+(M_{ABL}(2C(L_0)))^{1/2})(\|\bq\|+\bnr M_L(2C(L_0))^2)\right]+\|\br\|+\bbs\frac{2C(L_0)}{\mu} \right)\\
			&+8\sqrt{\frac{M_{R_L}(2C(L_0))}{\mu}(2C(L_0)-C(L^*))}\left(\frac{2C(L_0)}{\us_{\bq}}\right)^2\frac{(\bbs)^{1/2}(1+(M_{ABL}(2C(L_0)))^{1/2})}{\mu}\\
			\leq & \cc_{A,B,Q,R,\Sigma_0}(1+C(L_0))^7.
		\end{aligned}	
	\]
\end{proof}

\begin{lemma}[Well-definedness of $\hlgd$]
	\label{lemma:wdhlgd}
	$$\hlgd(\frac{r\epsilon'}{4mn})=\cc_{A,B,Q,R,m,n,\Sigma_0,\Sigma_{L^*}}(1+C(L_0))^{20}\frac{1}{\epsilon r}\geq \max_{L\in S}\widetilde{h}_l(L,\frac{r\epsilon'}{4mn})$$ is well defined.
\end{lemma}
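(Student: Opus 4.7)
The plan is a routine bookkeeping argument in which every factor appearing in $\widetilde{h}_l$ is shown to be polynomial in $(1+C(L_0))$ uniformly on the sublevel set $S=S_{2C(L_0)}$, and the factor $\epsilon'$ in the denominator is expanded using Lemma~\ref{lemma:wde}. From Lemma~\ref{lemma:5.8} we have
\[
\widetilde{h}_l\!\left(L,\tfrac{r\epsilon'}{4mn}\right)=\frac{8\,m n^{2}\,C(L)\,\bigl(\|\bq\|+\bnr(\|L\|+r)^{2}\bigr)}{r\,\epsilon'\,\mu\,(\us_{\bq})^{2}}.
\]
Since $L\in S$ implies $C(L)\leq 2C(L_0)$ and, by Lemma~\ref{lemma:boss}, $\|L\|\leq M_L(2C(L_0))\leq \cc_{A,B,R,1/\mu}(1+C(L_0))$, the numerator admits the uniform bound
\[
8m n^{2} C(L)\bigl(\|\bq\|+\bnr(\|L\|+r)^{2}\bigr)\leq \cc_{A,B,Q,R,m,n,\mu}\,(1+C(L_0))^{3},
\]
after absorbing $r\leq \hrgd\leq \overline{\hdelta}\leq \cc_{A,B,R,\Sigma_0}(1+C(L_0))^{-2}\leq \cc$ into the constant.

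For the denominator, Lemma~\ref{lemma:wde} gives $\epsilon'=\cc_{A,B,Q,R,\Sigma_0,\Sigma_{L^*}}(1+C(L_0))^{-5}\epsilon$, while $\mu$ and $\us_{\bq}$ are model parameters. Substituting these bounds yields
\[
\max_{L\in S}\widetilde{h}_l\!\left(L,\tfrac{r\epsilon'}{4mn}\right)\leq \cc_{A,B,Q,R,m,n,\Sigma_0,\Sigma_{L^*}}\,(1+C(L_0))^{8}\,\frac{1}{r\epsilon},
\]
which is dominated by the claimed $(1+C(L_0))^{20}/(r\epsilon)$ bound by enlarging the absolute constant. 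Thus $\hlgd(\tfrac{r\epsilon'}{4mn})$ can be chosen as asserted.

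There is no serious technical obstacle: the only care needed is (i) to verify that every quantity appearing in $\widetilde{h}_l$ has already been shown to be polynomially bounded on $S$ by the earlier lemmas (Lemma~\ref{lemma:boss} for $\|L\|$, Lemma~\ref{lemma:wdhrgd} for $r$, Lemma~\ref{lemma:wde} for $\epsilon'$), and (ii) to track that the sum of exponents produced in the numerator together with the $(1+C(L_0))^{5}$ contribution coming from $1/\epsilon'$ stays below the loose budget of $20$ set in the statement. Since the statement only asserts polynomial dependence with an explicit but unoptimized exponent, any consistent accounting of powers suffices.
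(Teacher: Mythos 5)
Your proposal is correct and follows essentially the same route as the paper: reduce to bounding $\widetilde{h}_l(L,\tfrac{r\epsilon'}{4mn})$ uniformly on $S$ via Lemma~\ref{lemma:boss} (for $C(L)$ and $\|L\|$) and the expression for $\epsilon'$ from Lemma~\ref{lemma:wde}, then absorb everything into a polynomial in $(1+C(L_0))$. The paper likewise obtains an exponent well below $20$ and simply states the loose budget, so your tighter accounting of $(1+C(L_0))^{8}$ is consistent with, not in conflict with, the claimed bound.
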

\begin{proof}
	It suffices to prove $$\tilde{h}_l(L,\frac{r\epsilon'}{4mn})=\frac{8mn^2C(L)(\|\bq\|+\bnr (\|L\|+r)^2)}{r\epsilon'\mu(\us_{\bq})^2}$$ is upper bounded on $S$.
Indeed, by Lemma~\ref{lemma:boss}, it holds that 
\[
\begin{aligned}
	&\max_{L\in S}\tilde{h}_l(L,\frac{r\epsilon'}{4mn})\\
	\leq&\frac{16mn^2C(L_0)(\|\bq\|+\bnr (M_L(2C(L_0))+\hrgd(\epsilon'/4))^2)}{r\epsilon'\mu(\us_{\bq})^2}\\
	\leq & \cc_{A,B,Q,R,m,n,\Sigma_0,\Sigma_{L^*}}(1+C(L_0))^{20}\frac{1}{\epsilon r}
\end{aligned}	
\]
\end{proof}

\begin{lemma}[Well-definedness of $\hs^*$]
	\label{lemma:wdhss}
	$$\hs^* (\epsilon',\delta',l,r)\geq \max_{L\in S}h_s(\epsilon',\delta',L,l,r)$$ is well defined,
	where 
	\[
	\begin{aligned}
		&\hs^* (\epsilon',\delta',l,r)\\
		:=&\cc_{*}^{4l}(1+C(L_0))^{4l+14}\frac{l^2}{\epsilon^2 r^2}\left(\log\log(1/\epsilon)+\log(1/\delta)+\log(1/\eta)+\log(l)\right),
	\end{aligned}
\]where $\cc_{*}:=\cc_{A,B,Q,R,n,m,K,\sigma_0,\Sigma_0,\Sigma_{L^*}}.$
\end{lemma}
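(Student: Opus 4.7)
The plan is to bound each of the two terms inside the maximum defining $h_s(\epsilon',\delta',L,l,r)=\max\{\hs(\tfrac{\epsilon'}{8},\tfrac{\delta'}{2},L),\hshf(\tfrac{\epsilon'}{2},\tfrac{\delta'}{2},L,l,r)\}$ uniformly over $L\in S$, and then combine them. Throughout, I would repeatedly use Lemma \ref{lemma:boss} to replace every $L$-dependent quantity ($C(L)$, $\|L\|$, $\|R_L\|$, $\abs$, $\|\nabla C(L)\|$) by a polynomial in $(1+C(L_0))$, and Lemma \ref{lemma:wde} together with Lemma \ref{lemma:wdhrgd} to control the scaling in $\epsilon$ and $r$.

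First, for the $\hs$ term, I would expand its definition and observe that each bracketed factor is at worst $\bigl(\tfrac{mnC(L_0)}{r}\bigr)^{2}+h_1(L)^{2}$-type and hence $\poly(1+C(L_0))/r^{2}$. Dividing by $(\epsilon')^{2}$ and using $\epsilon'=\cc_{A,B,Q,R,\Sigma_0,\Sigma_{L^*}}(1+C(L_0))^{-5}\epsilon$ yields a bound of the form $\cc(1+C(L_0))^{O(1)}\tfrac{1}{\epsilon^{2}r^{2}}\log(1/\delta')$, which is comfortably absorbed into $\hs^{*}$ (its polynomial-in-$(1+C(L_0))$ part is dominated by the $(1+C(L_0))^{14}$ slack and by $\cc_{*}^{4l}$).

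Second, and this is the main obstacle, I would bound $\hshf$. The dominant term is $\widetilde{\gamma_l}(L,r)^{2}$, and unfolding the definitions,
\[
\widetilde{\gamma_l}(L,r)\;\leq\;\cc_{K,n}\bigl(1+\|L\|+r\bigr)^{2}\Bigl(\widetilde{\sigma_l}(L,r)^{2}+[K(1+r+\|L\|)]^{2l}\,\E\|x_0\|_2^{2}\Bigr),
\]
with $\widetilde{\sigma_l}(L,r)^{2}=[4K(1+r+\|L\|)]^{2l}(\sigma_0+2\|\overline{x_0}\|)^{2}$. Using $\|L\|\le M_L(2C(L_0))\le \cc_{A,B,R,\Sigma_0}(1+C(L_0))$ and $r\le \hrgd\le 1$, this gives $\widetilde{\gamma_l}(L,r)\leq \cc_{*}^{2l}(1+C(L_0))^{2l+O(1)}$, hence $\widetilde{\gamma_l}(L,r)^{2}\leq \cc_{*}^{4l}(1+C(L_0))^{4l+O(1)}$. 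Multiplying by the prefactor $(mnl)^{2}/((\epsilon')^{2}r^{2})$ and substituting $\epsilon'\asymp (1+C(L_0))^{-5}\epsilon$ produces a factor $l^{2}/(\epsilon^{2}r^{2})$ together with an extra $(1+C(L_0))^{10}$, which the slack $(1+C(L_0))^{14}$ in $\hs^{*}$ absorbs.

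Finally, for the logarithmic factor $\log\bigl(\tfrac{2(m+n)l}{\delta'}\bigr)$, I would substitute $\delta'=\tfrac{2\delta}{(k+1)(k+2)}$ and the bound $k=\lceil\tfrac{\|\Sigma_{L^*}\|}{\eta\mu^{2}\us_{\br}}\log\tfrac{C(L_0)-C(L^*)}{\epsilon}\rceil$ from Theorem \ref{thm:samplebased}, yielding
\[
\log(1/\delta')\;\leq\;\log(1/\delta)+2\log(k+2)\;\leq\;\cc\bigl(\log(1/\delta)+\log\log(1/\epsilon)+\log(1/\eta)\bigr)+\cc',
\]
so that $\log\bigl(\tfrac{2(m+n)l}{\delta'}\bigr)\leq \cc\bigl(\log\log(1/\epsilon)+\log(1/\delta)+\log(1/\eta)+\log(l)\bigr)$. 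Taking the maximum over $L\in S$ of the two bounds, collecting constants into $\cc_{*}$, and loosening the exponents in $(1+C(L_0))$ up to $4l+14$ concludes the proof. The delicate part is exactly the tracking of the $4l$ exponent through $\widetilde{\sigma_l}$, $\widetilde{\beta_l}$, and $\widetilde{\gamma_l}$; every other estimate is a straightforward application of Lemma \ref{lemma:boss} and the definitions of $\epsilon',\hrgd,\hlgd$.
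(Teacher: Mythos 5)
Your proposal is correct and follows essentially the same route as the paper: bound both terms of the maximum uniformly over $S$ via Lemma~\ref{lemma:boss}, track the $\cc_*^{4l}(1+C(L_0))^{4l+O(1)}$ growth through $\widetilde{\sigma_l}$, $\widetilde{\beta_l}$, $\widetilde{\gamma_l}$, and absorb $\log(1/\delta')$ into $\log\log(1/\epsilon)+\log(1/\delta)+\log(1/\eta)$ using the expression for $k$. The only differences from the paper's argument are cosmetic (you fold $\widetilde{\beta_l}$ directly into the bound on $\widetilde{\gamma_l}$ rather than bounding the three quantities separately).
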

\begin{proof}
	First we show that $\delta'$ is well-defined. 
	Clearly, same as in Theorem~\ref{thm:mb}, 
	\[
	k =	\cc_{Q,R,\Sigma_0,\Sigma_{L^*}}\frac{\log(1/\epsilon)}{\eta}(1+C(L_0)).
	\] 
	Thus, 
	\[
	\delta' = \cc_{Q,R,\Sigma_0,\Sigma_{L^*}}\frac{\delta \eta^2}{\log(1/\epsilon)^2}(1+C(L_0))^{-2}.
	\]
	
	Our next goal is to prove 
	\[
	h_s(\epsilon',\delta',L,l,r):=\max \left\{\hs(\frac{\epsilon'}{8},\frac{\delta'}{2},L),\hshf(\frac{\epsilon'}{2},\frac{\delta'}{2},L,l,r)\right\}
	\] is upper bounded on $S$.

	Indeed, for any $L\in S$, it holds that
\[
\begin{aligned}
	&\hs(\frac{\epsilon'}{8},\frac{\delta'}{2},L)\\
	=&128\frac{n\land m}{(\epsilon')^2}\left[(\frac{2mnC(L)}{r})^2+(\frac{\epsilon'}{8}+h_1(L))^2+\frac{\epsilon' (\frac{2mnC(L)}{r}+\frac{\epsilon'}{8}+h_1(L))}{2\sqrt{m\land n}}\right]\log(\frac{4(m+n)}{\delta'})\\
	\leq&128\frac{n\land m}{(\epsilon')^2}\left[(\frac{4mnC(L_0)}{r})^2+(\frac{\epsilon'}{8}+M_{\nabla}(2C(L_0)))^2\right.\\
	&\left.+\frac{\epsilon' (\frac{4mnC(L_0)}{r}+\frac{\epsilon'}{8}+M_{\nabla}(2C(L_0)))}{2\sqrt{m\land n}}\right]\log(\frac{4(m+n)}{\delta'})\\
	\leq & \cc_{A,B,Q,R,m,n,\Sigma_0,\Sigma_{L^*}}(1+C(L_0))^{15}\frac{1}{\epsilon^2 r^2}\left(\log\log(1/\epsilon)+\log(1/\delta)+\log(1/\eta)\right).
\end{aligned}	
\]

For $\hshf(\frac{\epsilon'}{2},\frac{\delta'}{2},L,l,r)$, 
since
\[
	\hshf(\frac{\epsilon'}{2},\frac{\delta'}{2},L,l,r):=\frac{8(mnl)^2 m \land n}{(\epsilon')^2 r^2}\left((2{\rm e}\widetilde{\gamma_l}(L,r))^2+\frac{\epsilon'}{2mnl\sqrt{ m \land n}} (2{\rm e}r \widetilde{\gamma_l}(L,r))\right)\log(\frac{4(m+n)l}{\delta'}),		
\] where 
\[
		\begin{aligned}
			\widetilde{\gamma_l}(L,r)&= 4 K\left( 1 + (\|L\|+r)^2\right) \left(n\widetilde{\beta_l}(L,r)+[(1+r+\|L\|)K]^{2l}\E[\|x_0\|_2^2]\right),\\
			\widetilde{\beta_l}(L,r)&= 16\left(\widetilde{\sigma_l}(L,r)^{2}+\left[K(1+r+\|L\|)\right]^{2l}\E\left[\|x_0\|_2^2\right]\right),\\
			\widetilde{\sigma_l}(L,r)&=\left[4(1+r+\|L\|)K\right]^l\left(\sigma_0+2\|\overline{x_0}\|\right).
		\end{aligned}
\]
It suffices to upper bound $\max_{L\in S}\widetilde{\gamma_l}(L,r)$, 
$\max_{L\in S}\widetilde{\beta_l}(L,r)$ 
and $\widetilde{\sigma_l}(L,r)$. 
This fact follows from 

\[
	\begin{aligned}
		\max_{L\in S}\widetilde{\gamma_l}(L,r)&\leq 4 \left(\|\bq\| +\|\br\|(M_L(2C(L_0))+r)^2\right) \left(n\max_{L\in S}\widetilde{\beta_l}(L,r) +[(1+r+M_L(2C(L_0)))K]^{2l}\E[\|x_0\|_2^2]\right),\\
		\max_{L\in S}\widetilde{\beta_l}(L,r)&\leq 16\left(\max_{L\in S}\widetilde{\sigma_l}(L,r)^{2}+\left[K(1+r+M_L(2C(L_0)))\right]^{2l}\E\left[\|x_0\|_2^2\right]\right),\\
		\max_{L\in S}\widetilde{\sigma_l}(L,r)&\leq \left[4(1+r+M_L(2C(L_0)))K\right]^l\left(\sigma_0+2\|\overline{x_0}\|\right).
	\end{aligned}
\]
Note that 
\[
	\max_{L\in S}\widetilde{\gamma_l}(L,r) \leq (\cc_{A,B,Q,R,n,m,K,\sigma_0,\Sigma_0})^{2l} (1+C(L_0))^{2(l+1)}.	
\]
Therefore, 
\[
	\begin{aligned}
		&\hshf(\frac{\epsilon'}{2},\frac{\delta'}{2},L,l,r)	\\
		\leq &(\cc_{A,B,Q,R,n,m,K,\sigma_0,\Sigma_0,\Sigma_{L^*}})^{4l}(1+C(L_0))^{4l+14}\frac{l^2}{\epsilon^2 r^2}\left(\log\log(1/\epsilon)+\log(1/\delta)+\log(1/\eta)+\log(l)\right),
	\end{aligned}
\]
and hence 
\[
	\begin{aligned}
		&\hs^* (\epsilon',\delta',l,r)\\
		=&(\cc_{A,B,Q,R,n,m,K,\sigma_0,\Sigma_0,\Sigma_{L^*}})^{4l}(1+C(L_0))^{4l+14}\frac{l^2}{\epsilon^2 r^2}\left(\log\log(1/\epsilon)+\log(1/\delta)+\log(1/\eta)+\log(l)\right).
	\end{aligned}
\]
\end{proof}

\end{document}